\let\@wraptoccontribs\wraptoccontribs
\definecolor{deepjunglegreen}{rgb}{0.0, 0.29, 0.29}
\definecolor{darkspringgreen}{rgb}{0.09, 0.45, 0.27}
\pretocmd\section{\Needspace*{4\baselineskip}}{}{}
\newcommand{\red}[1]{\leavevmode{\color{red}{#1}}}
\DeclareMathOperator{\Gra}{{Gr}}
\newcommand{\rar}[1]{\stackrel{#1}{\longrightarrow}}
\newcommand{\bA}{{\mathbb A}}
\newcommand{\bC}{{\mathbb C}}
\newcommand{\bF}{{\mathbb F}}
\newcommand{\bG}{{\mathbb G}}
\newcommand{\bP}{{\mathbb P}}
\newcommand{\bS}{{\mathbb S}}
\newcommand{\bT}{{\mathbb T}}
\newcommand{\bW}{{\mathbb W}}
\newcommand{\bZ}{{\mathbb Z}}
\newcommand{\cA}{{\mathcal A}}
\newcommand{\cB}{{\mathcal B}}
\newcommand{\cC}{{\mathcal C}}
\newcommand{\cD}{{\mathcal D}}
\newcommand{\cE}{{\mathcal E}}
\newcommand{\cF}{{\mathcal F}}
\newcommand{\cM}{{\mathcal M}}
\newcommand{\cO}{{\mathcal O}}
\newcommand{\cT}{{\mathcal T}}
\newcommand{\cV}{{\mathcal V}}
\newcommand{\fH}{{\mathfrak H}}
\newcommand{\fS}{{\mathfrak S}}
\newcommand{\et}{\text{et}}
\newcommand{\colim}{\text{colim}}
\newcommand{\ta}{\tilde{a}}
\newcommand{\tb}{\tilde{b}}
\newcommand{\tf}{\tilde{f}}
\newcommand{\tg}{\tilde{g}}
\newcommand{\tx}{\widetilde{x}}
\newcommand{\ty}{\widetilde{y}}
\newcommand{\nc}{\newcommand}
\nc\wh{\widehat}
\nc\on{\operatorname}
\nc\Gr{\on{Gr}}
\nc\Fl{\on{Fl}}
\DeclareMathOperator{\gr}{{gr}}
\DeclareMathOperator{\Lie}{{Lie}}
\DeclareMathOperator{\Mat}{{Mat}}
 \DeclareMathOperator{\Spf}{{Spf}}
\DeclareMathOperator{\Sym}{{Sym}}
\DeclareMathOperator{\Mod}{{Mod}}
\DeclareMathOperator{\coker}{{coker}}
\DeclareMathOperator{\Sp}{{Sp}}
\newcommand{\limto}{{\displaystyle\lim_{\longrightarrow}}}
\newcommand{\rightlim}{\mathop{\limto}}
\newcommand{\leftlim}{\mathop{\displaystyle\lim_{\longleftarrow}}}
\newcommand{\limfromn}{\leftlim\limits_{\raise3pt\hbox{$n$}}}
\newcommand{\limton}{\rightlim\limits_{\raise3pt\hbox{$n$}}}
\newcommand{\rightlimit}[1]{\mathop{\lim\limits_{\longrightarrow}}\limits%
                    _{\raise3pt\hbox{$\scriptstyle #1$}}}
\newcommand{\leftlimit}[1]{\mathop{\lim\limits_{\longleftarrow}}\limits%
                    _{\raise3pt\hbox{$\scriptstyle #1$}}}
\newcommand{\epi}{\twoheadrightarrow}
\newcommand{\iso}{\buildrel{\sim}\over{\longrightarrow}}
\newcommand{\mono}{\hookrightarrow}
\newcommand{\lmod}{{\operatorname{\mathsf{-Mod}}}}
\DeclareMathOperator{\Id}{{Id}}
\DeclareMathOperator{\Aut}{{Aut}}
\DeclareMathOperator{\End}{{End}} 
\DeclareMathOperator{\Hom}{{Hom}}
\DeclareMathOperator{\Ker}{{Ker}} \DeclareMathOperator{\id}{{id}}
\DeclareMathOperator{\im}{{Im}} 
\DeclareMathOperator{\Mor}{{Mor}}
 \DeclareMathOperator{\op}{{op}}
\DeclareMathOperator{\Spec}{{Spec}}
\DeclareMathOperator{\PGL}{{PGL}}
\DeclareMathOperator{\GL}{{GL}}
\DeclareMathOperator{\Pic}{{Pic}}
\DeclareMathOperator{\QCoh}{{QCoh}}
\DeclareMathOperator{\rk}{{rk}}
\DeclareMathOperator{\ev}{{ev}}
\DeclareMathOperator{\Span}{{span}}
\DeclareMathOperator{\pro}{{pr}}
\DeclareMathOperator{\Ad}{{Ad}}
\newcommand{\Rmnum}[1]{\expandafter\@slowromancap\romannumeral #1@}
\newtheorem{Th}{Theorem}
\newtheorem{pr}{Proposition}[section]
\newtheorem{lm}[pr]{Lemma}
\newtheorem{cor}[pr]{Corollary}
\theoremstyle{definition}
\newtheorem{convention}[pr]{Convention}
\newtheorem{example}[pr]{Example}
\newtheorem{df}[pr]{Definition}
\newtheorem{rem}[pr]{Remark}
\numberwithin{equation}{section}
\newcommand{\Der}{\operatorname{Der}}
\newcommand{\tr}{\operatorname{tr}}
\newcommand{\dR}{\mathrm{dR}}
\newcommand{\mbb}{\mathbb}
\DeclareMathOperator{\ad}{{ad}}
\DeclareMathOperator{\proj}{{pr}}
\DeclareMathOperator{\Center}{{Center}}
\DeclareMathOperator{\CAlg}{{CAlg}}
\DeclareMathOperator{\Aff}{{Aff}}
\DeclareMathOperator{\Sch}{{Sch}}
\newcommand{\sep}{\mathrm{sep}}
\newcommand{\ul}{\underline}
\newcommand{\ra}{\rightarrow}
\newcommand{\xra}{\xrightarrow}
\newcommand{\mc}{\mathcal}
\newcommand{\mr}{\mathrm}
\newcommand{\twt}{^{(1)}}
\newcommand{\lp}{{\{ \!\!\!\ p \ \!\!\!\}}}
\newtheorem{constr}[pr]{Construction}
\begin{document}

\title[The canonical global quantization of symplectic varieties in char.\ $p$]
{The canonical global quantization of symplectic varieties in characteristic $p$}

%\author{Alexander Petrov \quad Dmitry Vaintrob \quad Vadim Vologodsky}

%\address{Department of Mathematics, University of Oregon, Eugene, OR, 97403, USA}
%\email{allens@uoregon.edu, vvologod@uoregon.edu}

%and
%\affilnum{2}Complete Second Author Address}

% Address / e-mail address of corresponding author
%\correspdetails{corr.email@math.edu}
\author[E.~Bogdanova]{Ekaterina Bogdanova}
\address{Harvard University,  USA}
\email{ebogdanova@math.harvard.edu}
\author[D.~Kubrak]{Dmitry Kubrak}
\address{Institut des Hautes \'Etudes Scientifiques, France}
\email{dmkubrak@gmail.com}
\author[R.~Travkin]{Roman Travkin}
\address{Skolkovo Institute of Science and Technology, Russia}
\email{roman.travkin2012@gmail.com}
\author[V. ~Vologodsky]{Vadim Vologodsky}
\address{National Research University ``Higher School of Economics'',  Russia}
\email{vologod@gmail.com}

\begin{abstract}

Let  $X$ be  a smooth symplectic variety over a  field $k$ of characteristic $p>2$ equipped with a restricted structure,  which is a class $[\eta] \in H^0(X, \Omega^1_X/d\mathcal O_X)$ whose de Rham differential equals the symplectic form. In this paper we construct a functorial in $(X, [\eta])$ formal quantization of the category $\mathrm{QCoh}(X)$ of quasi-coherent sheaves on $X$. We also construct its natural extension to a
quasi-coherent sheaf of categories  $\mathrm{QCoh}_h$ on the product $X^{(1)} \times {\mbb S}$ of the Frobenius twist of $X$ and the projective line ${\mbb S}=\mathbb P^1$,  viewed as the one-point compactification of $\mathrm{Spec}\ \! k[h]$.  Its global sections over $X^{(1)} \times \{0\}$ is the category of quasi-coherent sheaves on $X$. 
If $X$ is affine,  $\mathrm{QCoh}_h$, restricted to $X^{(1)}\times \mathrm{Spf} \ \! k[[h]]$, is equivalent to the category of modules over the distinguished ``Frobenius-constant" quantization of $(X,[\eta])$ defined in \cite{bk}.

 \end{abstract}

\maketitle

\tableofcontents

% does not work unfortunately. Nothing appears

%\author{By Ekaterina Bogdanova, Dmitry Kubrak, Roman Travkin, and Vadim Vologodsky}
\section{Introduction}
Let $k$ be a field of characteristic $p>2$. In \cite{bk}, Bezrukavnikov and Kaledin initiated the study of quantizations of symplectic varieties over $k$. More precisely, they introduced new notion of a \textit{restricted} symplectic $k$-scheme: namely, they consider pairs $(X,[\eta])$, where $[\eta]$ is a global section of the sheaf $\coker(\mc O_X\xra{d} \Omega^1_X)$, such that $\omega:=d([\eta])\in H^0(X,\Omega^2_X)$ is a symplectic form\footnote{One can think of $[\eta]$ as an algebraic analogue of a ``contact form" on $X$.}. In the case $H^i(X,\mc O_X)=0$ for $i=1,2,3$, they construct the \red{a} distinguished ``Frobenius-constant" quantization of $(X,[\eta])$.  Roughly speaking, such a quantization is a sheaf $O_h$ of associative $k[[h]]$-algebras on $X$ equipped with two isomorphisms \begin{itemize}
	\item $\mathcal O_X\xrightarrow{\sim}O_{h}/h$,
	\item $\mathcal O_{X\twt}[[h]]\xrightarrow{\sim} Z(O_{h})$,
\end{itemize}
that are also compatible with the restricted structure given by $[\eta]$ in a certain way. Here $X\twt$ denotes the Frobenius-twist of $X$ and $Z(O_{h})$ is the center of $O_h$. The prototypical example of such a quantization is given by the following: namely, the $k[[h]]$-version of the Rees algebra $\mc D_{Y,h}$ of differential operators on $Y$ gives the distinguished Frobenius-constant quantization of $X=T^*Y$ with the restricted structure given by the canonical 1-form $
\eta$. An unfortunate feature of the construction is that it is not functorial: there is no natural way to make the distinguished quantization equivariant with respect to automorphisms of $(X,[\eta])$, and moreover the quantization exists only if we require the aforementioned cohomology vanishing for $X$. A solution proposed by Kontsevich (\cite{kont}) and studied  further in
(\cite{vdb}, \cite{y}) in characteristic $0$ context  is to replace the algebra $O_h$ by the corresponding category of modules $O_h\lmod$. We develop a parallel picture in characteristic $p$: namely, we show that the category $O_h\lmod$ is functorial in the pair $(X,[\eta])$ and glues to a canonical quantization of the category $\QCoh(X)$ of quasi-coherent sheaves on $X$ for any restricted symplectic $k$-scheme $(X,[\eta])$.

%his idea is that in the algebraic context one in fact should quantize the category $\QCoh(X)$ of quasi-coherent sheaves on $X$ rather than the algebra $\mc O_X$.
%A quantization $O_h$ yields a quantization $O_h\lmod$ of $\QCoh(X)$ with a distinguished object $O_h$ deforming $\mc O_X$; conversely, a quantization of $\QCoh(X)$ together with a deformation of $\mc O_X$ gives a quantization of $\mc O_X$.

%\red{do smth }Having a morphism\footnote{Or in other words a $k[[h]]$-algebra $R$ such that $h$ acts locally nilpotently on $R$, and equipped with a map $\Spec R \rightarrow X'$.} $\Spec R \rightarrow X' \times \Spf k[[h]]$, we can consider the pull-back $A_R:=O_h\otimes_{\mathcal O_{X'}[[h]]} R$, which is an associative $R$-algebra; in particular, the category $A_R\lmod$ of left $A_R$-modules is an $R$-linear category. The association $$R\mapsto A_R\lmod$$ then natural defines a quasi-coherent sheaf of categories $O_h\lmod$ over the product $X' \times_k \Spf k[[h]]$ (in the sense of \cite[\S 9]{g1}, see also Section \ref{twisting.stacks}). 

Moreover, our construction allows to extend the range of the quantum parameter $h$ from being a formal variable to a genuine coordinate on $\bP^1$, where the corresponding category $\QCoh_h$ carries a formal resemblance with the twistor space construction appearing in Simpson's correspondence (see Remark \ref{rem:Simpson} below).

%Let $\mbb S:= \bP^1_k$ be the projective line\footnote{We think of $\mbb S\coloneqq \mbb P^1_k$ as ``a 2-dimensional sphere" of a sort, hence the notation.}, and let $\widehat{\mbb S}:=\Spf k[[h]] \hookrightarrow \bP^1$ be the formal neighborhood of 0. Given a smooth restricted symplectic scheme $(X,[\eta])$ over $k$ we construct a sheaf of categories $\QCoh_h$ over the product $X'\times \mbb S$, whose restriction to the formal neighborhood $X'\times \widehat {\mbb S}\hookrightarrow X'\times \mbb S$ agrees with the above category $O_h\lmod$ when  $H^2(X, \cO_X)=0$. Our construction has several main features. First is that it is completely functorial. Third is that it works for general $(X,[\eta])$, and the restriction $\QCoh_h|_{X'\times \widehat{\mbb S}}$ still gives a canonical formal quantization, but of the \textit{category} of quasi-coherent sheaves $\QCoh(X)$. This might indicate that the quantization of $\QCoh(X)$ and not $X$ itself is in fact a more natural thing to consider. Moreover, 

\subsection{Plan of the paper} The idea of ``formal geometry" in the sense of Gelfand-Kazhdan is to first deal with differential-geometric questions in the basic case of a formal disc, and then descend to smooth varieties via the corresponding torsor of formal coordinates. As observed by Bezrukavnikov and Kaledin in \cite{bk} in characteristic $p$ one can replace formal disc by the corresponding Frobenius neighborhood of zero, which makes things simpler. To be more precise, the key idea is to consider a smooth $S$-scheme $X$ of dimension $d$ as a scheme over its Frobenius twist $X\twt$ via the relative Frobenius $F_{X/S}\colon X\ra X\twt$. This is not only a finite locally free map, but in fact a locally trivial bundle in flat topology: namely,  Zariski-locally on $X$, the fiber product 
$X\times_{X\twt}X$ splits as $X\times \Spec A_0$ where $\Spec A_0$ is the Frobenius-neighborhood of 0 in $d$-dimensional affine space\footnote{Explicitly, $A_0\simeq \mbb F_p[x_1,\ldots,x_d]/x_1^p=\ldots=x_d^p=0$.}. One then can consider the canonical torsor $\mc M_X\ra X\twt$ of ``Frobenius frames": a $T$-point $T
\ra \mc M_X$ is given by an isomorphism $T\times_{X\twt}X \simeq T\times \Spec A_0$, or, in other words, an identification of (pull-back of) $X$ with the Frobenius neiborhood of zero in an affine space of dimension $d$ over $T$. The scheme $\mc M_X\ra X\twt$ is a torsor over the group scheme $\ul{\Aut}(A_0)$ of automorphisms of $\Spec A_0$ and many basic objects of differential-geometric nature such as\footnote{Strictly speaking one needs to consider their pushforwards to $X\twt$ under relative Frobenius.} differential operators $\mc D_X$, differential 1-forms $\Omega^1_X$, vector fields $\mc T_X$, or just the structure sheaf $\mc O_X$ all come via descent from some representations of $\ul{\Aut}(A_0)$. 
This also gives a way to define differentiable structures on smooth varieties in characteristic $p$ by considering the subgroup $H$ of $\ul{\Aut}(A_0)$ that stabilizes that ``structure" in the case of $X=\Spec A_0$. Prescribing such a structure on a given scheme $X$ then corresponds to a reduction of the $\ul{\Aut}(A_0)$-torsor $\mc M_X\ra X\twt$ to $H$. We will discuss in some detail how this picture looks in the case of restricted symplectic structure (see Section \ref{ssec: group scheme G_0}).

The above picture motivates considering things like differential operators or symplectic structures in the case of schemes like $\Spec A_0$, or $X$ considered as an $X\twt$-scheme. These schemes are not smooth, but nevertheless they are pseudo-smooth: namely the sheaf $\Omega^1_{X/S}$ of relative K\"ahler differentials is locally free. In Section \ref{sec:pseudo-smooth algebras and differential geometry} we develop\footnote{In fact, in \cite{bk} Bezrukavnikov and Kaledin freely assume the relevant parts of the theory in the pseudo-smooth (or, in their terminology,  quasi-regular) setup, mostly without a proper justification which sometimes leads them to small inaccuracies or even false claims. So we tried to carefully setup the theory, mostly from scratch, only sometimes referencing their results.} some basic theory of pseudo-smooth schemes. In fact pseudo-smoothness turns out to be enough to extend most of the standard features of differential geometry of smooth schemes in characteristic $p$: e.g. the algebra of differential operators is still an Azumaya algebra over the Frobenius twist of the total space of cotangent bundle (Corollary \ref{cor:D_X is an Azumaya algebarain pseudo-smooth setting}); there are also versions of Cartier isomorphism (Proposition \ref{prop: Cartier isom}) and Milne's exact sequence (Proposition \ref{prop:Milne's exact sequence}).  The only difference with the smooth setting is that the notion of Frobenius twist should be slightly modified: namely, instead of Frobenius twist $X\twt\coloneqq X\times_{S,F_S}S$ one should consider the schematic image $X^\lp\hookrightarrow X\twt$ of the relative Frobenius $F_{X/S}\colon X\ra X\twt$ (see Section \ref{ssec:pseudo-smooth schemes} for more details). The corresponding map $F_{X/S}^\lp\colon X \ra X^\lp$ induced by $F_{X/S}$ turns out to be finite locally free, and is again a locally trivial bundle with fiber given by the Frobenius neighborhood of 0 in the affine space (Lemma \ref{lem:fiber product of reduced Frobenius with itself}), however its dimension is now given by $\rk \Omega^1_{X/S}$ and not the relative dimension of $X$ over $S$.

The key geometric input which allows to get a good control on pseudo-smooth schemes is their local structure (Proposition \ref{prop:local description of pseudo-smooth}) namely, Zariski locally any pseudo-smooth $S$-scheme $X$ is given by the Frobenius-neighborhood of a closed subscheme in a smooth scheme. This observation allows us to extend most of the results from the smooth to pseudo-smooth setting almost for free.

Moreover, in Section \ref{ssec:restricted Poisson structures} we recall the definition of restricted Poisson structure by introducing a restricted Poisson algebra monad. This section is an attempt to explain and motivate the definition of restricted Poisson structure given in \cite{bk}. In Section \ref{ssec:some symplectic} we then discuss restricted symplectic schemes (see Definition \ref{def:restricted symplectic scheme}), where a restricted structure exteding a symplectic form $\omega$ has an equivalent description in terms of a class $[\eta]\in \coker(\mc O_X\ra \Omega^1_X)$ such that $d[\eta]=\omega$ (see Remark \ref{rem:restricted structure as 1-forms}). Finally, in Section \ref{ssec: group scheme G_0} we endow $\Spec A_0$ with a restricted symplectic structure and define the group $G_0\subset \ul{\Aut}(A_0)$ of its restricted Poisson automorphisms, that plays a very important role in most of the constructions below.

\textbf{Construction of $\QCoh_h$.} Our goal in this paper will be to construct a certain sheaf of categories $\QCoh_h$ on $X^\lp\times \mbb P^1$, given a restricted symplectic scheme $(X,[\eta])$. We want this construction to be functorial in $(X,[\eta])$ and agree\footnote{Meaning that the restriction of $\QCoh_h$ to the formal neighborhood of 0 in $\mbb P^1$ should be given by $O_h\lmod$.} with the distinguished quantization of Bezrukavnikov and Kaledin in the cases when the latter exists. We will discuss other expected properties of $\QCoh_h$ slightly later, in Section \ref{intro_properties of QCoh} below.

For the reader's convenience let us note right away that in Section \ref{twisting.stacks} we remind the notion of a quasi-coherent sheaf of (abelian) categories on a scheme or, more generally, an algebraic stack. We then also discuss actions of group schemes on the sheaves of categories in Section \ref{Groupsactingonacategory}.

Let $\mbb S\coloneqq \mbb P^1$, the idea behind the notation being that $\mbb P^1$ resembles a 2-dimensional sphere. In the case $X=T^*Y$ with the canonical 1-form $\eta$ one could expect $\QCoh_h$ to come as the category of modules over the sheaf of twistor differential operators $\mc D_{Y,\mbb S}$ . The latter comes as an instance of a $\mbb P^1$-version of the Rees construction (see Section \ref{ssec:Rees construction}): namely, $\mc D_{Y,\mbb S}$ corresponds to differential operators $\mc D_{Y}$ endowed with the pair of Hodge and conjugate filtrations\footnote{That are dual to Hodge and conjugate filtrations on the de Rham complex in a certain sense.}, see Section \ref{ssec:twistor differential operators} for more details. Below, we will define $\QCoh_h$ by certain universal property, and then check that it is indeed given by $\mc D_{Y,\mbb S}\lmod$ in the case of $T^*Y$.

The starting point of our construction is the following. \red{Namely}, to a restricted symplectic scheme $(X,[\eta])$ one canonically associates a $G_0$-torsor $\mc M_{X,[\eta]}\ra X^{\lp}$ of its ``Darboux frames": a $T$-point $T
\ra \mc M_X$ is given by a restricted Poisson isomorphism $$T\times_{X^\lp}X \simeq T\times \Spec A_0$$ (see Construction \ref{constr:torsor of Darboux frames}). This way a pair $(X,[\eta])$ produces a natural map 
$$
\pi\colon X^\lp\ra BG_0.
$$ A natural way to define $\QCoh_h$ would be to take the pull-back of a certain universal sheaf of categories over $BG_0\times \mbb S\simeq [\mbb S/G_0]$ via the map $\pi \times \Id\colon  X^\lp\times \mbb S\rightarrow   BG_0\times \mbb S$. This is exactly what we do.

We construct the universal sheaf of categories on $[\mbb S/G_0]$ via descent. Namely, we first produce a certain sheaf of algebras $\mc A_{\mbb S}$ over $\mbb S$ (see Section \ref{ssec:twistor reduced Weyl algebra}) and then construct an action of $G_0$ on the corresponding sheaf of categories  over $\mbb S$ given by $\mc A_{\mbb S}\lmod$  (Section \ref{canonicalquant.theaction}).  If $(X,[\eta])$ is $\Spec A_0$ equipped with the natural restricted symplectic structure then $\QCoh_h$ is equivalent $\mc A_{\mbb S}\lmod$.
We will call $\mc A_{\mbb S}$ the ``twistor reduced Weyl algebra".

Let us very briefly explain the construction of $\mc A_{\mbb S}$. Namely, in the even-dimensional case one can identify $\Spec A_0$ with a subvariety in cotangent bundle of a similar Frobenius neighborhood $\Spec C$, but of half the dimension. More precisely, $\Spec A_0$ is just a Frobenius-neighborhood of zero section in $T^*\Spec C$. The sheaf of algebras $\mc A_{\mbb S}$ is defined as the corresponding central reduction of twistor differential operators $\mc D_{\Spec C,\mbb S}$, see Section \ref{ssec:twistor reduced Weyl algebra} and Definition \ref{def:A_S} in particular. $\mc A_{\mbb S}$ is a locally free sheaf of algebras on $\mbb S$ with the fiber at $\{0\}\in \mbb S$ is given by $A_0$, and with restriction to $\mbb S\backslash \{0\}$ being a split Azumaya algebra: this way one can view  $\mc A_{\mbb S}$ as a certain order in a matrix algebra over $\mbb S$.

In Section \ref{canonicalquant.theaction} we construct the desired $G_0$-action on $\mc A_{\mbb S}\lmod$. The key observation is that imposing certain properties on the action automatically makes it canonical. More precisely, in Theorem \ref{appendix:maintheorem} we show that there is a unique $G_0$-action on $\mc A_{\mbb S}\lmod$ that restricts to the natural action of $G_0$ on $A_0\lmod$ over $\{0\}\in \mbb S$ and is trivial over $\{\infty\}\in \mbb S$. The proof of uniqueness crucially uses the description of automorphisms of $\mc A_{\mbb S}$ whose restriction to $\{0\}\in \mbb S$ is trivial, which we establish in Section \ref{ssec: automorphisms of A_S}. The construction of the existence is quite elaborate and uses an auxiliary sheaf of algebras $\mc A_{\mbb S}^\flat$ with a natural  $G_0$-action (see Section \ref{ssec:A_Sflat}), as well as a certain $G_0$-action on $\mc A_{\mbb S}\otimes_{\mc O_{\mbb S}}\mc A_{\mbb S}^{\flat,\op}\lmod$ constructed in \cite{bv} in the formal neighborhood of 0. In Section \ref{subs.addingmultiplicativegroup} we also show that the $G_0$-action on  $\mc A_{\mbb S}\lmod$ extends to a $\bG_m \ltimes G_0$-action equivariant structure where $\mbb G_m$ now acts non-trivially on $\mbb S$. This allows to descend $\mc A_{\mbb S}\lmod$ further to a sheaf of categories over $[\mbb S/(\bG_m \ltimes G_0)]$.

In Section \ref{canonicalquant.ofcat} we discuss the proprties of the resulting sheaf of categories $\QCoh_h$. In \ref{ssec:construction of the canonical quantization} we translate the defining properties of our $G_0$-action on $\mc A_{\mbb S}\lmod$ into defining properties for $\QCoh_h$. In Section \ref{ssec:G_m-equivariant quantizations} we give a slightly finer construction in the presence of a $\mbb G_m$-action on $X$ that rescales the contact form $[\eta]$ with a weight coprime to $p$. In this case one can descend $\QCoh_h$ further to the  quotient stack $[(X^\lp\times \mbb S)/\mbb G_m]$ under the diagonal action, where $\mbb G_m$ acts on $\mbb S$ by rescaling with the same weight as above. 
%Then, in Section \ref{app.cotangentbundle} we describe $\QCoh_h$ for $X=T^*Y$ in terms of the sheaf of twistor differential operators $\mc D_{Y,\mbb S}$.
 In Section \ref{quantizationofalgebrasandcategories} we discuss the relation of $\QCoh_h$ with the Frobenius-constant quantization $O_h$ constructed by Bezrukavnikov and Kaledin. Finally, in Section \ref{s.lag} we also construct the canonical quantization of restricted Lagrangian subvarieties extending the work of Mundinger \cite{Mu}.

\subsection{Properties of $\QCoh_h$.}\label{intro_properties of QCoh} For simplicity, below we let $X$ be a smooth scheme over a field of characteristic $p>2$. In this case $X^\lp\simeq X\twt$, and $\QCoh_h$ is a sheaf of categories over $X\twt\times \mbb S$; thus for each scheme $T\rightarrow X\twt\times \mbb S$ over $X\twt\times \mbb S$ we obtain a category $\QCoh_h(T)$. Let us summarize the nice properties of $\QCoh_h$ that we obtain in Section \ref{canonicalquant.ofcat}. 
\begin{enumerate}

	\item If we consider the $G_0$-torsor $\pi\colon  \cM_{X,[\eta]}\rightarrow X\twt$ of Darboux frames corresponding to $[\eta]$ then the pull-back $(\pi\times \id)^*\QCoh_{h}$ is equivalent to $(p_2^*\cA_{\mbb S})\lmod$, where $p_2\colon  \cM\times \mbb S\rightarrow \mbb S$ is the projection. 
	
	\item The global sections $\QCoh_{h}(X'\times \{0\})$ on the fiber over $\{0\}\in \mbb S$ are given by $\QCoh(X).$
	\item In the case $H^i(X, \cO_X)=0$ for $i=1,2,3$, we have an equivalence $\QCoh_{h}|_{X\twt\times \widehat{\mbb S}}\simeq O_h\lmod$ where $O_h$ is the distinguished Frobenius-constant quantization constructed in \cite{bk}.
	
		\item $\QCoh_{h}$ is functorial in $(X,[\eta])$. In particular, for any map $T\ra \mbb S$ there is a natural $\mc O(T)$-linear action of restricted Poisson automorphisms $\mathrm{Aut}((X,[\eta]))$ on the global sections $\QCoh_{h}(X\twt\times T)$.
	\item  The restriction of $\QCoh_{h}$ to $X^\lp\times ({\mbb S}\backslash \{0\})$ is equivalent to $\cD_{X,\frac{[\eta]}{h}}\lmod$, where $\cD_{X,\frac{[\eta]}{h}}$ is the central reduction of $\mc D_{X,\mbb S\backslash \{0\}}$ correponding to $[\eta]$ (see Construction \ref{constr:central reductions over S}).
	\item
	The global sections $\QCoh_{h}(X'\times \{\infty\})$ on the fiber over $\{\infty\}\in \mbb S$ are given by $\QCoh(X').$
	\item For a smooth Lagrangian subvariety $Y\subset X$ such that ${[\eta]|}_Y=0$ there exists a canonical flat object  $\cV_{Y^{(1)} \times \mbb S}\in \QCoh_h(Y\twt \times \mbb S)$ whose fiber over $Y\twt \times \{0\}$ is given by $(\Omega_Y^{\dim Y})^{\tfrac{1-p}{2}}$ (considered as a quasi-coherent sheaf on $X$).
	\item If $X$ is equipped with a $\mbb G_m$-action which rescales $[\eta]$ with a weight 1, then  $\QCoh_h$ canonically descends further to the quotient stack $(X\twt\times \mbb S)/\mbb G_m$ where  $\mbb G_m$ acts diagonally. 
\end{enumerate}

\begin{rem}\label{rem:Simpson}
	In the case $X=T^*Y$ and $\eta$ is the canonical 1-form on $T^*Y$, the global sections $\QCoh_h(X\twt\times \mbb S)$ can be alternatively described as the category of modules over the sheaf of twisted differential operators $\mc D_{X,\mbb S}$. 
	
	 Also, from the above picture one gets the following properties \begin{itemize}
		\item $\QCoh_{h}(X\twt\times \{0\})$ is given by the category of Higgs fields on $Y$;
		\item   $\QCoh_{h}(X\twt\times \{1\})$ can be identified\footnote{Using the Morita equivalence $\cD_{X,[\eta]}\sim \cD_Y$  (\cite{bb}).} with the category $\cD_Y\lmod$ of $D$-modules on $Y$;
		\item  $\QCoh_{h}(X\twt\times \{\infty\})$ is the category of Higgs fields on the Frobenius twist $Y\twt$.
	\end{itemize}This resembles the construction of the twistor space due to Deligne and Simpson: namely, given a smooth projective variety $Y$ over $\mbb C$ they construct a complex-analytic space $W\rightarrow \bP^1$ whose fiber over $\lambda \in \bP^1(\bC)$ is the ``moduli space" of $\lambda$-connections on $Y$. In particular, fibers $W_0$, $W_1$ and $W_\infty$ over $0$, $1$ and $\infty\in \bP^1(\bC)$ are given by\begin{itemize} \item  holomorphic Higgs fields on $Y$;
		\item  bundles with holomorphic flat connections on $Y$;
		\item  holomorphic Higgs fields on the complex conjugate $\overline Y$
	\end{itemize} correspondingly. This way the category $\QCoh_{h}$ can be considered as a categorified characteristic $p$ analogue of their construction.
	This motivates calling $\QCoh_{X,[\eta],h}$ the \textit{twistor category} associated to $(X,[\eta])$.
\end{rem}

\subsection{Acknowledgments} The authors are  grateful  to Dmitry Kaledin for his constant attention to this work and to Alexander Efimov for fruitful discussions of the categorical aspects of the paper. We would also like to thank Peter Scholze for bringing up the question about an additional $\mbb G_m$-equivariant structure on $\QCoh_h$. The third author wishes to thank 
Boris Feigin for introducing to him the ideas of Gelfand-Kazhdan ``formal geometry" back in the year of $2005$. 
  
The work of the last author was supported in part by RNF grant  N\textsuperscript{\underline{o}}~$21-11-00153$.

\section{Differential geometry on pseudo-smooth schemes}\label{sec:pseudo-smooth algebras and differential geometry}

\subsection{Pseudo-smooth schemes}\label{ssec:pseudo-smooth schemes}
Let $B$ be an $\mbb F_p$-algebra. Following \cite{bk} we consider the following notion.

\begin{df}\label{def:pseudo-smooth algebras}
	A finite-type $B$-algebra $A$ is called \textit{pseudo-smooth} if $\Omega^1_{A/B}$ is a locally free $A$-module. A qcqs morphism $X\ra S$ locally of finite type is called \textit{pseudo-smooth} if $\Omega^1_{X/S}$ is a locally free sheaf of finite type.
\end{df}

\begin{rem}
	In \cite{bk}, Bezrukavnikov and Kaledin call\footnote{Strictly speaking they also assume that $A/B$ is flat and $B$ is Noetherian.} such algebras quasi-regular. However, since terms quasi-regular and quasi-smooth are quite broadly used with a different meaning, we decided to change it to pseudo-smooth.
\end{rem}

\begin{example}\label{ex:pseudo-smooth schemes}\begin{enumerate}
		\item Smooth schemes are pseudo-smooth.
		\item Let $A\coloneqq B[x]/x^p$. Then $\Omega^1_{A/B}=A\cdot dx$ and $A$ is pseudo-smooth.
		\item Let $Z\hookrightarrow S$ be a closed subscheme. Then $\Omega^1_{Z/S}=0$ and $Z$ is pseudo-smooth over $S$.
		
	\end{enumerate}
\end{example}
\begin{rem}
	One might count Example \ref{ex:pseudo-smooth schemes}(3) as a pathological one and impose flatness condition in Definition \ref{def:pseudo-smooth algebras}. This is relatively harmless for our goals: namely, pseudo-smooth schemes that will appear in main applications (e.g. quantizations of smooth schemes) will all be flat. 
\end{rem}

\begin{rem}
	Pseudo-smoothness is preserved under base change: namely, having a pseudo smooth $S$-scheme $X\ra S$ and a morphism $S'\ra S$ the base change $X_{S'}=X\times_S S'$ is pseudo smooth over $S'$. Indeed one has $\Omega^1_{X/S'}=f^*\Omega^1_{X/S}$ where $f\colon X_{S'}\ra X$ is the natural map.
\end{rem}

\begin{constr}[Reduced Frobenius: algebras]
	
	Let $A$ be a $B$-algebra. The absolute Frobenius map $F_A\colon A\ra A$ factors naturally through the Frobenius twist $A\twt\coloneqq A\otimes_{B,F_B}B$:
	
	$$
	\xymatrix{A\ar[rr]^{F_A}\ar@{-->}[rd]_{W_{A/B}}& &  A\\
		& A\twt\ar@{-->}[ru]_{F_{A/B}}&}
	$$
	with $W_{A/B}\colon a\mapsto a\otimes 1\in A\twt$ and $F_{A/B}\colon a\otimes b \mapsto a^pb\in A$. The map $F_{A/B}$ is usually called the relative Frobenius; by construction it is $B$-linear (while $F_A$ and $W_{A/B}$ are $F_B$-linear). However, if $B$ or $A$ are not reduced one can factor $F_{A/B}$ even further. Namely, let $A^{\lp}\subset A$ be the image of $F_{A/B}$: this is the $B$-subalgebra in $A$ generated by $p$-th powers $A^p$, in other words $A^{\lp}\coloneqq B\cdot A^p\subset A$. By definition, one has a natural surjection  $F_A^{{int}}\colon A\twt \twoheadrightarrow A^\lp$. Moreover, we get a further factorization 
	$$
	\xymatrix{A\ar[rrr]^{F_A}\ar[rd]_{W_{A/B}}&&& A\\
		&A\twt \ar[rru]^{F_{A/B}}\ar@{-->}[r]_{F_A^{int}}&A^\lp\ar@{-->}[ru]_{F_{A}^\lp}&}
	$$
	We have $F_{A/B}=F_A^\lp\circ F_{A}^{int}$ and we can put $W_{A}^\lp\coloneqq F_A^{int} \circ W_{A/S}$. We will call $F_A^\lp$ and $F_A^{int}$ the \textit{reduced} and \textit{intermediate} Frobenii of $A$ correspondingly.
	
\end{constr}

In the case when both $A$ and $B$ are reduced, $F_A^{int}$ is an isomorphism and so $F_{A/B}=F_A^\lp$. However, we will be primarily interested in the non-reduced setting, where $A^{\lp}$ will turn out to be more relevant than $A\twt$. 

\begin{rem}
	Let us try to motivate consideration of the reduced Frobenius as compared to the relative one. The main reason is that when $A$ is pseudo-smooth, $F_A^\lp$ still enjoys some properties that the relative Frobenius $F_{A/B}$ has if we assume that $A$ is actually smooth. For example, we will see further that under the pseudo-smoothness assumption (see Remark \ref{rem:degree of reduced Frobenius}) $F_A^\lp$ is a finite faithfully flat map, which is no longer true for the relative Frobenius $F_{A/B}$ in this generality. This is illustrated well by the example of $A\coloneqq B[x]/x^p$, where one has $A\twt\simeq B[x]/x^p$ and where the relative Frobenius $F_{A/B}\colon B[x]/x^p \ra B[x]/x^p$ is the unique $B$-algebra map that sends $x$ to 0. This map is clearly not flat. On the other hand, $A^{\lp}\subset A$ is identified with $B\subset B[x]/x^p$ and the reduced Frobenius $F_A^\lp\colon A^{\lp}\ra A$ is given by the above embedding which is indeed finite and faithfully flat.
\end{rem}

\begin{rem}
	In \cite{bk}, for a pseudo-smooth $A$ it is claimed that $A^\lp$ can be identified with the tensor product $A^p\otimes_{B^p}B$ via the natural map $a^p\otimes b \mapsto a^pb$. Unfortunately, this is not exactly true. Put $B\coloneqq \mbb F_p[x]$ and $A=B[t]/(t^p-x)$. One checks easily that $\Omega^1_{A/B}\simeq A\cdot dt$, and so $A$ is pseudo-smooth. On the other hand $B^p\simeq \mbb F_p[x^p]$, $A^\lp\simeq A^p\simeq B$, and so the natural map $A^p\otimes_{B^p}B\ra A^\lp$ is identified with multiplication map $\mbb F_p[x]\otimes_{\mbb F_p[x^p]} \mbb F_p[x]\ra \mbb F_p[x]$ which is not an isomorphism.
\end{rem}

\begin{rem}
	Let $X=\Spec A$. By \cite[Tag0BR8]{stacks}, $F_X^{\lp}\colon X\ra X^\lp\coloneqq \Spec A^\lp$ is a universal homeomorphism. In particular, it induces an isomorphism of the underlying topological spaces.  In fact all three schemes $X\ra X^\lp \hookrightarrow X\twt\coloneqq \Spec A\twt$ have the same underlying topological space and the difference between them is only seen on the level of sheaves of functions. 
\end{rem}
\begin{constr}[Reduced Frobenius: schemes]\label{constr:reduced Frobenius}
	Given a base scheme $S$ and an $S$-scheme $X$, the construction $A\ra A^\lp$ globalizes, producing an $S$-scheme $X^{\lp}$. This is a scheme which has the same underlying topological space as the original $X$ and its Frobenius twist $X\twt\coloneqq X\times_{S,F_S}S$, but with the sheaf of functions given by $\mc O_X^\lp\coloneqq \mc O_X^p\cdot \mc O_S\subset \mc O_X$. One has maps $\mc O_X^\lp\hookrightarrow \mc O_X$ and $\mc O_{X\twt}\twoheadrightarrow \mc O_X^\lp$ (given locally by $F_A^\lp$ and $F_{A}^{int}$) which induce a map $F_{X}^\lp\colon X\ra X^\lp$ and a closed embedding $F_X^{int}\colon X^\lp\hookrightarrow X\twt$. This way one can also view $X^{\lp}$ as the schematic image of the relative Frobenius $F_{X/S}\colon X \ra X\twt$ inside the Frobenius twist $X\twt\coloneqq X\times_{S,F_S}S$. We will call $F_{X}^\lp$ and $F_X^{int}$ the \textit{reduced} and \textit{intermediate} Frobenii of $X$; both are morphisms of $S$-schemes. We can also define a \textit{reduced twist} map $W_X^\lp\colon X^\lp\ra X$ (by globalizing $W_A^\lp$); this map is not a morphism of $S$-schemes, instead it is $F_S$-linear. 
	One has relations $F_X^{int}\circ F_X^\lp=F_{X/S}$ and $W_{X/S}\circ F_X^{int}=W_X^\lp$.
\end{constr}

\begin{example}\label{ex:reduced vs relative Frobenius}Assume $S=\Spec B$.
	\begin{enumerate}
		\item Put $P\coloneqq B[x_1,\ldots,x_d]$ with $\Spec P\simeq \mbb A^d_S$. Then $P^\lp\coloneqq P^p\cdot B\simeq B[x_1^p,\ldots,x_d^p]\subset P$. On the other hand the Frobenius twist $P\twt\coloneqq P\otimes_{B,F_B}B\simeq B[y_1,\ldots,y_d]$ (with $y_i\coloneqq x_i\otimes 1$), and the relative Frobenius $F_{P/B}\colon P\twt\ra P$ is the unique $B$-algebra map that sends $y_i$ to $x_i^p$. We see that in this case the intermediate Frobenius $F_{\mbb A^d_S}^{int}\colon (\mbb A^d_S)^\lp \ra (\mbb A^d_S)\twt$ is an isomorphism. 
		\item Consider the unique surjection $P\twoheadrightarrow B$ of $B$-algebras sending every $x_i$ to 0. It corresponds to the embedding $\{0\}_S\hookrightarrow \mbb A^d_S$ of 0. Put
		$$
		Q\coloneqq B\otimes_{P, F^\lp_{P}}P\simeq P/(x_1^p,\ldots,x_d^p).
		$$
		$X\coloneqq\Spec Q\hookrightarrow \mbb A^d_S$ is the Frobenius neighborhood of 0: namely, $X\simeq \{0\}_S\times_{\mbb A^d_S, F^{\lp}_{\mbb A^d}} \mbb A^d_S$. We have $Q^\lp\coloneqq Q^p\cdot B\simeq B\subset Q$. However, $Q\twt \coloneqq Q\otimes_{B,F_B}B\simeq B[y_1,\ldots,y_d]/(y_1^p,\ldots,y_d^p)$, and the intermidiate Frobenius $F_{Q}^{int}\colon Q\twt\ra Q^\lp\simeq B$ sends $y_i$ to $x_i^p=0$.
	\end{enumerate}
	
\end{example}

\begin{rem}
	Since $F_X^{\lp}\colon X\ra X^\lp$ is a universal homeomorphism, say by \cite[Tag 0BTY]{stacks} it induces an equivalence of small \'etale sites $(X^\lp)_\et\simeq X_\et$, by $$(U\ra X^\lp)\in (X^\lp)_\et\ \ \mapsto \ \ (X\times_{X^\lp}U\ra X)\in X_\et.$$ 
\end{rem}
\subsection{Restricted Lie algebras and Jacobson's formula}\label{sec:Jacobson's formula}
Let $A$ be an associative {ring} such that $p\cdot A=0$. For $a\in A$ denote by $\ad(a)\colon A\ra A$ a map that sends $b\mapsto [a,b]=ab-ba$. 

If $A$ is commutative, given two elements $a,b\in A$ we have $(a+b)^p=a^p+b^p$. This formula generalizes to the general non-commutative case as follows
\begin{lm}[Jacobson, {\cite[Chapter II, \S 7, Proposition 3.2]{gd}}]\label{lem:Jacobson's formula} Let $a,b\in A$. Then 
	$$(a+b)^p=a^p+b^p +\sum_{i=1}^{p-1} L_i(a,b)$$
	where $L_i(a,b)$ is a Lie polynomial described as {$i^{-1}$ times} the coefficient of $t^{i-1}$ in the expression $\ad(a+tb)^{p-1}(b)\in A[t]$.  
\end{lm}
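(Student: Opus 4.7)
The plan is to work with the auxiliary polynomial $P(t) \coloneqq (a+tb)^p \in A[t]$, where $t$ is a central indeterminate, and extract the required identity by comparing coefficients with those of its formal derivative $P'(t)$.

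First, I would observe that $P(t)$ has degree at most $p$, with $P(0) = a^p$, coefficient of $t^p$ equal to $b^p$, and $P(1) = (a+b)^p$. Denoting by $c_i \in A$ the coefficient of $t^i$ in $P(t)$, the claimed formula is equivalent to verifying $c_i = L_i(a,b)$ for $1 \le i \le p-1$ and then summing.

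The key step is to compute $P'(t)$ in two different ways. On one hand, the non-commutative Leibniz rule gives
$$P'(t) = \sum_{k=0}^{p-1} (a+tb)^k \, b \, (a+tb)^{p-1-k}.$$
On the other hand, the standard expansion of iterated commutators yields $\ad(x)^{n}(y) = \sum_{k=0}^{n} (-1)^k \binom{n}{k} x^{n-k} y x^k$ for any $x,y \in A$ and $n \ge 0$; specializing to $n=p-1$ and invoking the congruence $\binom{p-1}{k} \equiv (-1)^k \pmod{p}$, all signs collapse and we obtain the identity
$$\ad(x)^{p-1}(y) = \sum_{k=0}^{p-1} x^{p-1-k}\, y \, x^k$$
valid in any associative ring of characteristic $p$. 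Substituting $x = a+tb$ and $y = b$ identifies the right-hand side with $P'(t)$, so $P'(t) = \ad(a+tb)^{p-1}(b)$ in $A[t]$.

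To conclude, the coefficient of $t^{i-1}$ in $P'(t)$ equals $i\, c_i$, so for $1 \le i \le p-1$ we have $c_i = i^{-1}\,[t^{i-1}]\, \ad(a+tb)^{p-1}(b) = L_i(a,b)$ by the very definition of $L_i$. Evaluating the expansion $P(t) = a^p + \sum_{i=1}^{p-1} L_i(a,b)\, t^i + b^p t^p$ at $t=1$ then yields Jacobson's formula. The only mildly non-trivial ingredient is the $\binom{p-1}{k} \equiv (-1)^k \pmod p$ congruence that turns the signed commutator expansion into the unsigned sum appearing in the Leibniz rule; once this is in place, the rest is a formal manipulation of polynomial coefficients.
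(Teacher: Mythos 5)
Your argument is correct and complete: the identity $P'(t)=\sum_{k}(a+tb)^k b(a+tb)^{p-1-k}$, the commuting-operator expansion $\ad(x)^{n}=(\ell(x)-r(x))^n$, and the congruence $\binom{p-1}{k}\equiv(-1)^k \pmod p$ fit together exactly as you say, and the invertibility of $i$ for $1\le i\le p-1$ in the $p$-torsion group $A$ makes the coefficient extraction legitimate. The paper itself gives no proof — it only cites Demazure–Gabriel — and your derivation is precisely the standard one found there, so there is nothing to compare beyond noting that you have supplied the omitted argument correctly.
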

\begin{rem}\label{rem:L_1}
	In particular, $L_1(a,b)=\ad(a)^{p-1}(b)$.
\end{rem}

\begin{rem}\label{rem:ad^p}
	For an element $a\in A$ denote by $\ad(a)\colon A\ra A$ the commutator map $x\mapsto [a,x]$. Note that $\ad(a)^p=\ad(a^p)$. Indeed, $\ad(a)=\ell(a) - r(a)$ where $\ell(a),r(a)\colon A\ra A$ are left and right multiplications by $a$ correspondingly.  Since $\ell(a)$ and $r(a)$ commute we have 
	$$
	\ad(a)^p= (\ell(a) - r(a))^p=\ell(a)^p-r(a)^p=\ell(a^p)-r(a^p)=\ad(a^p).
	$$ 
\end{rem}
We now remind the definition of a restricted Lie algebra.
\begin{df}\label{def:restricted Lie algebra}
	\textit{A restricted Lie algebra} over a ring $B$ is given by a $B$-module $L$ with a $B$-linear Lie bracket $[-,-]$ and a restricted $p$-power operation $-^{[p]}$ that are compatible in the following way: 
	\begin{enumerate}
		\item $(bx)^{[p]}=b^px^{[p]}$ for any $b\in B$ and $x\in L$;
		\item $[x^{[p]},y]=\ad(x)^p(y)$ for any $x,y\in L$;
		\item $(x+y)^{[p]}=x^{[p]}+y^{[p]}+ \sum_{i=1}^{p-1} L_i(x,y)$, where $L_i(x,y)$ are the Lie polynomials from Lemma \ref{lem:Jacobson's formula}.
	\end{enumerate} 
\end{df}

The definition of restricted Lie algebra is mainly motivated by the following example:
\begin{example}\label{ex:ass algebra as a restricted Lie algebra}
	Let $A$ be an associative $B$-algebra. Then by Lemma \ref{lem:Jacobson's formula} and  Remark \ref{rem:ad^p}, $A$ endowed with the Lie bracket given by commutator $[-,-]$ and the restricted structure $a^{[p]}\coloneqq a^p$ is a restricted Lie algebra. 
\end{example}

Another natural source of examples of restricted Lie algebras is differential geometry in char $p$ (see Remark \ref{rem:vector fields as restricted Lie algebra} below). In particular, Lie algebra of any group scheme $G$ is endowed with a natural restricted Lie algebra structure. 
\subsection{Crystalline differential operators}\label{ssec:definition of diff op}

We essentially follow \cite[Section 2]{bmr}, except we work in a relative and pseudo-smooth setting.

Let $X\ra S$ be a pseudo-smooth $S$-scheme. Let $\mc T_{X}\coloneqq \mc T_{X/S}$ be the sheaf of $\mc O_S$-linear differentiations of $\mc O_X$. We will ocasionally call (local) sections of $\mc T_{X}$ vector fields.

Denote $\Omega^1_X\coloneqq \Omega^1_{X/S}$. One has $\mc T_{X}\simeq (\Omega^1_X)^\vee$, and, since $X$ is pseudo-smooth, we get that $\mc T_{X}$ is a locally free $\mc O_X$-module of finite rank.

The sheaf $\mc T_{X}$ has a natural ($\mc O_S$-linear) Lie-bracket given by commutator\footnote{Namely, the commutator of two differentiations is again a differentiation.} which endows it with the structure of a Lie algebroid over $X$. Let $\mc D_X$ be the universal enveloping algebra of $\mc T_X$. Explicitly, $\mc D_X$ is generated over $\mc O_S$ by $\mc O_X$ and $\mc T_X$ with the relations given by $f_1\cdot f_2-f_2\cdot f_1=0$ for $f_i\in\mc O_X$, $v\cdot f-f\cdot v=v(f)\in \mc O_X$ for $v\in \mc T_X$ and $f\in \mc O_X$, and $v_1\cdot v_2 - v_2\cdot v_1=[v_1,v_2]\in \mc T_X$ for $v_i\in \mc T_X$. A data of $\mc O_X$-quasicoherent $\mc D_X$-module is equivalent to what is usually called a ``D-module" on $X$: namely, a quasicoherent sheaf $\mc E$ endowed with a flat connection $\nabla\colon \mc E\ra \mc E\otimes \Omega^1_{X}$.

\begin{rem}\label{rem:PBW-filtration} The algebra $\mc D_X$ comes with a natural increasing ``PBW-filtration" which is identified with the filtration by order of the differential operator. Namely, $\mc D_{X,\le n}\subset \mc D_X$ is the $\mc O_X$-submodule locally generated by the image of the multiplication map $\underbrace{\mc T_X\otimes_{\mc O_S}\ldots\otimes_{\mc O_S}\mc T_X}_{n}\ra \mc D_X$. Note that since $[\mc T_X,\mc T_X]\subset \mc T_X$ we have $[\mc D_{X,\le i}, \mc D_{X,\le j}]\subset \mc D_{X,\le i+j-1}$. By a version of PBW-theorem for Lie algebroids \red{ref} one has a natural isomorphism
	$$
	\gr_*\mc D_X\simeq \Sym^*_{\mc O_X}\mc T_X.
	$$
	Consequently, if we have a trivialization $\mc T_X\simeq \mc O_X^d$ given by a frame of vector fields $\partial_1,\ldots,\partial_d$ we can decompose 
	$$
	\mc D_X\simeq \oplus_{\alpha\in \mbb N^d}\mc O_X\cdot \partial^\alpha
	$$
	as a left $\mc O_X$-module. Here, for $\alpha\in \mbb N^d$, $\partial^\alpha$ denotes the monomial $\partial_1^{\alpha_1}\ldots \partial_d^{\alpha_d}$. In terms of this decomposition $\mc D_{X,\le n}\simeq \oplus_{\alpha\in \mbb N^d}\mc O_X\cdot \partial^\alpha$ with $$|\alpha|=\alpha_1+\ldots+\alpha_d\le n.$$
\end{rem}

\begin{rem}[Restricted Lie algebra structure on $\mc T_X$]\label{rem:vector fields as restricted Lie algebra}
	In char $p$, $\mc T_X$ is naturally a sheaf of restricted Lie algebras over $\mc O_S$. Namely, note that the $p$-th iterate of an $\mc O_S$-linear derivation $v\in \mc T_X$ again defines a $\mc O_S$-linear derivation of $\mc O_X$, which we will denote $v^{[p]}\in \mc T_X$. Since $v^{[p]}$ is equal to the image of $v^p$ in the $\mc O_S$-linear endomorphisms $\mc End_{\mc O_S}(\mc O_X)$ and the Lie bracket on $\mc T_X$ is induced by the commutator in $\mc End_{\mc O_S}(\mc O_X)$, one sees from Example \ref{ex:ass algebra as a restricted Lie algebra} that the relations in Definition \ref{def:restricted Lie algebra} are satisfied.
\end{rem}

The algebra $\mc D_X$ naturally acts on $\mc O_X$ by $\mc O_S$-linear endomorphisms: namely, $\mc O_X$ acts by multiplication, while $\mc T_X$ acts by the corresponding $\mc O_S$-linear derivations. 
Note that any $\mc O_S$-linear derivation acts by zero on all functions in $\mc O_X^\lp=\mc O_X^p\cdot \mc O_S\subset \mc O_X$. It follows that $\mc O_X^\lp\subset \mc O_X$ lies in the center $Z(\mc D_X)$, and so one can consider $\mc D_X$ as a sheaf of $\mc O_X^\lp$-algebras. By adjunction, from the identification $(F_X^\lp)^{-1}\mc O_{X^\lp}\simeq \mc O_X^\lp$, we get a map $\mc O_{X^{\!\lp}}\ra Z(F_{X*}^\lp\mc D_X)$ and this way we can consider $F_{X*}^\lp\mc D_X$ as a sheaf of algebras over $X^{\lp}$. 

\begin{rem}
	By construction, the map $F_X^\lp\colon X\ra X^\lp$ is affine. Consequently, one has a monoidal equivalence of categories of  quasi-coherent sheaves on $X$ and quasi-coherent sheaves of $F_{X*}^\lp\mc O_X$-modules on $X^\lp$. This then gives an equivalence of categories of quasi-coherent $\mc D_X$-modules on $X$ and quasi-coherent $F_{X*}^\lp\mc D_X$-modules on $X^\lp$.
	% The functor in one direction is given by $F_{X*}^\lp$, while in the other, having a $F_{X*}^\lp\mc D_X$-module $\mc E$ the pull-back $(F_X^\lp)^{-1}\mc E$ has a natural $\mc D_X\simeq (F_X^\lp)^{-1}(F_{X*}^\lp\mc D_X)$-module structure, which gives a quasi-inverse to $F_{X*}^\lp$.
\end{rem}

\begin{rem}\label{rem: reduced Frobenius twist might not be smooth}
	Note that by base change,  $\Omega^1_{X\twt}\simeq W_{X/S}^*\Omega^1_X\coloneqq \Omega^1_X\otimes_{\mc O_X}\mc O_{X\twt}$. In particular, if $X$ is pseudo-smooth, then so is $X\twt$. We warn that this is not necessarily true for $X^\lp$ (see Remark \ref{rem:reduced Frobenius in the case of Frobenius neighborhood}).
\end{rem}

Let us also define the appropriate variants of twisted tangent and cotangent bundles.

\begin{constr}[Twisted tangent and cotangent bundles]\label{constr:twisted tangent and cotangent bundles} For the usual Frobenius twist, one can consider $\Omega^1_{X\twt}\simeq \mc T_X\otimes_{\mc O_X}\mc O_{X\twt}$ and $\mc T_{X\twt}\simeq W_{X/S}^*\mc T_X\coloneqq \mc T_X\otimes_{\mc O_X}\mc O_{X\twt}$. Analogously, we define the (reduced Frobenius) twisted tangent and cotangent bundles $\mc T_X^\lp\coloneqq W_{X}^{\lp*}\mc T_X\coloneqq \mc T_X\otimes_{\mc O_X}\mc O_{X}^{\lp}$ and $\Omega_X^{1,\lp}\coloneqq \Omega_X^1 \otimes_{\mc O_X}\mc O_{X}^{\lp}$ (here the map $\mc O_X\ra \mc O_{X}^\lp$ sends $f\mapsto f^p$). One has $\Omega_X^{1,\lp}\simeq (\mc T_X^\lp)^\vee$. We warn that it is no longer necessarily true that $\mc T_X^\lp$ (resp. $\Omega_X^{1,\lp}$) is isomorphic to $\mc T_{X^\lp}$ (resp. $\Omega^1_{X^\lp}$), e.g. since $X^\lp$ can happen to be not smooth. Nevertheless there is still some natural compatibility (see Corollary \ref{cor:total space of twist is the twist of original bundles}).
\end{constr}

For a vector bundle $\mc E$ on a scheme $X$ let $\mr{Tot}_X(\mc E)\coloneqq \Spec_X \Sym^*_{\mc O_X}\mc E^\vee$ be the total space of $\mc E$. In the case $\mc E$ is $\mc T_X$ or $\Omega^1_X$ we denote it by $TX$ and $T^*\! X$ correspondingly. We denote by $\mc E^\lp\coloneqq W_{X}^{\lp*}\mc E\simeq \mc E\otimes_{\mc O_X}\mc O_{X}^\lp$ the pull-back of $\mc E$ to $X^\lp$.

\begin{lm}\label{lem:twist commutes with taking global sections}
	One has a natural isomorphism 
	$$
	\mr{Tot}_{X^{\!\lp}}(\mc E^\lp)=\left(\mr{Tot}_{X}(\mc E)\right)^\lp.
	$$
\end{lm}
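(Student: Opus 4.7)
The plan is to construct a canonical morphism of $X^\lp$-schemes $\mr{Tot}_X(\mc E)^\lp \to \mr{Tot}_{X^\lp}(\mc E^\lp)$ and to verify it is an isomorphism by a local calculation in the free case.

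First, I would note that both sides carry natural structures of $X^\lp$-schemes. The left-hand side is $\Spec_{X^\lp}\Sym^*_{\mc O_{X^\lp}}(\mc E^\lp)^\vee$ by definition. For the right-hand side, set $Y := \mr{Tot}_X(\mc E)$ with structure map $\pi\colon Y \to X$; the inclusion $\mc O_X^\lp \subset \mc O_Y^\lp$ (induced by $\pi^\sharp$) produces a map $Y^\lp \to X^\lp$ compatible with $F_X^\lp$, so both sides live over $X^\lp$.

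Next, I would construct the comparison map via the $p$-th power. The map $\xi \mapsto \xi^p$ sends $\mc E^\vee \subset \Sym^*_{\mc O_X}\mc E^\vee = \mc O_Y$ into $\Sym^p_{\mc O_X}\mc E^\vee$, and its image lies in $\mc O_Y^p \subset \mc O_Y^\lp$. Since $(f\xi)^p = f^p\xi^p$ for $f \in \mc O_X$, this is $W_X^\lp$-semilinear, and therefore factors through an $\mc O_{X^\lp}$-linear map
\[
(\mc E^\lp)^\vee \;=\; W_X^{\lp*}(\mc E^\vee) \;\longrightarrow\; (F_X^\lp)_*\mc O_Y^\lp.
\]
By the universal property of the symmetric algebra this extends to an $\mc O_{X^\lp}$-algebra homomorphism
\[
\Phi\colon \Sym^*_{\mc O_{X^\lp}}(\mc E^\lp)^\vee \;\longrightarrow\; (F_X^\lp)_*\mc O_Y^\lp,
\]
and passing to relative $\Spec$ over $X^\lp$ yields the desired morphism of $X^\lp$-schemes $Y^\lp \to \mr{Tot}_{X^\lp}(\mc E^\lp)$.

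Finally, I would verify that $\Phi$ is an isomorphism Zariski-locally on $X$. Reducing to $X = \Spec A$ with $\mc E$ free on basis $e_1, \ldots, e_d$ and dual basis $x_1, \ldots, x_d$, the source of $\Phi$ becomes $\Sym^*_{A^\lp}(\mc E^\lp)^\vee = A^\lp[y_1, \ldots, y_d]$, while its target is
\[
\mc O_Y^\lp \;=\; B \cdot (A[x_1, \ldots, x_d])^p \;=\; A^\lp[x_1^p, \ldots, x_d^p],
\]
using the characteristic-$p$ identity $\bigl(\sum_\alpha a_\alpha x^\alpha\bigr)^p = \sum_\alpha a_\alpha^p x^{p\alpha}$. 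The map sends $y_i \mapsto x_i^p$, which is manifestly an $A^\lp$-algebra isomorphism. Since the construction of $\Phi$ is basis-free, these local isomorphisms glue. The main obstacle is essentially bookkeeping — keeping careful track of the Frobenius semilinearity, the definition of $\mc E^\lp$ via the twisted map $W_X^\lp\colon X^\lp \to X$, and the distinction between $F_X^\lp$ and $W_X^\lp$ when passing between $X$-sheaves and $X^\lp$-sheaves.
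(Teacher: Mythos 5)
Your proof is correct and follows essentially the same route as the paper's: both construct a canonical comparison map between the two sides and verify it is an isomorphism by the same local computation in the free case, namely that $\bigl(A[x_1,\ldots,x_d]\bigr)^{\lp}=A^{\lp}[x_1^p,\ldots,x_d^p]$ with the map matching polynomial generators via $p$-th powers. The only cosmetic difference is in packaging: the paper first identifies $\mr{Tot}_{X^{\lp}}(\mc E^{\lp})$ with the base change $\mr{Tot}_X(\mc E)\times_{X,W_X^{\lp}}X^{\lp}$ and obtains the map from the universal property of the fiber product, whereas you build the $\mc O_{X^{\lp}}$-algebra homomorphism directly from $\xi\mapsto\xi^p$ and the universal property of $\Sym$ — these yield the same morphism.
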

\begin{proof}
	Note that $\mr{Tot}_{X^{\!\lp}}(\mc E^\lp)\simeq \mr{Tot}_{X}(\mc E)\times_X X^\lp$ under the map $W_{X}^\lp\colon X^\lp \ra X$. Indeed, $\Sym_{\mc O_X^\lp}^*(\mc E^\lp)\simeq (\Sym_{\mc O_X}^*\mc E)\otimes_{\mc O_X}\mc O_X^\lp$ and the isomorphism is obtained by taking $\Spec_{X^\lp}$ of both sides. By taking $W_{\mr{Tot}_{X}(\mc E)}^\lp\colon \left(\mr{Tot}_{X}(\mc E)\right)^\lp \ra \mr{Tot}_{X}(\mc E)$ and the projection $\left(\mr{Tot}_{X}(\mc E)\right)^\lp\ra X^\lp$ we obtain a map 
	$$
	\left(\mr{Tot}_{X}(\mc E)\right)^\lp \ra \mr{Tot}_{X}(\mc E)\times_X X^\lp\simeq \mr{Tot}_{X^{\!\lp}}(\mc E^\lp).
	$$
	To check that it is an isomorphism we can work locally on $X$ and $S$. So we can assume $X=\Spec A$, $Y=\Spec B$ and $\mc E$ is trivial. Then we need to check that the natural map 
	$$
	A[x_1,\ldots,x_n]\otimes_{A}A^\lp \ra (A[x_1,\ldots,x_n])^\lp
	$$
	given by $f\otimes s\mapsto f^ps$ is an isomorphism. But it is clear that the right hand side is the polynomial ring on $x_1^p,\ldots,x_n^p$ over $A^\lp$ and so this map is indeed an isomorphism.
\end{proof}
Plugging $\mc E$ to be $\mc T_X$ or $\Omega^1_X$ we get
\begin{cor}\label{cor:total space of twist is the twist of original bundles}
	One has isomorphisms 
	$$
	\mr{Tot}_{X^{\!\lp}}(\mc T_X^\lp)\simeq (TX)^\lp \text{ and } \mr{Tot}_{X^{\!\lp}}(\Omega_X^{1,\lp})\simeq (T^*\! X)^\lp.
	$$
\end{cor}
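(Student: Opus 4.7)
The plan is to obtain the corollary as an immediate specialization of Lemma \ref{lem:twist commutes with taking global sections}, applied to the two specific vector bundles $\mathcal E = \mathcal T_X$ and $\mathcal E = \Omega^1_X$ on $X$.

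More concretely, recall that by definition $TX = \mathrm{Tot}_X(\mathcal T_X)$ and $T^*\! X = \mathrm{Tot}_X(\Omega^1_X)$. On the other hand, Construction \ref{constr:twisted tangent and cotangent bundles} defines the twisted tangent and cotangent bundles as $\mathcal T_X^\lp = W_X^{\lp *}\mathcal T_X$ and $\Omega_X^{1,\lp} = W_X^{\lp *}\Omega^1_X$, which are precisely the pullbacks of $\mathcal T_X$ and $\Omega^1_X$ along $W_X^\lp \colon X^\lp \to X$. These are exactly the bundles denoted $\mathcal E^\lp$ in the statement and proof of Lemma \ref{lem:twist commutes with taking global sections}, so the hypotheses of that lemma are satisfied.

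With these substitutions in place, the lemma gives $\mathrm{Tot}_{X^{\!\lp}}(\mathcal T_X^\lp) \simeq (\mathrm{Tot}_X(\mathcal T_X))^\lp = (TX)^\lp$ and similarly $\mathrm{Tot}_{X^{\!\lp}}(\Omega_X^{1,\lp}) \simeq (\mathrm{Tot}_X(\Omega^1_X))^\lp = (T^*\! X)^\lp$. There is no real obstacle here: the content of the statement is entirely contained in Lemma \ref{lem:twist commutes with taking global sections}, and the only step is a notational unpacking. In particular, no pseudo-smoothness argument or further local computation is required beyond what was already used to establish the lemma.
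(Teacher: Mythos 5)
Your proposal is correct and is exactly the paper's argument: the corollary is stated there as an immediate specialization of Lemma \ref{lem:twist commutes with taking global sections} to $\mc E=\mc T_X$ and $\mc E=\Omega^1_X$ ("Plugging $\mc E$ to be $\mc T_X$ or $\Omega^1_X$ we get"). Nothing further is needed.
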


%\begin{rem}\label{rem:difference of two pushforwards} Usually, (say in \cite{vo}) one takes pushforward all the way to $X\twt$ and studies $F_{X/S*}\mc D_X$. However, since $X^\lp \hookrightarrow X\twt$ is precisely the schematic image of $F_{X/S}\colon X\ra X\twt$ this doesn't really make a difference. Indeed, one has a natural monoidal equivalence between the category of quasi-coherent sheaves on $X^\lp$ and the catgory of quasi-coherent sheaves on $X\twt$ with schematic support on $X^\lp\hookrightarrow X\twt$, which is given by the pair of quasi-inverse functors $F_{X*}^{int}$ and $(F_X^{int})^*$.
%	We have that $F_{X/S*}\mc D_X\simeq F_{X*}^{int}(F_{X*}^\lp\mc D_X)$ and so $F_{X/S*}\mc D_X$ is schematically supported on $X^\lp$ and corresponds to $F_{X*}^\lp\mc D_X$ under this equivalence.
%\end{rem}

\subsection{Differential operators on smooth schemes}\label{ssec:differential operators smooth}
For this section we assume that $X$ is a smooth scheme over $S$. We start with identifying the reduced Frobenius in this setting.

Assume that $X=\mbb A^d_S$ is the affine space over $S$ of dimension $d$. Then locally over $S$ we have $S=\Spec B$ and $X=\Spec B[x_1,\ldots,x_d]$. Let $P\coloneqq B[x_1,\ldots,x_d]$. We have $P^\lp=B[x_1^p,\ldots,x_d^p]\subset P$ and it is clear that $P$ is a free $P^\lp$-module of rank $p^{d}$. In particular, we see that $F_X^\lp\colon X\ra X^\lp$ is a finite faithfully flat map. 

Moreover, following Example \ref{ex:reduced vs relative Frobenius}(1), $F_{P}^{int}$ induces an isomorphism $P\twt\xra{\sim} P^\lp$. This shows that $F_X^{int}\colon X\twt\xra{\sim} X^\lp$ is an isomorphism, and that under this identification $F_X^\lp=F_{X/S}$.

In general smooth setting, Zariski locally $X$ has an \'etale map $q\colon X\ra \mbb A^d_S$ for some $d$. Since $q$ is \'etale, the relative Frobenius $F_{X/\mbb A^d_S}$ is an isomorphism, and so one has a fibered square
$$
\xymatrix{
	X \ar[d]_q\ar[r]^{F_{X/S}}& X\twt\ar[d]^{q\twt}\\
	\mbb A^d_S \ar[r]^(.45){F_{\mbb A^d_S/S}}&\mbb (\mbb A^d_S)^{\twt}}.
$$
Since $q$ is flat, the pull-back $F_{X/S}^{-1}(\mc O_{X\twt})\ra \mc O_X$ is locally injective, and so $\mc O_X^\lp\simeq F_{X/S}^{-1}(\mc O_{X\twt})$. This shows that $F_X^{int}\colon X\twt\xra{\sim} X^\lp$ is an isomorphism and $F_X^\lp=F_{X/S}$, as in the case of affine space. From the latter we also get that $F_X^\lp$ is a finite faithfully flat map of rank $p^{\dim_S X}$.

Given the above natural identification $X\twt\xra{\sim} X^\lp$, it follows that in the smooth case the vector bundle $\mc T_X^\lp$ can be identified with $\mc T_{X\twt}$ and, consequently, $(T^{*}X)^\lp$ can also be identified with $T^*\! X\twt$.

\begin{constr}[$p$-curvature map]\label{constr:theta}Let $X$ be a smooth $S$-scheme.
	Consider the map $
	\theta\coloneqq \mc T_X \ra \mc D_X
	$ of sheaves on $X$ given by the following formula
	$$
	v\mapsto \theta(v)\coloneqq v^p-v^{[p]},
	$$
	where $-^{[p]}$ is the restricted power (see Remark \ref{rem:vector fields as restricted Lie algebra}).
	We claim that 
	\begin{itemize}
		\item $\theta$ is additive: $\theta(v_1+v_2)-(\theta(v_1)+\theta(v_2))=0$;
		\item $\theta$ is Frobeinus-linear: $\theta(fv)-f^p\cdot \theta(v)=0$;
		\item $\theta(v)$ lies in the center $Z(\mc D_X)$ for any $v\in \mc T_X$: $[\theta(v),f]=[\theta(v),w]=0$ for any $f\in \mc O_X$ and $w\in \mc T_X$.
	\end{itemize}
	Indeed, note that $\theta(v)\in \mc D_{X,\le p}$ and that its image in $\gr_p\mc D_X\simeq \Sym^p_{\mc O_X}(\mc T_X)$ is $v^p$. It is easy\footnote{For all cases except $[\theta(v),w]$ this follows from the fact that the associated graded of $\mc D_X$ is commutative. For $[\theta(v),w]$ one can use that $[-,w]$ induces a derivation of $\gr_*\mc D_X$ by sending $x\in \gr_i\mc D_X$ to the class of $[\tilde x,w]\in \mc D_{X,\le i}$ in $ \gr_i\mc D_X$, and that any derivation is 0 on the $p$-th power $v^p$.} to see that all expressions above have 0 image in $\gr_p\mc D_X$, and so belong to $\mc D_{X,\le p-1}$. Thus, by Lemma \ref{lm:embedding of diff operators of order p-1} below, they are zero if and only of their images in $\mc End_{\mc O_S}(\mc O_X)$ are. However, $\theta(v)$ acts by 0 on any function $f$ in $\mc O_X$ since by definition $v^{[p]}$ acts as $v^{p}$, and so the image of all of the expressions above in $\mc End_{\mc O_S}(\mc O_X)$ is identically 0.
	
	This way we obtain a map 
	$$
	\theta'\colon \mc T_X\otimes_{\mc O_X}\mc O_X^p \ra Z(\mc D_X)
	$$
	of $\mc O_X^p$-modules. Since $Z(\mc D_X)$ is in fact an $\mc O_X^\lp\simeq \mc O_X^p\cdot \mc O_S$-algebra we can extend this to a map
	$
	\mc T_X\otimes_{\mc O_X}\mc O_X^\lp\ra  Z(\mc D_X)
	$ of $\mc O_X^\lp$, or in other words a map of sheaves on $X^\lp$
	\begin{equation}\label{eq:p-curvature map}
		\theta\colon \mc T_X^\lp \ra Z(F_{X*}^\lp\mc D_X).
	\end{equation}
	which we call the ``$p$-curvature map".
	%Denote $W_X\colon \mc O_X\xra{f\mapsto f^p} \mc O_X^p\subset \mc O_X$ be the map of sheaves induced by absolute Frobenius. We first define a map 
	%$$
	%\theta\colon \mc T_X\otimes_{\mc O_X}\mc O_X^{p}\ra \mc D_X 
	%$$
	%of sheaves of $\mc O_X^p$ modules.

	%Let $v^\lp \in\mc T_X^\lp$ be the pull-back of $v$ under $W_{X}^\lp$. Since $W_{X}^\lp$ is given on functions  by the map 
	%$$\mc O_X\xra{f\mapsto f^p} \mc O_X^\lp\simeq \mc O_S\cdot \mc O_X^p$$ 
	%where $\mc O_X$ surjects on $\mc O_X^p$, elements of the form $s\cdot v^\lp$ for $s\in \mc O_S$ and $v\in \mc T_X$ span $\mc T_X^\lp$. One can define a map 
	%$$
	%\theta\colon \mc T_X^\lp\ra F_{X*}^\lp\mc D_X
	%$$
	%by sending
	%$$
	%v^\lp\mapsto \theta(v^\lp)\coloneqq v^p-v^{[p]}\in \mc D_X
	%$$
	%for $v^\lp\in (W_{X}^\lp)^{-1}\mc T_X$ and exteding $\mc O_S$-linearly.
	
\end{constr}

\begin{lm}\label{lm:embedding of diff operators of order p-1}
	Let $X$ be smooth over $S$. Then the natural $\mc O_S$-linear action of $\mc D_X$ on $\mc O_X$ induces an embedding $\mc D_{X,\le p-1}\hookrightarrow \mc End_{\mc O_S}(\mc O_X)$.
\end{lm}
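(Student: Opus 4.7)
The plan is to reduce to the case of an \'etale local chart on $X$ and then prove injectivity by a direct coefficient-extraction argument, exploiting the characteristic-$p$ phenomenon that factorials $\beta!$ with all $\beta_i \le p-1$ remain invertible.

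First I would note that injectivity of $\mc D_{X,\le p-1}\hookrightarrow \mc End_{\mc O_S}(\mc O_X)$ is Zariski-local on $X$. Since $X$ is smooth over $S$, after shrinking we may assume there is an \'etale morphism $q\colon X\to \mbb A^d_S$. Pulling back the standard coordinates yields sections $x_1,\ldots,x_d\in \mc O_X$ together with dual vector fields $\partial_1,\ldots,\partial_d \in \mc T_X$ satisfying $\partial_i(x_j)=\delta_{ij}$. By the PBW decomposition recalled in Remark \ref{rem:PBW-filtration}, every section $D\in \mc D_{X,\le p-1}$ admits a unique expression
$$D=\sum_{|\alpha|\le p-1} f_\alpha\, \partial^\alpha, \qquad f_\alpha\in \mc O_X.$$

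Next I would suppose $D$ acts as zero on $\mc O_X$ and prove by induction on $k=|\beta|$ that $f_\beta=0$ for every multi-index $\beta$ with $|\beta|\le p-1$. The base case $k=0$ is immediate from $D(1)=f_0$. For the inductive step, a direct Leibniz computation gives $\partial^\alpha(x^\beta)=\tfrac{\beta!}{(\beta-\alpha)!}x^{\beta-\alpha}$ when $\alpha\le \beta$ componentwise, and $0$ otherwise. Hence
$$0=D(x^\beta)=\sum_{\alpha\le\beta} f_\alpha\,\tfrac{\beta!}{(\beta-\alpha)!}\,x^{\beta-\alpha}.$$
Terms with $|\alpha|<k$ vanish by the inductive hypothesis, while any $\alpha\le\beta$ with $|\alpha|=|\beta|$ must equal $\beta$. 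Only the term $\alpha=\beta$ survives, yielding $f_\beta\cdot\beta!=0$. Since $|\beta|\le p-1$ forces every $\beta_i\le p-1$, the integer $\beta!=\prod \beta_i!$ is a unit in $\mbb F_p\subset \mc O_S$, so $f_\beta=0$.

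The essential ingredient, rather than a real obstacle, is the existence of \'etale local coordinates, which explains why the lemma is stated for smooth (not merely pseudo-smooth) $X$; without such coordinates the PBW expansion and the monomial computation are unavailable. The bound $|\alpha|\le p-1$ is also sharp for this argument: as soon as some $\alpha_i$ reaches $p$, the relevant factorial vanishes modulo $p$ and the coefficient extraction breaks down, reflecting the fact that $\partial_i^p$ is a non-trivial central element acting as zero on $\mc O_X$.
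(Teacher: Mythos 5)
Your proof is correct and follows essentially the same route as the paper's: reduce to an \'etale chart over affine space, expand $D$ in the PBW basis, and evaluate on monomials in the pulled-back coordinate functions, using that $\beta!$ is a unit when every $\beta_i\le p-1$. The only difference is bookkeeping — you induct on the total degree $|\beta|$, whereas the paper extracts the coefficient of a single lexicographically maximal multi-index; your inductive version isolates the surviving term $f_\beta\cdot\beta!$ somewhat more cleanly.
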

\begin{proof}
	The statement is Zariski-local, so we can assume that we have an \'etale map $f\colon X\ra \mbb A^n_S$ given by functions $f_1,\ldots,f_n$. Let $z_1,\ldots,z_n$ be the coordinates on $\mbb A^n$. We have an isomorphism $\mc T_X\simeq f^*\mc T_{\mbb A^n_S}$ and can consider the trivialization of $\mc T_X$ given by $\partial_i\coloneqq f^{-1}(\partial_{z_i})$. We have $f_i=f^*(z_i)$ and this way $\partial_i(f_j)=\delta_{ij}$. $\mc T_X$ is trivial with basis $\partial_i$, $i=1,\ldots,n$ where $n=\rk(\Omega^1_X)$. Let $D\in \mc D_{X,\le p-1}$; by Remark \ref{rem:PBW-filtration} it can be written as a sum 
	$$
	D=\sum_{\alpha\in \mbb N^d} g_\alpha\cdot \partial^\alpha
	$$
	where $g_{\alpha}\in \mc O_X$ and $\alpha=(\alpha_1,\ldots,\alpha_n)$ is such that $|\alpha|\le p-1$. Take a maximal $\alpha$ with respect to the lexicographic order on $\mbb N^d$ such that $g_{\alpha}\neq 0$; then it is easy to check that $D(f_1^{\alpha_1}\ldots f_n^{\alpha_n})=\alpha_1!\ldots \alpha_n!\cdot g_\alpha $, which is non-zero since all $\alpha_i<p$.
\end{proof}

Recall the twisted tangent and cotangent bundles (Construction \ref{constr:twisted tangent and cotangent bundles} $\mc T_X^\lp$, $\Omega^{1,\lp}_X$). By Corollary \ref{cor:total space of twist is the twist of original bundles} we have
$$
(T^{*}X)^\lp\simeq \Spec_{X^{\!\lp}} (\Sym^*_{\mc O_X^{\!\lp}}\mc T_X^\lp)
$$ be the total space of $\Omega^{1,\lp}_X$. The natural projection $(T^{*}X)^\lp\ra X^\lp$ is affine and so one has an equivalence of categories of quasi-coherent sheaves on $(T^*\! X)^\lp$ and quasi-coherent $\Sym^*_{\mc O_{X}^{\!\lp}}\mc T_{X\lp}$-modules on $X^\lp$. The map $\theta$ above extends to a map of sheaves of $\mc O_X^\lp$-algebras 
$$
\theta\colon \Sym^*_{\mc O_X^{\!\lp}}\!\mc T_X^{\lp}\ra Z(F_{X*}^\lp\mc D_X),
$$
and this way $F_{X*}^\lp\mc D_X$ defines a sheaf of algebras over $(T^{*}\!X)^\lp$.

Then, following \cite{bmr} (or rather \cite[Section 2.1]{ov} in the relative setting) we have the following structural result for $F_{X*}^\lp\mc D_X$. 

\begin{pr}\label{prop:smooth Azumaya} Let $X$ be a smooth $S$-scheme. Then
	\begin{enumerate} 
		\item The $p$-curvature map (defined in the end of Section \ref{ssec:definition of diff op})
		$$
		\theta\colon \Sym^*_{\mc O_{X}^\lp}\!\mc T_{X^\lp}\ra Z(F_{X*}^\lp\mc D_X)
		$$
		is an isomorphism. 
		
		\item The sheaf of algebras over $(T^*\! X)^\lp$ defined by $F_{X*}^\lp\mc D_X$ is an Azumaya algebra of rank $2p^{\dim_S X}$. 
	\end{enumerate}
\end{pr}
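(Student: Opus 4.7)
The plan is to verify both statements Zariski-locally on $X$ and to use a local trivialization of $\mc T_X$ by a frame of commuting vector fields whose restricted $p$-power vanishes; the global statements then follow since both sides of the claimed isomorphism and the Azumaya condition are local.

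I first pick an affine open $U\subseteq X$ admitting an \'etale map $q\colon U\to \mbb A^d_S$ (with $d=\dim_S X$), put $x_i=q^*z_i$, and take the dual frame $\partial_1,\ldots,\partial_d\in \mc T_U$ with $\partial_i(x_j)=\delta_{ij}$ and $[\partial_i,\partial_j]=0$. A direct computation on monomials shows that the coordinate derivations on $\mbb A^d_S$ satisfy $(\partial/\partial z_i)^{[p]}=0$, and since the restricted $p$-power of a derivation is determined by its action on functions (cf.\ Remark \ref{rem:vector fields as restricted Lie algebra}) and \'etale morphisms induce isomorphisms on $\Omega^1$, this vanishing pulls back: $\partial_i^{[p]}=0$ on $U$. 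Hence $\theta(\partial_i)=\partial_i^p-\partial_i^{[p]}=\partial_i^p$, which by Construction \ref{constr:theta} is central.

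For (1), I use the PBW decomposition (Remark \ref{rem:PBW-filtration}) to write $D\in \mc D_U$ as $D=\sum_\alpha g_\alpha\partial^\alpha$ with $g_\alpha\in \mc O_U$ and compute commutators in two steps. Using $[x_j,\partial^\alpha]=-\alpha_j\partial^{\alpha-e_j}$, the vanishing of $[D,x_j]$ for all $j$ (comparing PBW-coefficients) forces $g_\alpha=0$ unless every component $\alpha_i$ is divisible by $p$, so $D=\sum_\gamma g_{p\gamma}\,\theta(\partial)^\gamma$. Next, $[D,\partial_j]=-\sum_\gamma \partial_j(g_{p\gamma})\theta(\partial)^\gamma$; its vanishing together with the $\mc O_U$-linear independence of the $\theta(\partial)^\gamma$ forces $\partial_j(g_{p\gamma})=0$ for all $j$. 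The joint kernel of $\partial_1,\ldots,\partial_d$ on $\mc O_U$ is precisely $\mc O_U^\lp$ (verified on $\mbb A^d_S$ where it equals $B[z_1^p,\ldots,z_d^p]$, and transferred along the \'etale $q$), so $g_{p\gamma}\in \mc O_U^\lp$ and $D$ lies in the image of $\theta$. Injectivity of $\theta$ is immediate from PBW, since the monomials $\partial^{p\gamma}$ remain $\mc O_U$-linearly, hence $\mc O_U^\lp$-linearly, independent. This proves (1) over $U$, whence globally.

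For (2), the same PBW analysis exhibits $F_{U*}^\lp\mc D_U$ as a free $Z:=\Sym^*_{\mc O_U^\lp}\mc T_U^\lp$-module of rank $p^{2d}$, with basis $\{x^\beta\partial^{\beta'}:0\le \beta_i,\beta_i'<p\}$ obtained by writing each PBW monomial $x^\nu\partial^\alpha$ uniquely as $x^\beta(x^p)^\mu\partial^{\beta'}\theta(\partial)^\gamma$ after Euclidean division. To check Azumaya-ness, I show that the natural multiplication map
$$
F_{X*}^\lp\mc D_X\otimes_{Z}\,(F_{X*}^\lp\mc D_X)^{\op}\;\longrightarrow\;\mc End_{Z}(F_{X*}^\lp\mc D_X)
$$
is an isomorphism. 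Both sides are locally free $Z$-modules of rank $p^{4d}$, so it suffices to verify the map on geometric fibers over $(T^*X)^\lp=\Spec_{X^\lp}Z$. The fiber of $F_{X*}^\lp\mc D_X$ at a point $s$ is the quotient of the local Weyl algebra on the $x_i,\partial_i$ by the central specializations $x_i^p\mapsto x_i^p(s)$ and $\theta(\partial_i)\mapsto \theta(\partial_i)(s)$; this is a classical ``reduced'' Weyl algebra, a central simple $\kappa(s)$-algebra of dimension $p^{2d}$, hence a matrix algebra $M_{p^d}(\kappa(s))$, and one checks the map is an iso on this fiber by comparing dimensions once surjectivity (e.g.\ onto a nilpotent generator and a diagonal idempotent) is exhibited.

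The main obstacle is the computation in (1): pinning down the full center as the image of $\theta$ requires the two-step commutator argument and crucially the identification of $\mc O_U^\lp$ with the joint kernel of a local frame of derivations. Once (1) is in hand, (2) is essentially formal, combining PBW with the standard structure theory of the Weyl algebra at a closed fiber.
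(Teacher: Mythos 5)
Your argument is correct. Note that the paper does not actually prove this proposition: it is quoted from \cite{bmr} (and \cite[Section 2.1]{ov} for the relative case), so there is nothing internal to compare against; what you have written is essentially the standard proof from those references. The two-step commutator computation in the PBW basis (first against the $x_j$ to force all exponents divisible by $p$, then against the $\partial_j$ to force the coefficients into $\ker d=\mc O_U^\lp$), together with the reduction to a frame with $\partial_i^{[p]}=0$ pulled back along an \'etale map to affine space, is exactly how the center is computed in \cite{bmr}, and the fiberwise verification of the Azumaya property via the reduced Weyl algebra being a matrix algebra over a geometric point is also the standard route. Two small points. First, the identification of the joint kernel of $\partial_1,\dots,\partial_d$ with $\mc O_U^\lp$ is the degree-zero part of the Cartier isomorphism (Proposition \ref{prop: Cartier isom}); your transfer along $q$ is fine, but it is cleaner to note that this kernel is $\ker(d\colon \mc O_U\to\Omega^1_U)$ and use the freeness of $\mc O_U$ over $\mc O_U^\lp$ on the monomials $x^\beta$, $0\le\beta_i<p$. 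Second, your computed rank $p^{2\dim_S X}$ is the correct one (consistent with Corollary \ref{cor:D_X is an Azumaya algebarain pseudo-smooth setting}); the ``$2p^{\dim_S X}$'' in the statement is a typo.
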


\begin{example}\label{ex:D_A^n}
	Assume that $S=\Spec B$ is affine and that $X\coloneqq \mbb A^d_S$ with coordinates $z_1,\ldots ,z_d$. In this case global sections of $\mc D_X$ are given by the Weyl algebra $$W_d(B)\coloneqq B\langle z_1,\ldots,z_d, \partial_{z_1},\ldots,\partial_{z_d}\rangle/([z_i,z_j]=[\partial_{z_i},\partial_{z_j}]=0, [\partial_{z_j},z_i]=\delta_{ij}).$$ The $p$-th power $\partial_{z_i}^p$ acts by zero on $\mc O_X$, so $\partial_{z_i}^{[p]}=0$ and $\theta(\partial_{x_i})= \partial_{z_i}^p$. Then proposition \ref{prop:smooth Azumaya} tells us that the center $Z(\mc D_X)$ is isomorphic to the polynomial ring $B[z_i^p,\partial_{z_i}^p]\subset \mc D_X$ and that $\mc D_X$ is an Azumaya algebra over it.
	
	%Note that by \ref{prop:smooth Azumaya} the tensor product $\mc O_X\otimes_{\mc O_X^{\lp}}Z(\mc D_X)$ can be identified with $\Sym^*_{\mc O_{X}}(F_{X}^{\lp *}\mc T_{X^\lp})\simeq \Sym^*_{\mc O_{X}} F_X^{*}\mc T_X$ where $F_X\colon X\ra X$ is the absolute Frobenius of $X$. Following \cite{bmr} denote by 
	%$$
	%T^{*,1}X\coloneqq \Spec_X\Sym^*_{\mc O_{X}} F_X^{*}\mc T_X
	%$$
	%the total space of the dual $(F_X^{*}\mc T_X)^\vee\simeq F_X^{*}\Omega^1_X$.
\end{example}

We finish with some remarks that will be useful in the next section:
\begin{rem}\label{rem:splitting on the 0 section}
	Consider the homomorphism $\Sym^*_{\mc O_{X}^\lp}\!\mc T_{X^\lp}\twoheadrightarrow \mc O_{X}^\lp$ sending $\mc T_X^\lp$ to 0. It corresponds to the embedding of the zero section $i_0\colon X^\lp \hookrightarrow (T^*\! X)^\lp$. Then the pull-back $\mc D_{X,0}\coloneqq i_0^*(F_{X*}^\lp\mc D_X)$ is an Azumaya algebra on $X^\lp$ which is canonically split. Indeed, $F_{X*}^\lp\mc O_X$ is a vector bundle of rank $p^{\dim_S X}$ and $\mc T_X^\lp\subset Z(F_{X*}^\lp\mc D_X)$ acts on it by 0 (since $v^{[p]}$ by definition acts as $v^p$): so the action of $F_{X*}^\lp\mc D_X$ on $F_{X*}^\lp\mc O_X$ factors through $\mc D_{X,0}$. Comparing the ranks we get that $F_{X*}^\lp\mc O_X$ is a splitting bundle for $\mc D_{X,0}$, and so
	$$
	\mc D_{X,0}\simeq \mc End_{\mc O_X^\lp}(F_{X*}^\lp\mc O_X).
	$$
\end{rem}
\begin{rem}[Cartier's equivalence]\label{rem:equivalence for the 0 section}
	From the above remark one gets an equivalence of categories of quasi-coherent sheaves of $\mc D_{X,0}$-modules and quasi-coherent sheaves on $X^\lp$. The corresponding pair of qausi-inverse functors is given by $-\otimes_{\mc O_{X}^\lp}\! F_{X*}^\lp\mc O_X$ and $\mc Hom_{\mc D_{X,0}}(F_{X*}^\lp\mc O_X,-)$. Under this equivalence the $\mc D_{X,0}$-module $F_{X*}^\lp\mc O_X$ corresponds to $\mc O_X^\lp$. 
	
\end{rem}
\begin{rem}\label{rem:ideal invariant under derivations}
	Let $\mc I\subset \mc O_X$ be a quasicoherent sheaf of ideals that is invariant under the action of $\mc D_X$. Note that $F_X^{\lp}$ is affine, so the pushforward $F_{X*}^{\lp}$ is exact, and for any quasicoherent sheaf $\mc F$ on $X$ the map $(F_X^\lp)^{-1}(\mc F_{X*}^\lp \mc F)\ra \mc F$ is an equivalence. The pushforward $\mc F_{X*}^\lp(\mc I)$ then gives a sub-$\mc D_{X,0}$-module of $\mc F_{X*}^\lp\mc O_X$ which by the equivalence in Remark \ref{rem:equivalence for the 0 section} is necessarily of the form $\mc J\otimes_{\mc O_X^\lp} \mc F_{X*}^\lp\mc O_X\subset \mc F_{X*}^\lp\mc O_X$ for some ideal $\mc J\subset \mc O_X^\lp$. Under the equivalence of categories between quasicoherent modules over $\mc F_{X*}^\lp\mc O_X$ and quasi-coherent sheaves on $X$ the module $\mc J\otimes_{\mc O_X^\lp} \mc F_{X*}^\lp\mc O_X$ exactly corresponds to $(F_{X}^\lp)^*\mc J$, and so we get that $\mc I\simeq F_{X}^{\lp*}(\mc J)$ for some ideal $\mc J\subset \mc O_{X}^\lp$.
\end{rem}

\subsection{Local structure of a pseudo-smooth scheme}The goal of this section is to give a local description of a general pseudo-smooth scheme which will allow us to make a reduction to smooth case. 
We start with the following lemma:

\begin{lm}\label{ex:Frobenius neighborhood is a pseudo-smooth scheme}
	Let $Y$ be a smooth $S$-scheme, and let $Z\subset Y^\lp$ be a subscheme. Then the preimage $(F_{Y}^\lp)^{-1}(Z)=Z\times_{Y^\lp} Y\subset Y$ is a pseudo-smooth scheme.
\end{lm}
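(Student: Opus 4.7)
The plan is to exhibit $X := (F_Y^\lp)^{-1}(Z) \hookrightarrow Y$ as a closed subscheme whose ideal of definition consists (locally) of functions with vanishing K\"ahler differentials, and then invoke the conormal sequence to conclude that $\Omega^1_{X/S}$ is locally free.

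First I would note that since $F_Y^\lp\colon Y\to Y^\lp$ is affine (indeed finite, since $Y$ is smooth, as discussed in Section \ref{ssec:differential operators smooth}), the base change $X = Z\times_{Y^\lp}Y \to Z$ is affine, and in particular $X\to Y$ is a closed immersion (assuming $Z\hookrightarrow Y^\lp$ is closed; the locally closed case reduces to this after restricting to an open of $Y^\lp$, hence of $Y$ via the homeomorphism $F_Y^\lp$). Let $\mc I\subset \mc O_Y$ denote the ideal sheaf of $X$. By flat base change along the affine morphism $F_Y^\lp$, we have
$$
\mc I \;=\; \mc I_Z\cdot \mc O_Y,
$$
where $\mc I_Z\subset \mc O_{Y^\lp}$ is the ideal sheaf of $Z$ in $Y^\lp$, and here we view $\mc O_{Y^\lp}=\mc O_Y^{\!\lp}=\mc O_Y^p\cdot \mc O_S$ as a subsheaf of $\mc O_Y$.

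Next I would apply the conormal right-exact sequence for the closed immersion $X\hookrightarrow Y$ over $S$:
$$
\mc I/\mc I^2 \;\xrightarrow{\ d\ }\; \Omega^1_{Y/S}\otimes_{\mc O_Y}\mc O_X \;\longrightarrow\; \Omega^1_{X/S} \;\longrightarrow\; 0.
$$
The key observation is that the leftmost map vanishes. Indeed, $\mc I$ is locally generated by sections of $\mc O_Y^{\!\lp}=\mc O_Y^p\cdot\mc O_S\subset \mc O_Y$, and any element of this subsheaf has zero $S$-relative differential: for a local section $g = \sum_i s_i f_i^p$ with $s_i\in \mc O_S$ and $f_i\in \mc O_Y$, one has $dg = \sum_i s_i\, d(f_i^p) = \sum_i s_i\cdot p f_i^{p-1} df_i = 0$ in $\Omega^1_{Y/S}$, since $p=0$ and sections of $\mc O_S$ are $d$-closed by the definition of relative differentials. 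Consequently
$$
\Omega^1_{X/S}\;\simeq\;\Omega^1_{Y/S}\otimes_{\mc O_Y}\mc O_X,
$$
and since $Y/S$ is smooth, $\Omega^1_{Y/S}$ is locally free of finite rank on $Y$, so its restriction to $X$ is locally free of finite rank on $X$. This is the definition of pseudo-smoothness. The finite-type and qcqs conditions for $X\to S$ are inherited from $Y\to S$ together with the fact that $X\hookrightarrow Y$ is a closed immersion (or locally closed in the general case).

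The only mildly subtle step is the vanishing of the conormal map, which hinges on the identification $\mc I = \mc I_Z\cdot \mc O_Y$ (so that generators of $\mc I$ can be chosen in $\mc O_Y^{\!\lp}$), rather than on any deep input. Everything else is formal from the conormal sequence and the smoothness of $Y$; I do not anticipate a genuine obstacle.
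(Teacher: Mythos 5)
Your proof is correct and follows essentially the same route as the paper's: identify the ideal sheaf of $X$ in $Y$ as locally generated by sections of $\mc O_Y^{\lp}=\mc O_Y^p\cdot\mc O_S$, note these are killed by the relative de Rham differential, and conclude from the conormal sequence that $\Omega^1_{X/S}\simeq i^*\Omega^1_{Y/S}$ is locally free. The additional remarks on affineness and the locally closed case are harmless elaborations of the same argument.
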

\begin{proof}
	Let $\mc J\subset \mc O_Y^\lp$ be the sheaf of ideals defining $Z$. Then $\mc I\coloneqq F_{Y}^{\lp*}(\mc J)\subset \mc O_X$ is the sheaf of ideals defining $X\coloneqq (F_{Y}^\lp)^{-1}(Z)$. Note that $\mc I$ is locally generated by elements in $\mc O_Y^\lp\simeq (F_Y^{\lp})^{-1}\mc O_{Y\lp}$. Let $i\colon X\ra Y$ be the corresponding embedding. We have an exact sequence 
	$$
	\ldots \ra i^{-1}(\mc I/\mc I^2)\xra{d} i^*\Omega^1_Y \twoheadrightarrow \Omega^1_X \ra 0 
	$$
	of sheaves on $X$ where the left map is induced by the map that sends $[f]\in \mc I/\mc I^2$ to $df\in \Omega^1_Y$. However, for any $f\in \mc O_Y^\lp$ we have $df=0$. Since $\mc I$ is locally generated by elements in $\mc O_Y^\lp$ we get that the map $d$ above is equal to 0 and so $\Omega^1_X\simeq i^*\Omega^1_Y$.
\end{proof}

\begin{rem}\label{rem:reduced Frobenius in the case of Frobenius neighborhood}
	We claim that in the above case the reduced Frobenius twist $X^\lp$ is given by  $Z$. Indeed, $\mc O_X$ (or rather $i_*\mc O_X$) is described as the reduction $\mc O_Y\otimes_{\mc O_Y^\lp} \mc O_Y^\lp\!/\mc J$. Indeed, $\mc O_X^{\lp}$ is the subalgebra in $\mc O_X$ generated by $\mc O_X^p$ and $\mc O_S$, which is easily seen to be isomorphic to $\mc O_Y^\lp\!/\mc J\simeq \mc O_Z$. Moreover, this way the correponding embedding $i^\lp\colon X^\lp\hookrightarrow Y^\lp$ is identified with the original $Z\hookrightarrow Y^\lp$. Consequently, we also see that the commutative diagram
	$$
	\xymatrix{X \ar[r]^{F_X^\lp}\ar[d]_i & Z\ar[d]^{i^\lp}\\
		Y\ar[r]^{F_Y^\lp} & Y^\lp
	}
	$$
	is in fact a fiber square. In particular, $F_X^\lp$ is a finite locally free map of degree $p^{\dim_S Y}$.
	
	Note that $Z$ here was an arbitrary closed scheme of $Y$; in particular, it could be very singular.
\end{rem}

\begin{rem}\label{rem:twisted forms}
	Recall that in the course of proof of Lemma \ref{ex:Frobenius neighborhood is a pseudo-smooth scheme} we showed that $i^*\Omega^1_Y\simeq \Omega^1_X$ via the natural map. It then formally follows that $i^*\Omega^j_Y\simeq \Omega^j_X$ for all $j\ge 0$. By considering the commutative diagram 
	$$
	\xymatrix{X^\lp \ar[r]^{W_X^\lp} \ar[d]_{i^\lp}& X\ar[d]^i\\
		Y^\lp\ar[r]^{W_Y^\lp}& Y}
	$$
	it also follows that the natural map $i^{\lp*}\Omega^{j,\lp}_Y\ra \Omega^{j,\lp}_X$ induced by the identification $i^{\lp*}\circ W_Y^\lp=W_X^\lp\circ i$ is an isomorphism for any $j\ge 0$.
\end{rem}

In fact, all pseudo-smooth schemes Zariski-locally look like Frobenius neighborhoods in smooth schemes:
\begin{pr}\label{prop:local description of pseudo-smooth}
	Let $X$	be a pseudo-smooth $S$-scheme. Then Zariski-locally it is of the form $(F_{Y}^\lp)^{-1}(Z)=Z\times_{Y^\lp} Y\subset Y$ with $Y$ smooth over $S$ and $Z\hookrightarrow Y$ is a closed subscheme (see Lemma \ref{ex:Frobenius neighborhood is a pseudo-smooth scheme}).
\end{pr}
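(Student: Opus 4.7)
The plan is to argue Zariski-locally around an arbitrary point $x \in X$. Since $\Omega^1_{X/S}$ is locally free of some rank $d$, after shrinking we may choose sections $f_1, \ldots, f_d \in \mc O_X$ whose differentials form an $\mc O_X$-basis of $\Omega^1_{X/S}$. These assemble into an $S$-morphism $\phi\colon X \ra \mbb A^d_S$ for which the natural map $\phi^*\Omega^1_{\mbb A^d_S/S} \to \Omega^1_{X/S}$ is an isomorphism; equivalently $\Omega^1_{X/\mbb A^d_S} = 0$, so $\phi$ is unramified and locally of finite presentation. The classical local structure theorem for unramified morphisms (EGA\,IV, 18.4.7; cf.\ \cite[Tag~04HJ]{stacks} and the surrounding material) then produces, after possibly further shrinking around $x$, a factorization
\[
X \xhookrightarrow{\,i\,} Y \xrightarrow{\,\pi\,} \mbb A^d_S
\]
in which $i$ is a closed immersion and $\pi$ is \'etale. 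Since $\mbb A^d_S$ is smooth over $S$ and \'etale maps preserve smoothness, $Y$ is smooth over $S$; this provides the ``smooth ambient scheme'' promised in the statement.

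Next, I would show that the ideal sheaf $\mc I \subset \mc O_Y$ cutting out $X$ is stable under the action of $\mc D_Y$. Since $\pi$ is \'etale, $\pi^*\Omega^1_{\mbb A^d_S/S} \xrightarrow{\sim} \Omega^1_{Y/S}$; pulling back further along $i$ yields $i^*\Omega^1_{Y/S} \simeq \phi^*\Omega^1_{\mbb A^d_S/S} \simeq \Omega^1_{X/S}$, an isomorphism. The conormal exact sequence
\[
\mc I/\mc I^2 \xrightarrow{\,d\,} i^*\Omega^1_{Y/S} \longrightarrow \Omega^1_{X/S} \longrightarrow 0
\]
then forces $d$ to vanish, i.e.\ $dg \in \mc I \cdot \Omega^1_{Y/S}$ for every local section $g$ of $\mc I$. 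Equivalently, every local vector field on $Y$ preserves $\mc I$, and consequently so does the full algebra $\mc D_Y$. By Remark~\ref{rem:ideal invariant under derivations} (whose hypotheses are satisfied because $Y$ is smooth), there is an ideal $\mc J \subset \mc O_{Y^\lp}$ with $\mc I = (F_Y^\lp)^*\mc J$. Setting $Z \coloneqq V(\mc J) \hookrightarrow Y^\lp$, we conclude
\[
X = V(\mc I) = V\bigl((F_Y^\lp)^*\mc J\bigr) = Z \times_{Y^\lp} Y,
\]
exactly the description asserted (with $Z$ a closed subscheme of $Y^\lp$, as in Lemma~\ref{ex:Frobenius neighborhood is a pseudo-smooth scheme}).

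The main obstacle in this plan is the invocation of the local structure theorem for unramified morphisms in the first paragraph: without the \'etale factorization $\phi = \pi \circ i$, there is no obvious way to replace the target $\mbb A^d_S$ by a smooth $S$-scheme $Y$ into which $X$ embeds as a closed subscheme, and one has only the weaker statement that the naive map $\phi$ itself is unramified. Once that factorization is available, the rest of the argument is essentially formal: it combines the conormal exact sequence with the Cartier-type classification of $\mc D_Y$-stable ideals on a smooth scheme established in Remark~\ref{rem:ideal invariant under derivations}.
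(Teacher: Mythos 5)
Your proof is correct, and its second half --- the conormal sequence forcing $d\colon \mc I/\mc I^2 \to i^*\Omega^1_{Y/S}$ to vanish, hence $\mc D_Y$-invariance of $\mc I$, hence $\mc I = F_Y^{\lp*}\mc J$ via Remark \ref{rem:ideal invariant under derivations} --- is exactly the paper's argument. Where you differ is in producing the smooth ambient scheme $Y$. The paper starts from an arbitrary closed embedding $X \hookrightarrow \mbb A^n_S$, chooses functions $f_1,\dots,f_{n-d}$ in the defining ideal whose differentials trivialize $\ker(i^*\Omega^1_{\mbb A^n} \to \Omega^1_X)$, completes them to a frame $df_1,\dots,df_{n-d},dg_1,\dots,dg_d$ of $\Omega^1$ on a neighborhood, sets $Y = V(f_1,\dots,f_{n-d})$, and checks by hand (Jacobian criterion) that $(g_1,\dots,g_d)\colon Y \to \mbb A^d_S$ is \'etale. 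You instead map $X$ directly to $\mbb A^d_S$ by functions whose differentials form a basis of $\Omega^1_{X/S}$, note that this map is unramified because it induces an isomorphism on $\Omega^1$, and invoke the local structure theorem for unramified morphisms. These are really the same construction in two packagings: the paper's explicit choice of the $f_i$ and $g_j$ is precisely an inline proof of that structure theorem (in standard-\'etale form) in this situation, so your version is a legitimate shortcut that trades an explicit computation for a citation. The only points to watch are (a) that the structure theorem is applied to a map that is merely locally of finite type (which the Stacks Project formulation accommodates, and which holds here by cancellation since $X\to S$ is locally of finite type), and (b) that it may require shrinking the target around $\phi(x)$ as well --- harmless, since the conclusion is Zariski-local on $X$.
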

\begin{proof}
	Let $x\in X$ be a point. We will show that there is a neighborhood of $x$ of the form above.
	
	Since $X$ is locally of finite type, passing to an open we can assume that there is an embedding $i\colon X\hookrightarrow \mbb A^n$; the natural map $i^{*}\colon i^*\Omega^1_{\mbb A^n}\ra \Omega^1_X$ is a surjection. Let $\mc I\subset \mc O_{\mbb A^n}$ be the ideal defining $X$. As before, we have an exact sequence
	$$
	\ldots \ra i^{-1}(\mc I/\mc I^2)\xra{d} i^*\Omega^1_{\mbb A^n} \xra{i^{*}} \Omega^1_X \ra 0  .
	$$
	Since $X$ is pseudo-smooth, the kernel of $i^*$ is a vector bundle. Let $d\coloneqq \rk H^0(X,\Omega^1_X)$. After passing to a neighborhood~{$U$} of $x$ in $\mbb A^n$ we can assume that $\Ker(i^*)$ is a trivial vector bundle with basis given by differentials $df_1,\ldots, df_{n-d}$ of $(n-d)$ functions $f_1,\ldots, f_{n-d}\in \mc I$. Consider the map $s\colon \mc O_U^{\oplus n-d}\ra \Omega^1_U$ defined by $(n-d)$ sections $df_1,\ldots, df_{n-d}$. The support of its kernel is a closed subset which doesn't intersect $X$ (and in particular doesn't contain $x$) thus considering the complement we can assume that $s$ is an embedding. Moreover, the dimension of the stalk of the cokernel is an upper-semicontinuous function bounded below by $n-(n-d)=d$. Since it is equal to $d$ on $X$ we see that it is also equal to $d$ on some neighborhood, and so restriction of the cokernel to this neighborhood is a vector bundle of rank $d$. Shrinking the open further we can assume that $\Omega^1_U$ is trivial with basis given by differentials $df_1,\ldots, df_{n-d},dg_1,\ldots,dg_d$ (with $f_i$'s being the functions we considered before, and $g_i$'s some other functions). 
	
	Now, let $i_Y\colon Y\hookrightarrow U$ be the subscheme defined by $\mc I_Y\coloneqq (f_1,\ldots,f_{n-d})\subset  \mc I$. Then we claim that the map $q\colon Y\ra \mbb A^d$ given by $g_1,\ldots,g_d$ is \'etale. Indeed, one can realize $Y$ as a subscheme in $U\times\mbb A^d\subset \mbb A^n\times \mbb A^d$ cut out by equations $f_1=\ldots=f_{n-d}=t_1-g_1=\ldots=t_d-g_d=0$ where $t_i$'s are coordinates on $\mbb A^d$. Since $df_1,\ldots, df_{n-d},dg_1,\ldots,dg_d$ are linearly independent over $\mc O_U$ fiberwise, the Jacobian of this system of equations (relative to $\mbb A^d$) is a unit, and so projection to $\mbb A^d$ is \'etale. 
	
	This way we get that $Y$ is in fact smooth over $S$. Let $\mc I'\subset \mc O_Y$ be the ideal defining $i_X\colon X\hookrightarrow Y$. We have an exact sequence
	$$
	\ldots \ra i^{-1}(\mc I'/\mc I'^2)\xra{d} i^*_X\Omega^1_{Y} \xra{i_X^{*}} \Omega^1_X \ra 0.
	$$
	However, note that the map $i_X^*$ is now an isomorphism. Indeed, $i_X^*\Omega^1_Y$ is exactly the quotient of $i^*\Omega^1_U$ by $df_i$, which are spanning the kernel of $i^*_X$. Thus, we get that the map $d$ is 0, or, in other words, that $d(\mc I')\subset \mc I'$. This is equivalent to $\mc I'$ being invariant under the action of $\mc D_Y$, from which it follows by Remark \ref{rem:ideal invariant under derivations} that $\mc I'\simeq F_Y^{\lp*}\mc J$ for some $\mc J\subset \mc O_{Y^\lp}$. Then, putting $Z\subset \mc Y^\lp$ to be the subscheme defined by $\mc J$ we exactly get that $X\simeq (F_Y^{\lp})^{-1}(Z)$. 
\end{proof}

Proposition \ref{prop:local description of pseudo-smooth} allows to extend the discussion in Section \ref{ssec:differential operators smooth} to the pseudo-smooth setting. We start with an extension of Lemma \ref{lm:embedding of diff operators of order p-1}:

\begin{cor}\label{cor:embedding of diff operators of order p-1} Let $X$ be a pseudo-smooth scheme over $S$. Then the natural map 
	$$\mc D_{X,\le p-1}\rightarrow \mc End_{\mc O_S}(\mc O_X)
	$$
	induced by the action of $\mc D_X$ on $\mc O_X$ is an embedding.
\end{cor}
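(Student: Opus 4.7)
The plan is to reduce to the smooth Lemma \ref{lm:embedding of diff operators of order p-1} by means of the local structure result Proposition \ref{prop:local description of pseudo-smooth}. Since the conclusion is Zariski-local on $X$, I would first shrink $X$ so that there is a closed embedding $i\colon X\hookrightarrow Y$ into a smooth $S$-scheme $Y$ with $X=(F_Y^\lp)^{-1}(Z)$ for some closed $Z\subset Y^\lp$; shrinking $Y$ further, I can arrange that $Y$ admits an \'etale map to $\mbb A^n_S$, where $n=\rk\Omega^1_{X/S}=\rk\Omega^1_{Y/S}$. This furnishes functions $f_1,\ldots,f_n\in\mc O_Y$ and a pairwise commuting frame $\partial_1,\ldots,\partial_n$ of $\mc T_Y$ satisfying $\partial_i(f_j)=\delta_{ij}$.

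Next I would transport this coordinate data to $X$. By Remark \ref{rem:twisted forms}, one has $\Omega^1_X\simeq i^*\Omega^1_Y$, and dualizing gives $\mc T_X\simeq i^*\mc T_Y$; hence the restrictions $\bar\partial_i:=\partial_i|_X$ form an $\mc O_X$-basis of $\mc T_X$, and they are pairwise commuting because $\mc I$ is $\mc D_Y$-invariant and the restriction map of vector fields is a Lie algebra homomorphism. Setting $\bar f_i:=f_i|_X\in\mc O_X$ one then has $\bar\partial_i(\bar f_j)=\delta_{ij}$ in $\mc O_X$, and the monomials $\bar\partial^\alpha$ for $|\alpha|\le p-1$ form an $\mc O_X$-basis of $\mc D_{X,\le p-1}$ by the PBW theorem for Lie algebroids (Remark \ref{rem:PBW-filtration}).

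With this basis in place I would run the same type of computation as in the smooth case, but with induction on the total degree $|\alpha|$ rather than on the lexicographic order, which avoids having to keep track of the nontrivial $\mc O_X^\lp$-expansions of the coefficients in the $\mc O_X$-basis $\{\bar f^{\,\beta}:\beta_i\le p-1\}$ of $\mc O_X$. Writing a local section $D=\sum_{|\alpha|\le p-1}\bar g_\alpha\bar\partial^\alpha$ and assuming $D$ annihilates $\mc O_X$, the base case is $\bar g_0=D(1)=0$. For the inductive step I would apply $D$ to $\bar f^{\,\beta}$ with $|\beta|=N\le p-1$ and use the Leibniz identity $\bar\partial^{\,\gamma}(\bar f^{\,\beta})=\tfrac{\beta!}{(\beta-\gamma)!}\bar f^{\,\beta-\gamma}$ (valid when $\gamma\le\beta$ componentwise, and $0$ otherwise). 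Since the only $\gamma\le\beta$ componentwise with $|\gamma|=|\beta|$ is $\gamma=\beta$ and the terms with $|\gamma|<N$ vanish by induction, the formula collapses to $0=D(\bar f^{\,\beta})=\bar g_\beta\cdot\beta!$; as $\beta!\in\mbb F_p^\times$ whenever all $\beta_i<p$, this forces $\bar g_\beta=0$, completing the induction.

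The main obstacle is really a bookkeeping one: making sure that the local model produced by Proposition \ref{prop:local description of pseudo-smooth} is compatible with an \'etale chart on $Y$ on which the frame $\partial_i$ and the dual functions $f_i$ are simultaneously available on $X$, and that the PBW identification for $\mc D_{X,\le p-1}$ and the comparisons $\mc T_X\simeq i^*\mc T_Y$ descend to a frame with the expected multiplication table. All of these are furnished by the earlier material of the section, after which the core computation is a direct pseudo-smooth analogue of the one in Lemma \ref{lm:embedding of diff operators of order p-1}.
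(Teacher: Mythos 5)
Your proof is correct and follows the paper's own route: localize via Proposition \ref{prop:local description of pseudo-smooth}, transport the \'etale coordinate frame $(f_i,\partial_i)$ from the smooth ambient $Y$ to $X$ using $\Omega^1_X\simeq i^*\Omega^1_Y$, and then repeat the monomial computation of Lemma \ref{lm:embedding of diff operators of order p-1}. The only (harmless) deviation is in the final combinatorial step, where you induct on total degree $|\alpha|$ instead of evaluating at an extremal multi-index; if anything your version is the more watertight way to organize that computation.
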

\begin{proof}
	The statement is local, so we can assume that $X=(F_Y^\lp)^{-1}(Z)\hookrightarrow Y$ for a closed subscheme $Z\hookrightarrow Y^\lp$ in smooth scheme. We can also assume that $Y$ has an \'etale map $f\colon Y\ra \mbb A^n$. Let $\partial_i$ and $f_i$ be as in the proof of Lemma \ref{lm:embedding of diff operators of order p-1}. Let $i\colon X\ra Y$ be the embedding. Recall (see proof of Lemma \ref{ex:Frobenius neighborhood is a pseudo-smooth scheme}) that $i^*\Omega^1_Y\xra{\sim} \Omega^1_X$, and, consequently, $\mc T_X\simeq i^*\mc T_Y$. Let $\partial_i'\coloneqq i^{-1}(\partial_i)\in \mc T_X$ and let $f_i'\coloneqq i^{*}(f_i)\in\mc O_X$. We have $\partial_i'(f_j')=\delta_{ij}$. The same argument as in Lemma \ref{lm:embedding of diff operators of order p-1} then shows that for any $D\in \mc D_{X,\le p-1}$ there is a function $g\in \mc O_X$ such that $D(g)\neq 0$.
\end{proof}	

\begin{constr}[$p$-curvature map in the pseudo-smooth setting]As in the smooth case, consider the map $
	\theta\coloneqq \mc T_X \ra \mc D_X
	$ of sheaves on $X$ given by the following formula
	$$
	v\mapsto \theta(v)\coloneqq v^p-v^{[p]}.
	$$
	Using Corollary \ref{cor:embedding of diff operators of order p-1} in place of Lemma \ref{lm:embedding of diff operators of order p-1} as in Construction \ref{constr:theta} one shows that
	\begin{itemize}
		\item $\theta$ is additive: $\theta(v_1+v_2)-(\theta(v_1)+\theta(v_2))=0$;
		\item $\theta$ is Frobeinus-linear: $\theta(fv)-f^p\cdot \theta(v)=0$;
		\item $\theta(v)$ lies in the center $Z(\mc D_X)$ for any $v\in \mc T_X$: $[\theta(v),f]=[\theta(v),w]=0$ for any $f\in \mc O_X$ and $w\in \mc T_X$.
	\end{itemize}
	
	This way we obtain a map 
	$$
	\theta'\colon \mc T_X\otimes_{\mc O_X}\mc O_X^p \ra Z(\mc D_X)
	$$
	of $\mc O_X^p$-modules. Since $Z(\mc D_X)$ is in fact an $\mc O_X^\lp\simeq \mc O_X^p\cdot \mc O_S$-algebra we can extend this to a map
	$
	\mc T_X\otimes_{\mc O_X}\mc O_X^\lp\ra  Z(\mc D_X)
	$ of $\mc O_X^\lp$, or, equivalently, a map of sheaves on $X^\lp$
	\begin{equation}\label{eq:p-curvature map 2}
		\theta\colon \mc T_X^\lp \ra Z(F_{X*}^\lp\mc D_X).
	\end{equation}
	which we still call the ``$p$-curvature map".
	
\end{constr}

\begin{cor}\label{cor:D_X is an Azumaya algebarain pseudo-smooth setting} Let $X$ be a pseudo-smooth scheme over $S$.
	As before, consider $(T^*\! X)^\lp\simeq \Spec_{X^\lp} (\Sym^*_{\mc O_X^\lp}\!\mc T_X^\lp)$. 
	
	\begin{enumerate}
		\item The map $\theta\colon \mc T_X^\lp \ra Z(F_{X*}^\lp\mc D_X)$ induces an isomorphism
		$$
		\theta\colon \Sym^*_{\mc O_X^\lp}\!\mc T_X^\lp\xra{\sim} Z(F_{X*}^{\lp}\mc D_X).
		$$
		
		\item $F_{X*}^{\lp}\mc D_X$, considered as a sheaf of algebras over $(T^*\! X)^\lp$, is an Azumaya algebra of rank\footnote{A {priori} this is a locally constant function on $X$.} $p^{2\rk \Omega^1_X}$.
	\end{enumerate}
	
\end{cor}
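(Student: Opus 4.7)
The plan is to reduce to the smooth case already established in Proposition \ref{prop:smooth Azumaya}, using the local description of pseudo-smooth schemes given in Proposition \ref{prop:local description of pseudo-smooth}. Both statements are Zariski local on $X$ (and on $X^\lp$, which has the same underlying topological space), so I may assume $X=(F_Y^\lp)^{-1}(Z)=Z\times_{Y^\lp}Y$ for a smooth $S$-scheme $Y$ of relative dimension $d=\rk\Omega^1_X$ and a closed subscheme $i^\lp\colon Z\hookrightarrow Y^\lp$ defined by an ideal $\mc J\subset\mc O_{Y^\lp}$, with $X^\lp\simeq Z$ and $\mc I=F_Y^{\lp*}\mc J$ the ideal of $X$ in $Y$ (Remark \ref{rem:reduced Frobenius in the case of Frobenius neighborhood}).

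The key algebraic identification is that $\mc D_X\simeq \mc D_Y/\mc J\cdot\mc D_Y\simeq \mc D_Y\otimes_{\mc O_Y^\lp}\mc O_Z$. Indeed, $\mc J\subset\mc O_Y^\lp\subset Z(\mc D_Y)$, so $\mc J\cdot\mc D_Y$ is a two-sided ideal, and it coincides with $\mc I\cdot\mc D_Y$. The quotient is generated by $\mc O_X=\mc O_Y/\mc I$ and the images of $\mc T_Y$, which under $\mc T_X\simeq i^*\mc T_Y$ (Remark \ref{rem:twisted forms}) give a generating set of $\mc T_X$ with the correct relations; a PBW-basis argument using a local \'etale chart $Y\to\mbb A^d_S$ (as in the proof of Proposition \ref{prop:local description of pseudo-smooth}) shows this is an isomorphism. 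Pushing forward along the fiber square of Remark \ref{rem:reduced Frobenius in the case of Frobenius neighborhood} yields
\[
F_{X*}^\lp\mc D_X\;\simeq\; i^{\lp,*}\bigl(F_{Y*}^\lp\mc D_Y\bigr)\;=\;F_{Y*}^\lp\mc D_Y\otimes_{\mc O_{Y^\lp}}\mc O_Z.
\]

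Next, I identify the base scheme. By Corollary \ref{cor:total space of twist is the twist of original bundles} and Remark \ref{rem:twisted forms}, $i^{\lp,*}\mc T_Y^\lp\simeq\mc T_X^\lp$, so
\[
(T^*Y)^\lp\times_{Y^\lp}Z\;\simeq\;\Spec_Z\Sym^*_{\mc O_Z}\mc T_X^\lp\;\simeq\;(T^*X)^\lp.
\]
Via the $p$-curvature map $\theta_Y$, Proposition \ref{prop:smooth Azumaya} says $F_{Y*}^\lp\mc D_Y$ is an Azumaya algebra on $(T^*Y)^\lp$ of rank $p^{2d}$ with center $\Sym^*_{\mc O_{Y^\lp}}\mc T_Y^\lp$. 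Base change to $(T^*X)^\lp$ preserves both the Azumaya property and the rank, and the center base changes to $\Sym^*_{\mc O_X^\lp}\mc T_X^\lp$; this gives (2) and the isomorphism in (1), modulo checking that the induced center map agrees with $\theta_X$ from the pseudo-smooth construction. For this compatibility, note that for $v\in\mc T_Y$ with restriction $v'\in\mc T_X$ one has $(v')^p=v^p\!\!\mod\mc I\cdot\mc D_Y$ and $(v')^{[p]}=v^{[p]}\!\!\mod\mc I\cdot\mc D_Y$ (both restricted $p$-powers are defined via the action on $\mc O_X$, which is the reduction of the action on $\mc O_Y$), so $\theta_X(v')$ is the image of $\theta_Y(v)$; thus $\theta_X$ agrees with the base change of $\theta_Y$.

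The main subtlety, and the point that should be handled carefully, is the algebraic identification $\mc D_X\simeq\mc D_Y\otimes_{\mc O_Y^\lp}\mc O_Z$: one needs to know both that $\mc J\cdot\mc D_Y$ is two-sided (which uses $\mc J\subset Z(\mc D_Y)$) and that the quotient has the correct PBW-type structure over $\mc O_X$ to be identified with $\mc D_X$. Once this is in hand, everything else reduces to the general principle that both the Azumaya property and the center of an Azumaya algebra are compatible with flat base change, applied to $i^\lp\colon Z\hookrightarrow Y^\lp$, together with the explicit bundle identifications already recorded in Remark \ref{rem:twisted forms} and Corollary \ref{cor:total space of twist is the twist of original bundles}.
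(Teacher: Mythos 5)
Your proposal follows essentially the same route as the paper: reduce via Proposition \ref{prop:local description of pseudo-smooth} to $X=(F_Y^\lp)^{-1}(Z)$ for $Y$ smooth, identify $F_{X*}^\lp\mc D_X$ with the base change $F_{Y*}^\lp\mc D_Y\otimes_{\mc O_{(T^*Y)^\lp}}\mc O_{(T^*\!X)^\lp}$, and invoke Proposition \ref{prop:smooth Azumaya} together with stability of the Azumaya property under base change. Your extra care on the two points the paper treats tersely — that $\mc J\cdot\mc D_Y$ is a two-sided ideal with quotient exactly $\mc D_X$ (the paper gets this from the Lie algebroid isomorphism $\mc T_X\simeq i^*\mc T_Y$ and naturality of the enveloping algebra), and that the pseudo-smooth $p$-curvature map $\theta_X$ agrees with the base change of $\theta_Y$ (needed to conclude part (1) as stated, rather than merely that the center is abstractly $\mc O_{(T^*\!X)^\lp}$) — is a welcome refinement but not a different argument.
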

\begin{proof}
	Both statements are local (e.g.\ in Zariski topology) and so we can assume $X\simeq (F_Y^{\lp})^{-1}(Z)$ where $Y$ is smooth over $S$ and $Z\hookrightarrow Y$ is a closed subscheme. In this case, $\mc D_X$ is a central reduction of $\mc D_Y$ via an embedding $T^*Y\twt|_Z\hookrightarrow T^*Y\twt\simeq (T^*Y)^\lp$. More precisely, let $i\colon X\hookrightarrow Y$ be the embedding. Following the proof of Lemma \ref{ex:Frobenius neighborhood is a pseudo-smooth scheme} in this situation we have isomorphisms $\Omega^1_X\simeq i^*\Omega^1_Y$, and consequently, $\mc T_X\simeq i^*\mc T_Y$.
	By naturality, the latter is an isomorphism of Lie algebroids, and induces an isomorphism $\mc D_X\simeq i^*\mc D_Y$. Also $$
	\Sym^*_{\mc O_X^\lp}\mc T_X^\lp\simeq \Sym^*_{\mc O_Y^\lp}\mc T_Y^\lp\otimes_{\mc O_Y^\lp}\mc O_Z
	$$ and so $(T^*\! X)^\lp$ can be identified with the restriction of $(T^*Y)^\lp$ to $Z\hookrightarrow Y^\lp$. Moreover, if we denote by $i_Z\colon Z\hookrightarrow Y^\lp$ the corresponding embedding, applying $F_{X*}$ we get
	$$
	F_{X*}^\lp\mc D_X\simeq F_{X*}^\lp(i^*\mc D_Y)\simeq i_Z^{*}F_{Y*}^\lp\mc D_Y\simeq F_{Y*}^\lp\mc D_Y\otimes_{\mc O_Y^\lp}\mc O_Z\simeq F_{Y*}^\lp\mc D_Y\otimes_{\mc O_{(T^*Y)^\lp}}\mc O_{(T^*\! X)^\lp},
	$$
	where in the last isomorphism we considered $F_{Y*}^\lp\mc D_Y$ as an Azumaya algebra over $(T^*Y)^\lp$. Thus we get that $F_{X*}^\lp\mc D_X$ as a sheaf of algebras over $(T^*\! X)^\lp$ is a restriction of an Azumaya algebra, and as such is an Azumaya algebra itself. Its center then also is necessarily isomorphic to $\mc O_{(T^*\! X)^\lp}\simeq \Sym^*_{\mc O_X^\lp}\mc T_X^\lp$.
\end{proof}

\begin{rem}\label{rem:degree of reduced Frobenius}From the proof of Proposition \ref{prop:local description of pseudo-smooth} it also follows that $F_X^\lp\colon X\ra X^\lp$ is a finite faithfully flat map of degree $p^{\rk \Omega^1_X}$. Indeed, it is locally obtained as a base change of relative Frobenius for a smooth scheme $Y$ of (relative) dimension $\rk \Omega^1_X$ (see Remark \ref{rem:reduced Frobenius in the case of Frobenius neighborhood}).
\end{rem}
\subsection{De Rham complex and Cartier operation}

Let $X$ be a pseudo-smooth scheme over $S$. Put $r\coloneqq \rk \Omega^1_X$; this is a locally constant function on $X$. Consider the de Rham complex 
$$ \dR_X\coloneqq \left[ 0\ra \mc O_X\ra \Omega^1_X \ra \ldots \ra \Omega^r_X\ra 0\right]{:} $$
this is a complex of sheaves of $\mc O_S$-modules on $X$ with differential given by the (relative) de Rham differential. Since the relative de Rham differential is $\mc O_X^p\cdot B$-linear, the pushforward
$
F_{X*}^\lp \dR_X
$
defines a complex of coherent sheaves on $X^\lp$. 

Let $\mc H_{\dR,X}^*\coloneqq \mc H^*(F_{X*}^\lp\dR_X)$ denote the sheaf cohomology algebra of $F_{X*}^\lp\dR_X$. Recall the notation $\Omega^{1,\lp}_X\coloneqq W_X^{\lp*}\Omega^1_X$. One can define the inverse Cartier morphism
$$
C^{-1}\colon \wedge^*_{\mc O_{X}^\lp}\!\! (\Omega^{1,\lp}_X)  \ra \mc H_{\dR,X}^*
$$ as follows.  Namely in degree 0 it is given by $(F_{X}^\lp)^{-1}\colon \mc O_{X^\lp} \ra F_{X*}\mc O_X$, while in degree 1 locally one sends $(W_{X}^{\lp})^{-1}(df)\in \Omega^{1,\lp}_X$ to $[f^{p-1}df]\in \mc H_{\dR,X}^1$. By a standard computation the latter gives a well-defined additive map to $\mc H^1_{\dR,X}$, which is $F_X^\lp$-linear. For higher degrees $C^{-1}$ uniquely extends by multiplicativity. 

The classical result of Cartier states that $C^{-1}$ is an equivalence if $X$ over $S$ is smooth. This remains true in the pseudo-smooth setting as well.
\begin{pr}\label{prop: Cartier isom}
	Let $X$ be a pseudo-smooth $S$-scheme. Then the inverse Cartier map is an isomorphism. 
\end{pr}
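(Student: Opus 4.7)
The strategy is to reduce to the smooth case via the local structure theorem (Proposition \ref{prop:local description of pseudo-smooth}), where the classical Cartier isomorphism applies. Since the statement is local on $X$, I may assume that there is a closed embedding $i\colon X\hookrightarrow Y$ with $Y$ smooth over $S$, a closed subscheme $i^\lp\colon Z\hookrightarrow Y^\lp$, and $X\simeq (F_Y^\lp)^{-1}(Z)$; by Remark \ref{rem:reduced Frobenius in the case of Frobenius neighborhood}, the square
$$
\xymatrix{X \ar[r]^{F_X^\lp}\ar[d]_i & Z\ar[d]^{i^\lp}\\ Y\ar[r]^{F_Y^\lp} & Y^\lp}
$$
is cartesian and $X^\lp\simeq Z$. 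The Cartier isomorphism $C_Y^{-1}\colon \wedge^*_{\mc O_{Y^\lp}}\!(\Omega_Y^{1,\lp})\xra{\sim}\mc H^*(F_{Y*}^\lp\dR_Y)$ is known in the smooth case.

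Next, I identify $F_{X*}^\lp\dR_X$ as the base change of $F_{Y*}^\lp\dR_Y$ along $i^\lp$. By Remark \ref{rem:twisted forms}, $i^*\Omega^j_Y\xra{\sim}\Omega^j_X$ for all $j$, and by naturality of the de Rham differential this upgrades to an isomorphism of complexes $i^*\dR_Y\simeq \dR_X$ (of sheaves of $\mc O_S$-modules on $X$). Applying $F_{X*}^\lp$ and using flat base change along the cartesian square above (valid since $F_Y^\lp$ is affine), I obtain an isomorphism of complexes
$$
F_{X*}^\lp\dR_X\;\simeq\; (i^\lp)^*F_{Y*}^\lp\dR_Y
$$
of sheaves on $X^\lp\simeq Z$. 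Similarly, Remark \ref{rem:twisted forms} gives $(i^\lp)^*\wedge^*_{\mc O_{Y^\lp}}(\Omega^{1,\lp}_Y)\xra{\sim}\wedge^*_{\mc O_{X^\lp}}(\Omega^{1,\lp}_X)$, and by construction the map $C^{-1}_X$ matches with $C^{-1}_Y$ after pulling back to $X^\lp$ (both are built out of the same local formula $[f^{p-1}df]\mapsto (W^\lp)^{-1}(df)$).

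The final step is to check that taking cohomology of $F_{Y*}^\lp\dR_Y$ commutes with the derived pullback $(i^\lp)^*$. Here is where the smoothness of $Y$ pays off: since $F_Y^\lp$ is finite locally free (by the smooth case discussion in Section \ref{ssec:differential operators smooth}), every term $F_{Y*}^\lp\Omega^j_Y$ is a locally free $\mc O_{Y^\lp}$-module, and by the Cartier isomorphism for $Y$ the cohomology sheaves $\mc H^j(F_{Y*}^\lp\dR_Y)\simeq \wedge^j\Omega^{1,\lp}_Y$ are also locally free. A bounded complex of flat modules with flat cohomology is quasi-isomorphic (after taking cycles and boundaries) to its cohomology as a split split-exact sequences argument shows, so its cohomology commutes with arbitrary base change; in particular with $-\otimes_{\mc O_{Y^\lp}}\!\mc O_Z$. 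Combining this with the identifications above gives a commutative square
$$
\xymatrix{
(i^\lp)^*\wedge^*_{\mc O_{Y^\lp}}(\Omega^{1,\lp}_Y)\ar[r]^(.55){(i^\lp)^*C^{-1}_Y}\ar[d]_\simeq & (i^\lp)^*\mc H^*(F_{Y*}^\lp\dR_Y)\ar[d]^\simeq \\
\wedge^*_{\mc O_{X^\lp}}(\Omega^{1,\lp}_X)\ar[r]^{C^{-1}_X} & \mc H^*(F_{X*}^\lp\dR_X)
}
$$
in which the top arrow is an isomorphism by the smooth Cartier theorem, hence so is the bottom arrow.

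The main technical point to verify carefully is the commutation of cohomology with base change along $i^\lp$; this rests on the local freeness of both the terms and the cohomology of $F_{Y*}^\lp\dR_Y$, which is the only place the smooth case is genuinely used beyond its statement.
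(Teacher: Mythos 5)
Your proposal is correct and follows essentially the same route as the paper: reduce via the local structure theorem to $X=(F_Y^\lp)^{-1}(Z)$ with $Y$ smooth, identify $F_{X*}^\lp\dR_X$ with $i^{\lp*}F_{Y*}^\lp\dR_Y$ by base change along the cartesian Frobenius square, and use flatness of the cohomology sheaves $\mc H^*_{\dR,Y}\simeq\wedge^*\Omega^{1,\lp}_Y$ (guaranteed by the smooth Cartier isomorphism) to commute cohomology with the pullback along $i^\lp$. Your write-up is if anything slightly more explicit than the paper's, both in spelling out why flat terms plus flat cohomology give base-change compatibility and in checking that the two inverse Cartier maps agree under pullback.
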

\begin{proof}
	We deduce the statement from the smooth case. The statement is Zariski local, so we can assume $X=(F_Y^\lp)^{-1}(Z)$ is as in Lemma \ref{ex:Frobenius neighborhood is a pseudo-smooth scheme} (so $Y$ is smooth and $Z\hookrightarrow Y^\lp$). In this case $X^\lp\simeq Z$. Let $i\colon X\hookrightarrow Y$ and $i^\lp\colon Z\hookrightarrow \mc Y^\lp$.  By Remarks \ref{rem:reduced Frobenius in the case of Frobenius neighborhood}, \ref{rem:twisted forms} and base change, $F_{X*}^\lp\dR_X$ can be naturally identified with $i^{\lp*}F_{Y*}^\lp\dR_Y$. By Cartier isomorphism in the smooth case, the cohomology sheaves $\mc H_{\dR,Y}^*$ are isomorphic to $\wedge^*_{\mc O_{Y}^\lp}\Omega^{1,\lp}_Y$; in particular, they are vector bundles and thus are flat over $\mc O_Y^\lp$. From this and Remark \ref{rem:twisted forms} we get that 
	\[\mc H_{\dR,X}^*\simeq \mc H^*(i^{\lp*}F_{Y*}^\lp\dR_Y)\simeq i^{\lp*}\mc H_{\dR,Y}^*\simeq i^{\lp*}(\wedge^*_{\mc O_{Y}^\lp}\! \Omega^{1,\lp}_Y)\simeq \wedge^*_{\mc O_{X^\lp}}\Omega^{1,\lp}_X.\qedhere\]
\end{proof}
Let $C$ be the inverse to $C^{-1}$. In each degree $i$, $C$ gives an isomorphism $\mc H^i_{\dR,X}\simeq \Omega^{i,\lp}_X$ which we can precompose with the projection $F_{X*}^\lp\Omega^i_{X,cl}\twoheadrightarrow \mc H^i_{\dR, X}$ from the closed $i$-forms. We will call the resulting map $\mathsf{C}^i\colon F_{X*}^\lp\Omega^i_{X,cl}\twoheadrightarrow \Omega^{i,\lp}_X$ the \textit{Cartier operation}.
For $i=1$ one gets a short exact sequence
\begin{equation}\label{eq:Cartier ses}
	0\ra F_{X*}^\lp\mc O_{X}/\mc O_{X}^\lp \xra{d} F_{X*}^\lp\Omega^1_{X,cl}\xra{\mathsf{C}} \Omega^{1,\lp}_X \ra 0,
\end{equation}
of sheaves on $X^\lp$ where the term on the left is identified with the exact 1-forms via~$d$.
\begin{rem}\label{rem:pairing with Cartier}
	The map $\mathsf C\colon F_{X*}^\lp\Omega^1_{X,cl}\ra \Omega^{1,\lp}_X$ dually induces a map $\mc T_X^\lp\otimes F_{X*}^\lp\Omega^1_{X,cl}\ra \mc O_X^\lp$ which lifts the pairing map $\mc T_X^\lp\otimes \Omega^{1,\lp}_X\ra \mc O_X^\lp$. Substituting a local section $\xi\in \mc T_X^\lp$ we obtain the corresponding ``contraction" maps 
	$$
	\widetilde{\iota_\xi}\colon F_{X*}^\lp\Omega^1_{X,cl}\ra \Omega^{1,\lp}_X \ \ \text{ and } \ \ \iota_\xi\colon \Omega^{1,\lp}_X\ra \mc O_X^\lp.
	$$
	Following \cite{bk}, we have the following explicit formula for $\widetilde{\iota_\xi}$. Namely, given a closed 1-form $\omega\in \Omega^1_{X,cl}$ and a vector field $\xi\in\mc T_X$ (giving a section $\xi^{\lp}\coloneqq (W_X^\lp)^{-1}(\xi)\in \mc T_X^{\lp}$) one has the following formula 
	\begin{equation}\label{eq:formula for the Cartier operation}
		\widetilde{\iota_{\xi^\lp}}(\mathsf C(\omega))=\langle\xi^{[p]},\omega\rangle - \xi^{p-1}(\langle\xi,\omega\rangle)
	\end{equation}
	for the corresponding contraction.
\end{rem}

\subsection{Line bundles with connection and Milne's exact sequence}
Let $(\mc E,\nabla)$, with $\nabla\colon \mc E\ra \mc E\otimes \Omega^1_X$ be a quasi-coherent sheaf with flat connection on a pseudo-smooth $S$-scheme $X$. The connection $\nabla$ induces a map $\mc D_X\ra \mc End_{\mc O_S}(\mc E)$, whose restriction to $Z(\mc D_X)$ factors through the subalgebra of $\mc O_X$-linear endomorphisms  $\mc End_{\mc O_X}(\mc E)^{\nabla'=0}\subset \mc End_{\mc O_S}(\mc E)$ that are flat with respect to the natural connection\footnote{Here the connection $\nabla'$ on $\mc End_{\mc O_X}(\mc E)$ is given by $(\nabla'(\psi))(s)\coloneqq \nabla(\psi(s))-\psi(\nabla(s)$ for $s\in \mc E$ and $\psi\in \mc End_{\mc O_X}(\mc E)$. Thus an endomorphism $\psi\in \mc End_{\mc O_X}(\mc E)$ is flat with respect to $\nabla'$ if and only if it commutes with the action of $\mc T_X$ induced by $\nabla$.} on $\mc End_{\mc O_X}(\mc E)$. Indeed, central elements commute both with multiplication on functions and action by vector fields.

\begin{constr}[{$p$-curvature map for line bundles}]\label{constr:p-curvature map line bundles} Let $\mc E\coloneqq \mc L$ in $(\mc E,\nabla)$ be a line bundle. In this case, the endomorphism sheaf $(\mc End_{\mc O_X}(\mc L),\nabla')$ with its natural connection is given by $(\mc O_X,d)$. In particular, $\mc End_{\mc O_X}(\mc L)^{\nabla'=0}\subset \mc O_X$ is given by $\ker d$, which is identified with $\mc O_X^\lp\subset \mc O_X$ by Proposition \ref{prop: Cartier isom}. Using the $p$-curvature map (\ref{eq:p-curvature map 2}) we obtain a map of sheaves $\mc T_X^\lp\ra \mc O_X^\lp$, or, in other words, a section of $\Omega^{1,\lp}_X$, which we will denote $\mathsf{c}_p(\mc L,\nabla)\in \Omega^{1,\lp}_X$. It is not hard to see from the definition (and the way $\mc T_X^\lp\subset Z(F_{X*}^\lp\mc D_X)$ acts on the tensor product), that 
	$$
	\mathsf{c}_p((\mc L_1,\nabla_1)\otimes (\mc L_2,\nabla_2))=\mathsf c_p(\mc L_1,\nabla_1)+\mathsf c_p(\mc L_2,\nabla_2). 
	$$
	
	If we are given a trivialization $\mc L\simeq \mc O_X$, $\nabla$ can be written in the form $d+\alpha$ for some closed 1-form $\alpha\in \Omega^1_{X,cl}$. Restricting to line bundles of this form we get a map of sheaves
	$$
	\mathsf c_p\colon F_{X*}^\lp \Omega^1_{X,cl}\ra \Omega^{1,\lp}_X, \ \ \ \ \mathsf c_p\colon \alpha \mapsto \mathsf c_p(\mc O_X,d+\alpha).
	$$
	Since $(\mc O_X,d+\alpha_1)\otimes (\mc O_X,d+\alpha_2)\simeq (\mc O_X,d+\alpha_1+\alpha_2)$ the above map is a map of sheaves of abelian groups.
\end{constr}

\begin{rem}\label{rem:katzformula}
	One has an explicit formula for the map $\mathsf c_p$ in terms of the Cartier operation. Namely, given $\alpha\in \Omega^1_{X,cl}$ let's compute the pairing $\langle \xi^\lp,\mathsf c_p(\alpha)\rangle$ for any $\xi\in \mc T_X$, $\xi^\lp\in (W_X^\lp)^{-1}(\xi)\in \mc T_X^\lp$: this will define $\mathsf c_p(\alpha)$ uniquely. By definition, we need to compute the action of $\theta(\xi)=\xi^p-\xi^{[p]}$ on line bundle $(\mc O_X,d+\alpha)$. This is given by
	$$
	(\xi + \langle \xi,\alpha\rangle)^p - \xi^{[p]}-\langle \xi^{[p]},\alpha\rangle = \langle \xi,\alpha\rangle^p-  \langle \xi^\lp,\mathsf C(\alpha)\rangle \in \mc End_{\mc O_S}(\mc O_X)
	$$ where for the equality we used that $\xi^p-\xi^{[p]}$ acts by 0 on $\mc O_X$, formula (\ref{eq:formula for the Cartier operation}) and also Jacobson's formula. Note that $\langle \xi,\alpha\rangle^p=(W_X^\lp)^{-1}(\langle \xi,\alpha\rangle)=\langle \xi^{\lp},\alpha^\lp\rangle$, where $\alpha^\lp\coloneqq (W_X^\lp)^{-1}\alpha$. This gives
	$$
	\mathsf c_p(\alpha)=\alpha^\lp -\mathsf C(\alpha).
	$$
\end{rem}

We now consider a non-commutative counterpart of the above $p$-curvature map.

\begin{constr}[Central reductions of $\mc D_X$ associated to 1-forms]\label{constr:classical central reductions associated to 1-forms}
	Let $\alpha\in H^0(X^\lp, \Omega^{1,\lp}_X)$ be a global section. Let $i_\alpha\colon X^\lp \hookrightarrow (T^*\! X)^\lp$ be the graph of $\alpha$. Under the identification $(T^*\! X)^\lp\simeq \Spec_X \Sym^*_{\mc O_X^\lp} \mc T_X^\lp$, the image $\Gamma_\alpha=\im(i_\alpha)\subset (T^*\! X)^\lp$ of $i_\alpha$ is defined by the sheaf of ideals generated by linear expressions 
	$$\mc I_\alpha\coloneqq (\xi- \langle\xi,\alpha\rangle)_{\xi\in \mc T_X^\lp}\subset \Sym^*_{\mc O_X^\lp} \mc T_X^\lp.$$ We denote by $\mc D_{X,\alpha}\coloneqq i_\alpha^*(F_{X*}\mc D_X)$ the corresponding pull-back of $F_{X*}^\lp\mc D_X$ considered as a sheaf of algebras over $(T^*\! X)^\lp$. For any $\alpha$ the sheaf $\mc D_{X,\alpha}$ defines an Azumaya algebra over $X^\lp$ of rank $p^{2\rk \Omega^1_X}$.
\end{constr}

%\begin{rem}
%For any $\alpha$ the composite map  
%$$
%F_{X*}^\lp \mc O_X\ra F_{X*}^\lp \mc D_X \twoheadrightarrow\mc D_{X,\alpha}
%$$
%is an embedding. Indeed,
%recall the subalgebra $\mc B_X\coloneqq \mc O_X\cdot Z(\mc D_X)\subset \mc D_X$. By \red{ref}, $\mc D_X$ is is a locally free finite left $\mc B_X$-module; in particular, the inclusion $\mc B_X\subset \mc D_X$ is locally split, and the splitting is preserved by the action of $Z(\mc D_X)$. Since $\mc B_X\simeq \mc O_X\otimes_{\mc O_X^\lp} Z(\mc D_X)$ we have ${F_{X*}^\lp}\mc B_X \simeq F_{X*}^\lp\mc O_X\otimes_{\mc O_X^\lp} Z(F_{X*}^\lp\mc D_X)$, and the map $F_{X*}^\lp \mc O_X \ra D_{X,\alpha}$ is obtained as the composition 
%$$
%F_{X*}^\lp \mc O_X \ra i_\alpha^*({F_{X*}^\lp}\mc B_X) \ra i_\alpha^*({F_{X*}^\lp}\mc D_X)
%$$
%where the first map is an isomorphism (since $Z(F_{X*}^\lp\mc D_X)/\mc I_\alpha\simeq \mc O_X^\lp$) and the second one is a split injection.
%\end{rem}
\begin{constr}[Line bundles with connection vs splittings.]\label{constr:line bundles vs splittings} Suppose we are given a line bundle with flat connection $(\mc L,\nabla)$ and put $\alpha\coloneqq \mathsf c_p(\mc L,\nabla)$. Then by definition of $\mathsf c_p$ we have that $\xi\in \mc T_{X}^\lp$ acts on $F_{X*}^\lp\mc L$ by $\langle\xi,\alpha\rangle\in \mc O_X^\lp$. It follows that the action of $F_{X*}^\lp\mc D_X$ on $F_{X*}^\lp\mc L$ induced by $\nabla$ factors through $\mc D_{X,\alpha}$. Moreover, by Remark \ref{rem:degree of reduced Frobenius} $F_{X*}^\lp\mc L$ is a vector bundle of rank $p^{\rk \Omega^1_X}$ on $X^\lp$ and thus is a splitting bundle for $\mc D_{X,\alpha}$: namely, $\nabla$ induces an isomorphism 
	$$
	\nabla\colon \mc D_{X,\alpha} \xra{\sim} \mc End_{\mc O_X^\lp}(F_{X*}^\lp\mc L).
	$$ Vice versa, let $\mc E$ be a splitting bundle $\mc D_{X,\alpha}$-module on $X^\lp$. Then $\mc E$ is a vector bundle of rank $p^{\rk \Omega^1_X}$. However $\mc E$ is also a quasi-coherent $F_{X*}\mc O_X$-module (with the action via the composition $
	F_{X*}^\lp \mc O_X\ra F_{X*}^\lp \mc D_X \twoheadrightarrow\mc D_{X,\alpha})$ and so comes as a pushforward $(F_{X*}^\lp)\mc F$ of some quasi-coherent sheaf $\mc F$ on $X$. Since $F_{X}^\lp$ is a finite locally free map of degree $p^{\rk \Omega^1_X}$ the only option is that $\mc F\simeq (F_{X*}^\lp)^{-1}(\mc E)$ is a line bundle.  The action of $\mc D_X\simeq (F_{X*}^\lp)^{-1}(F_{X*}^\lp\mc D_X)$ on $\mc F$ induces a natural flat connection, whose $p$-curvature is given by $\alpha$. Moreover, isomorphisms between two line bundles $(\mc L_1,\nabla_1)$ and $(\mc L_2,\nabla_2)$ with the same $p$-curvature $\alpha$ exactly correspond to isomorphisms between the corresponding $\mc D_{X,\alpha}$-modules $F_{X*}^\lp\mc L_1$ and $F_{X*}^\lp\mc L_2$. In particular, any two such line bundles a \'etale (and, in fact even Zariski) locally isomorphic.
	
	This gives an equivalence of two groupoids: line bundles with flat connection on $X$ with $p$-curvature $\alpha$, and splitting bundles\footnote{Here an object is given by a vector bundle $\mc E$ on $X^\lp$ together with an isomorphism $\gamma\colon \mc D_{X,\alpha}\simeq \mc End_{\mc O_X^\lp}(\mc E)$, and morphisms are isomorphisms of vector bundles that preserve this data.} for $\mc D_{X,\alpha}$ on $X^\lp$ correspondingly.
\end{constr}

Consider the map $d\log\colon \mc O_X^\times \ra \Omega^1_{X,cl}$ of \'etale sheaves on $\mc O_X$ sending $f$ to $df/f$. It induces a map of \'etale sheaves $F_{X*}^\lp\mc O_X^\times \ra F_{X*}^\lp\Omega^1_{X,cl}$ on $X^\lp$.
\begin{pr}\label{prop:Milne's exact sequence}
	There is a short exact sequence of \'etale sheaves on $X^\lp$
	$$
	0\ra F_{X*}^\lp\mc O_X^\times/(\mc O_X^\lp)^\times\xra{d\log} F_{X*}^\lp\Omega^1_{X,cl} \xra{\mathsf c_p} \Omega^{1,\lp}_X \ra 0, 
	$$
	where $\mathsf c_p$ is the map from Construction \ref{constr:p-curvature map line bundles}.
\end{pr}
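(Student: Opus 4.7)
The strategy is to establish exactness at each of the three spots separately, relying on tools already developed in the section: the Cartier isomorphism in degree $0$, the explicit formula for $\mathsf c_p$ from Remark \ref{rem:katzformula}, the Cartier descent equivalence of Remark \ref{rem:equivalence for the 0 section}, and the fact that $\mc D_{X,\alpha}$ is Azumaya on $X^\lp$ (Corollary \ref{cor:D_X is an Azumaya algebarain pseudo-smooth setting} and Construction \ref{constr:line bundles vs splittings}).

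First, for exactness on the left, let $f$ be a local section of $\mc O_X^\times$ with $d\log f=0$. Then $df=0$, so by the degree-zero case of Proposition \ref{prop: Cartier isom} one has $f\in \mc O_X^\lp$; since $F_X^\lp\colon X\to X^\lp$ is integral (in fact finite locally free, Remark \ref{rem:degree of reduced Frobenius}), any $f\in \mc O_X^\lp$ invertible in $\mc O_X$ is already invertible in $\mc O_X^\lp$, so $f\in (\mc O_X^\lp)^\times$. For $\mathsf c_p\circ d\log=0$ I plan a direct calculation: writing $d\log f=f^{-p}\cdot f^{p-1}df$ with $f^{-p}\in \mc O_X^\lp$, the $\mc O_X^\lp$-linearity of the Cartier operation together with the defining formula $\mathsf C(f^{p-1}df)=(df)^\lp$ gives $\mathsf C(d\log f)=f^{-p}(df)^\lp$, which equals $(d\log f)^\lp$ by the $F$-linearity of the twist $\Omega^1_X\to \Omega^{1,\lp}_X$; then Remark \ref{rem:katzformula} yields $\mathsf c_p(d\log f)=(d\log f)^\lp-\mathsf C(d\log f)=0$.

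For middle exactness, suppose $\alpha\in \Omega^1_{X,cl}$ satisfies $\mathsf c_p(\alpha)=0$. By Construction \ref{constr:p-curvature map line bundles} the line bundle with connection $(\mc O_X,d+\alpha)$ has vanishing $p$-curvature, so by Cartier descent (Remark \ref{rem:equivalence for the 0 section}) it is of the form $(F_X^{\lp*}\mc N,\mathrm{can})$ for a line bundle $\mc N$ on $X^\lp$. Trivializing $\mc N$ Zariski-locally on $X^\lp$, one obtains an isomorphism $(\mc O_X,d+\alpha)\simeq (\mc O_X,d)$ Zariski-locally, and the identification of such isomorphisms with units $f\in \mc O_X^\times$ forces $\alpha=d\log f$ (up to inverting $f$). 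For surjectivity of $\mathsf c_p$, given a local section $\alpha\in \Omega^{1,\lp}_X$, the Azumaya algebra $\mc D_{X,\alpha}$ on $X^\lp$ from Construction \ref{constr:classical central reductions associated to 1-forms} splits étale-locally by general theory of Azumaya algebras; by Construction \ref{constr:line bundles vs splittings} any such splitting corresponds to a line bundle with flat connection $(\mc L,\nabla)$ of $p$-curvature $\alpha$, and after further Zariski-localization trivializing $\mc L$ one gets $\nabla=d+\beta$ with $\beta\in \Omega^1_{X,cl}$ and $\mathsf c_p(\beta)=\alpha$ by construction.

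The only non-mechanical step is the middle exactness, which depends essentially on Cartier descent for flat line bundles with trivial $p$-curvature holding in the pseudo-smooth setting---this is precisely the content of Remark \ref{rem:equivalence for the 0 section} and relies on the identification $\mc D_{X,0}\simeq \mc End_{\mc O_X^\lp}(F_{X*}^\lp\mc O_X)$ from Remark \ref{rem:splitting on the 0 section}, which in turn was derived as a formal consequence of the Azumaya result of Corollary \ref{cor:D_X is an Azumaya algebarain pseudo-smooth setting}. Once these inputs are in place, all other steps are direct and parallel to the smooth case.
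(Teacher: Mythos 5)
Your proof is correct and follows essentially the same route as the paper's: left exactness via the degree-zero Cartier isomorphism, surjectivity via étale-local splittings of the Azumaya algebra $\mc D_{X,\alpha}$, and middle exactness via the dictionary between closed $1$-forms and splitting bundles from Construction \ref{constr:line bundles vs splittings}. The only cosmetic difference is that you verify $\mathsf c_p\circ d\log=0$ by a direct computation with the Katz formula, whereas the paper leaves this implicit in the observation that isomorphic connections differ by $d\log f$ and have equal $p$-curvature.
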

\begin{proof}
	The idea is to use the equivalence of groupoids described in Construction \ref{constr:line bundles vs splittings}. First of all, the kernel of $d\log \colon F_{X*}^\lp\mc O_X^\times \ra F_{X*}^\lp\Omega^1_{X,cl}$ is given by the intersection of  $F_{X*}^\lp\mc O_X^\times$ and $\ker d\subset F_{X*}^\lp\mc O_X$. The latter is isomorphic to $\mc O_X^\lp$ by Proposition \ref{prop: Cartier isom}, so the intersection is exactly $(\mc O_X^\lp)^\times$. This shows exactness from the left. To show that $\mathsf c_p$ is surjective, for any local section $\alpha\in \Omega^{1,\lp}_X$ it is enough to show that \'etale locally on $X$ there is a line bundle with $p$-curvature given by (the corresponding pull-back {of)} $\alpha$. By the above this is equivalent to finding a splitting bundle for the Azumaya algebra $\mc D_{X,\alpha}$, which is always possible \'etale locally. It remains to show the exactness in the middle. Let $\alpha_1,\alpha_2$ be two closed forms on an \'etale open $U\ra X$ such that $\mathsf c_p(\alpha_1)=\mathsf c_p(\alpha_2)$. Then by the discussion in Construction \ref{constr:line bundles vs splittings} line bundles with connection $(\mc O_U,d+\alpha_1)$  and $(\mc O_U,d+\alpha_2)$ are locally isomorphic, which means\footnote{Indeed, if  $(\mc O_U,\nabla)$ and $(\mc O_U,\nabla')$ are isomorphic, the isomorphism is given by multiplication by $f$ and then $\nabla'=\nabla+d\log f$.} that (after some cover) $\alpha_1=\alpha_2+d\log f$ for some function $f$.
\end{proof}
%There are two $\mbb G_m$-gerbes on $X^\lp$ that one can associate to a section $\alpha\in H^0(X^\lp, \Omega^{1,\lp}_X)$. Namely, on one side there is a sheaf of groupoids $\Pic^\nabla_\alpha(X)$ of line bundles with connection on $X$ with $p$-curvature $\alpha$: its sections over an \'etale map $f\colon U\ra X^\lp$ is given by the groupoid of line bundles with connection on $F_{X}^{-1}(U)$ (see \red{ref}) with $p$-curvature $f^*\alpha\in H^0(U^\lp, \Omega^{1,\lp}_U)$. On the other hand the Azumaya algebra $\mc D_{X,\alpha}$ defines a gerbe of splittings $\mr{Spl}(\mc D_{X,\alpha})$ whose sections over an \'etale map $f\colon U\ra X^\lp$ are given by pairs $(\mc E,\gamma)$ where $\mc E$ is a vector bundle on $U$ and $\gamma\colon f^*\mc D_{X,\alpha}\simeq \mc End_{\mc O_U}(\mc E)$ is an isomorphism. Applying the above discussion we get that the morphism $\Pic^\nabla_\alpha(X)\ra \mr{Spl}(\mc D_{X,\alpha})$ that on a fixed $f\colon U\ra X^\lp$ sends $(\mc L,\nabla)$ on $Y=F_{X}^{-1}(U)$ to $F_{Y*}^\lp \mc L$ with the natural $f^*\mc D_{X,\alpha}$-action
\subsection{Restricted Poisson structures}\label{ssec:restricted Poisson structures}We recall and try to motivate the definition of restricted Poisson structure due to \cite{bk}.

Recall that a Poisson bracket on a commutative ring $A$ is a Lie bracket $\{-,-\}$ on $A$ that is also a derivation in each variable. If the corresponding derivations are $B$-linear for some algebra $B$ with a homomorphism $B\ra A$ then we will say that the Poisson bracket is over $B$ (or $B$-linear).

\begin{constr}[Free Poisson algebra]\label{constr:free Poisson algebra}
	Let $V$ be a $B$-module. Let $T(V)$ be the tensor algebra over $B$ and $L(V)$ the free Lie algebra over $B$ on $V$ correspondingly. Then $T(V)$ is the universal envelopping algebra of $L(V)$. Viewed this way, $T(V)$ acquires a PBW-filtration $F_*^{PBW}T(V)$, for which $\gr_*^{PBW}T(V)\simeq \Sym^{*}_{B} V$. One can also consider the corresponding Rees algebra\footnote{In \cite{bk} $Q(V)$ is called the algebra of \textit{quantized polynomials} in $V$, hence the notation. } $Q(V)\coloneqq  \oplus_i F_i^{PBW}T(V)\cdot h^i$ corresponding to $F_*^{PBW}T(V)$; this is an $h$-torsion free $B[h]$-algebra such that $Q(V)[h^{-1}]\simeq T(V)[h,h^{-1}]$ and $Q(V)/h\simeq \gr_*^{PBW}T(V)$. Since $[F_i^{PBW}T(V),F_j^{PBW}T(V)]\subset F_{i+j-1}^{PBW}T(V)$ one has $[Q(V),Q(V)]\subset h\cdot Q(V)$ and there is a natural Poisson bracket on $\gr_*^{PBW}T(V)$ defined by
	$$
	h\cdot \{f,g\}\coloneqq [\tf,\tg] \mod h^2
	$$
	where $\tf$ and $\tg$ are some lifts of $f,g\in \gr_*^{PBW}T(V)\simeq Q(V)/h$ to $Q(V)$.
	
	The resulting Poisson bracket on $\gr_*^{PBW}T(V)\simeq \Sym^{*}_{B} L(V)$ is identified with the Kostant-Kirillov bracket, namely the unique bracket that on linear functions $z,w\in L(v)\subset \Sym^{*}_{B} L(V)$ is given by $\{z,w\}\coloneqq [z,w]\in L(V)$. We call $\Sym^{*}_{B} L(V)$ with the above bracket the \textit{free Poisson algebra} on $V$ and denote it $\mr{Pois}(V)$. 
	
	This is justified by the universal property: namely, any $B$-linear map $f\colon V\ra A$ to a Poisson algebra $A$ extends to a Poisson algebra homomorphism $\mr{Pois}(V)\ra A$. Indeed, $f$ extends to a Lie algebra map $f'\colon L(V)\ra A$ with respect to the Poisson bracket on $A$, and then we can extend $f'$ to an algebra map $\Sym^{*}_{B} L(V)\ra A$. Since the Poisson bracket on $\Sym^{*}_{B} L(V)$ is uniquely defined by its restriction to $L(V)$ we get that this map is automatically Poisson. In particular, for any Poisson algebra $A$ there is a natural Poisson algebra map $\mr{Pois}(A)\ra A$.
\end{constr}

\begin{rem}\label{rem:Poisson structure as a module over a monad}
	The construction $V\mapsto \mr{Pois}(V)$ defines a monad on $B$-modules. Indeed, we have a natural map $V\ra \mr{Pois}(V)$ and a natural map $\mr{Pois}(\mr{Pois}(V))\ra \mr{Pois}(V)$ induced by the Poisson structure on $\mr{Pois}(V)$ that endow $\mr{Pois}(-)$ with a structure of a monad. Then, using the morphism $\mr{Pois}(A)\ra A$ one can define Poisson $B$-algebras simply as modules over $\mr{Pois}(-)$.
\end{rem}

The idea of a restricted Poisson structure on $A$ is that together with a Poisson bracket one should include a restricted $p$-power operation $-^{[p]}\colon A\ra A$ that would turn $(A,\{-,-\})$ in a restricted Lie algebra. However, the natural question is how $-^{[p]}$ should interact with the mulptiplication on $A$. This is what we will try to explain in the rest of this section. First, motivated by Construction \ref{constr:free Poisson algebra} we can build the free restricted Poisson algebra:
\begin{constr}[Free restricted Poisson algebra: part I] \label{constr:free restricted Poisson algebra} Let $V$ be a $B$-module and consider the free restricted Lie algebra $L_{\mathsf{rest}}(V)$ on $V$. The underlying $B$-module of $L_{\mathsf{rest}}(V)$ has a natural filtration $L_{\mathsf{rest}}(V)^{\le *}$ with $L_{\mathsf{rest}}(V)^{\le -1}=0$, $L_{\mathsf{rest}}(V)^{\le 0}\simeq L(V)$, and $L_{\mathsf{rest}}(V)^{\le i+1}$ is inductively defined as $L_{\mathsf{rest}}(V)^{\le i} + (L_{\mathsf{rest}}(V)^{\le i})^{[p]}$, where by the Jacobson's formula the latter expression is a well-defined $B$-module. One has $\gr_*(L_{\mathsf{rest}}(V))\simeq \oplus_{i\ge 0}L(V)^{(i)}$, where $L(V)^{(i)}\coloneqq L(V)\otimes_{B,F_B^i} B$ is the {$i$'th} Frobenius twist. 
	
	We can consider the corresponding universal enveloping algebra and its Rees algebra $Q_{\mathsf{rest}}(V)\coloneqq \oplus_i F_i^{PBW}U(L_{\mathsf{rest}}(V))$ with respect to the PBW-filtration. One can then define the free restricted Poisson algebra $\mr{Pois}_{\mathsf{rest}}(V)$ as $Q_{\mathsf{rest}}(V)/h\simeq \Sym L_{\mathsf{rest}}(V)$. As in Construction \ref{constr:free Poisson algebra} this is naturally a Poisson algebra via the Lie bracket on $L_{\mathsf{rest}}(V)$. However, we also need to endow $\Sym L_{\mathsf{rest}}(V)$ with a natural restricted structure. For this we need a little more preparation.
\end{constr}

Let $L$ be a restricted Lie algebra over $B$ (e.g. $L_{\mathsf{rest}}(V)$), and consider its universal enveloping algebra $U(L)$. Let us denote by $U_h(L)\coloneqq \oplus_n F_n^{PBW} U(L)\cdot h^n$ its Rees algebra with respect to the PBW-filtration. $L$ embeds in $U_h(L)$ as $h\cdot L\subset h\cdot F_1^{PBW}U(L)\subset U_h(L)$ and generates $U_h(L)$ over $B[h]$.

Consider the map $\theta\colon L\ra U_h(L)$ which sends $x\in L$ to $x^p-h^{p-1}x^{[p]}$. Note that it is $F_B$-linear.

\begin{lm}\label{lem:central map to U(g)}
	$\theta$ is additive and its image lands in the center $Z(U_h(L))\subset U_h(L)$.
\end{lm}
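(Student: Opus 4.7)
The plan is to reduce the statement to classical facts about restricted enveloping algebras by realizing $U_h(L)$ as a graded subalgebra of $U(L)[h]$ and unwinding what $\theta(x)$ literally is in this larger ring. Under the embedding $L \hookrightarrow U_h(L)$ described in the text, an element $x \in L$ corresponds to $xh$ (degree $1$) and $x^{[p]}$ corresponds to $x^{[p]}h$ (again degree $1$). Thus
\[
\theta(x) \;=\; (xh)^p - h^{p-1}\cdot(x^{[p]}h) \;=\; \bigl(x^p - x^{[p]}\bigr)\,h^p,
\]
viewed inside $U(L)[h]$, where $x^p$ denotes the $p$-th power in $U(L)$. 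So it suffices to show that the map $\xi\colon L \to U(L)$, $x \mapsto x^p - x^{[p]}$, is additive and lands in $Z(U(L))$; then $\theta = \xi(-)\cdot h^p$ inherits both properties in $U_h(L) \subset U(L)[h]$.

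For additivity, I would apply Jacobson's formula (Lemma~\ref{lem:Jacobson's formula}) inside the associative algebra $U(L)$: it gives
\[
(x+y)^p \;=\; x^p + y^p + \sum_{i=1}^{p-1} L_i(x,y),
\]
where the $L_i(x,y)$ are the specific Lie polynomials in $x,y$ described in Lemma~\ref{lem:Jacobson's formula}. On the other hand, axiom (3) of the definition of a restricted Lie algebra (Definition~\ref{def:restricted Lie algebra}) says that in $L$ itself,
\[
(x+y)^{[p]} \;=\; x^{[p]} + y^{[p]} + \sum_{i=1}^{p-1} L_i(x,y),
\]
with the \emph{same} Lie polynomials (this is in fact the reason for the form of axiom (3)). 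Subtracting the two identities --- noting that both sides use only the Lie bracket, so the evaluations of $L_i$ in $L$ and in $U(L)$ agree --- yields $\xi(x+y) = \xi(x) + \xi(y)$, and hence $\theta(x+y) = \theta(x) + \theta(y)$.

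For centrality, since $L$ generates $U(L)$ as an algebra and the center passes to $U(L)[h]$ and restricts to $U_h(L)$, it is enough to show that $[\xi(x), y] = 0$ in $U(L)$ for every $y \in L$. Applying Remark~\ref{rem:ad^p} to the associative algebra $U(L)$ gives $\ad(x)^p = \ad(x^p)$, so
\[
[x^p, y] \;=\; \ad(x^p)(y) \;=\; \ad(x)^p(y).
\]
Axiom (2) of Definition~\ref{def:restricted Lie algebra} gives the companion identity $[x^{[p]}, y] = \ad(x)^p(y)$ in $L$, which also holds in $U(L)$ since the embedding $L \hookrightarrow U(L)$ is compatible with brackets. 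Subtracting yields $[x^p - x^{[p]}, y] = 0$, which is exactly what we need.

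The main (minor) point to be careful about is the bookkeeping of the $h$-factor implicit in the embedding $L \hookrightarrow U_h(L)$; once one writes $\theta(x) = (x^p - x^{[p]})h^p$ inside $U(L)[h]$, no genuinely new difficulty is introduced by the Rees/graded structure, and both claims reduce to the classical statements that $x \mapsto x^p - x^{[p]}$ is additive (Jacobson + axiom (3)) and central (axiom (2) + $\ad(x)^p=\ad(x^p)$) in the enveloping algebra of a restricted Lie algebra.
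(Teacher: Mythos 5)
Your proof is correct and follows essentially the same route as the paper's: both arguments rest on Jacobson's formula together with axiom (3) for additivity, and on axiom (2) combined with $\ad(a)^p=\ad(a^p)$ for centrality. The only difference is cosmetic — you factor $\theta(x)=(x^p-x^{[p]})h^p$ inside $U(L)[h]$ and reduce to the classical statement in $U(L)$, whereas the paper tracks the powers of $h$ directly in $U_h(L)$ via $[x,y]=h\,[x,y]_L$; the underlying computation is identical.
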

\begin{proof}
	We will denote by $[-,-]_L$ the Lie bracket in $L$ to distinguish it from the one in $U_h(L)$. The additivity follows from the Jacobson's formula (see Lemma \ref{lem:Jacobson's formula} and part(2) of Definition \ref{def:restricted Lie algebra}). For the statement about the center, it is enough to show that $[\theta(x),y]=0$ for $y\in L$. Note that for $x,y\in L$ we have $[x,y]=h\cdot [x,y]_L$. Thus for $y\in L$ we have $[x^p,y]=\ad(x)^p(y)=h^p\ad_L(x)^p(y)$. On the other hand, $[x^{[p]},y]=h\ad_L(x)^p(y)$. This shows that $[\theta(x),y]=0$.
\end{proof}

The map from \ref{lem:central map to U(g)} gives a map of $B$-algebras
$$
\theta\colon \Sym L^{(1)}\ra Z(U_h(L)). 
$$
Let $-^{(1)}\colon L\ra L^{(1)}$ be the natural map sending $x$ to $x\otimes 1$. Consider also the map $p\colon Q_{\mathsf{rest}}(V)\twoheadrightarrow \mr{Pois}_{\mathsf{rest}}(V)\simeq Q_{\mathsf{rest}}(V)/h$ given by reduction modulo $h$. Note that given $x\in \mr{Pois}_{\mathsf{rest}}(V)$ and any lift $\tx\in Q_{\mathsf{rest}}(V)$ the element $(\tx)^p\in Q_{\mathsf{rest}}(V)$ is well-defined modulo $h^p$. Indeed, by Jacobson's formula
$$
(\tx+hy)^p=\tx^p+\sum_{i=1}^{p-1}{L_i}(\tx,hy)\pmod{h^p}, 
$$
but each ${L_i}(\tx,hy)$ is Lie polynomial of total Lie degree\footnote{Meaning that it is expressed as a sum of {terms each involving a composition of }$p-1$ commutators.} $p-1$ and (see the explicit description of $L_i$ in Lemma \ref{lem:Jacobson's formula}) $L_i(\tx,hy)=h^iL_i(\tx,{y})$. Thus $
(\tx+hy)^p=\tx^p\mod{h^p}. 
$ 

Then one can reconstruct the restricted structure on $L$ in terms of $\theta$, namely 
$$h^{p-1}x^{[p]}= \tilde{x}^p- \theta(x\twt) \pmod{h^p}.$$
Under the identification $h^{p-1}Q_{\mathsf{rest}}(V)/h^p\simeq Q_{\mathsf{rest}}(V)/h$  this defines $x^{[p]}$ uniquely. The idea now is to define the restricted structure on the whole of $\Sym L_{\mathsf{rest}}(V)$ in a similar manner using the map $\theta$. 

\begin{lm}\label{lem:restricted structure}\begin{enumerate}
		\item For any $x\in \mr{Pois}_{\mathsf{rest}}(V)$ and any lift $\tx\in Q_{\mathsf{rest}}(V)$ the element $\tx^p-\theta(x\twt)$ is divisible by $h^{p-1}$.
		\item The map $-^{[p]}\colon \mr{Pois}_{\mathsf{rest}}(V)\ra \mr{Pois}_{\mathsf{rest}}(V)$ defined by 
		$$
		h^{p-1}x^{[p]}= \tilde{x}^p- \theta(x\twt) \pmod{h^p}
		$$
		endows $\mr{Pois}_{\mathsf{rest}}(V)\simeq \Sym L_{\mathsf{rest}}(V)$ with a restricted Lie algebra structure that agrees with the one on $ L_{\mathsf{rest}}(V)$.
	\end{enumerate}
	
\end{lm}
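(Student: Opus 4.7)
The strategy is to construct $-^{[p]}$ on $\mr{Pois}_{\mathsf{rest}}(V)=\Sym L_{\mathsf{rest}}(V)$ through the stated formula by comparing two $F_B$-semilinear maps $\Sym L\to Q/h^{p-1}Q$ (with $Q:=Q_{\mathsf{rest}}(V)$): the ``quantum Frobenius'' $\Phi(x):=\tx^p\bmod h^{p-1}Q$ (for any lift $\tx$) and the reduction $\ol\Theta(x):=\theta(x\twt)\bmod h^{p-1}Q$ of the central ring map from Lemma~\ref{lem:central map to U(g)}. First I would show that $\tx^p\bmod h^pQ$ depends only on $x:=\tx\bmod h$: if $\tx'=\tx+hz$, Jacobson's formula (Lemma~\ref{lem:Jacobson's formula}) gives $\tx'^p-\tx^p=(hz)^p+\sum_{i=1}^{p-1}L_i(\tx,hz)$; since $[Q,Q]\subseteq hQ$, each of the $p-1$ nested brackets in $L_i$ contributes a factor of $h$ and each of the $i$ substitutions of $hz$ contributes another, so $L_i(\tx,hz)\in h^{p-1+i}Q\subseteq h^pQ$, and of course $(hz)^p\in h^pQ$. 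The same Lie-polynomial bound, applied now modulo $h^{p-1}Q$, shows that $\Phi$ is additive, and $F_B$-semilinearity $(b\tx)^p=b^p\tx^p$ is automatic.

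Granted the crucial \emph{multiplicativity} $(\tx\ty)^p\equiv\tx^p\ty^p\pmod{h^{p-1}Q}$ (the main obstacle, below), $\Phi$ becomes an $F_B$-semilinear ring homomorphism. On generators $x\in L$, the natural lift is $\tx=x$ under the embedding $L\hookrightarrow hL\subset Q$, and the defining formula $\theta(x\twt)=x^p-h^{p-1}x_L^{[p]}$ gives directly $\tx^p-\theta(x\twt)=h^{p-1}x_L^{[p]}\in h^{p-1}Q$, so $\Phi$ agrees with $\ol\Theta$ on $L$. Since $L$ generates $\Sym L$ as an $F_B$-algebra, this forces $\Phi=\ol\Theta$ on all of $\Sym L$, which is exactly part~(1); simultaneously it identifies $x^{[p]}$ with $x_L^{[p]}$ for $x\in L$, establishing the agreement clause in part~(2).

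The restricted Lie algebra axioms of part~(2) unwind from $h^{p-1}x^{[p]}\equiv\tx^p-\theta(x\twt)\pmod{h^pQ}$: semilinearity $(bx)^{[p]}=b^px^{[p]}$ follows from $(b\tx)^p=b^p\tx^p$ and $\theta((bx)\twt)=b^p\theta(x\twt)$; the restricted Jacobson formula $(x+y)^{[p]}=x^{[p]}+y^{[p]}+\sum L_i(x,y)$ follows by applying the formula to $(\tx+\ty)^p$, since each $L_i(\tx,\ty)\in h^{p-1}Q$ descends modulo $h$ to the Poisson Lie polynomial $L_i(x,y)$; and for $[x^{[p]},y]=\ad(x)^p(y)$, commuting the defining relation with a lift $\ty$ and invoking centrality of $\theta(x\twt)$ (Lemma~\ref{lem:central map to U(g)}) yields $h^{p-1}[x^{[p]},\ty]\equiv[\tx^p,\ty]=\ad(\tx)^p(\ty)\pmod{h^pQ}$ by Remark~\ref{rem:ad^p}, which upon reducing mod $h$ transforms iterated commutators into iterated Poisson brackets.

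The main obstacle is the multiplicativity claim $(\tx\ty)^p-\tx^p\ty^p\in h^{p-1}Q$. By the well-definedness above, it suffices to verify it on products of generators: for $\tx=l_1\cdots l_n$ and $\ty=m_1\cdots m_k$ with $l_i,m_j\in L$, repeated use of $[l_i,m_j]=h\,[l_i,m_j]_L$ in $Q$ shows that each error term in the expansion of $(\tx\ty)^p-\tx^p\ty^p$ contains at least one commutator, hence at least one factor of $h$; the full depth $p-1$ emerges only after aggregating error terms by the positions of the swaps and invoking characteristic-$p$ cancellations (beginning with $\binom{p}{2}\equiv 0\pmod p$ for $p$ odd and propagating to lower PBW symbols). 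Equivalently, one must establish in the ordinary enveloping algebra the identity $(lm)^p-l^pm^p\in F_{p+1}U(L_{\mathsf{rest}}(V))$ for $l,m\in L$, a strengthening of the naive $F_{2p-1}$ bound.
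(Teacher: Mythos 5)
Your overall strategy is the same as the paper's: compare the ``$p$-th power of a lift'' map $\Phi\colon x\mapsto \tx^p$ with the central map $\theta$ as Frobenius-semilinear algebra homomorphisms into $Q_{\mathsf{rest}}(V)/h^{p-1}$, observe that they agree on $L_{\mathsf{rest}}(V)$ (where $\tx^p-\theta(x\twt)=h^{p-1}x^{[p]}$ by the very definition of $\theta$), conclude that they agree everywhere because $L_{\mathsf{rest}}(V)$ generates $\Sym L_{\mathsf{rest}}(V)$, and then extract the three axioms of Definition~\ref{def:restricted Lie algebra} from Jacobson's formula and the centrality of $\theta$. Your verifications of the well-definedness of $\tx^p$ modulo $h^p$, of additivity, and of the restricted-Lie axioms all match the paper's argument (one small imprecision: in the step $h^{p-1}[\widetilde{x^{[p]}},\ty]\equiv[\tx^p,\ty]$ both sides already lie in $h^pQ$, so the congruence must be taken modulo $h^{p+1}$, not $h^{p}$, in order to compare the coefficients of $h^p$).

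The one genuine gap is exactly where you flag it: the multiplicativity $(\widetilde{xy})^p\equiv\tx^p\,\ty^p\pmod{h^{p-1}}$. Everything in your proof of part (1) funnels through this claim, and you do not prove it; the closing sketch (``aggregating error terms by the positions of the swaps and invoking characteristic-$p$ cancellations beginning with $\binom{p}{2}\equiv 0$'') is not an argument one could check, and the naive commutator count gives only divisibility by $h$, as you note. The paper does not prove this step either: it disposes of it by citing \cite[Lemma 1.3]{bk}, which is precisely the assertion that $x\mapsto\tx^p$ is an algebra homomorphism $Q/h\to Q/h^{p-1}$ (equivalently your reformulation $(lm)^p-l^pm^p\in F_{p+1}^{PBW}U$; compare Construction~\ref{rem:polynomial P}, where the same citation produces the polynomial $P(x,y)$). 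So you have correctly isolated and correctly restated the key lemma, but as written the proposal is incomplete at this point: either invoke the Bezrukavnikov--Kaledin lemma explicitly or supply an actual proof of it.
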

\begin{proof}
	Consider the $B[h]/h^{p-1}$-algebra given by $Q_{\mathsf{rest}}(V)/h^{p-1}$. By the discussion above concerning the Jacobson's formula, $x\mapsto \tilde x^p$ produces a well-defined map $\mr{Pois}_{\mathsf{rest}}(V)\simeq Q_{\mathsf{rest}}(V)/h\ra Q_{\mathsf{rest}}(V)/h^{p-1}$. By \cite[Lemma 1.3]{bk} this is in fact an algebra homomorphism. Moreover, it agrees with $\theta$ on $L_{\mathsf{rest}}$ and since $\mr{Pois}_{\mathsf{rest}}(V)\coloneqq \Sym L_{\mathsf{rest}}(V)$ it agrees with $\theta$ on the whole $\mr{Pois}_{\mathsf{rest}}(V)$. We get that $\tx^p-\theta(x\twt)=0$ modulo $h^{p-1}$ for any $x\in \mr{Pois}_{\mathsf{rest}}(V)$. 
	
	For (2) we need to check the defining properties of restricted Lie algebra (Definition \ref{def:restricted Lie algebra}). First of all, $-^{[p]}$ is Frobenius-linear since $x\ra \tilde x^p$ and $\theta$ are. Note that since $\{x,y\}=h[\tilde x,\tilde y]\mod h^2$ one has 
	$$
	\ad_{[-,-]}(\tilde x^p)(\tilde y)=\ad_{[-,-]}(\tilde x)^p(\tilde y)=h^p \cdot \ad_{\{-,-\}}(x)^p(y)\mod h^{p+1}. 
	$$ 
	Since $\theta(x^{(1)})$ is central applying $[-,\tilde y]$ to the expression for $x^{[p]}$ we get that 
	$$
	h^p\ad_{\{-,-\}}(x^{[p]})(y)=h^p \ad_{\{-,-\}}(x)^p(y)\mod h^{p+1}. 
	$$
	Finally, the formula for $(x+y)^{[p]}$ follows from the Jacobson formula. Namely, $\theta$ is a homomorphism, so we have $\theta(x^{(1)}+y^{(1)})=\theta(x^{(1)})+\theta(y^{(1)})$, and 
	$$
	h^{p-1}\cdot ((x+y)^{[p]}-x^{[p]}-y^{[p]})= (\tilde x+\tilde y)^p - \tilde x^p-\tilde y^p \mod h^p
	$$
	where the right hand side is given by $h^{p-1}\sum_i L_i(x,y)$ (\ref{lem:Jacobson's formula}, here since each $L_i$ is a Lie polynomial of degree $p-1$ we have $L_i(\tilde x,\tilde y)=h^{p-1}L_i(x,y)$).
\end{proof}

\begin{constr}[Free restricted Poisson algebra: part II]\label{constr:free restricted Poisson algebra 2} Continuing Construction \ref{constr:free restricted Poisson algebra}, via Lemma \ref{lem:restricted structure} we can now endow $\mr{Pois}_{\mathsf{rest}}(V)$ not only with a Poisson bracket, but also a $p$-power operation $-^{[p]}$ such that $(\mr{Pois}_{\mathsf{rest}}(V),\{-,-\},-^{[p]})$ becomes a restricted Lie algebra. This allows to consider $\mr{Pois}_{\mathsf{rest}}(\mr{Pois}_{\mathsf{rest}}(V))$ and a homomorphism of algebras 
	$$
	\mr{Pois}_{\mathsf{rest}}(\mr{Pois}_{\mathsf{rest}}(V))\ra \mr{Pois}_{\mathsf{rest}}(V)
	$$
	defined (via the identification $\mr{Pois}_{\mathsf{rest}}(\mr{Pois}_{\mathsf{rest}}(V))\simeq \Sym L_{\mathsf{rest}}(\mr{Pois}_{\mathsf{rest}}(V))$) by sending $L_{\mathsf{rest}}(\mr{Pois}_{\mathsf{rest}}(V))$ to $\mr{Pois}_{\mathsf{rest}}(V)$ using the structure of restricted Lie algebra, and then extending by multiplicativity. We also have an embedding $V\subset L_{\mathsf{rest}}(V)\subset \mr{Pois}_{\mathsf{rest}}(V)$ which with the map above endows $\mr{Pois}_{\mathsf{rest}}$ with a structure of a monad on the category of $B$-modules.
\end{constr}

\begin{df}[Restricted Poisson algebra]\label{def:restricted Poisson algebra}
	A \textit{restricted Poisson algebra} $A$ over $B$ is a module over the monad given by $V\mapsto \mr{Pois}_{\mathsf{rest}}(V)$.
\end{df}

\begin{rem}We note that there are a lot of structures underlying the restricted Poisson structure. Let $A$ be a restricted Poisson $B$-algebra.
	\begin{itemize}
		\item There is a natural map of monads $\mr{Pois}(-)\ra \mr{Pois}_{\mathsf{rest}}(-)$ induced by the embedding $\Sym^*(L(V))\ra \Sym^*(L_{\mathsf{rest}}(V))$, which in turn is induced by $L(V)\ra L_{\mathsf{rest}}(V)$ for each $B$-module $V$. Thus by Remark \ref{rem:Poisson structure as a module over a monad}, $A$ has a natural Poisson algebra structure.
		\item There is a natural map of monads $L_{\mathsf{rest}}(V)\ra \mr{Pois}_{\mathsf{rest}}(-)$ induced by the embedding $L_{\mathsf{rest}}(V)\ra \Sym^*(L_{\mathsf{rest}}(V))$ for each $B$-module $V$. This gives a restricted Lie algebra structure on $A$.
		\item There is a natural diagram of monads
		$$
		\xymatrix{L(-) \ar[r]\ar[d]& \mr{Pois}(-)\ar[d]\\
			L_{\mathsf{rest}}(-)\ar[r]&\mr{Pois}_{\mathsf{rest}}(-)}
		$$
		given by the above maps and embeddings $L(V)\ra \Sym^*(L(V))$ and $L(V)\ra L_{\mathsf{rest}}(V)$ for each $B$-module $V$ correspondingly, which shows that the Lie algebra structure on $A$ given by the above Poisson structure and restricted Lie structure on $A$ agree.
	\end{itemize}
\end{rem}

Finally, let us describe more explicitly how $(-)^{[p]}$ in $\mr{Pois}_{\mathsf{rest}}(V)$ interects with multiplication. 
\begin{constr}[Polynomial $P(x,y)$ and formula for $(xy)^{[p]}$.]\label{rem:polynomial P}
	Consider $V\simeq Bx\oplus By$ a free $B$-module of rank 2 with basis given by $x$ and $y$. Consider the (non-restricted) algebras $Q(V)$ and $\mr{Pois}(V)$. Then by \cite[Equation 1.3]{bk} (which in turn follows from \cite[Lemma 1.3]{bk}) the expression $(xy)^p-x^py^p$ is divisible by $h^{p-1}$. One can the define a Poisson polynomial $P(x,y)\in \mr{Pois}(x,y)\coloneqq \mr{Pois}(V)$ by 
	$$
	h^{p-1}P(x,y)=(xy)^p-x^py^p \pmod {h^pQ(V)}.
	$$
	Then, for the restricted structures in $Q_{\mathsf{rest}}(V)$ we have an equality
	\begin{align*}
		h^{p-1}(xy)^{[p]}\coloneqq (xy)^p&-\theta(xy)=x^py^p+h^{p-1}P(x,y)-(x^p-h^{p-1}x^{[p]})(y^p-h^{p-1}y^{[p]})=\\
		&=h^{p-1}(x^py^{[p]} +x^{[p]}y +P(x,y))\pmod{h^p},
	\end{align*}
	which shows that $(xy)^{[p]}=x^py^{[p]} +x^{[p]}y +P(x,y)\in \mr {Pois}_{\mathsf{rest}}(V)$. Note that for any $V$ this formula uniquely defines the restricted structure on $\mr {Pois}_{\mathsf{rest}}(V)\simeq \Sym^*(L_{\mathsf{rest}}(V))$ by induction on the degree.

	This way for any restricted Poisson algebra $A$ and any two elements $x,y\in A$ we have
	\begin{equation}\label{eq:restricted Poisson algebra}
		(xy)^{[p]}=x^py^{[p]} +x^{[p]}y +P(x,y),
	\end{equation}
	where the value of $P(x,y)\in A$ is computed via the map $\mr{Pois}(A)\ra A$. In fact, since the above formula defines the restricted structure on a free restricted Poisson algebra uniquely, one can give an alternative definition of a restricted Poisson algebra as a Poisson $B$-algebra $A$ with a restricted $p$-power operation $-^{[p]}$ such that $(A,\{-,-\},-^{[p]})$ is restricted Lie algebra and that the relation (\ref{eq:restricted Poisson algebra}) is satisfied for any $x,y\in A$. This is how it is defined in \cite{bk} (namely, see \cite[Definiton 1.8]{bk}).
	
\end{constr}
Equation \ref{eq:restricted Poisson algebra} is hard to check in practice in the original form essentially because the definition of $P(x,y)$ is complicated (and not very explicit). But, at least when $p>2$ there is an equivalent condition that is easier to check:

\begin{rem}\label{rem:easier check for restricted Poisson}
	Let $p>2$. Let $x\in \mr{Pois}(V)$ be some element. Then, for any lift $\tx\in Q(V)$ we have $(\tx^2)^p=\tx^p\cdot \tx^p$ and so $P(x,x)=0$. From this and (\ref{eq:restricted Poisson algebra}) we get that
	\begin{equation}\label{eq:simpler restricted structure}
		(x^2)^{[p]}=2x^{[p]}x^p.	
	\end{equation}
	
	Note that $xy=\frac{1}{4}((x+y)^2-(x-y)^2)$.  Thus, applying the restricted power, we get
	%Using Jacobson's formula we then get that 
	$$
	(xy)^{[p]}=\tfrac{1}{4}((x+y)^2-(x-y)^2)^{[p]}
	$$
	which, using Jacobson's formula, gives an expression for $P(x,y)$ in terms of $-^{[p]}$ applied to squares and some Poisson polynomial in $x$, $y$, $x^{[p]}$ and $y^{[p]}$. This shows that checking Equation (\ref{eq:restricted Poisson algebra}) reduces to checking that $-^{[p]}$ together with the Poisson bracket give a restricted structure in the Lie algebra structure and that Equation (\ref{eq:restricted Poisson algebra}) holds for all $x$. 
	
	In particular, $(A,\{-,-\},-^{[p]})$ defines a restricted Poisson algebra \ref{eq:restricted Poisson algebra} if and only if \begin{itemize}
		\item $(A,\{-,-\})$ is a Poisson algebra;
		\item $(A,\{-,-\},-^{[p]})$ defines a restricted Lie algebra;
		\item $(x^2)^{[p]}=2x^{[p]}x^p$ for any $x\in A$.
	\end{itemize} This simplification of \cite[Definiton 1.8]{bk} was also previously observed in \cite{bao-ye-zhang}.
\end{rem}

\subsection{Restricted symplectic geometry}\label{ssec:some symplectic}
\begin{df}
	A \textit{symplectic scheme} $(X,\omega)$ over $S$ is a pseudo-smooth $S$-scheme $X$ with a non-degenerate globally defined closed 2-form $\omega\in H^0(X,\Omega^2_X)$.
\end{df}

Since $\omega$ is non-degenerate it induces an identification $\mc T_X\xra{\sim} \Omega^1_X$ via $\xi\mapsto \iota_\xi\omega$, where $\iota_\xi$ is the contraction with $\xi$. This way $\omega$ also defines a section $\omega^{-1}$ of $\wedge^2_{\mc O_X}\mc T_X$, which then gives a Poisson bracket on $\mc O_X$: $\{f,g\}\coloneqq \langle\omega^{-1},df\wedge dg\rangle$. The property that $d\omega=0$ translates to the Jacobi identity for $\{-,-\}$: in particular $(\mc O_X,\{-,-\})$ defines a sheaf of Lie algebras over $\mc O_S$. 

Having a Poisson bracket $\{-,-\}$, for any $f\in\mc O_X$ the bracket $\{f,-\}$ defines a derivation $H_f\colon \mc O_X\ra \mc O_X$. This gives a map of sheaves $H_-\colon \mc O_X\ra \mc T_X$ locally sending $f\mapsto H_f$.

The following definition is classical:
\begin{df}\label{def:hamiltonian vector fields}
	A derivation $\xi\in H^0(X,\Omega^1)$ is called \textit{Hamiltonian} if $\xi=H_f$ for some $f\in H^0(X,\mc O_X)$.
\end{df} 
\begin{rem}\label{rem:Hamiltonian vector fields}
	Tracing through the definitions one can see that the equation $\xi=H_f$ is in fact equivalent to $\iota_\xi\omega=df$. Since $\omega$ is non-degenerate one sees from this that Hamiltonian vector fields locally generate $\mc T_X$ over $\mc O_X$.
\end{rem}

One of the main ideas in \cite{bk} is that in char $p$ there is a natural enhancement of the usual Poisson structure called restricted Poisson structure (see Definition \ref{def:restricted Poisson algebra}). Namely, together with the Poisson bracket $\{-,-\}$ on $X$ they ask for a restricted $p$-th power operation $-^{[p]}\colon \mc O_X\ra \mc O_X$ that turns $(\mc O_X,\{-,-\})$ into a restricted Lie algebra compatibly with such structure on $\mc T_X$ (meaning that $H_{f^{[p]}}=H_f^{[p]}$). It should also interact with multiplication in a particular way, namely for any $f,g\in\mc O_X$ we should have 
\begin{equation}\label{eq:restricted symplectic scheme}
	(fg)^{[p]}=f^pg^{[p]} +f^{[p]}g^p +P(f,g),
\end{equation}
where $P(x,y)$ is a certain Poisson polynomial (see Remark \ref{rem:polynomial P}). By Remark \ref{rem:easier check for restricted Poisson}, if $p>2$, this is also equivalent to the simpler relation $$(f^2)^{[p]}=2f^{[p]}f^p$$ for any $f\in \mc O_X$. 

In \cite[Theorem 1.11]{bk} Bezrukavnikov and Kaledin show that given a symplectic $S$-scheme $(X,\omega)$ Hamiltonian vector fields are closed under the restricted $p$-th power $-^{[p]}$ if and only if\footnote{Here $C\colon F_{X*}^\lp\Omega^2_{X,cl}\ra \Omega^{2,\lp}_X$ is the Cartier operator and the condition $C(\omega)=0$ is equivalent to $\omega$ being exact locally in Zariski topology.} $C(\omega)=0$. Under this assumption, a consequence of \cite[Theorem 1.12]{bk} is that restricted structures extending the Poisson bracket given by $\omega$ are in bijection with classes $[\eta]\in H^0(X,\Omega^1_X/d(\mc O_X))$ such that $d[\eta]=\omega$. Note that such classes $\eta$ form a torsor on $X^\lp$ over the vector bundle $F_{X*}^\lp\Omega^1_{X,cl}/d(F_{X*}^\lp\mc O_X)\simeq \Omega^{1,\lp}_X$.

\begin{df}\label{def:restricted symplectic scheme}
	A \textit{restricted symplectic $S$-scheme} $(X,[\eta])$ is a pair of a pseudo-smooth $S$-scheme $X$ and a section $[\eta]\in H^0(X,\Omega^1_X/d(\mc O_X))$ such that $\omega\coloneqq d\eta\in H^0(X,\Omega^2_X)$ is non-degenerate.
\end{df}

\begin{rem}\label{rem:restricted structure as 1-forms}
	Given $[\eta]$, one reconstructs the restricted structure $-^{[p]}$ via an explicit formula $f^{[p]}=H_f^{p-1}(\langle H_f,\eta\rangle) - \langle H_f^{[p]},\eta\rangle$. Also, any two restricted structures $-^{[p]_1},-^{[p]_2}$ differ by a Frobenius-differentiation of $\mc O_X$: this can be seen directly from Equation \ref{eq:restricted symplectic scheme}.
\end{rem}

The following remark will be important. 
\begin{rem}[Doing differential geometry on $X$ as a scheme over $X^\lp$ and $S$ is the same]\label{rem:restricted stucture over the Frobenius twist}
	Let $X$ be an $S$-scheme and let $X\ra X^\lp$ be the reduced Frobenius. Then we claim that providing $X$ with the structure of a restricted symplectic scheme over $S$ is the samle as providing it with the same structure over the reduced twist $X^\lp$. Indeed, first of all we have a commutative diagram
	$$
	\xymatrix{X\ar[r]\ar[dr]&X^\lp\ar[d]\\
		&S}
	$$
	which gives an exact sequence 
	$$
	F_X^{\lp*}\Omega^1_{X^\lp}\xra{F_X^{\lp*}} \Omega^1_X\ra \Omega^1_{X/X^\lp}\ra 0.
	$$
	Since the pull-back map $F_X^{\lp*}\Omega^1_{X^\lp}\ra \Omega^1_X$ is 0 we get that $\Omega^1_{X/X^\lp}\simeq \Omega^1_X$ and so $X$ is pseudo-smooth over $X^\lp$. We also have isomorphisms  $\Omega^i_{X/X^\lp}\simeq \Omega^i_X$ and, more generally of the de Rham complex $\dR_{X/X^\lp}\simeq \dR_X$. Lastly, essentially by the definition of reduced twist, $X^\lp$ for $X$ considered as an $X^\lp$-scheme and as $S$-scheme coincide (because $\mc O_X^\lp$ is defined as the image of relative Frobenius). Thus symplectic and, more generally, restricted symplectic structures on $X$ as an $X^\lp$-scheme or an $S$-scheme are the same thing.
\end{rem}
\subsection{The group scheme $G_0$ and Darboux lemma}\label{ssec: group scheme G_0}
Another insight of Bezrukavnikov and Kaledin is that restricted Poisson structures on $X$ can be interpreted in terms of ``formal geometry" and related torsors. To recall it let us introduce one more piece of notation. 
\begin{constr}[Algebra $A_0$]\label{constr:A_0}
Fix a number $d\in \mathbb N$ and let $S=\Spec \mbb F_p$. Let $V$ be a vector space scheme of dimension $d$ over $\mbb F_p$ (so $V\simeq \Spec \mbb F_p[x_1,\ldots,x_d]$) and let $W=V\oplus V^\vee$ be the total space of the cotangent bundle to $V$: thus $W$ is a symplectic vector space of dimension~$2d$. Let $y_i$ be the dual coordinates to $x_i$; one has $W\simeq \Spec \mbb F_p[x_1,\ldots,x_d,y_1,\ldots,y_d]$.  
	
	Consider the ring of functions on the Frobenius neighborhood $F_W^{-1}(\{0\})$ of $0$ in $W$ (see Example \ref{ex:reduced vs relative Frobenius}). Explicitly, put\footnote{This algebra is denoted $A$ in \cite[Section 3]{bk}.} $$A_0\coloneqq \mbb F_p[x_i,y_j]_{1\le i,j\le d}/ x_i^p=y_j^p=0.$$
	
	One has $\Omega^1_{A_0}\simeq (\oplus_{i=1}^d A_0\cdot dx_i)\oplus (\oplus_{i=1}^d A_0\cdot dy_i)$ (namely, since $dx_i^p=dy_j^p=0$ the map $\Omega^1_W\otimes_{\mbb F_p[x_i,y_j]} A_0\ra \Omega^1_{A_0}$ is an isomorphism). In particular, $A_0$ is pseudo-smooth over $\mbb F_p$. $\Spec A_0$ endowed with a natural restricted symplectic structure given by the 1-form 
	$$
	\eta_{\mathsf{can}}=\sum_{i=1}^d y_idx_i\in \Omega^1_{A_0}.
	$$
	Indeed, the 2-form $\omega\coloneqq d\eta_{\mathsf{can}}=\sum_i dy_i\wedge dx_i\in \Omega^2_{A_0}$ is easily seen to be non-degenerate.
	
	Similarly, for any $\mbb F_p$-algebra $R$ we can consider an $R$-algebra
	$$A_0(R)\coloneqq A_0\otimes_{\mbb F_p}R.$$
	We have $\Omega^1_{A_0(R)/R}\simeq \Omega^1_{A_0}\otimes_{\mbb F_p}R$ and via this identification the pair $(\Spec A_0(R),[\eta_{\mathsf{can}}\otimes 1])$ defines a restricted symplectic $R$-scheme (see Definition \ref{def:restricted symplectic scheme}). Further we will denote $\eta_{\mathsf{can}}\otimes 1$ by $\eta_{\mathsf{can}}$ for convenience.
	
\end{constr}

\begin{constr}[Group scheme $G_0$]\label{constr:G_0}
	Consider the functor $$
	A_0\colon R\mapsto A_0(R)$$
	on $\mbb F_p$-algebras. Let $\ul{\mr{Aut}}(A_0)$ be the functor 
	$$
	\ul{\mr{Aut}}(A_0)\colon R\mapsto \mr{Aut}_{R}(A_0(R))\simeq \mr{Iso}_R(\Spec A_0(R),\Spec A_0(R))
	$$
	that sends an $\mbb F_p$-algebra $R$ to the group of $R$-linear \textit{algebra} automorphisms. Since $A_0(R)$ is a free module over $R$ one sees that $\ul{\mr{Aut}}(A_0)$ with represented by the affine group subscheme of $\GL(A_0)$ given by those linear automorphisms that preserve multiplication on $A_0$. Finally, let $G_0\subset \ul{\mr{Aut}}(A_0)$ be the functor
	$$
	R\mapsto \mr{Aut}_R^{\mathsf{rest}}((\Spec A_0(R),[\eta_{\mathsf{can}}]))
	$$
	that sends $R$ to the group of $R$-algebra automorphisms of $A_0(R)$ that preserve the restricted symplectic structure given by $\eta_{\mathsf{can}}$. It can be seen to be represented by a group subscheme of $\ul{\mr{Aut}}(A_0)$: namely, one adds equations to $\phi\in \ul{\mr{Aut}}(A_0)$ saying that $\mathsf C(\phi^*\eta_{\mathsf{can}}-\eta_{\mathsf{can}})=0$ (which is equivalent to $[\eta_{\mathsf{can}}]\in \Omega^1_{A_0(R)}/d(A_0(R))$ being preserved under $\phi$).
	
\end{constr}

\begin{rem}The group scheme $H\coloneqq \ul{\mr{Aut}}(A_0)$ is not particularly nice, for example it is not reduced. Indeed, let $I_0=(x_1,\ldots,x_d,y_1,\ldots,y_d)\subset A_0$ be the maximal ideal. Note that for a reduced $R$ any $\phi\in \mr{Aut}_{R}(A_0(R))$ preserves $I_0(R)\coloneqq I_0\otimes R$: indeed $I_0(R)$ in this case coincides with nilradical of $A_0(R)$. If $H$ were reduced this would also be true for the $\mc O(H)$-point of $H$ given by the identity map $H\xra{\mr{id}}H$. Since any $R$-point of $H$ factors through this one we would get that any $\phi\in \mr{Aut}_{R}(A_0(R))$ even for non-reduced $R$ also preserve $I_0(R)$. However, this is not true: whenever $\alpha_p(R)\neq 0$ one can consider an automorphism sending $x_i$ to $x_i+\varepsilon$ for some non-zero $\varepsilon\in \alpha_p(R)$: this gives a well defined automorphism of $A_0(R)$ that doesn't preserve $I_0(R)$. Similarly, one sees that $G_0$ is also not reduced: namely $\eta_{\mathsf{can}}$ is also preserved by the above automorphism.
\end{rem}
%\begin{constr}[Subgroup $\ul{\mr{Aut}}(A_0)^0\subset {\mr{Aut}}(A_0)$] Note that $A_0$ is a local ring. Let $I_0\subset A_0$ be the maximal ideal; one has $I_0\simeq (x_1,\ldots,x_n,y_1,\ldots, y_n)\subset A_0$. We put $$I_0(R)\coloneqq I_0\otimes_{\mbb F_p}R \subset A_0(R)$$ and define $$\ul{\mr{Aut}}(A_0)^0\colon R\mapsto \ul{\mr{Aut}}(A_0)^0(R) \subset \ul{\mr{Aut}}_R(A_0(R))$$ as the subgroup of those $R$-algebra automorphisms that preserve $I_0(R)$. This defines a subgroup scheme of $\ul{\mr{Aut}}_R(A_0(R))$.\red{is it reduced?}

%Let $\ev_0\colon A_0\ra \mbb F_p\simeq A_0/I_0$ be the ``evaluation at 0"-map that sends all $x_i$ and $y_i$ to 0. For any $\mbb F_p$-algebra $R$ this gives a map $ A_0(R)\ra R$. Composing it with $A_0\ra A_0(R)$ we get an $R$-point of $\Spec A_0$ which we will call $0$. There is a natural $\ul{\mr{Aut}}(A_0)}$-action on $\Spec A_0$ (meaning the functor represented by $A_0$ and not the underlying topological space) and $\ul{\mr{Aut}}(A_0)^0$ is exactly the stabilizer of $0\in \Spec A_0$. Moreover, the action of $\ul{\mr{Aut}}(A_0)}$ on $\Spec A_0$ is transitive: indeed, a homomorphism $\alpha\colon A_0\ra R$ is given by $2n$-tuple $\alpha(x_i),\alpha(y_i)\in \alpha_p(R)$, then one can define $\phi_\alpha\in \ul{\mr{Aut}}(A_0)}(R)$ by sending $x_i\mapsto x_i+\alpha(x_i)$ and $y_i\mapsto y_i+\alpha(y_i)$
%\end{constr}
\begin{constr}[Torsor of ``Frobenius-frames"]\label{constr:torsor of Frobenius-frames} Let $X$ be a pseudo-smooth scheme over a base scheme $S$ such that $\rk H^0(X,\Omega^1_{X/S})=2d$. Consider the algebra $A_0$ from Construction \ref{constr:A_0} for that particular $d$. Consider the reduced Frobenius map $F_X^\lp\colon X\ra X^\lp$ (Construction \ref{constr:reduced Frobenius}).
	
	One can construct a canonical $\ul{\mr{Aut}}(A_0)$-torsor $\mc M_X$ on $X^\lp$ as follows. Namely, for a map $T\ra X^\lp$ one puts
	$$
	\mc M_X(T)\coloneqq \mr{Iso}_{T}(\Spec A_0\times T, X\times_{X^\lp} T)
	$$
	This is a sheaf in flat topology on the category of schemes over $X^\lp$. Note that $\mc M_X$ has a natural action of $\ul{\mr{Aut}}(A_0)$. Moreover, flat locally on $X^\lp$ one has an isomorphism 
	$${\mc M_X|}_U\simeq \ul{\mr{Aut}}(A_0)\times U.$$ Indeed, let $U\ra X$ be a Zariski cover that trivializes $\Omega^1_{X/S}$. Then $U$ is still pseudo-smooth, so $F_U^\lp\colon U\ra U^\lp$ is faithfully flat (as well as the composite map $U\ra U^\lp \ra X^\lp$) and by Lemma \ref{lem:fiber product of reduced Frobenius with itself} below 
	$$X\times_{X^\lp} U \simeq U\times_{U^\lp} U\simeq \Spec A_0 \times U .$$ 
	This way the restriction ${\mc M_X|}_U$ is identified with the functor 
	$$
	(T\ra U)\mapsto \mr{Iso}_{T}(\Spec A_0\times T, \Spec A_0\times T)
	$$
	which is exactly $X\times \ul{\mr{Aut}}(A_0)$. This way we see that the action of $\ul{\mr{Aut}}(A_0)$ on $\mc M_X$ is effective and that the latter indeed defines an $\ul{\mr{Aut}}(A_0)$-torsor over $X'$ (in flat topology). In \cite{bk} it is called the \textit{torsor of Frobenius frames on $X$}.
\end{constr}
\begin{lm}\label{lem:fiber product of reduced Frobenius with itself}
	Let $X$ be a pseudo-smooth scheme over a base scheme $S$. Then Zariski locally on $X$ one has a natural isomorphism 
	$$
	X\times_{X^\lp} X\simeq X\times \Spec A_0.
	$$
\end{lm}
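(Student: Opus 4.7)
The plan is to reduce to the case of a smooth scheme and then to the affine space, using the local model provided by Proposition~\ref{prop:local description of pseudo-smooth}. By that proposition, after shrinking $X$ Zariski-locally we may assume $X \simeq (F_Y^{\lp})^{-1}(Z) = Z \times_{Y^{\lp}} Y$ for some smooth $S$-scheme $Y$ and a closed subscheme $Z \hookrightarrow Y^{\lp}$. Remark~\ref{rem:reduced Frobenius in the case of Frobenius neighborhood} then identifies $X^{\lp}$ with $Z$ and says the square relating $F_X^{\lp}$ and $F_Y^{\lp}$ is cartesian. In particular $F_X^{\lp}$ is the base change of $F_Y^{\lp}$ along $Z \hookrightarrow Y^{\lp}$, and a direct manipulation of fiber products yields
\[
X \times_{X^{\lp}} X \;=\; (Z\times_{Y^\lp} Y)\times_Z (Z\times_{Y^\lp}Y)\;\simeq\; Z \times_{Y^{\lp}} \bigl(Y \times_{Y^{\lp}} Y\bigr).
\]
It therefore suffices to trivialize $Y \times_{Y^{\lp}} Y$ as a $Y$-scheme (via, say, the first projection); the identification for $X$ then follows by pulling back along $X = Z\times_{Y^\lp} Y \to Y$.

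Since $Y$ is smooth, after further shrinking $Y$ we may choose étale coordinates $t_1, \ldots, t_n\colon Y \to \mbb A^n_S$, with $n = \rk \Omega^1_{Y/S} = \rk \Omega^1_{X/S}$. Because the relative Frobenius of an étale morphism is an isomorphism and $F_{\mbb A^n_S}^{\lp}=F_{\mbb A^n_S/S}$ (by Section~\ref{ssec:differential operators smooth}), the square
\[
\xymatrix{Y \ar[r]^{F_Y^{\lp}}\ar[d] & Y^{\lp} \ar[d] \\ \mbb A^n_S \ar[r]^{F_{\mbb A^n_S}^{\lp}} & (\mbb A^n_S)^{\lp}}
\]
is cartesian, reducing the computation of $Y \times_{Y^{\lp}} Y$ (as a $Y$-scheme) to that of $\mbb A^n_S \times_{(\mbb A^n_S)^{\lp}} \mbb A^n_S$.

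For $\mbb A^n_S = \Spec B[t_1, \ldots, t_n]$ we have $(\mbb A^n_S)^{\lp} = \Spec B[t_1^p, \ldots, t_n^p]$, and introducing new variables $s_i = u_i - t_i$ in the second copy the defining relations $u_i^p = t_i^p$ become, in characteristic $p$, $s_i^p = 0$. Hence
\[
\mbb A^n_S \times_{(\mbb A^n_S)^{\lp}} \mbb A^n_S \;\simeq\; \mbb A^n_S \times \Spec\bigl(\mbb F_p[s_1, \ldots, s_n]/(s_i^p)\bigr).
\]
Pulling back along $Y\to \mbb A^n_S$ and then along $X\to Y$ gives the desired trivialization. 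In the setting of Construction~\ref{constr:torsor of Frobenius-frames} we have $n = 2d$, and renaming $s_1,\ldots,s_d,s_{d+1},\ldots,s_{2d}$ as $x_1,\ldots,x_d,y_1,\ldots,y_d$ identifies the last factor with $\Spec A_0$. I do not foresee a genuine obstacle: each step is a formal consequence of facts already established in the excerpt, and the only mildly delicate point is keeping track of which of the two projections one uses to view $Y\times_{Y^\lp}Y$ as a $Y$-scheme so that the subsequent base changes compose correctly.
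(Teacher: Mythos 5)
Your proof is correct and follows essentially the same route as the paper: reduce to the local model $X\simeq Z\times_{Y^\lp}Y$ via Proposition~\ref{prop:local description of pseudo-smooth}, pass to the smooth case, and then to the affine space via étale coordinates. You merely spell out the fiber-product manipulation and the explicit change of variables $s_i=u_i-t_i$ with $s_i^p=0$ that the paper leaves as "easy to construct explicitly."
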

\begin{proof}
	When $X$ is smooth over $S$, $F_{X}^\lp\colon X\ra X^\lp$ is given by the relative Frobenius $F_{X/S}\colon X\ra X^{(1)}$ and the statement is classical: Zariski locally one has an \'etale map $X\ra \mbb A^{2d}_S$ and in the case of affine space the isomorphism is easy to construct explicitly. 
	
	For general pseudo-smooth scheme we use Proposition \ref{prop:local description of pseudo-smooth}: namely, $X$ Zariski locally is identified with the Frobenius neighborhood $(F_Y^\lp)^{-1}(Z)$ in a smooth $S$-scheme $Y$ of a closed subscheme $Z\hookrightarrow Y^\lp$. Following Remark \ref{rem:reduced Frobenius in the case of Frobenius neighborhood} the reduced Frobenius map $F_X^\lp\colon  X\ra X^\lp\simeq Z$ is the pull-back of $F_Y^\lp\colon Y\ra Y^\lp$. This way we have 
	$$
	X\times_{X^\lp} X\simeq (Y\times_{Y^\lp} Y)\times_Y Z
	$$
	and we reduce to the smooth case.
\end{proof}

\begin{constr}[Restricted symplectic structures as reductions of $\mc M_X$ to $G_0$]\label{constr:torsor of Darboux frames}
	Another observation of \cite{bk} is that restricted Poisson structures on $X$ can be seen as reductions of the canonical $\ul{\mr{Aut}}(A_0)$-torsor $\mc M_X$ to the subgroup $G_0\subset \ul{\mr{Aut}}(A_0)$. 
	
	Let us remind this identification. By Remark \ref{rem:restricted stucture over the Frobenius twist} given a restricted symplectic $S$-scheme $(X,[\eta])$ one can equally consider it as a restricted symplectic scheme over $X^\lp$. Then one can define a functor $\mc M_{X,[\eta]}$ on schemes over $X^\lp$ as follows. First note that for any $T$ one gets a restricted sympectic $T$-scheme $(\Spec A_0,[\eta_{\mathsf{can}}])\times T$. Second, given a map $T\ra X^\lp$ one can take $(X,[\eta])\times_{X^\lp}T$ which also gives a restricted symplectic $T$-scheme. Then one defines
	$$
	(T\ra X^\lp)\mapsto \mc M_{X,[\eta]}(T)\coloneqq \mr{Iso}_{T}^{\mathsf{rest}}((\Spec A_0,\eta_{\mathsf{can}})\times T, (X,[\eta])\times_{X^\lp}T)
	$$
	where $\mr{Iso}_{T}^{\mathsf{rest}}$ denotes isomorphisms over $T$ that preserve the restricted symplectic structure. This is a sheaf in flat toplogy on the category of schemes over $X^\lp$ with a natural action of $G_0$. Moreover, picking a map $U\ra X^\lp$ as in Construction \ref{constr:torsor of Frobenius-frames} such that $U$ is affine we get that $(X,[\eta])\times_{X^\lp}U$ is in fact isomorphic to  $(\Spec A_0,\eta_{\mathsf{can}})\times U$ and so the restriction ${\mc M_{X,[\eta]}|}_{U}$ can be identified with the functor 
	$$
	(T\ra U) \mapsto \mr{Iso}_{T}^{\mathsf{rest}}((\Spec A_0,\eta_{\mathsf{can}})\times T, (\Spec A_0,\eta_{\mathsf{can}})\times T)
	$$
	or, in other words,
	$$
	{\mc M_{X,[\eta]}|}_{U} \simeq G_0\times U.
	$$ 
	This way $\mc M_{X,[\eta]}$ defines a $G_0$-torsor on $X^\lp$ and by construction one has a natural isomorphism
	$$
	\alpha\colon \mc M_{X,[\eta]}\times^{G_0}\ul{\mr{Aut}}(A_0)\simeq \mc M_X.
	$$
	
	The other way around, having a $G_0$-torsor $\mc M'$ over $X^\lp$ such that $$\mc M'\times^{G_0}\ul{\mr{Aut}}(A_0)\simeq \mc M_X$$ we can take a cover $U\ra X$ as above such that $X\times_{X^\lp}U\simeq \Spec A_0\times U$ and descend the restricted symplectic $U$-scheme given by $(\Spec A_0,\eta_{\mathsf{can}})\times U$ to $X^\lp$ using the descent data provided by $\mc M'$ and $\alpha$.
\end{constr}
\begin{rem}Let $R$ be an $\mbb F_p$-algebra. Then one can show that any two restricted Poisson structures over $R$ on $\Spec A_0(R)$ are isomorphic (see \cite[Proposition 3.4]{bk}).
\end{rem}

Finally, let us fix some terminology:
\begin{df} The $G_0$-torsor $\mc M_{X,[\eta]}$ over $X^\lp$ is called the \textit{torsor of Darboux frames} associated to $(X,[\eta])$.
\end{df}
\begin{rem}
	The fact that there exists a faithfully flat cover $U\ra X^\lp$ such that $X\times_{X^\lp} U\simeq (\Spec A_0,\eta_{\mathsf{can}})\times U$ as restricted Poisson schemes, can be considered as an analogue of the Darboux lemma in the restricted symplectic setting. Indeed, we see that $(\Spec A_0,\eta_{\mathsf{can}})$ serves as a ``local model" for any restricted symplectic structure.
\end{rem}

\section{Sheaves of algebras $\mc A_{\mbb S}$ and $\mc A_{\mbb S}^\flat$ and their categories of modules}

For the rest of this section let $\mbb S=\mbb P^1$ \red{do we want base to be k or anything?}. For each dimension $n$ we will define two sheaves of algebras on $\mbb S$: the compactified version  $\mc A_{\mbb S}$ of the (Rees construction of) \textit{reduced Weyl algebra} on $2n$ generators and a certain auxhillary algebra $\mc A_{\mbb S}^\flat$ which is obtained as another central reduction from Weyl algebra on $4n$-generators. The goal later will be to construct a certain canonical action of $G_0$ on the category $\mc A_{\mbb S}-\Mod$. For now, following \cite{bv} we will construct an action of $G_0$ on $\mc A_{\mbb S}^\flat-\Mod$: it then will be crucially used to construct the desired action on $\mc A_{\mbb S}-\Mod$.

\subsection{Rees construction over $\mbb S\coloneqq \mbb P^1$}\label{ssec:Rees construction}

Let $\mc B$ be a quasicoherent sheaf of associative (but not necessarily commutative) algebras over a base scheme $Y$. Let $\mc B_{\le *}=\ldots \subset \mc B_{\le n} \subset \mc B_{\le n+1}\subset \ldots$ be an increasing filtration on $\mc B$ indexed by integers. We assume that the filtration is multiplicative ($\mc B_{\le i}\cdot \mc B_{\le j}\subset \mc B_{\le i+j}$) and exhaustive (namely $\mc B=\colim_{n} \mc B_{\le n}$). 

The idea of our construction is very simple: namely we glue up the Rees construction of $\mc B_{\le *}$ (considered as a sheaf on $\mbb S\backslash \{\infty\}$) with the constant sheaf given by $\mc B$ on $\mbb S\backslash \{0\}$ along their common intersection. First let us remind the usual Rees construction and its properties.
\begin{constr}[{Rees construction over $\mbb A^1$}]\label{constr:Rees construction over A^1} Let $h$ be a formal variable. Let $\mc B_{\le *}$ be as above.

Define \textit{the Rees construction} $\mc B_{\mbb A^1}$ of the filtered algebra $\mc B_{\le *}$ as a subsheaf of algebras
$$
\mc B_{\mbb A^1}\coloneqq \oplus_i \mc B_{\le i}\cdot h^i \subset \mc B[h,h^{-1}]\coloneqq \mc B\otimes_{\mbb Z}\mbb Z[h,h^{-1}].
$$
Since $\mc B_{\le i}\cdot \mc B_{\le j}\subset \mc B_{\le i+j}$ and $\mc B_{\le i}\subset \mc B_{\le i+1}$, the multiplication on $\mc B_{\mbb A^1}$ is well-defined and $\mc B_{\mbb A^1}$ defines a (quasicoherent) subsheaf of $\mc O_Y[h]$-algebras in $\mc B[h,h^{-1}]$. One also has isomorphisms 
\begin{equation}\label{eq:comparison Rees algebras}
\mc B_{\mbb A^1}/h\simeq \gr_*\mc B \quad \text{and} \quad \mc B_{\mbb A^1}[h^{-1}]\simeq \mc B[h,h^{-1}].
\end{equation}
Here for the second isomorphism we used the exhaustiveness of filtration.

Using a monoidal equivalence of categories of quasicoherent $\mc O_Y[h]$-modules and quasicoherent sheaves on $\mbb A^1\times Y$ we can view the $\mc O_Y[h]$-algebra $\mc B_{\mbb A^1}$ as a quasicoherent sheaf of algebras $\mc B_{\mbb A^1}$ over $\mbb A^1\times Y$. Formulas \ref{eq:comparison Rees algebras} then give
$$
{{\mc B_{\mbb A^1}|}_{\{0\}\times Y}}\simeq \gr_*\mc B \quad \text{and} \quad {\mc B_{\mbb A^1}|}_{\mbb G_m\times Y}\simeq p_{\mbb G_m}^*\mc B
$$
where $p_{\mbb G_m}\colon \mbb G_m\times Y\ra Y$ is the projection.

\end{constr}

\begin{constr}[A variant over the center]\label{constr:} Let $Z(\mc B)\subset \mc B$ be the center and define the filtration $Z(\mc B)_{\le *}\coloneqq Z(\mc B)\cap \mc B_{\le *}$. This filtration is also multiplicative and exhaustive and we can consider the corresponding Rees construction $Z(\mc B)_{\mbb A^1}$. Then $\mc B_{\mbb A^1}$ is naturally a sheaf of algebras over $Z(\mc B)_{\mbb A^1}$ and as such also defines a quasi-coherent sheaf of algebras over the relative spectrum $\Spec_{\mbb A^1\times Y}(Z(\mc B)_{\mbb A^1})$.
\end{constr}

\begin{constr}[{Descent to $[\mbb A^1/\mbb G_m]$}]\label{constr:Rees algebra on A^1/G_m}
Consider the standard action of $\mbb G_m$ on $\mbb A^1$ (such that $t\circ h=th$ for $t\in \mbb G_m$). It induces a $\mbb G_m$-action on $Y\times \mbb A^1$ (where the action on $Y$ is trivial) and endows $\mc O_Y[h]$ with a $\mbb Z$-grading such that $\mc O_Y$ has grading 0 and the variable $h$ has grading 1. One can consider the corresponding quotient stack $[\mbb A^1/\mbb G_m]\times Y$. Using faithfully flat descent the abelian category of quasicoherent sheaves on $[\mbb A^1/\mbb G_m]\times Y$ can be identified with the category of quasicoherent sheaves of graded $\mc O_Y[h]$-modules on $Y$.

The algebra $\mc B_{\mbb A^1}$ comes with a natural grading $(\mc B_{\mbb A^1})^i\coloneqq \mc B_{\le i}\cdot h^i$ that makes it into a graded $\mc O_Y[h]$-algebra. This way we can also descend $\mc B_{\mbb A^1}$ to a quasi-coherent sheaf $\mc B_{[\mbb A^1/\mbb G_m]}$ of algebras on $[\mbb A^1/\mbb G_m]\times Y$.

The descent above also has a variant over the center. Namely, $Z(\mc B)_{\mbb A^1}$ is also a graded $\mc O_Y[h]$-algebra, which endows $\Spec_{\mbb A^1\times Y}(Z(\mc B)_{\mbb A^1})$ with a natural $\mbb G_m$-action. Moreover, $\mc B_{\mbb A^1}$ is a graded $Z(\mc B)_{\mbb A^1}$-algebra and as such defines a quasi-coherent sheaf of algebras on the quotient stack $[\Spec_{\mbb A^1\times Y}(Z(\mc B)_{\mbb A^1})/\mbb G_m]$. 
\end{constr}

\begin{constr}[Rees construction over $\mbb S=\mbb P^1$ associated to two filtrations]\label{constr:Rees construction over P^1} Let $\mc B$ be a quasi-coherent sheaf of algebras over $Y$ endowed with two multiplicative and exhaustive filtrations $\mc B_{\le^1 *}$ and $\mc B_{\le^2 *}$. We denote such data by $(\mc B, \mc B_{\le^1 *},\mc B_{\le^2 *})$.

We cover $\mbb S$ by two charts given by affine lines $\mbb S\backslash \{\infty\}\simeq \Spec \mbb Z[h]$ and $\mbb S\backslash \{0\}\simeq \Spec \mbb Z[h^{-1}]$, with intersection given by $\mbb G_m=\mbb S\backslash (\{0\}\sqcup \{\infty\})\simeq \Spec \mbb Z[h,h^{-1}]$. On these charts we take Rees constructions associated to the two filtrations:
$$
\mc B_{\mbb S\backslash \{\infty\}}\coloneqq \oplus_{i}\mc B_{\le^1 i}\cdot h^i \subset \mc B[h,h^{-1}] \supset \mc B_{\mbb S\backslash \{0\}}\coloneqq \oplus_{i}\mc B_{\le^2 i}\cdot h^{-i}.
$$
$\mc B_{\mbb S\backslash \{\infty\}}$ and $\mc B_{\mbb S\backslash \{0\}}$ are the $\mc O_Y[h]$ and $\mc O_Y[h^{-1}]$ subalgebras of $\mc B[h,h^{-1}]$ and as such define quasicoherent sheaves of algebras on $(\mbb S\backslash \{\infty\})\times Y$ and $(\mbb S\backslash \{0\})\times Y$ correspondingly. Moreover, we have natural identifications 
$$
{\mc B_{\mbb S\backslash \{\infty\}}|}_{(\mbb S\backslash (\{0\}\sqcup \{\infty\}))\times Y}\simeq p^*\mc B\simeq {\mc B_{\mbb S\backslash \{0\}}|}_{(\mbb S\backslash (\{0\}\sqcup \{\infty\}))\times Y}
$$
between their restrictions to $(\mbb S\backslash (\{0\}\sqcup \{\infty\}))\times Y$. Using the composite isomorphism as the gluing data we obtain a quasi-coherent sheaf of algebras on $\mbb S\times Y$ which we will denote $\mc B_{\mbb S}$ and call \textit{Rees construction over $\mbb P^1$}.

There is also a version over the center of $\mc B$. Namely, $\mc B_{\mbb S}$ is naturally an algebra over $Z(\mc B)_{\mbb S}$ and thus defines a sheaf of algebras  on $\Spec_{\ \!\!\mbb S\times Y}Z(\mc B)_{\mbb S}$ which we will continue to denote $\mc B_{\mbb S}$.
%The gluing map above is $\mbb G_m$-equivariant (with the trivial equivariant sructure for the constant sheaf on the right) and so one can also descend $\mc B_{\mbb S}$ to $[\mbb S/\mbb G_m]$. 
\end{constr}
\begin{constr}[{Descent to $[\mbb S/\mbb G_m]$}]\label{constr:Rees construction over P^1/G_m} Analogously to Construction \ref{constr:Rees algebra on A^1/G_m} one endows algebras $\mc B_{\mbb S\backslash \{\infty\}}$ and $\mc B_{\mbb S\backslash \{0\}}$ with natural gradings $(\mc B_{\mbb S\backslash \{\infty\}})^i\coloneqq \mc B_{\le^1 i}\cdot h^i$ and $(\mc B_{\mbb S\backslash \{\infty\}})^{-i}\coloneqq \mc B_{\le^1 i}\cdot h^{-i}$. This way they define quasi-coherent sheaves of algebras on $[(\mbb S\backslash \{\infty\})/\mbb G_m]\times Y$ and $[(\mbb S\backslash \{0\})/\mbb G_m]\times Y$. Moreover, the restrictions of both $\mc B_{[\mbb S\backslash \{\infty\}/\mbb G_m]}$ and $[\mc B_{\mbb S\backslash \{0\}}/\mbb G_m]$ to $[(\mbb S\backslash (\{0\}\sqcup \{\infty\}))/\mbb G_m]\times Y\simeq Y$ are naturally identified with $\mc B$. This way we can glue them to a sheaf $\mc B_{[\mbb S/\mbb G_m]}$ on $[\mbb S/\mbb G_m]_Y$.

Similar construction works also over the center of $\mc B$. Namely $\mc B_{[\mbb S/\mbb G_m]}$ is naturally a sheaf of algebras over $Z(\mc B)_{[\mbb S/\mbb G_m]}$ and we can also consider $\mc B_{[\mbb S/\mbb G_m]}$ as a quasi-coherent sheaf of algebras over $\Spec_{[\mbb S/\mbb G_m]\times Y}(Z(\mc B)_{[\mbb S/\mbb G_m]})$.
\end{constr}

\begin{example}[Split case]\label{ex: split case} Assume $\mc B$ is commutative (so $\mc B\simeq Z(\mc B)$).
Let $\mc B$ be graded $\mc B\simeq \oplus_{i} \mc B_i$ and define the following two increasing filtrations associated to it: $$
\mc B_{\le^1 n}\coloneqq \oplus_{i\le n}\mc B_i \text{ and }
\mc B_{\le^2 n}\coloneqq \oplus_{i\ge -n}\mc B_i.
$$

 Then we claim that $\mc B_{\mbb S}$ splits: namely, one can identify $\Spec_{Y\times \mbb S}(\mc B_{\mbb S})$ with $\Spec_Y (\mc B)\times \mbb S$. Moreover, this isomorphism is $\mbb G_m$-equivariant, where the $\mbb G_m$-action on $\Spec_Y (\mc B)$ is defined by the grading we started with. This then also gives an isomorphism
 $$
 \Spec_{Y\times [\mbb S/\mbb G_m]}(\mc B_{[\mbb S/\mbb G_m]}) \simeq [(\Spec_Y (\mc B)\times \mbb S)/\mbb G_m].
 $$
 
 To see this, note that the $n$-th graded component
 $$
 (\mc B[h])_n\simeq \oplus_{i+j=n}\mc B_{i}\cdot h^j \simeq \oplus_{i\le n} \mc B_i \simeq \mc B_{\le^{1} n}
 $$
and, this way $\mc B[h]\simeq \mc B_{\mbb S\backslash\{\infty\}}$ as a graded $\mc O_Y$-algebra. Similarly, 
$$
(\mc B[h^{-1}])_n\simeq \oplus_{i-j=n}\mc B_{i}\cdot h^{-j} \simeq \oplus_{i\ge -n}\mc B_{i}\simeq \mc B_{\le^2 n}
$$
and $\mc B[h]\simeq \mc B_{\mbb S\backslash\{0\}}$. Moreover, under this identification the gluing data on the intersection $\mbb S\backslash(\{0\}\sqcup \{\infty\})$ is just given by the isomorphism $(\mc B[h])[h^{-1}]\simeq (\mc B[h^{-1}])[h]$. This way we can identify $\mc B_S$ with the pushforward $p_*\mc O$ for the projection $p\colon \Spec_{Y}\mc B\times \mbb S\ra \mbb S$, hence the statement above.
\end{example}

\subsection{Twistor differential operators $\mc D_{X,\mbb S}$}\label{ssec:twistor differential operators}
We now discuss a particular case of the above construction in the case of the sheaf of differential operators on a pseudo-smooth scheme. Let us clarify the setup. 

\begin{constr}[{Hodge and conjugate filtrations on $\mc D_{X}$}]
Namely, let $X$ be a pseudo-smooth $S$-scheme and let $\mc D_X$ be the sheaf of differential operators that we have defined in Section \ref{ssec:definition of diff op}. The pushforward $F_{X*}^\lp\mc D_X$ defines a quasi-coherent sheaf of algebras over $X^\lp$. Let us discard $F_{X*}^\lp$ from the notation and implicitly consider $\mc D_X$ as a sheaf of algebras over $\mc O_X^\lp$. Recall that we have an isomorphism $\theta^{-1}\colon Z(\mc D_X)\simeq \Sym^*_{\mc O_X^\lp}\mc T_X^\lp$ (see Corollary \ref{cor:D_X is an Azumaya algebarain pseudo-smooth setting}). Let us temporarily discard the subscript given by $\mc O_X^\lp$ to lighten up the notations.

We endow $\mc D_X$ with two filtrations: 
\begin{itemize}
\item (Hodge filtration). Consider filtration $\mr{Fil}^{\mr{H}}_{\le *}\coloneqq \mc D_{X,\le *}$ by the order of differential operator (see Remark \ref{rem:PBW-filtration}). One has $\mc D_{X,\le n}=0$ if $n<0$ and $\mc D_{X,\le 0}\simeq \mc O_X\subset \mc D_X$:
$$
\ldots = 0 = 0\subset \mc O_X \subset \mc D_{X,\le 1} \subset \mc D_{X,\le 2}\subset \ldots
$$
\item (Conjugate filtration). This is a filtration $\mr{Fil}^{\mr{cnj}}_{\le *}$ defined as follows. Namely one has $\mr{Fil}^{\mr{cnj}}_{\le n}\simeq \mc D_{X}$ if $n\ge 0$:
$$
\ldots \mr{Fil}^{\mr{cnj}}_{\le -2}\subset\mr{Fil}^{\mr{cnj}}_{\le -1}\subset \mc D_{X}= \mc D_{X} =\ldots
$$
and $\mr{Fil}^{\mr{cnj}}_{\le -i}$ is defined as $\Sym^{\lfloor\frac{i}{p}\rfloor}\mc T_X^\lp\cdot \mc D_{X}\subset \mc D_{X}$. In particular, it is a $p$-step filtration (one has $\mr{Fil}^{\mr{cnj}}_{\le -i}= \mr{Fil}^{\mr{cnj}}_{\le -i+1}$ unless $p|i$). 
\end{itemize}
\end{constr} 
\begin{rem}
The conjugate filtration $\mr{Fil}^{\mr{cnj}}_{\le *}$ is a reindexed variant of a filtration that appeard in \cite{ov}.
\end{rem}

\begin{rem}[Induced filtrations on the center]\label{rem:Rees construction for the center} Filtrations induced on $Z(\mc D_{X})$ by the Hodge and conjugate filtrations on $\mc D_X$ above are in fact split. Namely, we claim that the two filtrations $Z(\mc D_{X})\cap \mr{Fil}^{\mr{H}}_{\le *}$ and $Z(\mc D_{X})\cap \mr{Fil}^{\mr{cnj}}_{\le *}$ are obtained by construction in Example \ref{ex: split case} from the single grading on $Z(\mc D_{X})$ given as follows:
$$
Z(\mc D_{X})_n\coloneqq \begin{cases}0 & \text{if $p\nmid n$}\\
\Sym^k\mc T_{X}^\lp & \text{if $n=p\cdot k$}\end{cases}
$$
This is clear for the conjugate filtration: since $Z(\mc D_{X})\cap \mr{Fil}^{\mr{cnj}}_{\le -n}$ is given by $\Sym^{\ge[\frac{n}{p}]}\mc T_X^\lp$ which is also a direct sum $\oplus_i Z(\mc D_{X})_i$ with $i\ge n$. For the Hodge filtration this requires a slight argument. 

More precisely, we need to show that $\mr{Fil}^{\mr{H}}_{\le n}\cap Z(\mc D_{X})\simeq \Sym^{\le [\frac{n}{p}]}\mc T_X^\lp$ (where the latter is isomorphic to $\oplus_i Z(\mc D_{X})_i$ with $i\ge n$). Recall that the map $\theta\colon \Sym^*_{\mc O_X^\lp}\mc T_X^\lp \ra Z(\mc D_X)$ is induced by $v^\lp\mapsto \theta(v^\lp)\colon v^p-v^{[p]}$ and so $\theta(\mc T_X)\subset \mr{Fil}^{\mr{H}}_{\le p}$. Consequently, $\theta(\Sym^{\le k}\mc T_X)\subset \mr{Fil}^{\mr{H}}_{\le pk}$ and to show the statement it is enough enough to show that the map $\gr_{pk}\theta\colon \Sym^k\mc T_X^\lp\ra \mr{gr}^{\mr{H}}_{\le pk}$ between the associated graded pieces is an embedding. We have $
\theta(v^{\lp})= v^p \in \gr_p^{\mr{H}}$ and this way we see that the map of $\mc O_X^\lp$-algebras $\gr_*(\theta)\colon \Sym^*\mc T_X^\lp \ra \Sym^*_{\mc O_X}\!\!\mc T_X$ is the map given by reduced Frobenius $F_{T^*\! X}^\lp\colon T^*\! X\ra (T^*\! X)^\lp$ (as affine schemes over $\mc O_X^\lp$), which is an embedding essentially by definition.

Following the discussion in Example \ref{ex: split case} we then get the following description of the Rees construction for $Z(\mc D_{X})$. Namely, the grading on $Z(\mc D_{X})$ that we considered above corresponds to the $\mbb G_m$-action on $(T^*\! X)^\lp\simeq \Spec_{X^{\!\lp}}Z(\mc D_{X})$ where $t\in\mbb G_m$ acts by rescaling the fibers of the vector bundle $(T^*\! X)^\lp\ra X^\lp$ by $t^p$. 
We get a $\mbb G_m$-equivariant isomorphism $$\Spec_{X^{\!\lp}\!\times \mbb S} Z(\mc D_{X})_{\ \!\!\mbb S}\simeq (T^*\! X)^\lp \times \mbb S,$$
where the action on the right is diagonal.

It then also descends to an isomorphism 
$$
\Spec_{X^{\!\lp}\!\times [\mbb S/\mbb G_m]} Z(\mc D_{X})_{\ \!\![\mbb S/\mbb G_m]}\simeq [((T^*\! X)^\lp \times \mbb S)/\mbb G_m].
$$
for the corresponding quotients by $\mbb G_m$.
\end{rem}

\begin{df}[Twistor differential operators]
We define the sheaf $\mc D_{X,\mbb S}$ of \textit{twistor differential operators} on $X$ as the Rees construction (\ref{constr:Rees construction over P^1}) over $\mbb S$ associated to $(\mc D_X,\mr{Fil}^{\mr{H}}_{\le *},\mr{Fil}^{\mr{cnj}}_{\le *})$. By Remark \ref{rem:Rees construction for the center} and discussion in Example \ref{ex: split case}, $\mc D_{X,\mbb S}$ defines a $\mbb G_m$-equivariant quasi-coherent sheaf of algebras over $(T^*\! X)^\lp \!\times \mbb S$ (where the relevant $\mbb G_m$-action on the latter is discussed in the end of Remark \ref{rem:Rees construction for the center}). 
\end{df}

\begin{rem}\label{rem:twistor differential operators explicitly}Let us describe $\mc D_{X,\mbb S}$ more explicitly. Namely, the restriction $\mc D_{X,\mbb S\backslash \{\infty\}}$ is the Rees algebra associated to PBW-filtration on $\mc D_X$ and as such can be identified with the subalgebra in $\mc D_X[h]$ generated by $\mc O_X$ and $h\cdot \mc T_X$. One can also describe it as a sheaf of algebras over $\mc O_X^\lp[h]$ generated by $\mc O_X$ and $\mc T_X$ with the relations given by $f_1\cdot f_2-f_2\cdot f_1=0$, $v\cdot f-f\cdot v=h\cdot v(f)$ and $v_1\cdot v_2-v_2\cdot v_1=h\cdot [v_1,v_2]$ (here we identify $\mc T_X$ with its image in $\mc D_{X,\le 1}\cdot h$ in the Rees construction).

 Following Remark \ref{rem:Rees construction for the center} and Example \ref{ex: split case} we have an isomorphism ${Z(\mc D_{X})}_{\mbb S\backslash \{\infty\}}\simeq Z(\mc D_{X})[h]\simeq (\Sym^*_{\mc O_X^\lp}\!\!\mc T_X^\lp)[h]$. The corresponding map 
${Z(\mc D_{X})}_{\mbb S\backslash \{\infty\}}\ra \mc D_{X, \mbb S\backslash \{\infty\}}$ sends $\mc O_X^\lp$ to $\mc O_X$ via $F_X^\lp$, while $v^\lp\in \mc T_X^\lp$ maps to\footnote{To see why this is the formula: we have $\theta(v)\in \mr{Fil}_{\le p}^{\mr{H}}$ and $\theta(v)\cdot h^p=v^ph^p-v^{[p]}h^p=(vh)^p-h^{p-1}(v^{[p]}h)$.} $\theta_h(v)=v^p-h^{p-1}\cdot v^{[p]}$. 

For the description of the other chart note that since $\mr{Fil}^{\mr{cnj}}_{\le 0}=\mc D_X$ we have a natural embedding $Z(\mc D_X)[h^{-1}]\subset {Z(\mc D_{X})}_{\mbb S\backslash \{0\}}$. Moreover, by the way how $\mr{Fil}^{\mr{cnj}}_{\le *}$ is defined, $\mc D_{X,\mbb S\backslash \{0\}}$ is identified with the tensor product $\mc D_X[h^{-1}]\otimes_{Z(\mc D_X)[h^{-1}]} {Z(\mc D_{X})}_{\mbb S\backslash \{0\}}$. Note that over $\{\infty\}\in \mbb S\backslash \{0\}$ the map $Z(\mc D_X)[h^{-1}]/h^{-1}\ra {Z(\mc D_{X})}_{\mbb S\backslash \{0\}}/h^{-1}$ is given by the composition
$$
\Sym^*\mc T_X^\lp \ra \mc O_{X}^\lp \ra \Sym^*\mc T_X^\lp
$$
where the first map is the projection. This way $\mc D_{X,\{\infty\}}\coloneqq \mc D_{X,\mbb S\backslash \{0\}}/h^{-1}$ is identified with $\mc D_{X,0}\otimes_{\mc O_{X}^\lp}\Sym^*\mc T_X^\lp$ where $\mc D_{X,0}$ is the central reduction from Remark \ref{rem:splitting on the 0 section}. Recall that $\mc D_{X,0}\simeq \mc End_{\mc O_X^\lp}(\mc O_X)$ is a split Azumaya algebra; this way $\mc D_{X,\{\infty\}}$ is Morita equivalent to $\mc O_X\otimes_{\mc O_{X}^\lp}\Sym^*\mc T_X^\lp\simeq \Sym_{\mc O_X}\mc T_X$. In particular, the category of quasi-coherent sheaves of $\mc D_{X,\{\infty\}}$-modules is equivalent to quasi-coherent sheaves over $T^*\! X$. 

The two restrictions $\mc D_{X,\mbb S\backslash \{\infty\}}$ and $\mc D_{X,\mbb S\backslash \{0\}}$ are then glued along $\mbb S\backslash (\{0\}\sqcup \{\infty\})$ via identification of the restrictions of both algebras with $\mc D_X[h,h^{-1}]$.
\end{rem}

Let $\mu \in H^0(X, \Omega^{1}_{X/S})$ be a closed $1$-form on $X$. Define an automorphism {$\phi_{\frac\mu h}$} of algebra $\mc D_{X,\mbb S\backslash \{\infty\}}$ (viewed  as the  subalgebra of $\mc D_X[h]$ generated by $\mc O_X$ and $h\cdot \mc T_X$)
by setting $\phi_{\frac{\mu}{h}}(f)=f$ and $\phi_{\frac{\mu}{h}}(hv)=hv + \iota_{v}\mu$,  
for every function $f$ and every vector field $v$. Informally, this automorphism is the conjugation by $e^{\frac{1}{h} \int \mu }$: though this expression does not make sense as a function on $X \times \mbb S$,  the conjugation automorphism is well defined. For $\mu_1, \mu_2  \in H^0(X, \Omega^{1}_{X/S})$, we have that
\begin{equation}\label{eq:transitivity}
\phi_{\frac{\mu_1}{h}} \circ \phi_{\frac{\mu_2}{h}}= \phi_{\frac{\mu_1 +\mu_2}{h}}.
\end{equation}
The action of $\phi_{\frac{\mu}{h}}$ on the center $(\Sym^*_{\mc O_X^\lp}\!\!\mc T_X^\lp)[h]$ of the algebra $\mc D_{X,\mbb S\backslash \{\infty\}}$ is given by the Katz $p$-curvature formula (see Remark \ref{rem:katzformula}): $\phi_{\frac{\mu}{h}}$ acts trivially on $\mc O_X^\lp$ and, for $v^\lp\in \mc T_X^\lp$, one has that 
$\phi_{\frac{\mu}{h}} (h^pv^\lp)= h^p v^\lp + \iota_{v^\lp} (\mu^\lp - h^{p-1} C(\mu)).$ In particular, if $C(\mu)=0$ {\it i.e.}, $\mu$ is exact, then  $\phi_{\frac{\mu}{h}}$ extends to an automorphism of $\mc D_{X,\mbb S}$ and acts on the spectrum of its centre  $(T^*\! X)^\lp \times \mbb S$ as the translation by $\mu^\lp$.

Now we extend Construction \ref{constr:classical central reductions associated to 1-forms} to the algebra of twistor differential operators. Let $\alpha\in H^0(X^\lp, \Omega^{1,\lp}_X)$ be a global section, $i_\alpha\colon X^\lp \hookrightarrow (T^*\! X)^\lp$  the graph of $\alpha$. Denote by
 $$\mc I_{\alpha, \mbb S} \subset  Z(\mc D_{X})_{\ \!\!\mbb S} \subset \mc D_{X,\mbb S}$$
the sheaf of ideals determined by the closed embedding $i_\alpha \times \Id_{\mbb S} \colon X^\lp \hookrightarrow (T^*\! X)^\lp \times \mbb S$.  Let $\mc D_{X, \frac{\alpha}{h^p}, \mbb S}$ be the quotient $\mc D_{X,\mbb S}/\mc I_{\alpha, \mbb S} \mc D_{X,\mbb S}$.

For an exact $1$-form $\mu$ on $X$, one has that $\phi_{\frac{\mu}{h}}(\mc I_{\alpha, \mbb S})=\mc I_{\alpha + \mu^\lp, \mbb S}$. It follows that $\phi_{\frac{\mu}{h}}$ descends to an isomorphism:
$$\phi_{\frac{\mu}{h}}\colon \mc D_{X, \frac{\alpha }{h^p}, \mbb S} \iso \mc D_{X, \frac{\alpha +\mu^\lp}{h^p}, \mbb S}.$$

\begin{constr}[Central reduction of twistor differential operators]\label{constr:central reductions over S}
A class 
$$[\eta] \in H_{Zar}^{0}\left(X, \operatorname{coker}\left(\mathcal{O}_{X} \stackrel{d}{\longrightarrow} \Omega_{X/S}^{1}\right)\right)$$
gives rise to a certain central reduction $\mathcal{D}_{X, \frac{[\eta]}{h}, \mbb S}$ of the algebra $\mathcal{D}_{X, \mbb S}$.  This is a sheaf of quasi-coherent $\mc O_X^\lp$-algebras  that comes together with the following structure. 
For any open subset $U$ together  with a $1$-form $\eta \in \Omega^1_{X/S}(U)$ representing  $[\eta]$,  we have an isomorphism of $\mc O_U^\lp$-algebras
$$\gamma_{U, \eta}\colon (\mathcal{D}_{X, \frac{[\eta]}{h}, \mbb S})_{|U} \iso \mc D_{U, \frac{(\eta_{|U})^\lp}{h^p}, \mbb S}$$
such that, for any two $1$-forms $\eta_1$, $\eta_2$ on $U$ representing the class $[\eta]$, one has that
 $\gamma_{U, \eta_2} \circ \gamma_{U, \eta_1}^{-1}  =  \phi_{\eta_2 -\eta_1}.$
Formula (\ref{eq:transitivity}) shows that the algebra $\mathcal{D}_{X, \frac{[\eta]}{h}, \mbb S}$ together with the collections of $\gamma_{U, \eta}$ exists and is unique up to a unique isomorphisms. Moreover, the construction is functorial meaning that 
the sheaf algebras  $\mathcal{D}_{X, \frac{[\eta]}{h}, \mbb S}$  is equivariant with respect to the group $\Aut_S(X,[\eta])$ of automorphisms of $X$ over $S$  preserving the class $[\eta]$. 
 For future reference we display the induced action  $\Aut_S(X,[\eta])$  on the direct image $\mathcal{D}_{X, \frac{[\eta]}{h}, \mbb S}$ along the projection $\pro_{S \times \mbb S}\colon X \times \mbb S \to S \times \mbb S$ 
 by $\cO_{S\times \mbb S}$-algebra 
automorphisms
\begin{equation}\label{eq:action on Central reduction}
\psi\colon \Aut_{\mbb S}(X,[\eta]) \to \Aut (\pro_{\mbb S *}\mathcal{D}_{X, \frac{[\eta]}{h}, \mbb S}).
\end{equation}
 \end{constr}
 The algebra $\mathcal{D}_{X, \frac{[\eta]}{h}, \mbb S}$ has further symmetries. Endow $\mbb S$ with an action $\chi$ of $\bG_m$ by homotheties:  $\chi(h)= z h$, where $z$ is the coordinate on $\bG_m$.
 Then pullback of $\mathcal{D}_{X, \frac{[\eta]}{h}, \mbb S}$ with respect to the morphism  $\bG_m \times X \times \mbb S \to X \times \mbb S$ given by the $\bG_m$-action on the last factor is naturally isomorphic to 
 $\mathcal{D}_{\bG_m \times X, \frac{[z^{-1}\pro_X^*\eta]}{h}, \mbb S}$, where $\pro_X\colon \bG_m \times X \to X$ is the projection.
\begin{example}[The sheaf $\mc D_{X,0,\mbb S}$]\label{ex:D_X0S}
Let's consider a particular case of the above construction when we have $[\eta]=0$. The corresponding sheaf of algebras $\mc D_{X,0,\mbb S}$ is simply the restriction of $\mc D_{X,\mbb S}$ to the zero section $X^\lp\!\times\mbb S \ra (T^*\! X)^\lp\!\times\mbb S$, or in terms of sheaves of algebras we have $\mc D_{X,0,\mbb S}\simeq \mc D_{X,\mbb S}\otimes_{Z(\mc D_X)_{\mbb S}}\mc O_X^\lp.$ 

Let us describe more explicitly the gluing data corresponding to $\mc D_{X,0,\mbb S}$. Namely, $\mc D_{X,0,\mbb S\backslash \{\infty\}}$ is given by the quotient of $\mc D_{X,\mbb S\backslash \{\infty\}}$ (which is generated by $\mc O_X$ and $\mc T_X$ modulo the relations as in Remark \ref{rem:twistor differential operators explicitly}) by the two-sided ideal generated by $v^p-h^{p-1}v^{[p]}$, $v\in \mc T_X$. On the other hand, restriction $\mc D_{X,0,\mbb S\backslash \{0\}}$ is really simple, namely $\mc D_{X,0,\mbb S\backslash \{0\}}\simeq \mc D_{X,0}[h^{-1}]$. Indeed, by Remark \ref{rem:twistor differential operators explicitly} $\mc D_{X,\mbb S\backslash \{0\}}\simeq \mc D_X[h^{-1}]\otimes_{Z(\mc D_X)[h^{-1}]} {Z(\mc D_{X})}_{\mbb S\backslash \{0\}}$ from which we get that 
$$
\mc D_{X,0,\mbb S\backslash \{0\}}\simeq \mc D_X[h^{-1}]\otimes_{Z(\mc D_X)[h^{-1}]} \mc O_X^{\lp}[h^{-1}]\simeq \mc D_{X,0}[h^{-1}].
$$
\end{example}
Observe that, for every class $[\eta]$ the restriction of  $\mathcal{D}_{X, \frac{[\eta]}{h}, \mbb S}$ to $X \times \{\infty\} \mono X \times \mbb S$ is naturally isomorphic to that of  $\mc D_{X,0,\mbb S}$. In particular, the former is a canonically split 
Azumaya algebra over $X \times \{\infty\} $. 
\subsection{Twistor reduced Weyl algebra $\mc A_{\mbb S}$}\label{ssec:twistor reduced Weyl algebra}
We will be now interested in a very particular case of the Construction \ref{constr:central reductions over S}. Namely, let us fix a number $n\in \mathbb N$. Consider the algebra $C\coloneqq \mbb F_p[x_1,\ldots,x_n]/(x_1^p,\ldots,x_n^p)$: this is the (ring of functions on the) Frobenius neighborhood of $\{0\}$ in an $n$-dimensional affine space over $\mbb F_p$. Put $Y\coloneqq \Spec C$. This is a pseudo-smooth $\mbb F_p$-scheme (with $\Omega^1_C\simeq \oplus_{i=1}^nC\cdot dx_i$) and one has $Y^\lp\simeq \Spec (C^p)\simeq \Spec \mbb F_p$. 

\begin{df}\label{def:A_S}
We define the \textit{twistor reduced Weyl algebra} $\mc A_{\mbb S}$ as $\mc D_{Y,0,\mbb S}$. This is a quasi-coherent sheaf of algebras on $\mbb S$.
\end{df}

\begin{rem}
Note that by (the proof of) Lemma \ref{lem:fiber product of reduced Frobenius with itself} any pseudo-smooth $X$ looks like $C$ flat locally over its Frobenius twist $X^\lp$. This way $\mc A_{\mbb S}$ can serve as a model example of quantization (in this context of $T^*\! X$).
\end{rem}

Let us describe $\mc A_{\mbb S}$ in terms of its restrictions to $\mbb S\backslash \{\infty \}$ and $\mbb S\backslash \{0\}$ and the gluing data. First, let us describe the algebra $D_{Y}=\Gamma(Y,\mc D_Y)$. We have $\mc T_Y\simeq \oplus_{i=1}^nC\cdot \partial_{i}$ with $\partial_{i}(x_j)=\delta_{ij}$. Thus 
$$
D_Y\simeq \mbb F_p\langle x_i,y_j\rangle_{1\le i,j\le n}/([x_i,x_j]=[y_i,y_j]=x_i^p=0, [y_j,x_j]=\delta_{ij})
$$
with $y_i$ corresponding to $\partial_i$. One sees easily that $\partial_i^{[p]}=0$ and so the center $Z(D_Y)$ is described as 
$$
Z(D_Y)\simeq \mbb F_p[y_1^p,\ldots,y_n^p]\subset D_Y.
$$
Hodge filtration on $D_Y$ is given by the total degree in $y_i$'s. Following Remark \ref{rem:twistor differential operators explicitly} the corresponding Rees algebra $D_{Y,\mbb S\backslash \{\infty \}}$ is the $\mbb F_p[h]$-algebra given by 
$$
D_{Y,\mbb S\backslash \{\infty \}}\simeq  \mbb F_p[h]\langle x_i,y_j\rangle_{1\le i,j\le n}/([x_i,x_j]=[y_i,y_j]=x_i^p=0, [y_j,x_j]=h\cdot \delta_{ij})
$$
The center $Z(D_{Y,\mbb S\backslash \{\infty \}})$ is identified with 
$$
Z(D_{Y,\mbb S\backslash \{\infty \}})\simeq \mbb F_p[h,y_1^p,\ldots,y_n^p]\subset D_{Y,\mbb S\backslash \{\infty \}}.
$$
We have 
$
D_{Y,0, \mbb S\backslash \{\infty \}}\coloneqq D_{Y,\mbb S\backslash \{\infty \}}\otimes_{Z(D_{Y,\mbb S\backslash \infty})}\mbb F_p
$ and so 
$$
D_{Y,0, \mbb S\backslash \{\infty \}}\simeq \mbb F_p[h]\langle x_i,y_j\rangle_{1\le i,j\le n}/([x_i,x_j]=[y_i,y_j]=y_j^p=x_i^p=0, [y_j,x_j]=h\cdot \delta_{ij}).
$$

Finally, $D_{Y,0}\coloneqq D_Y\otimes_{Z(D_Y)}\mc O(Y)^\lp$ is described as 
$$
D_{Y,0}\coloneqq \mbb F_p\langle x_i,y_j\rangle_{1\le i,j\le n}/([x_i,x_j]=[y_i,y_j]=x_i^p=y_j^p=0, [y_j,x_j]=\delta_{ij})
$$
Following Remark \ref{rem:splitting on the 0 section} the natural action of $D_{Y,0}$ on $C$ (where $x_i$ acts by multiplication and $y_i$ by $\partial_i$) produces an isomorphism 
$$
D_{Y,0}\simeq \mr{End}_{\mbb F_p}(C)\simeq \Mat_{p^n}(\mbb F_p).
$$

\begin{rem}[Explicit description of $\mc A_{\mbb S}$]\label{rem:description of A_S} Now, following Example \ref{ex:D_X0S} we can explicitly describe $\mc A_{\mbb S}$. Let $\mc A_{\mbb S\backslash \{\infty \}}$ and $\mc A_{\mbb S\backslash \{\infty \}}$ be the restriction of $\mc A_{\mbb S}$ to the corresponding charts. We have $\mc A_{\mbb S\backslash \{\infty \}}\simeq \mc D_{Y,0, \mbb S\backslash \{\infty \}}$ and so the $\mbb F_p[h]$-algebra $A_{\mbb S\backslash \{\infty\}}\coloneqq \Gamma(\mbb S\backslash \{\infty\},\mc A_{\mbb S\backslash \{\infty \}})$ is described as
 $$
 A_{\mbb S\backslash \{\infty\}}\simeq \mbb F_p[h]\langle x_i,y_j\rangle_{1\le i,j\le n}/([x_i,x_j]=[y_i,y_j]=y_j^p=x_i^p=0, [y_j,x_j]=h\cdot \delta_{ij}).
 $$ 
 We note that $A_{\mbb S\backslash \{\infty\}}$ is a free $\mbb F_p[h]$-module of rank $p^{2n}$ (by ordered monomials $x^Iy^J$ with the degrees less or equal than $p-1$ in each variable).

On the other hand, $\mc A_{\mbb S\backslash \{0 \}}\simeq \mc D_{Y,0, \mbb S\backslash \{0\}}\simeq \mc D_{Y,0}[h^{-1}]$, and so by the above the $\mbb F_p[h^{-1}]$-algebra $A_{\mbb S\backslash \{0\}}\coloneqq \Gamma(\mbb S\backslash \{0\},\mc A_{\mbb S\backslash \{0\}})$ is given by
$$
A_{\mbb S\backslash \{0 \}}\simeq \mbb F_p[h^{-1}]\langle x'_i,y_j'\rangle_{1\le i,j\le n}/([x'_i,x_j']=[y_i',y_j']=x_i'^p=y_j'^p=0, [y_j',x_j']=\delta_{ij}).
$$
Similarly, this is a free $\mbb F_p[h^{-1}]$-module of rank $p^{2n}$. It follows that $\mc A_{\mbb S}$ is locally free as an $\mc O_{\mbb S}$-module of rank $p^{2n}$.
The identification $A_{\mbb S\backslash \{\infty\}}[h^{-1}]\simeq A_{\mbb S\backslash \{0 \}}[h]$ is given by $x_i=x'_i$ and $y_i=h\cdot y_i'$ (since $y_i$ and $y_i'$ in fact corresponds to $h\cdot \partial_i$ and $\partial_i$). 
\end{rem} 

\begin{rem}\label{rem:A_S is matrix algebra outside of 0}
Since $D_{Y,0}\simeq \Mat_{p^n}(\mbb F_p)$ we get that $A_{\mbb S\backslash \{0 \}}\simeq \Mat_{p^n}(\mbb F_p)[h^{-1}]$. Also $A_{\mbb S\backslash \{\infty\}}$ embeds into $A_{\mbb S\backslash \{\infty\}}[h^{-1}]\simeq A_{\mbb S\backslash \{0 \}}[h]\simeq \Mat_{p^n}(\mbb F_p)[h,h^{-1}]$ so $\mc A_{\mbb S}$ is an order in a matrix algebra of rank $p^n$ over $\mbb S$. 
\end{rem}
\begin{constr}[A module over $\cA_{\mbb S}$]\label{constr:A module over A}
We define a sheaf $\cV_{\mbb S}$ of $\cA_{\mbb S}$-modules as follows. As an $\cO_{\mbb S}$-module $\cV_{\mbb S}$ is free:
  $$\cV_{\mbb S} = \cO_{\mbb S} \otimes \bF_p[x_1, \cdots ,x_n]/(x_i^p, 1\leq i\leq n).$$
  The action of $x_i\in \Gamma({\mbb S}, \cA_{\mbb S})$ on  $\cV_{\mbb S}$ is given by the multiplication on the right factor; the action of  $y_i\in \Gamma(\mbb S\backslash \{\infty \}, \cA_{\mbb S})$ is given by the formula $y_i (f) = h\otimes \frac{\partial f}{\partial x_i}$, for $f\in \bF_p[x_1, \cdots ,x_n]/(x_i^p, 1\leq i \leq n) $.
  \end{constr}
  
\subsection{Algebra $A_0$}
Let $i_0\colon \{0\}\simeq\Spec \mbb F_p \ra \mbb S$ be an embedding of 0. 

\begin{df}
We define the $\mbb F_p$-algebra $A_0$ as $i_0^*\mc A_{\mbb S}$. Equivalently, $A_0\simeq  A_{\mbb S\backslash \{\infty\}}/h$ and so (using Remark \ref{rem:description of A_S})
$$
A_0\simeq \mbb F_p[x_i,y_j]_{1\le i,j\le n}/(x_i^p=y_j^p=0).
$$
We also let $A_h$ to be the $h$-adic completion of the $k[h]$-algebra $A_{\mbb S\backslash \{\infty\}}$. Since $A_{\mbb S\backslash \{\infty\}}$ is finitely-generated as a $k[h]$-module one has $A_h\simeq A_{\mbb S\backslash \{\infty\}}\otimes_{k[h]}k[[h]].$
\end{df}

For an $\mbb F_p$-algebra $R$ we will denote by $A_0(R)$ the $R$-algebra $A_0\otimes_{\mbb F_p}R$. Note that $\Spec A_0(R)$ is pseudo-smooth over $R$ for any $R$.

\begin{constr} 
We denote by $\ev_0\colon A_0\twoheadrightarrow \mbb F_p$ the ``evaluation at 0"-map that sends all $x_i$ and $y_j$ to 0. We let $I_0\coloneqq \Ker(\ev_0)\subset A_0$ be the ideal given by its kernel. Obviously, $I_0=(x_1,\ldots,x_n,y_1,\ldots,y_n)$. For any $R$ we put $I_0(R)\coloneqq I_0\otimes_{\mbb F_p}R\subset A_0(R)$. Note that $I_0(R)\subset \alpha_p(A_0(R))$.
\end{constr}

Let us now apply some of the results of Section \ref{sec:pseudo-smooth algebras and differential geometry} to $S\coloneqq \Spec R$ and $X\coloneqq \Spec A_0(R)$. Following Example \ref{ex:reduced vs relative Frobenius}(2) we have $A_0(R)^\lp\simeq R\subset A_0(R)$; this way the reduced Frobenius map $F_X^\lp\colon X\ra X^\lp$ is identified with the projection $X\ra S$. On the other hand since $x_i^p=y_j^p=0$ the reduced twist map $W_{A_0(R)}^\lp\colon A_0(R)\ra R$ induced by $a\mapsto a^p\in R\simeq A_0(R)^\lp$ is identified with the composition $A_0(R)\xra{\ev_0}R\xra{-^p}R$, $a\mapsto \ev_0(a)^p$.

 Consider the $A_0(R)$-module $\Omega^1_{A_0(R)}\coloneqq \Omega^1_{A_0(R)/R}$ of $R$-relative differential 1-forms. $\Omega^{1}_{A_0(R)}$ is a free $A_0(R)$-module of rank $2n$ spanned by $dx_i$ and $dy_i$ for $i=1,\dots,n$. Consider also the $R$-module $\Omega^{1,\lp}_{A_0(R)}\coloneqq \Omega^1_{A_0(R)}\otimes_{A_0(R),\ev_0^p} R$. We let $\Omega^{1,cl}_{A_0(R)}\subset \Omega^{1}_{A_0(R)}$ be the $R$-module of closed 1-forms. We have the de Rham differential $A_0(R)\xra{d}\Omega^{1,cl}_{A_0(R)}$ and the Cartier operation $\mathsf{C}\colon \Omega^{1,cl}_{A_0(R)} \ra \Omega^{1,\lp}_{A_0(R)}$.

\begin{lm}\label{lem:sequence for closed 1-forms}There is a short exact sequence of $R$-modules
$$
0\ra A_0(R)/R\xra{d}  \Omega^{1,cl}_{A_0(R)} \xra{\mathsf{C}} \Omega^{1,\lp}_{A_0(R)}\ra 0
$$
\end{lm}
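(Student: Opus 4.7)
The plan is to deduce this directly from the general Cartier short exact sequence \eqref{eq:Cartier ses} established earlier in the pseudo-smooth setting, by specializing to $X\coloneqq \Spec A_0(R)$ over $S\coloneqq \Spec R$.

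First I would record the relevant identifications. The scheme $X$ is pseudo-smooth over $S$ with $\Omega^1_{X/S}$ free of rank $2n$ (this is observed in Construction \ref{constr:A_0} and just before Lemma \ref{lem:sequence for closed 1-forms}). By Example \ref{ex:reduced vs relative Frobenius}(2), the reduced Frobenius twist satisfies $A_0(R)^{\lp}\simeq R$, so $X^{\lp}\simeq S=\Spec R$ and the reduced Frobenius map $F_X^{\lp}\colon X\to X^{\lp}$ corresponds to the structure inclusion $R\hookrightarrow A_0(R)$. In particular, for any quasi-coherent sheaf $\mc F$ on $X$ one has $H^0(X^\lp, F_{X*}^{\lp}\mc F)=H^0(X,\mc F)$; applied to $\mc F=\mc O_X$, $\mc F=\Omega^1_{X,cl}$ and to the quasi-coherent sheaf $\Omega^{1,\lp}_X$ on $X^\lp$, the three modules of global sections are, respectively, $A_0(R)$, $\Omega^{1,cl}_{A_0(R)}$ and $\Omega^{1,\lp}_{A_0(R)}$.

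Next I would invoke the sequence \eqref{eq:Cartier ses}, which in the pseudo-smooth setting is a short exact sequence of quasi-coherent sheaves on $X^{\lp}$:
\[
0\ra F_{X*}^{\lp}\mc O_X/\mc O_X^{\lp}\xra{d} F_{X*}^{\lp}\Omega^1_{X,cl}\xra{\mathsf C}\Omega^{1,\lp}_X\ra 0.
\]
This in turn rests on Proposition \ref{prop: Cartier isom} (Cartier isomorphism in the pseudo-smooth setting), which is already at our disposal; the identification of the kernel of $d$ with $\mc O_X^{\lp}\simeq R$ uses the degree $0$ part of that isomorphism.

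Finally, since $X^{\lp}=\Spec R$ is affine and all three sheaves are quasi-coherent, higher cohomology vanishes and the global sections functor is exact. Taking $H^0(X^{\lp},-)$ of the displayed sequence and using the identifications from the first paragraph yields precisely
\[
0\ra A_0(R)/R\xra{d}\Omega^{1,cl}_{A_0(R)}\xra{\mathsf C}\Omega^{1,\lp}_{A_0(R)}\ra 0,
\]
as required. There is no real obstacle here beyond matching up the identifications: the lemma is a direct affine specialization of the pseudo-smooth Cartier exact sequence.
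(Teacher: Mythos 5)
Your proof is correct and is exactly the argument the paper gives: apply global sections to the Cartier short exact sequence (\ref{eq:Cartier ses}) over the affine scheme $X^{\lp}\simeq\Spec R$, where quasi-coherence makes the global sections functor exact. You have simply spelled out the identifications that the paper's one-line proof leaves implicit.
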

\begin{proof}
This follows by applying global sections to the Cartier short exact sequence (\ref{eq:Cartier ses}) on the affine $R$-scheme $X=\Spec A_0(R)$.
\end{proof}	

\begin{rem}
Under an isomorphism $$
\Omega^{1,\lp}_{A_0(R)}\simeq \left(\oplus_{i=1}^{n} R\cdot dx_i\right) \oplus \left(\oplus_{i=1}^{n} R\cdot d{y_i}\right),
$$ the Cartier operation has the following description. Namely, given $\alpha\in \Omega^1_{X/R,cl}$ we need to write it as a sum of monomials in $x_i,y_j$ times $dx_k,dy_l$ with coefficients in $R$ and look at the coefficients $a_i\in R$ and $b_j\in R'$ in front of summands of the form $x_i^{p-1}dx_i$ and $y_j^{p-1}dy_j$. Then 
	$
	\mathsf C(\alpha)=\sum_i a_i\cdot {dx_i} + \sum_j b_j\cdot {dy_j}\in \Omega^{1,\lp}_{A_0(R)}.
	$
\end{rem}
We can also consider maps $d\log\colon A_0(R)^\times\ra \Omega^{1,cl}_{A_0(R)}$ and the map $-^\lp\colon \Omega^{1,cl}_{A_0(R)}\ra \Omega^{1,\lp}_{A_0(R)}$ sending $\alpha$ to $\alpha\otimes 1\in \Omega^{1,\lp}_{A_0(R)}$.

\begin{lm}\label{lem:Milne's sequence}
There is a short exact sequence 
\begin{equation}\label{eq:Milne}
0\ra A_0(R)^\times/R^\times\xra{d\log}  \Omega^{1,cl}_{A_0(R)} \xra{\alpha\mapsto \mathsf{C}(\alpha)-\alpha^\lp} \Omega^{1,\lp}_{A_0(R)}\ra 0.
\end{equation}

\end{lm}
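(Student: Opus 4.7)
The plan is to deduce the lemma from Proposition~\ref{prop:Milne's exact sequence} applied to the pseudo-smooth $R$-scheme $X = \Spec A_0(R)$ (for which $X^\lp = \Spec R$) by passing to \'etale cohomology on $X^\lp$. First observe that, by Remark~\ref{rem:katzformula}, $\mathsf{c}_p(\alpha) = \alpha^\lp - \mathsf{C}(\alpha)$, so the arrow $\alpha \mapsto \mathsf{C}(\alpha) - \alpha^\lp$ in the statement equals $-\mathsf{c}_p$, with the same kernel and cokernel. Since $F_{X*}^\lp \Omega^1_{X,cl}$ and $\Omega^{1,\lp}_X$ are quasi-coherent sheaves on the affine scheme $\Spec R$, their higher \'etale cohomology vanishes; hence the long exact sequence associated with Proposition~\ref{prop:Milne's exact sequence} collapses to
\[
0 \to H^0_{\et}(\Spec R, \mc F) \to \Omega^{1,cl}_{A_0(R)} \xra{-\mathsf{c}_p} \Omega^{1,\lp}_{A_0(R)} \to H^1_{\et}(\Spec R, \mc F) \to 0,
\]
where $\mc F := F_{X*}^\lp \mc O_X^\times / (\mc O_{X^\lp})^\times$.

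The next step is to identify $H^0_{\et}(\Spec R, \mc F)$ with $A_0(R)^\times/R^\times$. Applying \'etale cohomology to the defining sequence $0 \to \bG_m \to F_{X*}^\lp \mc O_X^\times \to \mc F \to 0$ gives the exact fragment
\[
R^\times \to A_0(R)^\times \to H^0_{\et}(\Spec R, \mc F) \to \Pic(R) \to \Pic(A_0(R)).
\]
The reduction $\ev_0\colon A_0(R) \twoheadrightarrow R$ has nilpotent kernel $I_0(R)$, so by the standard invariance of $\Pic$ under nilpotent thickenings, pullback along $\ev_0$ induces an isomorphism $\Pic(A_0(R)) \simeq \Pic(R)$; combined with the splitting $R \hookrightarrow A_0(R)$ of $\ev_0$, this shows that $\Pic(R) \to \Pic(A_0(R))$ is injective (in fact an iso). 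Exactness then yields $H^0_{\et}(\Spec R, \mc F) = A_0(R)^\times / R^\times$, and by construction the induced arrow to $\Omega^{1,cl}_{A_0(R)}$ is $d\log$.

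Finally, to show that the right-hand map is surjective (equivalently $H^1_{\et}(\Spec R, \mc F) = 0$), one exhibits an explicit set-theoretic right inverse. Given $\beta = \sum_i a_i\, dx_i + \sum_j b_j\, dy_j \in \Omega^{1,\lp}_{A_0(R)}$, set
\[
\alpha \;:=\; \sum_i a_i\, x_i^{p-1}\, dx_i + \sum_j b_j\, y_j^{p-1}\, dy_j \;\in\; \Omega^{1,cl}_{A_0(R)};
\]
each summand depends on a single variable and is therefore closed. Reading off the coefficients of $x_i^{p-1}dx_i$ and $y_j^{p-1}dy_j$ gives $\mathsf{C}(\alpha) = \beta$, while $\alpha^\lp = 0$ because $\ev_0(x_i^{p-1}) = \ev_0(y_j^{p-1}) = 0$. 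Thus $\mathsf{C}(\alpha) - \alpha^\lp = \beta$, and combining this with the preceding steps yields the short exact sequence in the statement. The main point requiring care is the identification of $H^0_{\et}(\Spec R, \mc F)$, which rests on the Picard invariance above; once that is granted, the rest is formal modulo this explicit surjectivity computation.
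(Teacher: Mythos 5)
Your proof is correct, and its skeleton is the same as the paper's: apply Proposition~\ref{prop:Milne's exact sequence} to $X=\Spec A_0(R)$ (with $X^\lp\simeq\Spec R$), note that the lemma's map differs from $\mathsf c_p$ only by sign via Remark~\ref{rem:katzformula}, and pass to global sections over the affine base. Where you diverge is in how you handle the two nontrivial verifications. The paper disposes of both at once using Remark~\ref{rem:structure of units in A_0}: the splitting $F_{X*}^\lp\mc O_X^\times\simeq\mc O_S^\times\times(1+I_0)^\times$ identifies the quotient sheaf with $(1+I_0)^\times$, whose $H^0$ is $A_0(R)^\times/R^\times$ and whose $H^1_\et$ vanishes because of the finite filtration with coherent graded pieces; the latter vanishing gives surjectivity. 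You instead (i) identify $H^0$ of the quotient sheaf via the long exact sequence for $0\to\bG_m\to F_{X*}^\lp\mc O_X^\times\to\mc F\to 0$ together with the injectivity of $\Pic(R)\to\Pic(A_0(R))$ (which indeed follows already from the retraction $\ev_0$, without even invoking nilpotent invariance), and (ii) prove surjectivity by writing down an explicit closed preimage $\sum_i a_ix_i^{p-1}dx_i+\sum_j b_jy_j^{p-1}dy_j$ of a given $\beta$, using that $\mathsf C$ is $\mc O_X^\lp$-linear and that $\alpha^\lp=0$ since the coefficients vanish under $\ev_0$. Your route for (ii) is more elementary and self-contained — it bypasses any cohomology vanishing for the non-quasi-coherent units sheaf — at the cost of an explicit coordinate computation; the paper's route is shorter once Remark~\ref{rem:structure of units in A_0} is in place and also delivers the $H^1$-vanishing that is reused implicitly elsewhere. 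Both arguments are sound.
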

\begin{proof}
Let $X\coloneqq \Spec A_0(R)$. By Example \ref{ex:reduced vs relative Frobenius}, $X^\lp\simeq S\coloneqq \Spec R$. Recall Milne's exact sequence (Proposition \ref{prop:Milne's exact sequence}). We claim that $H^1_\et(S,F_{X*}^\lp\mc O_X^\times/\mc O_S^\times)=0$. Indeed, by Remark \ref{rem:structure of units in A_0} below, $F_{X*}^\lp\mc O_X^\times$ splits as $\mc O_S^\times \times (1+I_0)^\times$ and $H^1_\et(S,(1+I_0)^\times)$ is 0. The other terms in  Milne's exact sequence are coherent sheaves, and so also don't have higher cohomology. Thus, applying global sections to \ref{prop:Milne's exact sequence} we exactly get (\ref{eq:Milne}) above.
\end{proof}

\begin{rem}\label{rem:structure of units in A_0}
Recall the ideal $I_0(R)\coloneqq \Ker(\ev_0)$. Consider the subgroup $(1+I_0(R))^\times\subset A_0(R)^\times$: since $I_0(R)$ lies in the nilradical of $A_0(R)$ it is well-defined, and identifies with the kernel of the map $A_0(R)^\times\twoheadrightarrow R^\times$ induced by $\ev_0$. Together with the map $R^\times\ra A_0(R)^\times$ this gives a splitting $A_0(R)^\times\simeq R^\times \times(1+I_0(R))$ functorially in $R$. Moreover, $(1+I_0(R))$ has a functorial in $R$) finite filtration by 
$$
(1+I_0(R))^\times\supset (1+I_0(R)^2)^\times \supset (1+I_0(R)^3)^\times +\ldots  
$$
with $(1+I_0(R)^n)/(1+I_0(R)^{n+1})\simeq I_0(R)^n/I_0(R)^{n+1}$ which is a free $R$-module spanned by monomials in $x_i$'s and $y_j$'s of degree $n$.

This shows that the \'etale sheaf $(1+I_0)^\times\colon R\mapsto (1+I_0(R))^\times$ on the bog site of affine $\mbb F_p$-schemes has a finite filtration with the associated graded given by a coherent sheaf. In particular, for any specific $\Spec R$ the restriction of this sheaf to the small 
\'etale site $(\Spec R)_\et$ doesn't have higher cohomology: $H^{>0}_\et(\Spec R, (1+I_0)^\times)=0$
\end{rem}

\begin{constr}[Restricted structure on the Frobenius neighborhood]\label{constr:restricted structure on A_0}
We endow $X=\Spec A_0(R)$ with the structure of a restricted symplectic $R$-scheme as in Construction \ref{constr:A_0}. Namely, we consider $(X,[\eta_{\mathsf{can}}])$ with  
$$
\eta_{\mathsf{can}}\coloneqq \sum_{i=1}^n y_idx_i\in \Omega^{1}_{A_0(R)}.
$$ 
The corresponding symplectic from is given by $\omega\coloneqq d\eta_{\mathsf{can}}=\sum_i dy_i\wedge dx_i\in \Omega^2_{A_0(R)}$.
\end{constr}

For the lemma that will follow shortly we will need the following generalization of Definition \ref{def:hamiltonian vector fields}.
\begin{df}
	Let $X$ be a symplectic $S$-scheme. A derivation $\xi\in H^0(X,\mc T_X)$ is called $\log$-\textit{Hamiltonian} if $\xi=f^{-1}\cdot H_f$ for some $f\in H^0(X,\mc O_X^\times)$.
\end{df}

\begin{rem}
 Similarly to the Hamiltonian case, the condition $\xi=f^{-1}\cdot H_f$ is equivalent to $\iota_\xi\omega=d\log f$.
\end{rem}

Given a vector field $\xi\in H^0(X,\mc T_X)$ we denote by $L_\xi\colon \Omega^i_X\ra \Omega^i_X$ the Lie derivative along $\xi$. We recall the Cartan formula:
$L_\xi=\iota_\xi\circ d+d\circ\iota_\xi$ in terms of the de Rham differential and contraction.  A vector field $\xi$ is called \textit{Poisson} if $L_\xi\omega=0$.

\begin{lm}\label{lem:description of log and usual Hamiltonian vector fields}
	\begin{enumerate}
	\item A Poisson vector field $\xi\in \cT_{A_0(R)}$ is Hamiltonian if and only if for all $f\in A_0(R)$ we have
	$$
	\xi(f^{[p]})=H_f^{p-1}(\xi(f)).
	$$
	\item  A Poisson derivation $\xi\in \cT_{A_0(R)}$ is $\log$-Hamiltonian if and only if for all $f\in A$ we have
	\begin{equation}\label{eq:log-Ham}
			\xi(f^{[p]})=H_f^{p-1}(\xi(f))\  +\ \xi(f)^p.
	\end{equation}
	
\end{enumerate}
\end{lm}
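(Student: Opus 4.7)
The plan is to translate the pointwise conditions on $\xi$ into global conditions on the associated closed $1$-form $\alpha := \iota_\xi\omega$, and then recognize those conditions as the vanishing of the Cartier operation $\mathsf{C}(\alpha)$ and of the $p$-curvature $\mathsf{c}_p(\alpha)$ respectively; the desired conclusions will then follow from the exact sequences of Lemmas \ref{lem:sequence for closed 1-forms} and \ref{lem:Milne's sequence}. Since $\xi$ is Poisson and $\omega$ is closed, $\alpha$ is automatically closed, and the non-degeneracy of $\omega$ makes $\xi \mapsto \alpha$ a bijection between Poisson vector fields and closed $1$-forms. The identities $\iota_{H_f}\omega = df$ and $H_{f^{[p]}} = H_f^{[p]}$ (the latter a consequence of the restricted-Lie axiom $\{f^{[p]},g\} = \ad(f)^p(g)$) then give
\[
\xi(f) = -\langle H_f, \alpha\rangle, \qquad \xi(f^{[p]}) = -\langle H_f^{[p]}, \alpha\rangle.
\]

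The key step will be the Cartier contraction formula (\ref{eq:formula for the Cartier operation}) applied to the pair $(H_f, \alpha)$, which after substituting the identities above rewrites
\[
\xi(f^{[p]}) - H_f^{p-1}\bigl(\xi(f)\bigr) = -\iota_{H_f^{\lp}}\bigl(\mathsf{C}(\alpha)\bigr).
\]
Combining this with Remark \ref{rem:katzformula} applied to the function $\langle H_f, \alpha\rangle \in A_0(R)$, which yields $\xi(f)^p = -\iota_{H_f^{\lp}}(\alpha^{\lp})$, one further obtains
\[
\xi(f^{[p]}) - H_f^{p-1}\bigl(\xi(f)\bigr) - \xi(f)^p = -\iota_{H_f^{\lp}}\bigl(\mathsf{C}(\alpha) - \alpha^{\lp}\bigr),
\]
whose inner expression is (up to sign) precisely the map from the Milne sequence of Lemma \ref{lem:Milne's sequence}.

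To finish, I will use that the Hamiltonian vector fields $\{H_f\}_{f \in A_0(R)}$ generate $\mc T_{A_0(R)}$ over $A_0(R)$ (Remark \ref{rem:Hamiltonian vector fields}); their reduced-Frobenius twists $\{H_f^{\lp}\}$ therefore generate $\mc T_{A_0(R)}^{\lp}$ over $R$, so by non-degeneracy of the pairing $\mc T_{A_0(R)}^{\lp} \otimes \Omega^{1,\lp}_{A_0(R)} \to R$ the right-hand sides above vanish for every $f$ if and only if, respectively, $\mathsf{C}(\alpha) = 0$ or $\mathsf{C}(\alpha) - \alpha^{\lp} = 0$ in $\Omega^{1,\lp}_{A_0(R)}$. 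Lemma \ref{lem:sequence for closed 1-forms} then identifies $\mathsf{C}(\alpha) = 0$ with $\alpha \in d(A_0(R))$, giving $\xi = H_g$ in part (1); Lemma \ref{lem:Milne's sequence} identifies $\mathsf{c}_p(\alpha) = 0$ with $\alpha = d\log g$ for some $g \in A_0(R)^\times$, and the computation $\iota_{g^{-1}H_g}\omega = g^{-1}\,dg = d\log g$ then exhibits $\xi = g^{-1}H_g$ as $\log$-Hamiltonian in part (2).

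The main obstacle I expect is purely bookkeeping: keeping straight the sign conventions between the Cartier contraction defined in Remark \ref{rem:pairing with Cartier}, the Katz-type formula $\mathsf{c}_p(\alpha) = \alpha^{\lp} - \mathsf{C}(\alpha)$ of Remark \ref{rem:katzformula}, and the Milne-sequence map $\alpha \mapsto \mathsf{C}(\alpha) - \alpha^{\lp}$ of Lemma \ref{lem:Milne's sequence}, so that the ``defect'' $\xi(f^{[p]}) - H_f^{p-1}(\xi(f))$ (resp.\ its $\log$-corrected version) pairs cleanly with the correct central object; once this identification is made, the rest of the argument is essentially formal.
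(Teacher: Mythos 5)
Your argument is correct and is essentially the paper's own proof: both pass to the closed form $\beta=\iota_\xi\omega$, use the contraction formula of Remark \ref{rem:pairing with Cartier} to identify the defect $\xi(f^{[p]})-H_f^{p-1}(\xi(f))$ (resp.\ its $\log$-corrected version) with $\pm\iota_{H_f^\lp}(\mathsf C(\beta))$ (resp.\ $\pm\iota_{H_f^\lp}(\mathsf C(\beta)-\beta^\lp)$), and then invoke the generation of $\mc T_{A_0(R)}^\lp$ by twists of Hamiltonian fields together with Lemmas \ref{lem:sequence for closed 1-forms} and \ref{lem:Milne's sequence}. The sign issues you flag are harmless since $p$ is odd and the conditions are vanishing statements.
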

\begin{proof}
The proof essentially follows \cite[Lemma 2.5]{bk}. By Cartan formula and the assumption $L_\xi \omega=0 \Rightarrow d(\iota_\xi \omega) +0=0$ we get that $\beta\coloneqq \iota_\xi\omega\in \Omega^1_{A_0(R)}$ is closed. By Remark \ref{rem:Hamiltonian vector fields} and Lemma \ref{lem:sequence for closed 1-forms} $\xi$ is Hamiltonian if and only if $\mathsf C(\beta)=0$. Similarly, by Lemma \ref{lem:Milne's sequence} $\xi$ is $\log$-Hamiltonian if and only if $\mathsf C(\beta)=\beta^\lp$. Since $\omega$ is non-degenerate, Hamiltonian vector fields generate $\mc T_{A_0(R)}$. Then vector fields of the form $H_f^\lp$ generate $\mc T_{A_0(R)}^\lp$, and so it is enough to understand the pairing of $\mathsf C(\beta)\in \Omega^{1,\lp}_{A_0(R)}$ with (the twists of) Hamiltonian vector fields. By Remark \ref{rem:pairing with Cartier} $\iota_{H_f^\lp}(\mathsf C (\beta))=\langle H_f^{[p]},\beta\rangle - H_f^{p-1}(\langle {H_f},\beta \rangle)$. Substituting $\beta=\iota_\xi\omega$ we get $\iota_{H_f}(\mathsf C(\beta))=\xi(f^{[p]}) - H_f^{p-1}(\xi(f))\in R\subset A_0(R)$. Thus, $\mathsf C(\beta)=0$ if and only if $\xi(f^{[p]}) - H_f^{p-1}(\xi(f))=0$ for all $f\in A_0(R)$. This gives Part 1. Part 2 is obtained similarly, by noting that $\iota_{H_f^\lp}(\beta^\lp)=(\iota_{H_f}\beta)^\lp=\xi(f)^p$.
\end{proof}
\subsection{$h$-separable algebras}
Let $\Sch_{\!/\mbb S}$ be the category of schemes over $\mbb S$. To construct a group action on the category of $\mc A_{\mbb S}$-modules it will be crucial to have some understanding of the algebra automorphisms of $\mc A_{\mbb S}$.  This will become easier if we restrict to a certain subcategory in $\Sch_{\!/\mbb S}$, where automorphisms of $\mc A_{\mbb S}$ are closely related to  $G_0$. The goal of this section is to define this subcategory.

\begin{constr}
	Let $R\in \CAlg_{\mbb F_p[h]/}$. For an element $b\in R/h$ let $\tb\in R$ be a lift. Then the operation $b\mapsto \tb^p$ gives a well-defined map $R/h\ra R/h^p$. Indeed, given other lift $\tb'=\tb+hc$ we have $(\tb')^p=\tb^p+h^pc^p$. 
	
	Consider $\alpha_p(R/h)=\{a\in R/h\ |\  a^p=0\}\subset R/h$. Then restricting the above map to $\alpha_p(R/h)$ gives a map $\delta\colon \alpha_p(R/h) \ra hR/h^pR$.  
\end{constr}
\begin{df}\label{def:h-separable algebras}An $\mbb F_p[h]$-algebra $R$ is called \textit{$h$-separable} if 
	\begin{enumerate}
		\item  it is $h$-torsion free (equivalently, flat over $\mbb F_p[h]$);
		\item the map $\delta_R\colon \alpha_p(R/h) \ra hR/h^pR$ is equal to 0.
	\end{enumerate}		
\end{df}
\begin{rem}
	It is not hard to see that the latter condition is equivalent to the following:  for any $a\in R$ one has
	$$a^p\in hR \quad \Leftrightarrow \quad a^p\in h^pR.$$
\end{rem}
We denote by $\CAlg_{\mbb F_p[h]/}^{h-\sep}$ the category of $h$-separable $\mbb F_p[h]$-algebras.

\begin{example}\label{ex:separable algebras}
\begin{enumerate}
\item Let $C$ be an $\mbb F_p$-algebra. Then $R\coloneqq C\otimes_{\mbb F_p} \mbb F_p[h]$ is $h$-separable. Indeed, it is $h$-torsion free and to check the second condition note that $R/h\simeq C$ and any $x\in \alpha_p(R/h)$ can be lifted to $x\otimes 1$ for which $(x\otimes 1)^p=x^p\otimes 1=0$. Thus $\delta_R$ is 0.

\item Any $\mbb F_p[h,h^{-1}]$-algebra $R$ is $h$-separable. Indeed, it is obviously $h$-tosion free and the second condition is vacuous since $R/h=0$.

\item $R\coloneqq \mbb F_p[h^{1/p}]$ is not $h$ separable. Indeed, $R/h\simeq \mbb F_p[x]/x^p$ where $x$ is the class of $h^{1/p}$, and the map $\delta_R$ sends $x$ to $h\in hR/h^pR$.
\end{enumerate}
\end{example}

Note that $\alpha_p(R/h)$ has a natural structure of $R$-module such that $\delta_R$ is a Frobenius-linear map of $R$-modules. Let $\delta_R^\ell\colon \alpha_p(R/h)\otimes_{R,F_R}R\ra hR/h^pR$ be its linearization. Obviously, $\delta_R=0 \Leftrightarrow \delta_R^\ell =0$.

 Even though it might not yet be clear what is the motivation behind the notion $h$-separability, let us show that it is local in \'etale topology:
\begin{lm}
	Let $R$ be an $\mbb F_p[h]$-algebra and let $R\ra R'$ be an \'etale cover. Then $R$ is $h$-separable if and only if $R'$ is.
\end{lm}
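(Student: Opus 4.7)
Plan. The first of the two defining conditions of $h$-separability, namely $h$-torsion freeness, both ascends and descends along the faithfully flat map $R\to R'$, so the content of the statement lies entirely in the vanishing of $\delta_R$. Recall that $\delta_R=0$ if and only if its $R$-linear linearization
$$\delta_R^{\ell}\colon \alpha_p(R/h)\otimes_{R,F_R} R \to hR/h^pR$$
vanishes, and likewise for $R'$. The strategy is to identify $\delta_{R'}^\ell$ with the base change $\delta_R^\ell \otimes_R R'$, after which the biconditional reduces to faithfully flat descent.

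For the implication $R'$ separable $\Rightarrow$ $R$ separable one needs nothing beyond faithful flatness: given $b\in R$ with $b^p\in hR$, its image in $R'$ lies in $hR'$, hence in $h^pR'$ by $h$-separability of $R'$; and since ideal contraction for faithfully flat maps yields $h^pR'\cap R = h^pR$, one concludes $b^p\in h^pR$.

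The converse direction rests on a canonical isomorphism $\alpha_p(R/h)\otimes_R R' \simeq \alpha_p(R'/h)$. Setting $B:=R/h$ and $B':=R'/h$, the morphism $B\to B'$ is étale, so the relative Frobenius $F_{B'/B}\colon B'\otimes_{B,F_B}B\xrightarrow{\sim} B'$ is an isomorphism; equivalently, the commutative square formed by $F_B$ and $F_{B'}$ is a pushout of rings. Flat base change of the exact sequence $0\to \alpha_p(B)\to B \xrightarrow{F_B} B^{(1)}$ of $B$-modules along $B\to B'$, combined with this Cartesian property, delivers the sought isomorphism. Together with the tautological $hR/h^pR\otimes_R R'\simeq hR'/h^pR'$, one then checks that $\delta_{R'}^\ell$ coincides with $\delta_R^\ell\otimes_R R'$, and faithful flatness of $R\to R'$ concludes.

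The main subtle point is the identification $\alpha_p(R/h)\otimes_R R'\simeq \alpha_p(R'/h)$, which genuinely requires étaleness. Bare flatness does not suffice: for $B=\mbb F_p[x]/x^p$ and the flat extension $B\to B':=B[y]/y^p$ one has $\alpha_p(B)\otimes_B B' = (x)B'\subsetneq (x,y)=\alpha_p(B')$. It is precisely the Cartesian behavior of Frobenius for étale morphisms that resolves this, and on which the entire proof pivots.
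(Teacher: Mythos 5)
Your proof is correct and follows essentially the same route as the paper: reduce to the linearized map $\delta^\ell$, use \'etaleness (via the relative Frobenius being an isomorphism, i.e.\ the Cartesian Frobenius square) to identify $\alpha_p(R'/h)$ with $\alpha_p(R/h)\otimes_R R'$, and conclude by faithfully flat descent. Your direct ideal-contraction argument for the descent direction and the counterexample showing bare flatness is insufficient are nice additions but do not change the substance of the argument.
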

\begin{proof}
	$R'$ is faithfully flat over $R$, and so $R$ is $h$-torsion free if and only if so is $R'$. It remains to relate $\delta_R$ and $\delta_{R'}$. Since $R\ra R'$ is \'etale, the relative Frobenius $F_{R'/R}$ is an isomorphism. We have a factorization 
	$$
	\xymatrix{R\ar[r]^{\id_R} \ar[d] & R \ar[r]^{F_R} \ar[d] & R\ar[d] \\
	R' \ar@/_1pc/[rr]_{F_{R'}}\ar[r]^{F_{R'/R}}& R' \ar[r]^{W_{R'/R}} & R'} 
	$$	
	of $F_{R'}$ as the composition $W_{R'/R}\circ F_{R'/R}$ (recall that $W_{R'/R}$ by definition is the pull-back of $F_R$ to $R'$). Identifying $\alpha_p(B)$ with the kernel of $F_B$ and the diagram above, it is not hard to see that the linearized map $\delta_{R'}^\ell\colon \alpha_p(R'/h)\otimes_{R',F_{R'}} R'\ra hR'/h^pR'$ is identified with $F_{R'/R}^{-1}$ applied to the base change 
	$$
	\delta_R^\ell\otimes R'\colon \alpha_p(R/h)\otimes_{R,F_R}R\otimes_R R'\ra hR/h^pR\otimes_RR'.
	$$
	Thus, $\delta_{R'}^\ell=0\Leftrightarrow \delta_R=0$.
\end{proof}

\begin{rem}\label{rem: p-power of a lifting}
	Let $R$ be an $h$-separable $k[h]$-algebra. Then for any $a\in R/h$ the image $\ta^p\in R/h^p$ only depends on the $p$-th power $a^p\in R/h$. Indeed, if $a,b\in R/h$ are such that $a^p=b^p$, then $a-b\in \alpha_p(R/h)$ and $
	\ta^p-\tb^p=\delta_R(a-b)=0$, so $\ta^p=\tb^p$. In other words, the map $(\widetilde{-})^p\colon R/h \ra R/h^p$ factors through the image of absolute Frobenius on $R/h$. This gives a well-defined map 
	$$\mathsf s\colon (R/h)^p\simeq (R/h)/\alpha_p(R/h) \ra R/h^p.$$
\end{rem}

\begin{df}
A $\mbb S$-scheme $X\ra \mbb S$ is called \textit{$h$-separable} if its base change to $\mbb S\backslash \{\infty\}$ has a Zariski cover by $\{U_i\coloneqq \Spec R_i\}$ where each $R_i$ is a separable $\mbb F_p[h]$-algebra.
\end{df}

\begin{example}\label{ex:h-separable schemes}
\begin{enumerate}
\item If the map $X\ra \mbb S$ factors through $\mbb S\backslash \{0\}$ then $X$ is $h$-separable. Indeed, in this case $X_{\mbb S\backslash \{0\}}$ can be covered by spectra of $\mbb F[h,h^{-1}]$-algebras that are $h$-separated. 
\item Let $R$ be an $h$-separable $k[h]$-algebra. Then the composition $\Spec R\ra \mbb S\backslash \{\infty\} \ra \mbb S$ defines an $h$-separable $\mbb S$-scheme.
\end{enumerate}
\end{example}

We consider the big site $\Sch^{h-\sep{}}_{\!/\mbb S}\subset \Sch_{\!/\mbb S}$ of all {$h$-separable} schemes over $\mbb S$ endowed with the \'etale topology\footnote{In fact for all the computations we will only need to use Zariski topology, but we just wanted to point out that \'etale topology also make sense in this setup.}. 

\begin{rem}\label{rem:subcategories of h-separable schemes}By Example \ref{ex:h-separable schemes} any $\mbb S$-scheme that lives over $\mbb S\backslash \{0\}$ is $h$-separable, and so we have a natural embedding $\Sch_{\!/\mbb S\backslash \{0\}}\subset \Sch^{h-\sep{}}_{\!/\mbb S}$. We also have a fullt faithful embedding $(\CAlg_{\mbb F_p[h]/}^{h-\sep})^{\mr{op}}\subset \Sch^{h-\sep{}}_{\!/\mbb S}$ with the image given by those separable $h$-schemes that live over $\mbb S\backslash \{\infty\}$ and are affine. Moreover, any $(X\ra \mbb S)\in \Sch^{h-\sep{}}_{\!/\mbb S}$ has a Zariski cover $U_i\ra X$ such that each $U_i\ra \mbb S$ belongs to one of the two subcategories.
\end{rem}

 Functor ${\mc A}_{\mbb S}\colon (f\colon X\ra \mbb S) \mapsto \Gamma(X, f^*{\mc A}_{\mbb S})$ defines a sheaf on $\Aff^{h-\sep{}}_{\!\!/\mbb S}$. Its restriction to $(\CAlg_{\mbb F_p[h]/}^{h-\sep})^{\mr{op}}\subset \Aff^{h-\sep{}}_{\!\!/\mbb S}$ is given by
$$
R\mapsto A_{\mbb S\backslash \{\infty\}}(R)\coloneqq A_{\mbb S\backslash \{\infty\}}\otimes_{k[h]}R,
$$
(see Remark \ref{rem:description of A_S} to recall the definition of $A_{\mbb S\backslash \{\infty\}}$). Explicitly, $$
A_{\mbb S\backslash \{\infty\}}(R)\simeq R\langle x_i,y_j\rangle_{1\le i,j\le n}/([x_i,x_j]=[y_i,y_j]=y_j^p=x_i^p=0, [y_j,x_j]=h\cdot \delta_{ij}).
 $$ 
 Note that the underlying $R$-module of $A_{\mbb S\backslash \{\infty\}}(R)$ is free of rank $p^{2n}$.

Taking the fiber $(X\ra \mbb S)\mapsto X_{\{0\}}$ over $\{0\}\in\mbb S$ defines a map of sites $i_0^{-1}\colon\Sch^{h-\sep{}}_{\!/\mbb S}\ra \Sch_{\!/\mbb F_p}$. On $(\CAlg_{\mbb F_p[h]/}^{h-\sep})^{\mr{op}}\subset \Sch^{h-\sep{}}_{\!/\mbb S}$ this functor is given by sending an $\mbb F_p[h]$-algebra $R$ to $\Spec R/h$. Consider the sheaf $A_0$ on $\Sch_{\!/\mbb F_p}$ sending $X\mapsto \Gamma(X, A_0\otimes_{\mbb F_p}\mc O_X)$ and the corresponding pushforward $i_{0*}A_0$. Its restriction to $\Sch_{\!/\mbb S\backslash \{0\}}\subset \Sch^{h-\sep{}}_{\!/\mbb S}$ is 0, while on $(\CAlg_{\mbb F_p[h]/}^{h-\sep})^{\mr{op}}\subset \Aff^{h-\sep{}}_{\!\!/\mbb S}$ it is given by $R\mapsto A_0(R/h)$.

We have a natural map $\mc A_{\mbb S}\ra i_{0*}A_0$ (induced by the isomorphism $A_0\simeq i_0^*\mc A_{\mbb S}$ as a quasi-coherent sheaf on $\mbb S$). Its restriction to $\Sch_{\!/\mbb S\backslash \{0\}}\subset \Sch^{h-\sep{}}_{\!/\mbb S}$ is the zero map $\mc A_{\mbb S}\ra 0$, while on $(\CAlg_{\mbb F_p[h]/}^{h-\sep})^{\mr{op}}\subset \Aff^{h-\sep{}}_{\!\!/\mbb S}$ it is given by reduction modulo $h$
$$
A_{\mbb S\backslash \{\infty\}}(R) \ra A_{\mbb S\backslash \{\infty\}}(R)/h\simeq A_0(R/h).
$$

\subsection{Automorphisms of $\mc A_{\mbb S}$}\label{ssec: automorphisms of A_S}
We let ${\ul{\Aut}}(\mc A_{\mbb S})$ be the sheaf on big fpqc site of $\mbb S$ which sends $f\colon X\ra \mbb S$ to algebra automorphisms of $f^*\mc A_{\mbb S}$. Given the explicit description of $\mc A_{\mbb S}$ in Remark \ref{rem:description of A_S} one sees that ${\ul{\Aut}}(\mc A_{\mbb S})$ is represented by a finite type group scheme over $\mbb S$. Since by definition $A_0\simeq i_0^*\mc A_{\mbb S}$ (as quasi-coherent sheaves) we get a natural ``reduction mod $h$"-map $i_0^*\colon {\ul{\Aut}}(\mc A_{\mbb S})\ra i_{0*}(\ul{\Aut}(A_0))$.

Recall the group subscheme $G_0\subset {\ul{\Aut}}(A_0)$ of restricted Poisson automorphisms of $(\Spec A_0,[\eta_{\mathsf{can}}])$ (Construction \ref{constr:G_0}).
The first goal of this section is to show that if we restrict to $\Sch^{h-\sep{}}_{\!/\mbb S}\subset \Sch_{\!/\mbb S}$ the map $i_0^*\colon {\ul{\Aut}}(\mc A_{\mbb S})\ra i_{0*}(\ul{\Aut}(A_0))$ above factors through $i_{0*}G_0$. 

The key is to reinterpret the restricted Poisson structure on $A_{0}(R/h)$ in terms of $A_{\mbb S\backslash \{\infty\}}(R)$.  Note that (since $A_0\coloneqq i_0^*\mc A_{\mbb S}$) we have $A_0(R/h)\simeq A_{\mbb S\backslash \{\infty\}}(R)/h$.
 
 \begin{rem}[Poisson structure]\label{rem:Poisson structure in terms of A_h}
 Let $R$ be  $h$-torsion free. Then $A_{\mbb S\backslash \{\infty\}}(R)$ is $h$-torsion free as well and we can identify $h^iA_{\mbb S\backslash \{\infty\}}(R)/h^{i+1}A_{\mbb S\backslash \{\infty\}}(R)\simeq A_{0}(R/h)$ (via dividing by $h^i$) for any $i\ge 0$. For an element $f\in A_{0}(R/h)$ let $\tf\in A_{\mbb S\backslash \{\infty\}}(R)$ denote a lift under the projection $A_{\mbb S\backslash \{\infty\}}(R)\epi A_{0}(R/h)$. One can define a Poisson bracket $\{-,-\}$ on $A_{0}(R/h)$ by the equation $h\{f,g\}=[\tf,\tg] \pmod{h^2}$. One sees immediately from the relations defining $A_{\mbb S\backslash \{\infty\}}(R)$ that $\{x_i,x_j\}=\{y_i,y_j\}=0$, while $\{y_j,x_i\}=\delta_{ij}$ and so this Poisson structure agrees with the one defined by $\omega$ (see Construction \ref{constr:restricted structure on A_0}). 
 \end{rem}
 
 The restricted $p$-th power operation $-^{[p]}$ (corresponding to the restricted Poisson structure on $A_0(R)$) is more subtle to identify and this is where $h$-separability property is important. 
 
 \begin{constr}[Restricted structure]\label{constr:restricted structure on A_0 via A_h}
 Note that for any $f\in  A_{0}(R/h)$ and any lift $\tf\in A_{\mbb S\backslash \{\infty\}}(R)$ the expression $\tf^p$ is well-defined mod $h^p$. Indeed, by the Jacobson's formula (see Section \ref{sec:Jacobson's formula}) we have
 	$$
 	(\tf + h\tg)^p=\tf^p + h^p \tg^p + \sum_{i=1}^{p-1} h^iL_i(\tf,\tg),$$
 	where each $L_i(\tf,\tg)$ is a Lie polynomial of total degree $p-1$. Since the commutator $[A_{\mbb S\backslash \{\infty\}}(R),A_{\mbb S\backslash \{\infty\}}(R)]\subset h A_{\mbb S\backslash \{\infty\}}(R)$ we get that $h^iL_i(\tf,\tg)\in h^{p-1+i}A_{\mbb S\backslash \{\infty\}}(R)$, and, consequently, that $(\tf + h\tg)^p=\tf^p \pmod{h^p}$.

 	 Recall the map $\mathsf s\colon (R/h)^p\ra R/h^p$ that we constructed in Remark \ref{rem: p-power of a lifting}. We will denote in the same way its composition with the embedding $R/h^p\hookrightarrow A_{\mbb S\backslash \{\infty\}}(R)/h^p$. The $p$-restricted structure then appears as follows, namely for any $f\in A_{0}(R/h)$ and a lift $\tf\in A_{\mbb S\backslash \{\infty\}}(R)$ one can show as in Lemma \ref{lem:restricted structure} that the difference $\tf^p- \mathsf s(\ev_0(f)^p)$ is divisible by $h^{p-1}$ and define
 	$$
 	h^{p-1}\cdot f^{[p]}\coloneqq \tf^p- \mathsf s(\ev_0(f)^p) \pmod{h^p}.
 	$$
 	Here we implicitly identified $h^{p-1}A_{\mbb S\backslash \{\infty\}}(R)/h^pA_{\mbb S\backslash \{\infty\}}(R)\simeq A_0(R/h)$ and considered $s(\ev_0(f)^p)$ as an element of $A_{\mbb S\backslash \{\infty\}}(R)/h^p$.
 	By the same argument as in Lemma \ref{lem:restricted structure}, one sees that this defines a restricted Poisson structure on $A_0(R/h)$ which coincides with the one that we defined in Construction \ref{constr:restricted structure on A_0} by Lemma \ref{lem:two restricted structures} below.
 \end{constr}

\begin{rem}
	It is important to note that the map $\mathsf s(\ev_0(-)^p)\colon A_0(R/h)\ra R/h^p$ given by $f\mapsto \mathsf{s}(\ev_0(f)^p)\in R/h^p$ is preserved\footnote{Meaning for any $\phi\in \Aut_{R/h}(A_0(R/h))$ and $f\in A_0(R/h)$ we have $\mathsf s(\ev_0(f)^p)=\mathsf s(\ev_0(\phi(f))^p)$.} by any $R/h$-linear automorphism of $A_0(R/h)$. Indeed, $\Aut_{R/h}(A_0(R/h))$ acts trivially on $A_0(R/h)^\lp\simeq R$, and $\ev_0(f)^p$ is identified with the reduced Frobenius map $F_{A_0(R/h)}^\lp\colon A_0(R/h)\ra A_0(R/h)^\lp$ which commutes with the action by automorphisms. 
\end{rem}
 \begin{lm}\label{lem:two restricted structures}
 	The restricted structures on $A_0(R/h)$ from Constructions \ref{constr:restricted structure on A_0 via A_h} and \ref{constr:restricted structure on A_0} coincide.
 \end{lm}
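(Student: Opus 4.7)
The plan is to reduce the verification to a direct computation on the $R/h$-algebra generators $x_i, y_j$ of $A_0(R/h)$. First, I note that both restricted structures extend the same Poisson bracket on $A_0(R/h)$: for the structure of Construction \ref{constr:restricted structure on A_0} this is immediate, while for the structure of Construction \ref{constr:restricted structure on A_0 via A_h} this is exactly the content of Remark \ref{rem:Poisson structure in terms of A_h}, where the commutation relations of $A_{\mbb S\backslash \{\infty\}}(R)$ produce $\{y_j,x_i\} = \delta_{ij}$ and $\{x_i,x_j\} = \{y_i,y_j\} = 0$. By the last observation of Remark \ref{rem:restricted structure as 1-forms}, the difference $D(f) \coloneqq f^{[p]_1} - f^{[p]_2}$ is then a Frobenius-derivation of $A_0(R/h)$ over $R/h$, i.e. satisfies $D(fg) = f^p D(g) + g^p D(f)$.

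Next, I would verify that both restricted $p$-th powers vanish on the generators $x_i, y_j$. For the first structure this is essentially immediate: using the obvious lifts $\tilde{x}_i = x_i,\ \tilde{y}_j = y_j \in A_{\mbb S\backslash \{\infty\}}(R)$, the defining relations of $A_{\mbb S\backslash \{\infty\}}(R)$ (see Remark \ref{rem:description of A_S}) give $\tilde{x}_i^p = \tilde{y}_j^p = 0$, while $\ev_0(x_i) = \ev_0(y_j) = 0$ yields $\mathsf{s}(\ev_0(x_i)^p) = \mathsf{s}(\ev_0(y_j)^p) = 0$; by the defining formula $h^{p-1} f^{[p]_1} \equiv \tilde{f}^p - \mathsf{s}(\ev_0(f)^p) \pmod{h^p}$ we conclude $x_i^{[p]_1} = y_j^{[p]_1} = 0$. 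For the second structure I would apply the explicit formula $f^{[p]_2} = H_f^{p-1}(\langle H_f, \eta_{\mathsf{can}}\rangle) - \langle H_f^{[p]}, \eta_{\mathsf{can}}\rangle$ from Remark \ref{rem:restricted structure as 1-forms}. A direct computation with $\omega = \sum dy_i \wedge dx_i$ gives $H_{x_i} = \partial_{y_i}$ and $H_{y_j} = -\partial_{x_j}$, and these constant vector fields have vanishing restricted $p$-th power since $\partial_{x_j}^p$ and $\partial_{y_i}^p$ annihilate all of $A_0(R/h)$ (all generators being $p$-nilpotent). Furthermore $\langle H_{x_i}, \eta_{\mathsf{can}}\rangle = 0$ (as $\partial_{y_i}$ kills every $dx_k$), while $\langle H_{y_j}, \eta_{\mathsf{can}}\rangle = -y_j$ is killed by $H_{y_j}^{p-1} = (-\partial_{x_j})^{p-1}$. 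Hence both terms in the formula vanish, giving $x_i^{[p]_2} = y_j^{[p]_2} = 0$.

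It follows that the Frobenius-derivation $D$ vanishes on the generators $x_i, y_j$. An induction via $D(fg) = f^p D(g) + g^p D(f)$ then forces $D$ to vanish on every monomial in the $x_i, y_j$, and hence on all of $A_0(R/h)$ by $R/h$-linearity. The only place requiring some care is the computation for the second structure: one must correctly identify the Hamiltonian vector fields of the generators with respect to $\omega$ (including signs), and verify that $\partial_{x_j}^{[p]}$ and $\partial_{y_i}^{[p]}$ really are zero \emph{as derivations of $A_0(R/h)$} --- which follows cleanly from the $p$-nilpotence $x_j^p = y_i^p = 0$.
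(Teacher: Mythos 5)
Your proof is correct and follows essentially the same route as the paper: reduce to the generators $x_i,y_j$ by observing that the difference of the two restricted structures is a Frobenius-derivation (the paper invokes this implicitly via Remark \ref{rem:restricted structure as 1-forms}), and then verify that both $p$-th power operations vanish on the generators, carrying out the computation the paper leaves to the reader. The one blemish is the signs of the Hamiltonian vector fields: with the paper's conventions ($\omega=\sum dy_i\wedge dx_i$, $\{y_j,x_i\}=\delta_{ij}$, $H_f=\{f,-\}$) one has $H_{y_j}=\partial_{x_j}$ and $H_{x_i}=-\partial_{y_i}$, the opposite of what you wrote --- but this is immaterial since both terms in the formula for $f^{[p]}$ vanish for either choice of sign.
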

\begin{proof}
	Once we know that both constructions give restricted Poisson structures it is enough to check that they act in the same way on the algebra generators $x_i$ and $y_i$. We claim that in both cases $x_i^{[p]}=y_i^{[p]}=0$. Indeed, for \ref{constr:restricted structure on A_0 via A_h} this follows from $\ev_0(x_i)=\ev_0(y_j)=0$. For \ref{constr:restricted structure on A_0}, note that $H_{y_j}=\partial_{x_j}$ and $H_{x_i}=-\partial_{y_j}$. In both cases $H_f^{[p]}=0$, and so we need to check that $$H_{x_i}^{p-1}(\langle H_{x_i},\eta \rangle)=H_{y_i}^{p-1}(\langle H_{y_i},\eta \rangle)=0.$$ We leave this to the reader.
\end{proof}
\begin{lm}\label{lem:the reduction lies in G_0 for separable guys}
	Let $R$ be an $h$-separable $\mbb F_p$-algebra. If $\phi$ is an $R$-linear algebra  automorphism of $A_{\mbb S\backslash \{\infty\}}(R)$, then the reduction of $\phi$ modulo $h$ preserves the restricted Poisson structure on $A_{0}(R/h)$. In other words, $\phi_0\in G_0(R/h)$. 
\end{lm}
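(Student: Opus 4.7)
The plan is to verify two things about $\phi_0$: that it preserves the Poisson bracket on $A_0(R/h)$, and that it commutes with the restricted $p$-power operation $-^{[p]}$. Together, by Lemma \ref{lem:two restricted structures} and the definition of $G_0$ in Construction \ref{constr:G_0}, this will place $\phi_0$ in $G_0(R/h)$.

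For the Poisson bracket, the argument is immediate from the definition in Remark \ref{rem:Poisson structure in terms of A_h}. Given $f,g\in A_0(R/h)$ with lifts $\tilde f,\tilde g\in A_{\mbb S\backslash\{\infty\}}(R)$, the elements $\phi(\tilde f),\phi(\tilde g)$ lift $\phi_0(f),\phi_0(g)$, and since $\phi$ is an algebra automorphism, $[\phi(\tilde f),\phi(\tilde g)]=\phi([\tilde f,\tilde g])$. Dividing by $h$ (using that $R$, and hence $A_{\mbb S\backslash\{\infty\}}(R)$, is $h$-torsion free) and reducing mod $h$ yields $\{\phi_0(f),\phi_0(g)\}=\phi_0(\{f,g\})$.

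For the restricted $p$-power operation, I will use the alternative description from Construction \ref{constr:restricted structure on A_0 via A_h}, which is where the $h$-separability hypothesis enters in a crucial way: separability guarantees that the map $\mathsf s\colon (R/h)^p\to R/h^p$ is well-defined, so that
\[
h^{p-1}f^{[p]}\ \equiv\ \tilde f^{\,p}-\mathsf s(\ev_0(f)^p)\pmod{h^p}
\]
makes sense and defines the correct restricted structure on $A_0(R/h)$. Now apply $\phi$ to this congruence with the lift $\phi(\tilde f)$ of $\phi_0(f)$: since $\phi$ is an algebra automorphism, $\phi(\tilde f)^p=\phi(\tilde f^{\,p})$, and since $\phi$ is $R$-linear, it fixes the element $\mathsf s(\ev_0(f)^p)\in R/h^p\subset A_{\mbb S\backslash\{\infty\}}(R)/h^p$ pointwise. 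Therefore
\[
\phi(\tilde f)^p\ \equiv\ h^{p-1}\,\phi(\widetilde{f^{[p]}})+\mathsf s(\ev_0(f)^p)\pmod{h^p}.
\]

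The final step is to compare this with the formula applied directly to $\phi_0(f)$, namely $\phi(\tilde f)^p\equiv h^{p-1}\,\widetilde{\phi_0(f)^{[p]}}+\mathsf s(\ev_0(\phi_0(f))^p)\pmod{h^p}$. By the remark following Construction \ref{constr:restricted structure on A_0 via A_h}, the map $f\mapsto \mathsf s(\ev_0(f)^p)$ is preserved by every $R/h$-linear algebra automorphism of $A_0(R/h)$, because it factors through the reduced Frobenius $A_0(R/h)\to A_0(R/h)^\lp\simeq R/h$, which is natural in algebra maps. Hence $\mathsf s(\ev_0(f)^p)=\mathsf s(\ev_0(\phi_0(f))^p)$. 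Subtracting the two congruences and dividing by $h^{p-1}$ (using again the $h$-torsion freeness) gives $\phi_0(f^{[p]})=\phi_0(f)^{[p]}$. The main subtlety — and the only place where separability is genuinely needed — is ensuring the well-definedness of $\mathsf s$ and its naturality under $\phi$; everything else is formal manipulation with Jacobson's identity and the compatibility of $\phi$ with the algebra structure.
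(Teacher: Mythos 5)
Your proposal is correct and follows essentially the same route as the paper's own proof: the Poisson bracket is preserved by applying $\phi$ to commutators of lifts, and the restricted power is preserved by applying $\phi$ to the congruence $h^{p-1}f^{[p]}\equiv \tf^p-\mathsf s(\ev_0(f)^p)\pmod{h^p}$, using $R$-linearity of $\phi$ and the invariance of $f\mapsto \mathsf s(\ev_0(f)^p)$ under automorphisms. The identification of exactly where $h$-separability enters (well-definedness of $\mathsf s$) matches the paper's reasoning.
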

\begin{proof}
	By Remark \ref{rem:Poisson structure in terms of A_h} 
	$$
	h\cdot \{\phi_0(f),\phi_0(g)\}=[\phi(\tf),\phi(\tg)]=\phi([\tf,\tg])=h\cdot \phi_0(\{f,g\}) \pmod{h^2}
	$$
	thus $\phi_0$ preserves Poisson structure. 
	Similarly, $$h^{p-1}\cdot(\phi_0(f))^{[p]}=(\widetilde{\phi_0(f)})^p - \mathsf s(\ev_0({\phi_0(f)})^p) = \phi(\tf^p- \mathsf s(\ev_0({f})^p) = h^{p-1} \cdot \phi_0(f^{[p]}) \pmod{h^p},$$ 
	so $\phi_0$ also preserves the restricted structure.
\end{proof}

\begin{cor}
The map $i_0^*\colon {\ul{\Aut}}(\mc A_{\mbb S})\ra i_{0*}{\ul{\Aut}}(A_0)$ of sheaves on $
\Sch^{h-\sep}_{\!/\mbb S}$ factors through $i_{0*}G_0$.
\end{cor}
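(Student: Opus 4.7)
The plan is to reduce the statement to the previous lemma (\textbf{Lemma \ref{lem:the reduction lies in G_0 for separable guys}}), which already handles the key case. Since we are asserting a factorization of a morphism of sheaves on the big site $\Sch^{h-\sep{}}_{\!/\mbb S}$, and $G_0 \hookrightarrow \ul{\Aut}(A_0)$ is a monomorphism, it suffices to check the factorization on a generating class of objects, or equivalently on sections over each object in a conveniently chosen covering family.

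By Remark~\ref{rem:subcategories of h-separable schemes}, every $h$-separable $\mbb S$-scheme admits a Zariski cover whose members belong to one of two classes: (a) schemes $X \to \mbb S$ that factor through $\mbb S \setminus \{0\}$; or (b) affine schemes $\Spec R \to \mbb S$ with $R$ an $h$-separable $\mbb F_p[h]$-algebra (mapped into $\mbb S \setminus \{\infty\}$ via the canonical map). Because both source and target of $i_0^*$ are sheaves and the property of factoring through a subsheaf is local, it is enough to verify the claim on such $X$.

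In case~(a), the scheme-theoretic fiber $X_{\{0\}} = X \times_{\mbb S} \{0\}$ is empty, so $(i_{0*}\ul{\Aut}(A_0))(X) = \ul{\Aut}(A_0)(X_{\{0\}})$ is the trivial (one-element) group. Consequently the map $i_0^*(X) \colon \ul{\Aut}(\mc A_{\mbb S})(X) \to (i_{0*}\ul{\Aut}(A_0))(X)$ is the constant map to the identity, which obviously factors through the subgroup $G_0(X_{\{0\}}) = \{1\}$.

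In case~(b), we have $X_{\{0\}} = \Spec(R/h)$ and $\ul{\Aut}(\mc A_{\mbb S})(X) = \Aut_R(A_{\mbb S\backslash \{\infty\}}(R))$, while $(i_{0*}\ul{\Aut}(A_0))(X) = \Aut_{R/h}(A_0(R/h))$, with the map $i_0^*$ given by reduction modulo~$h$. Since $R$ is $h$-separable, Lemma~\ref{lem:the reduction lies in G_0 for separable guys} directly says that for any $\phi \in \Aut_R(A_{\mbb S\backslash \{\infty\}}(R))$ the reduction $\phi_0$ preserves both the Poisson bracket and the restricted $p$-power on $A_0(R/h)$, hence lies in $G_0(R/h) \subset \Aut_{R/h}(A_0(R/h))$. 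This gives the desired factorization through $i_{0*}G_0$, and since no step beyond the already-proved lemma is required, there is no serious obstacle; the only real content is verifying that the two listed cases indeed generate $\Sch^{h-\sep{}}_{\!/\mbb S}$ via covers, which is precisely the content of Remark~\ref{rem:subcategories of h-separable schemes}.
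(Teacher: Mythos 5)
Your proposal is correct and follows essentially the same route as the paper: reduce via the remark on the two generating subcategories, note the target is trivial over $\mbb S\setminus\{0\}$, and invoke Lemma \ref{lem:the reduction lies in G_0 for separable guys} for $h$-separable algebras. No gaps.
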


\begin{proof}
Indeed, by Remark \ref{rem:subcategories of h-separable schemes} it is enough to check on the subcategories $\Sch_{\!/\mbb S\backslash \{0\}}\subset 
\Sch^{h-\sep}_{\!/\mbb S}$ and $(\CAlg_{\mbb F_p[h]/}^{h-\sep})^{\mr{op}}\subset 
\Sch^{h-\sep}_{\!/\mbb S}$. The restriction of both $i_{0*}{\ul{\Aut}}(A_0)$ and $i_{0*}G_0$ to the first subcategory is trivial and so the the statement is trivial as well. For $R\in (\CAlg_{\mbb F_p[h]/}^{h-\sep})^{\mr{op}}\subset 
\Sch^{h-\sep}_{\!/\mbb S}$ this is exactly the statement of Lemma \ref{lem:the reduction lies in G_0 for separable guys}.
\end{proof}
Let $\mathsf r\colon {\ul{\Aut}}(\mc A_{\mbb S})\ra  i_{0*}G_0$ denote the resulting map of sheaves on $\Sch^{h-\sep{}}_{\!/\mbb S}$. The second result of this section concerns the kernel of the reduction map $\mathsf r\colon {\ul{\Aut}}(\mc A_{\mbb S}) \ra i_{0*}G_0$. Note that $\mc A_{\mbb S}^\times$ acts on $\mc A_{\mbb S}$ by conjugation and that its image lands in the kernel of $\mathsf{r}$. Once, again it is enough to see this separately either for objects over $\mbb S\backslash \{0\}$ or $h$-separable $k[h]$-algabras. The restriction of $\Ker(\mathsf r)$ to $\mbb S\backslash \{0\}$ is the whole ${\ul{\Aut}}(\mc A_{\mbb S})$, so there is nothing to prove, while for a $k[h]$-separable algebra $R$ the action of $A_{\mbb S\backslash \{\infty\}}(R)^\times$ on $A_{\mbb S\backslash \{\infty\}}(R)$ by conjugation is trivial modulo $h$ since $[A_{\mbb S\backslash \{\infty\}}(R),A_{\mbb S\backslash \{\infty\}}(R)]\subset h\cdot A_{\mbb S\backslash \{\infty\}}(R)$. Thus we have a natural map $$\Ad\colon {\mc A}_{\mbb S}^\times \ra  \Ker(\mathsf r)\subset {\ul{\Aut}}(\mc A_{\mbb S})$$ of Zariski sheaves on $\Sch^{h-\sep{}}_{\!/\mbb S}$. The map of algebras $\mc O_{\mbb S}\ra \mc A_{\mbb S}$ induces a homomorphism $\mbb G_m \ra \mc A_{\mbb S}^\times$ which factors through the central elements in the center $Z(\mc A_{\mbb \mbb S})^\times$.

\begin{pr}\label{prop:automorphisms trivial modulo h are inner locally}
	The map $\Ad\colon {\mc A}_{\mbb S}^\times \rightarrow{} \Ker(\mathsf r)$ is a surjuction of Zariski sheaves on $\Sch^{h-\sep{}}_{\!\!/\mbb S}$. The kernel of $\Ad(-)$ is canonically identified with $\bG_m$.
\end{pr}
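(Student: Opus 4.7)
The plan splits into two independent tasks: identifying $\ker(\Ad)$ with $\mbb G_m$, and proving surjectivity of $\Ad$ as a map of Zariski sheaves.

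For the kernel, an element $u \in \mc A_{\mbb S}^\times$ lies in $\ker(\Ad)$ exactly when $u$ is central, so it suffices to compute $Z(\mc A_{\mbb S})$ and take units. Using the cover $\mbb S = (\mbb S \setminus \{\infty\}) \cup (\mbb S \setminus \{0\})$: on $\mbb S \setminus \{0\}$ the algebra is $\Mat_{p^n}(\mc O)$ by Remark~\ref{rem:A_S is matrix algebra outside of 0}, with center $\mc O$; on $\mbb S \setminus \{\infty\}$, evaluated on an $h$-separable $R$, a direct computation from the presentation of $A_{\mbb S \setminus \{\infty\}}(R)$ in Remark~\ref{rem:description of A_S} using the relations $[y_j, x_i] = h\delta_{ij}$ together with $h$-torsion freeness forces any central element, expanded in the $R$-basis $\{x^I y^J\}$, to lie in $R$ (commuting with each $x_i$ and dividing by $h$ kills all $y$-terms, then commuting with $y_j$ kills all $x$-terms). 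Hence $Z(\mc A_{\mbb S}) = \mc O_{\mbb S}$ and $\ker(\Ad) = \mbb G_m$.

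For surjectivity, by Remark~\ref{rem:subcategories of h-separable schemes} it suffices to treat $\phi \in \ker(\mathsf r)(X)$ in two cases: either $X \to \mbb S$ factors through $\mbb S \setminus \{0\}$, or $X = \Spec R$ for an $h$-separable $\mbb F_p[h]$-algebra $R$ mapping to $\mbb S \setminus \{\infty\}$. The first case follows from classical Skolem--Noether: by Remark~\ref{rem:A_S is matrix algebra outside of 0} and Construction~\ref{constr:A module over A}, $\mc A|_{\mbb S \setminus \{0\}}$ is a matrix algebra with splitting bundle $\mc V|_{\mbb S \setminus \{0\}}$, and after Zariski-localizing so that $\mc V$ is free, every $\mc O$-algebra automorphism of $\Mat_{p^n}(\mc O)$ is inner.

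For the $h$-separable case, set $A := A_{\mbb S \setminus \{\infty\}}(R)$. The strategy is to construct $u \in A^\times$ by matching two partial constructions inside $A \otimes_{R[h]} R((h))$. On the one hand, since $A[h^{-1}] \simeq \Mat_{p^n}(R[h^{-1}])$ is split Azumaya, Skolem--Noether after Zariski-shrinking $\Spec R$ yields $u' \in A[h^{-1}]^\times$ with $\Ad_{u'} = \phi[h^{-1}]$. On the other hand, in the $h$-adic completion $\widehat A = R[[h]] \otimes_{R[h]} A$ one constructs $\widehat u \in \widehat A^\times$ with $\Ad_{\widehat u} = \widehat \phi$ by successive approximation: at step $i$, the residual defect $\phi \cdot \Ad_{u^{(i-1)}}^{-1} \equiv \id + h^i D_i \pmod{h^{i+1}}$ produces a Poisson derivation $D_i$ of $A_0(R/h)$ which we realize as a Hamiltonian $\{v_i, -\}$ for some $v_i \in A_0(R/h)$, and set $u_i$ to be any lift of $h^i v_i$. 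Since $A$ is $h$-torsion free and free of rank $p^{2n}$ over $R[h]$, one has $A = A[h^{-1}] \cap \widehat A$; the central ratio $\widehat u \cdot (u')^{-1} \in R((h))^\times$ can be factored Zariski-locally on $\Spec R$ as a product of elements of $R[[h]]^\times$ and $R[h^{-1}]^\times$, which we absorb into $\widehat u$ and $u'$ respectively to force them to agree, producing the desired $u \in A^\times$.

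The main obstacle is the inductive step in the $\widehat A$-construction: verifying that each Poisson derivation $D_i$ is in fact Hamiltonian in $A_0(R/h)$. Here the $h$-separability of $R$ is indispensable, since it allows the restricted $p$-th power on $A_0(R/h)$ to be recovered from the ambient lift to $A$ via Construction~\ref{constr:restricted structure on A_0 via A_h}; this in turn lets one apply the Hamiltonian characterization of Lemma~\ref{lem:description of log and usual Hamiltonian vector fields} to certify that the $D_i$, coming from an $R$-algebra automorphism of $A$ preserving $p$-th powers, satisfy the required identity $D_i(f^{[p]}) = H_f^{p-1}(D_i(f))$ and are hence locally Hamiltonian. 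Without $h$-separability, the link between $\phi$ and the restricted structure on $A_0(R/h)$ breaks down and the inductive step can fail.
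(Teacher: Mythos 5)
Your overall architecture matches the paper's proof: you identify $\Ker(\Ad)$ with the units of the center by the same basis computation, handle the locus over $\mbb S\setminus\{0\}$ by Skolem--Noether for the split matrix algebra, run an $h$-adic successive approximation over the completion, and glue the completed and $h$-inverted solutions via Beauville--Laszlo, absorbing the central discrepancy after localizing so that $\Pic$ vanishes. All of that is sound and is essentially what the paper does.

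There is, however, a genuine gap at the base case of your induction. You assert that at every step the defect derivation $D_i$ of $A_0(R/h)$ satisfies $D_i(f^{[p]})=H_f^{p-1}(D_i(f))$ and is therefore Hamiltonian, correctable by conjugation by $1+(\text{lift of }h^{i-1}v_i)$. This is true only for $i\geq 2$, where the correction enters at order $h^i$ with $i\geq 2$ and the term $(h^i g)^p$ dies modulo $h^{p+i}$. At the first step the Jacobson-formula computation leaves an extra term: one gets $D_1(f^{[p]})=H_f^{p-1}(D_1(f))+D_1(f)^p$, which by Lemma \ref{lem:description of log and usual Hamiltonian vector fields}(2) (via Milne's exact sequence, not the Cartier sequence) certifies only that $D_1$ is \emph{log}-Hamiltonian, i.e.\ $\iota_{D_1}\omega=d\log g$ for a unit $g\in A_0(R/h)^\times$, equivalently $D_1=\tfrac{1}{g}H_g$. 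Correspondingly, $\Ad(u)$ for a unit $u$ with $u\equiv g \pmod h$ induces $\tfrac{1}{g}H_g$ and not $H_g$, so a correction of the form $\Ad(1+\widetilde{v_1})$ produces $\tfrac{1}{1+v_1}H_{v_1}$ rather than $\{v_1,-\}$; a genuinely log-Hamiltonian-but-not-Hamiltonian $D_1$ cannot be killed by your recipe. The fix is exactly the paper's two-case analysis: use the log-Hamiltonian criterion and $\Ad(g)$ for a unit with the appropriate constant term at the first step, and the plain Hamiltonian criterion with $\Ad(1+h^{n-1}x)$ for all subsequent steps. Your emphasis on $h$-separability as the ingredient linking $\phi$ to the restricted structure via Construction \ref{constr:restricted structure on A_0 via A_h} is correct; it is only the form of the identity at the first order, and hence the shape of the corrective unit, that you have wrong.
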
	

\begin{proof} Again, it is enough to prove the statement for the restrictions to $\Sch_{\!/\mbb S\backslash \{0\}}\subset 
\Sch^{h-\sep}_{\!/\mbb S}$ and $(\CAlg_{\mbb F_p[h]/}^{h-\sep})^{\mr{op}}\subset 
\Sch^{h-\sep}_{\!/\mbb S}$. 

For $\Sch_{\!/\mbb S\backslash \{0\}}\subset 
\Sch^{h-\sep}_{\!/\mbb S}$ we have $\Ker(\mathsf r)\simeq \ul{\Aut}(\mc A_{\mbb S\backslash \{0\}})$ and we want to show that there is a short exact sequence 
$$
0\ra \mbb G_m \ra A_{\mbb S\backslash \{0\}}^\times \ra \Aut(\mc A_{\mbb S\backslash \{0\}}) \ra 0.
$$
However, by Remark \ref{rem:description of A_S} we can identify $\mc A_{\mbb S\backslash \{0\}}$ with the matrix algebra $\Mat_{p^n}$, and the sequence above turns into 
$$
0\ra \mbb G_m \ra \mr{GL}_{p^n} \ra \mr{PGL}_{p^n} \ra 0,
$$
which is exact in Zariski topology.

 The case of $(\CAlg_{\mbb F_p[h]/}^{h-\sep})^{\mr{op}}\subset 
\Sch^{h-\sep}_{\!/\mbb S}$ is much more subtle. Let us first identify the kernel of $\Ad$. By definition it consists of the invertible elements $Z({A}_{\mbb S})^\times$ in the center. Every element $a\in A_{\mbb S\backslash \{\infty\}}(R)$ can be uniquely written as $\sum_{I,J}r_{I,J}x^Iy^J$ with $r_{I,J}\in R$ (with $I=(i_1,\ldots,i_n)$, $J=(j_1,\ldots,j_n)$ and $0 \le i_k, j_l\le p-1$). Since $R$ is $h$-torsion free, from the relations defining $A_{\mbb S\backslash \{\infty\}}(R)$ it is clear that is $r_{I,J}\neq 0$ for some non-zero $I$ or $J$ then there is $x_i$, or $y_j$ such that one of the commutators $[a,x_i]$ or $[a,y_j]$ is non-zero. Thus $Z(A_{\mbb S\backslash \{\infty\}}(R))^\times\simeq R^\times$ and $\Ker(\Ad(-))\simeq \bG_m$.
	
	We now pass to showing that automorphisms of $A_{\mbb S\backslash \{\infty\}}(R)$ that are trivial modulo $h$ are given by conjugation, at least after a Zariski localization $R\ra R'$. We will first prove this after inverting $h$, then for $h$-completion of $R$ and then will obtain the result for $R$ using Beauville-Laszlo gluing. 
	
	If $h\in R$ is invertible then $\Spec R$ lies over $\Sch_{\!/\mbb S\backslash \{0\}}$, and we are done by the case $\Sch_{\!/\mbb S\backslash \{0\}}\subset 
\Sch^{h-\sep}_{\!/\mbb S}$ .
	
	If $R$ is $h$-complete, note that $A_{\mbb S\backslash \{\infty\}}(R)$ if also $h$-complete (since it is free nodule of finite rank over $R$). We will approximate $A_{\mbb S\backslash \{\infty\}}(R)$ by $A_{\mbb S\backslash \{\infty\}}(R)/h^n$ for all $n\ge 0$. As a base of induction let us show that any $\phi\in \Ker(\mathsf r)$ is equal to an inner automorphism modulo $h^2$. In other words, we would like to find $x\in A_{\mbb S\backslash \{\infty\}}(R)^\times$ such that $\phi=\Ad(x)\mod h^2$. Note that the map $\phi-\id \mod h^2 \colon A_{\mbb S\backslash \{\infty\}}\rightarrow A_{\mbb S\backslash \{\infty\}}/h^2$ factors through $h\cdot A_{\mbb S\backslash \{\infty\}}(R)$ and gives a well defined map
	$\xi_\phi \colon A_0(R/h)\simeq A_{\mbb S\backslash \{\infty\}}/h \rightarrow hA_{\mbb S\backslash \{\infty\}}/h^2A_{\mbb S\backslash \{\infty\}} \simeq A_0(R/h)$, which is an $R/h$-linear differentiation\footnote{Indeed, $$h\cdot \xi_\phi(xy)=\phi(\tx\cdot \ty)-\tx\cdot \ty=\phi(\tx)\cdot \phi(\ty)-\tx\cdot \ty=h\cdot (\xi_\phi(x)\cdot y+x\cdot \xi_\phi(y)) \pmod{h^2}.$$} of $A_0(R/h)$. For $\phi=\Ad(x)$ with $x=x_0\mod h$ we have 
	$$
	h\cdot \xi_{\Ad(x)}(a)=xax^{-1}-a= [x,a] \cdot x^{-1}= h\cdot \tfrac{1}{x_0}\{x_0,a\} \pmod{h^2},
	$$ and so one sees that $\xi_{\Ad(x)}=\frac{1}{x_0}H_{x_0}$. Thus it is enough to show that for $\phi\in \Ker(\mathsf r)$ the corresponding vector field $\xi_\phi$ is log-Hamiltonian.

	  The idea then is to use Lemma \ref{lem:description of log and usual Hamiltonian vector fields}(2), namely we need to show that $\xi_\phi$ satisfies the corresponding equation (\ref{eq:log-Ham}). Looking at the action on commutator modulo $h^2$, one sees that $\xi_\phi$ is Poisson (namely, $L_{\xi_\phi}\omega=0$), so Lemma \ref{lem:description of log and usual Hamiltonian vector fields}(2) indeed can be applied. Moreover, it is enough to check (\ref{eq:log-Ham}) on $f\in I_0(R/h)=\{f\in A_0(R/h)\ |\ \ev_0(f)=0\}$: indeed, the constants $R/h$ and $I_0(R/h)$ span all $A_0(R/h)$. By \ref{constr:restricted structure on A_0 via A_h}, for $f\in I_0(R/h)$ and any lift $\tf\in A_{\mbb S\backslash \{\infty\}}(R)$ we have 
	  $$
	  \tf^p=h^{p-1}\cdot f^{[p]}\pmod{h^p}.
	  $$ 
	  We then have 
	  $$
	  h^{p}\cdot \xi_\phi(f^{[p]})=h^{p-1}\cdot(\phi(f^{[p]}-f^{[p]})=(\tf+h\cdot \widetilde{\xi_\phi(f)})^p-\tf^p\pmod{h^{p+1}}.
	  $$
	  Using Jacobson's formula (\ref{lem:Jacobson's formula}) the right hand side is identified with $h^p(\xi_\phi(f)^p+L_1(\tf,\widetilde{\xi_\phi(f)}))\pmod{h^{p+1}}$. From this, using that $L_1(x,y)=\ad(x)^{p-1}(y)$ (\ref{rem:L_1}) and identifying $A_0(R/h)\simeq h^pA_{\mbb S\backslash \{\infty\}}(R)/h^{p+1}A_{\mbb S\backslash \{\infty\}}(R)$, we get that
	  $$
	  \xi_\phi(f^{[p]})=\xi_\phi(f)^p +H_f^{p-1}(\xi_\phi(f)),
	  $$
	  as desired.

	For $n\ge 2$ one shows analogously that if $\phi\colon A_{\mbb S\backslash \{\infty\}}(R) \rightarrow A_{\mbb S\backslash \{\infty\}}(R)$ is an automorphism which is identity modulo $h^n$ then there exists an element $x\in A_{\mbb S\backslash \{\infty\}}(R)$ such that $\phi = \Ad(1+h^{n-1}x)$ modulo $h^{n+1}$. Namely, such $\phi$ also defines a derivation $\xi'_\phi$ of $A_0(R/h)$ by considering the map  
	$
	\phi -\id \mod h^{n+1}\colon A_{\mbb S\backslash \{\infty\}}(R) \rightarrow A_{\mbb S\backslash \{\infty\}}(R)/h^{n+1}
	$ which factors through $A_h/h\simeq A_0$ and whose image lies in $h^nA_{\mbb S\backslash \{\infty\}}(R)/h^{n+1}A_{\mbb S\backslash \{\infty\}}(R)\simeq A_0$. As in the case of $n=1$, looking at the action on the commutator one sees that $\xi'_\phi$ is Poisson. However, we now have 
	$$
	h^n\cdot \xi_{\Ad(1+h^{n-1}x)}(a)=(1+h^{n-1}x)a(1-h^{n-1}x)=h^n\cdot [x_0,a]\pmod{h^{n+1}};
	$$ 
	in other words, $\xi_{\Ad(1+h^{n-1}x)}=H_{x_0}$. Also, given $\phi$ that is trivial modulo $h^n$ with $n\ge 2$
	$$
	h^{p+n-1}\cdot \xi_\phi(f^{[p]})=h^{p-1}\cdot(\phi(f^{[p]}-f^{[p]})=(\tf+h^n\cdot \widetilde{\xi_\phi(f)})^p-\tf^p\pmod{h^{p+n}},
	$$
	from which (using Jacobson's formula again) we get
	$$
	\xi_\phi(f^{[p]})=H_f^{p-1}(\xi_\phi(f)),
	$$
	so by Lemma \ref{lem:description of log and usual Hamiltonian vector fields}(1) $\xi_\phi$ is Hamiltonian and $\phi$ agrees with $\Ad(1+h^{n-1}x)$ for some $x$.
	
	This way, starting with an automorphism $\phi\in \Ker(\mathsf{r})$, we can by induction construct a sequence of elements $x_1,x_2,x_3,\ldots\in A_{\mbb S\backslash \{\infty\}}(R)$ such that 
	$$
	\phi=\ldots\circ \Ad(1+h^2x_3)\circ \Ad(1+hx_2)\circ \Ad(x_1)=\Ad(\ldots\cdot  (1+h^2x_3)(1+hx_2)x_1),
	$$
	where the infinite product on the right hand side is well-defined since $R$ is $h$-complete.

	Let now $R$ be an arbitrary $h$-separable $k[h]$-algebra. We are trying to show that the map of (Zariski) sheaves ${\mc A_{\mbb S}^\times} \rightarrow \Ker(\mathsf r)$ on $\Sch^{h-\sep{}}_{\!/\mbb S}$ is surjective. It is enough to do so on stalks, and so we can assume $R$ is local (and, in particular, that $\Pic(R)=0$). Since $A_{\mbb S\backslash \{\infty\}}(R)$ is a free $R$-module of finite rank, by Beauville-Laszlo theorem \cite{bl}, an $R$-linear automorphism of $A_{\mbb S\backslash \{\infty\}}(R)$ is uniquely encoded by a pair of automorphisms $\phi_1\in \Aut_{R^\wedge_h}(A_{\mbb S\backslash \{\infty\}}(R^\wedge_h))$, $\phi_2\in \Aut_{R[h^{-1}]}(A_{\mbb S\backslash \{\infty\}}(R[h^{-1}]))$ (where $R^\wedge_h$ is the $h$-adic completion of $R$) such that their images in $\Aut_{R^\wedge_h[h^{-1}]}(A_{\mbb S\backslash \{\infty\}}(R^\wedge_h[h^{-1}]))$ agree. By the discussion above we can find elements $a_1\in A_{\mbb S\backslash \{\infty\}}(R^\wedge_h)$, $a_2\in A_{\mbb S\backslash \{\infty\}}(R[h^{-1}])$ such that $\phi_1=\Ad(a_1)$ and $\phi_2=\Ad(a_2)$; moreover, if we denote by $a_1^\circ$, $a_2^\circ$ the images of $a_1$ and $a_2$ in $A_{\mbb S\backslash \{\infty\}}(R^\wedge_h[h^{-1}])$ then we have $\Ad(a_1^\circ)=\Ad(a_2^\circ)$, so $a_1^\circ = s\cdot a_2^\circ$ for some central unit $s\in (R^\wedge_h[h^{-1}])^\times$. Now, again by Beauville-Laszlo, one has a natural embedding of the double quotient\footnote{Which corresponds to the gluing data for a line bundle on $\Spec R$ given by trivial line bundles on $\Spec R^\wedge_h$ and $\Spec R[h^{-1}]$ and the the gluing $R^\wedge_h[h^{-1}]\simeq R^\wedge_h[h^{-1}]$ for the restrictions to $\Spec R^\wedge_h[h^{-1}]$ given by $s\in R^\wedge_h[h^{-1}]$.} $(R^\wedge_h)^\times\backslash (R^\wedge_h[h^{-1}])^\times/R[h^{-1}]^\times $ to $\Pic(R)$. Since $\Pic(R)=0$ by our assumption, it means that we can decompose $s\in (R^\wedge_h[h^{-1}])^\times$ as $s_1^{-1}s_2$ with $s_1\in (R^\wedge_h)^\times$ and $s_2\in (R[h^{-1}])^\times$. Replacing $a_1$ and $a_2$ by $s_1a_1$ and $s_2a_2$ we now get $a_1^\circ=a_2^\circ$, while still $\phi_1=\Ad(a_1)$ and $\phi_1=\Ad(a_2)$. Using Beauville-Laszlo one more time the data of $a_1\in A_{\mbb S\backslash \{\infty\}}(R^\wedge_h)$, $a_2\in A_{\mbb S\backslash \{\infty\}}(R[h^{-1}])$ with $a_1^\circ=a_2^\circ$ gives an element $a\in ^\times$. Moreover, the data of $\Ad(a_1)\in \Aut_{R^\wedge_h}(A_{\mbb S\backslash \{\infty\}}(R^\wedge_h))$, $\Ad(a_2)\in \Aut_{R[h^{-1}]}(A_{\mbb S\backslash \{\infty\}}(R[h^{-1}]))$ is the same as for the $\phi$. Thus, $\phi=\Ad(a)$ and we are done.

\end{proof}

\begin{rem}
	From Proposition \ref{prop:automorphisms trivial modulo h are inner locally} we get an exact sequence of sheaves on $\Sch^{h-\sep{}}_{\!/\mbb S}$:
	$$
	1\rightarrow \bG_m \rightarrow \mc {A}_{\mbb S}^\times \rightarrow \ul{\Aut}(\mc A_{\mbb S}) \xrightarrow{\mathsf r} G_0.
	$$	
	In fact if we restrict to even smaller subcategory of locally constant schemes (see \red{some later section}) one can show that the morphism $\mathsf r$ becomes surjective, and so we will have a 4-term exact sequence.
\end{rem}

Further we will need the following corollary:

\begin{cor}\label{app.lemma.inneraut} Let $T$ be an $\mbb F_p$-scheme and let $\pi\colon T\times\mbb S\ra \mbb S$ be the projection. Let $R$ be the stalk of $\cO_{T\times \mbb S}$ at a point $x\in T\times \mbb S$ and consider the corresponding map $\Spec R\ra \mbb S$. Then every $R$-linear automorphism of $\mc A_{\mbb S}(R)$ that lies in $\Ker(\mathsf r)(R)$ is inner. 
\end{cor}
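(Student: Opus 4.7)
My approach is to identify the stalk $R$ as a local object of $\Sch^{h-\sep{}}_{\!/\mbb S}$, at which point the corollary becomes an immediate specialization of Proposition \ref{prop:automorphisms trivial modulo h are inner locally}: the Zariski-local surjectivity of $\Ad$ provided by that proposition automatically gives global surjectivity when evaluated on a local ring, since a Zariski open of $\Spec R$ that contains the closed point must be all of $\Spec R$.

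The first task is to verify that $\Spec R$ is $h$-separable over $\mbb S$. Working Zariski-locally on $T$, I reduce to the case $T = \Spec C$ is affine. Then $T \times \mbb S$ is covered by the two charts $T \times (\mbb S \backslash \{\infty\}) \simeq \Spec C[h]$ and $T \times (\mbb S \backslash \{0\}) \simeq \Spec C[h^{-1}]$, and the point $x$ lies in at least one of them, so $R$ is a localization of either $C[h]$ or $C[h^{-1}]$ at a prime ideal. In the former case, $C[h]$ is $h$-separable by Example \ref{ex:separable algebras}(1), and both conditions of Definition \ref{def:h-separable algebras} pass to localization: $h$-torsion freeness because localization is flat, and the vanishing of $\delta$ because any element of $R/h$ can be lifted via the composition $C \hookrightarrow C[h] \to R$, producing a lift whose $p$-th power lies in $C \subset R$ and therefore vanishes in $R/h^p$ as soon as it vanishes in $R/h$. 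In the latter case, $h$ is invertible in $R$, so $\Spec R$ factors through $\mbb S \backslash \{0\}$ and is $h$-separable by Example \ref{ex:h-separable schemes}(1).

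Having placed $\Spec R$ in $\Sch^{h-\sep{}}_{\!/\mbb S}$, I invoke Proposition \ref{prop:automorphisms trivial modulo h are inner locally}: the map $\Ad\colon \mc A_{\mbb S}^{\times} \to \Ker(\mathsf r)$ is a surjection of Zariski sheaves. Thus any $\phi \in \Ker(\mathsf r)(R)$ admits, on some Zariski cover $\{U_i \to \Spec R\}$, lifts $a_i \in \mc A_{\mbb S}^{\times}(U_i)$ with $\phi_{|U_i} = \Ad(a_i)$. Since $R$ is local, the closed point of $\Spec R$ lies in some $U_{i_0}$, and every Zariski open containing the closed point of a local scheme equals the whole scheme, so $U_{i_0} = \Spec R$. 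Setting $a := a_{i_0} \in \mc A_{\mbb S}^{\times}(R)$ gives $\phi = \Ad(a)$, exhibiting $\phi$ as inner.

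The only non-formal ingredient is the verification of $h$-separability of the stalk at a point lying over $\{0\} \in \mbb S$, and this reduces to a single observation: because $C[h]$ is commutative, the map $a \mapsto \tilde a^p$ on $\alpha_p(C[h]/h) = \alpha_p(C)$ factors through any choice of lift $C \hookrightarrow C[h]$ of $a$ that avoids $h$, hence lands in the zero coset modulo $h^p$. Beyond this, everything else is a mechanical application of Proposition \ref{prop:automorphisms trivial modulo h are inner locally} combined with the general principle that sheaf-theoretic surjectivity at a local ring is set-theoretic surjectivity.
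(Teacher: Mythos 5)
Your proof is correct and follows the same route as the paper: verify that $\Spec R$ is an $h$-separable $\mbb S$-scheme by the two-chart case analysis, then specialize the Zariski-sheaf surjectivity of $\Ad$ from Proposition \ref{prop:automorphisms trivial modulo h are inner locally} to the local ring $R$. Your explicit check that $h$-separability survives localization at a prime (which the paper leaves implicit) is a welcome addition, modulo the small imprecision that the lift of an element of $R/h$ goes through the localization $C_{\mathfrak q}\to R$ rather than $C\to R$ itself.
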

\begin{proof}
 Note that $\Spec R\ra \mbb S$ is $h$-separable. Indeed, either $\pi(x)=\{\infty\}$ and then $\Spec R$ lies over $\mbb S\backslash \{0\}$ or $\pi(x)\in \mbb S\backslash \{\infty\}$ and then $R$ is a localization of a $k[h]$-algebra of the form Example \ref{ex:separable algebras}(1) (and thus is also $h$-separable).
\end{proof}
\subsection{Algebra $\mc A_{\mbb S}^\flat$ and the $G_0$-action on $\mc A_{\mbb S}^\flat-\Mod$}\label{ssec:A_Sflat}
We implicitly fix a number $n\in \mbb N$. Recall from Construction \ref{constr:A_0} that $X\coloneqq \Spec A_0$ has a restricted symplectic structure given by 1-form $\eta_{\mathsf{can}}= \sum_i{y_idx_i}$.

\begin{df} The sheaf of algebras $\mc A_{\mbb S}^\flat$ is defined as $\mc D_{\Spec A_0,\frac{[\eta_{\mathsf{can}}]}{h},\mbb S}$.
\end{df}

Note that $A_0$ is a particular case of the algebra $C$ that we considered in 
\ref{ssec:twistor reduced Weyl algebra} for $2n$ parameters. We have $A_0^\lp\simeq \mbb F_p$ and since $y_i^p=0\in A_0$ one can see that 
$$
(\eta_{\mathsf{can}})^{\lp} = \sum_iy_i^pdx_i=0.
$$
Since $\mc D_{\Spec A_0,\frac{[\eta]}{h},\mbb S}$ only depends on the pull-back $[\eta]^\lp$ we get that in fact $\mc D_{\Spec A_0,\frac{[\eta_{\mathsf{can}}]}{h},\mbb S} \simeq \mc D_{\Spec A_0,0,\mbb S}$, and we are fully in the context of \ref{ssec:twistor reduced Weyl algebra}, thus the discussion there applies. 

Let $x_i,y_j\in A_0$ be coordinate functions in $A_0$ and let $u_i,v_j\in D_{\Spec A_0}$ be the differentiations corresponding to $\partial_{x_i},\partial_{y_j}$. Then $\mc A_{\mbb S}^\flat$ is glued from the restrictions $\mc A_{\mbb S\backslash \{\infty\}}^\flat$ and $\mc A_{\mbb S\backslash \{0\}}^\flat$ whose global sections are given by the $\mbb F_p[h]$-algebra $A_{\mbb S\backslash \{\infty\}}^\flat$ that is generated by $x_i,y_j, u_i,v_j$ such that $x_i,y_j$ commute which each other and $u_i,v_j$ commute with each other, one has $[u_i,x_j]=[v_i,y_j]=h\cdot \delta_{ij}$ and $x_i^p=y_j^p=u_i^p=v_j^p=0$, and the $\mbb F_p[h^{-1}]$-algebra $A_{\mbb S\backslash \{0\}}^\flat$ that is generated by $x_i',y_j', u_i',v_j'$ such that $x'_i,y'_j$ commute and $u'_i,v'_j$ commute, one has $[u'_i,x'_j]=[v'_i,y'_j]=h\cdot \delta_{ij}$ and $x_i'^p=y_j'^p=u_i'^p=v_j'^p=0$. Both gluing along $\mbb S\backslash(\{0\}\sqcup \{\infty\})$ that gives $\mc A_{\mbb S}^\flat$ is given by relation $x_i=x'_i$, $y_i=y_i'$, $u_i=h\cdot u_i'$ and $v_i=h \cdot v_i'$.

\begin{constr}[$G_0$-action on $\mc A_{\mbb S}^\flat$]\label{constr:G_0action on the cental reduction} This is a variant of the construction in \cite[Section 3.1]{bv}.
Let $S$ be a scheme over $\bF_p$. Denote by $\pro_{\mbb S}\colon S \times \mbb S \to \mbb S$ the projection. Applying Construction \ref{constr:central reductions over S} 
(with $X=  S \times \Spec A_0$ and $[\eta]$ being the pullback of $[\eta_{\mathsf{can}}]$ to $X$) we have a homomorphism
$$G_0(S) \to \Aut (\pro_{\mbb S}^*\mc A_{\mbb S}^\flat ),$$
where the group displayed at the right-hand side consists of all $\cO_{S \times \mbb S}$-linear algebra automorphisms of $\pro_{\mbb S}^*\mc A_{\mbb S}^\flat $. This defines a homomorphism of group schemes over $\mbb S$: 
$$\psi\colon G_0\times \mbb S\to \underline{\Aut} (\cA_{\mbb S}^\flat).$$
\end{constr}

Let $(X,[\eta])$ be a restricted symplectic $S$-scheme. Recall that $[\eta]$ defines the $G_0$-torsor  of Darboux frames $\mc M_{X,[\eta]}\ra X^\lp$ (see Construction \ref{constr:torsor of Darboux frames}). Let $\pro_{\mbb S}
\colon \mc M_{X,[\eta]}\times \mbb S \ra \mbb S$ be the projection. The $G_0$-action on $\mc A_{\mbb S}^\flat$ endows $\pro_{\mbb S}^*\mc A_{\mbb S}^\flat$ with the structure of $G_0$-equivariant sheaf of algebras on $\mc M_{X,[\eta]}\times \mbb S$ (where $G_0$ acts on $\mc M_{X,[\eta]}$). Denote the corresponfing sheaf on the quotient $X^\lp\times \mbb S$ by 
$$
\mc M_{X,[\eta]}\times^{G_0} \mc A_{\mbb S}^\flat. 
$$

\begin{pr}
Let $(X,[\eta])$ be a restricted symplectic $S$-scheme. Then one has a natural isomorphism 
$$
\mc M_{X,[\eta]}\times^{G_0} \mc A_{\mbb S}^\flat \simeq \mc D_{X,h^{-1}{[\eta]},\mbb S}.
$$ 
\end{pr}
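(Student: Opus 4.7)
The plan is to use descent along the $G_0$-torsor $\pi\colon \mc M_{X,[\eta]}\to X^\lp$. Both sides of the claimed isomorphism are quasi-coherent sheaves of algebras on $X^\lp\times \mbb S$, so it suffices to construct a $G_0$-equivariant isomorphism between their pullbacks to $\mc M_{X,[\eta]}\times \mbb S$, where $G_0$ acts trivially on the $\mbb S$-factor.

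First I would unpack the pullback of the right-hand side along $\pi\times \id_{\mbb S}$. By Construction \ref{constr:torsor of Darboux frames}, $\mc M_{X,[\eta]}$ carries a tautological restricted Poisson isomorphism
$$(\Spec A_0,[\eta_{\mathsf{can}}])\times \mc M_{X,[\eta]}\ \iso\ (X,[\eta])\times_{X^\lp}\mc M_{X,[\eta]}$$
of restricted symplectic schemes over $\mc M_{X,[\eta]}$. Viewing $(X,[\eta])$ as a restricted symplectic scheme over $X^\lp$ (Remark \ref{rem:restricted stucture over the Frobenius twist}) and applying the functoriality of $\mc D_{-,[-]/h,\mbb S}$ from Construction \ref{constr:central reductions over S} (the action $\psi$), this identification induces an isomorphism of sheaves of algebras on $\mc M_{X,[\eta]}\times \mbb S$:
$$(\pi\times \id_{\mbb S})^*\mc D_{X,[\eta]/h,\mbb S}\ \iso\ \pro_{\mbb S}^*\,\mc D_{\Spec A_0,[\eta_{\mathsf{can}}]/h,\mbb S}\ =\ \pro_{\mbb S}^*\mc A_{\mbb S}^\flat.$$

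Next I would verify $G_0$-equivariance. The source carries the equivariant structure of a pullback from $X^\lp\times \mbb S$, with $G_0$ acting only through its action on $\mc M_{X,[\eta]}$. The target carries the diagonal $G_0$-action: the translation action on $\mc M_{X,[\eta]}$ combined with the action of Construction \ref{constr:G_0action on the cental reduction} on $\mc A_{\mbb S}^\flat$. The compatibility amounts to the following tautology: if $g\in G_0(T)$ acts on a $T$-point of $\mc M_{X,[\eta]}$ by precomposition, the tautological Darboux frame changes by the restricted Poisson automorphism $g$ of $(\Spec A_0,[\eta_{\mathsf{can}}])\times T$, and by functoriality the induced isomorphism of central reductions is twisted by $\psi(g)$; this is exactly the action used to define the equivariance on $\mc A_{\mbb S}^\flat$. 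Both twists combine to give an equivariant isomorphism.

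Finally, faithfully flat descent along $\pi\times \id_{\mbb S}$ (see Construction \ref{constr:torsor of Darboux frames}, where $\mc M_{X,[\eta]}$ is shown to be a flat-locally trivial $G_0$-torsor) turns this $G_0$-equivariant isomorphism into the claimed isomorphism $\mc D_{X,[\eta]/h,\mbb S}\iso \mc M_{X,[\eta]}\times^{G_0}\mc A_{\mbb S}^\flat$ on $X^\lp\times \mbb S$. The main subtlety is the bookkeeping for the equivariance check: one must reconcile the fact that the $G_0$-action on $\mc M_{X,[\eta]}$ lives on the ``frame'' side while the action on $\mc A_{\mbb S}^\flat$ lives on the ``model'' side, and show that the tautological Darboux isomorphism intertwines them. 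This is essentially a Yoneda-type statement for the functor $\mc D_{-,[-]/h,\mbb S}$ evaluated on the universal restricted symplectic model $(\Spec A_0,[\eta_{\mathsf{can}}])$, and the argument is forced once one traces through how $\psi$ was defined in Construction \ref{constr:G_0action on the cental reduction}.
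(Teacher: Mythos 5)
Your argument is correct and is exactly the intended one: the paper states this proposition without proof, treating it as immediate from the definition of $\mc M_{X,[\eta]}\times^{G_0}\mc A_{\mbb S}^\flat$ as the descent of the $G_0$-equivariant sheaf $\pro_{\mbb S}^*\mc A_{\mbb S}^\flat$ together with the functoriality of Construction \ref{constr:central reductions over S}, which is precisely the descent-along-Darboux-frames argument you spell out. The only detail you leave implicit (as does the paper) is the flat base-change compatibility of $\mc D_{X,\frac{[\eta]}{h},\mbb S}$ along $\mc M_{X,[\eta]}\to X^\lp$, which holds because $\mc D_X$ is locally free over $\mc O_{X^\lp}$ by the PBW decomposition.
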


\section{Quasi-coherent sheaves of abelian categories}\label{twisting.stacks} 
\subsection{Sheaves of categories.}\label{Sheavesofcategories}
For the duration of this section a category   over a commutative ring $R$ means a  cocomplete abelian category $\cC$ together with a ring homomorphism $R  \to \Center (\cC)$, where $\Center (\cC)$  is  the center of $\cC$, that is the ring of endomorphisms 
of the identity functor $\Id\colon \cC \to \cC$. Given such $\cC$ and a homomorphism $R\to R'$  define $\cC \otimes_{R} R'$  to be the  category over $R'$  formed by pairs $(M, \alpha)$, where $M$ is an object of $\cC$  and $\alpha\colon R' \to \End(M)$ is a homomorphism of algebras over   $R$.

Recall the notion of a quasi-coherent  sheaf of (abelian) categories over an algebraic stack due Gaitsgory (\cite[\S 9]{g1}, \cite[\S 1]{g2}). Let $X$ be an algebraic stack (with respect to {\it fpqc} topology) over a base scheme %\footnote{\red{Do we need base scheme $S$ here? Probably, only when we consider group schemes acting on $X$. However, in our applications all group schemes are over $\bF_p.$}}
 $S$  such that the diagonal morphism $X \to X\times_S X$ is affine
 %\footnote{\red{Why this is important?}}.  
 A quasi-coherent  sheaf of categories $\fS$  over $X$ consists of the following data. 
 \begin{itemize}
\item[(i)]  For every affine scheme $Z$ over $X$ a category  $\fS(Z)$   over  $\cO(Z)$.
\item[(ii)] For every morphism $u\colon Z' \to Z $ of affine schemes over $X$, an equivalence 
$$u^*\colon  \fS(Z) \otimes_{\cO(Z)} \cO(Z') \iso  \fS(Z'), $$
of categories over $\cO(Z')$.
\item[(iii)] For every composable pair of morphisms $u_1$, $u_2$ of affine schemes over $X$,  an isomorphism $(u_1\circ u_2)^* \iso u_2^* \circ u_1^*$  such that the natural compatibility axiom for $3$-fold compositions holds.
\end{itemize}

Given a quasi-coherent  sheaf of categories $\fS$  over $X$ the category $\fS(X)$ of its global sections is defined to be $\leftlim \fS(Z)$, where the limit is taken over the category of affine schemes over $X$.
 Given any morphism $u\colon Z \to X$ we shall write $\fS(Z)$ for the category of global sections of the pullback of  $\fS$ to $Z$. According to   \cite[Proposition 8]{g2}    $\fS$ has the sheaf property: for a faithfully flat cover $u\colon Z\to X$
 %\footnote{\red{In \cite{g2} the result is stated for affine schemes only. Is it true in this generality?}}
  the category  $\fS(X)$ is equivalent to the category of descent data on $\fS(Z)$.
 A  key result proven in  
 %\cite{g2}[Proposition 11]
   is the faithfully flat descent for quasi-coherent  sheaves of categories:  for $u$ as above, giving a  quasi-coherent  sheaf of categories over $X$ is equivalent to giving a  quasi-coherent  sheaf of categories over $Z$ together with   descent data.
 
 Note that  if $X=\Spec \cO(X)$ is an affine scheme then giving a  quasi-coherent  sheaf of categories  $\fS$ over $X$ is equivalent to giving a  category 
$\fS(S)$ over  $ \cO(S) $.
\begin{example}\label{ex:A-mod} Let $A$ be a quasi-coherent sheaf of $\cO_X$-algebras. For a morphism $u\colon Z\to X$, define a category $A\lmod(Z)$ to be the category of quasi-coherent sheaves of $u^*A$-modules on $Z$. This construction determines a quasi-coherent  sheaf of categories over $X$ denoted by  $A\lmod$. In particular, $\cO_X\lmod$  is quasi-coherent sheaf
of categories over $X$ with  $\cO_X\lmod(Z)\iso \QCoh(Z)$, for any stack $Z$ mapping to $X$. 
\end{example}
\subsection{Groups acting on a category.}\label{Groupsactingonacategory}
For an affine group scheme $H$  over $S$ acting on $X$ and a quasi-coherent sheaf of categories $\fS$ over $X$ an $H$-equivariant structure on  $\fS$ consists of a  quasi-coherent  sheaf of categories  $\fS_{X/H}$ over the quotient stack $X/H$ together with an equivalence   of  quasi-coherent  sheaves of categories between the pullback of $\fS_{X/H}$ to $X$ and $\fS$.
%As explained in  \cite{g2}[\S 1.12] an action of $H$ on a  quasi-coherent sheaf of categories is equivalent to descent data along the morphism $S \to  BH$ to the classifying stack of $H$ corresponding to the trivial $H$-torsor over $S$. In particular, using   (\ref{twistext.eq}),  $A\lmod$ descends  to a quasi-coherent sheaf of  categories $ A\lmod ^{tw}$  over 
%$ B H$. For a morphism  $f: Y \to B H$, we refer to  $ A\lmod ^{tw}(Y) $ as {\it the  category of twisted $A$-modules on $Y$}. 
In particular, if $H$ acts trivially on $X$ we shall refer to the above data as an action of $H$ on $\fS$. 

The following construction will play an important role in what follows.
\begin{constr}[A version of the Grothendieck construction]\label{constr:A version of the Grothendieck construction}
Let  $A$   be
a sheaf of $\cO_X$-algebras which is locally free of finite type\footnote{\red{Do we need this assumption?}} as a $\cO_X$-module,
$ \underline{A}^*$ the group scheme  of its invertible elements, and let 
\begin{equation}\label{twistext.eq}
  1 \to \underline{A}^*   \to  \widehat H_X \to  H \times_S X\to 1 
\end{equation}
be an  extension of group schemes over $X$ together with an action 
\begin{equation}\label{twistext2.eq}
\widehat H_X \to \underline{\Aut} (A)
\end{equation}
of  $\widehat H_X$ on $A$ by algebra automorphisms such that  
\begin{itemize}
\item[(T)] the action of  $\widehat H_X$ on  $\underline{A}^*$ induced by (\ref{twistext2.eq}) is equal to the adjoint action arising from exact sequence (\ref{twistext.eq}). 
\end{itemize}
It follows that  the restriction of homomorphism (\ref{twistext2.eq}) to the subgroup  $\underline{A}^* \subset \widehat H_X$ is equal to $\Ad$.  
Extension (\ref{twistext.eq}) gives rise to an action of the group scheme  $H$ on  $A\lmod$.   Indeed, (\ref{twistext2.eq}) defines an action $\widehat H_X $  on $A\lmod$ and we claim that the latter factors canonically through $H$. Indeed, for an affine  scheme  over $X$, $u\colon Z \to X$, and an element $a\in  \underline{A}^* (Z)\subset \widehat H(Z)$ the corresponding endofunctor $\phi_{a}$ of   $ A\lmod(Z)$ is induced by the inner automorphism $\Ad_a$ of $u^*A(Z)$. Consequently, the multiplication by $a$
induces an isomorphism between the identity endofunctor and $\phi_{a}$. The above construction determines descent data for $A\lmod$ 
 along the morphism $X \to  X/H= X\times BH$, where $BH$ is the classifying stack of $H$.  We denote the corresponding category over $X/H$ by  $A\lmod_{X/H}$. 
 Let  $f\colon Z \to X\times BH$ be a morphism. Here is an explicit description of the category $ A\lmod_{X/H}(Z)$.
%To simplify notations we assume that $S'=S$: otherwise, replace $A$ by its pullback to $S'$. 
 Let $u\colon \cM \to Z$ be the $H$-torsor corresponding to $f$.
 The group $\widehat H_X $ acts on $\cM$ via the projection $\widehat H_X \to H \times_S X$.
 Action (\ref{twistext2.eq}) of $\widehat H_X$ on $A$ makes the pullback $A_\cM$  of $A$ to $\cM$ a $\widehat H_X $-equivariant sheaf of algebras on $\cM$.
  Then  $ A\lmod_{X/H}(Z)$  is the category of $\widehat H_X $-equivariant quasi-coherent 
$A_\cM$-modules\footnote{A sheaf of $A_\cM$-modules is called quasi-coherent if the underlying sheaf of $\cO_{\cM}$-modules is quasi-coherent.} $\cF$ on  $\cM$ such that the action of $\underline{A}^* $ on $\cF$ induced by the   $A_\cM$-module structure coincides with the action of the subgroup 
$\underline{A}^* \mono \widehat H_X $. 
\end{constr}
 %Observe that $A\lmod(\cM)$ is equivalent to the category of quasi-coherent sheaves 
%on $\cM$ equipped with an action of $u^*A$. 

%The category $\fS$ as  of twisted $A$-modules defined above can be interpreted using this formalism as the pullback of  $A\lmod$ along the morphism $Y\to BH$ determined by the $H$-torsor $\cM \to Y$.
%\begin{itemize}
%\item[(i)]  the action of  $\widehat H$ on  $\underline{A}^*$ induced by (\ref{twistext2.eq}) is equal to the adjoint action arising from exact sequence (\ref{twistext.eq}),
%\item[(ii)]  the restriction of homomorphism (\ref{twistext2.eq}) to the subgroup  $\underline{A}^* \subset \widehat H$ is equal to $\Ad$.  
%\end{itemize}
 
 Assume that,  for $f\colon Z \to X\times BH$ as above, we are given a $\widehat H_X$-torsor $\widehat \cM \to Z$ with $\widehat \cM/\underline{A}^* = \cM$. 
  Using the action of $\widehat H_X$ on $A$ define 
 a sheaf $\cO_Z$-algebras $O= \widehat \cM  \times ^{\widehat H_X}  A_Z$.
 Applying the construction from Example \ref{ex:A-mod}  we consider the quasi-coherent sheaf categories  $O\lmod$  over $Z$.
 %Define a stack $\QCoh(\underline{O})$ of quasi-coherent $O$-modules assigning to  
 %a scheme $Z\rar{\pi} Y$ of finite type over $Y$  the category $\QCoh(Z, O)$ of quasi-coherent $\pi^*O$-modules. 
  By definition, the category $O\lmod(Z)$  is equivalent to the category of $\widehat H_X $-equivariant quasi-coherent 
modules over $A_{\widehat \cM }$.  An object $\cF \in  A\lmod_{X/H}(Z)$  determines via the pullback along $\widehat \cM  \to \cM $ 
  an object of $O\lmod(Z)$.  Repeating  this construction for every scheme over $Z$ we get a functor between quasi-coherent sheaves of categories over $Z$:
 \begin{equation}\label{twist.morph.ofstacks}
 f^*(A\lmod _{X/H}) \to O\lmod.
\end{equation}
\begin{pr}\label{pr.twistedmodules} 
 Morphism (\ref{twist.morph.ofstacks}) is an equivalence. In particular, it induces an equivalence of categories  $A\lmod_{X/H}(Z) \iso O\lmod(Z)$.
 \end{pr}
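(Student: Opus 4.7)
The strategy is to use fpqc descent for quasi-coherent sheaves of categories. Both $f^*(A\lmod_{X/H})$ and $O\lmod$ are quasi-coherent sheaves of categories on $Z$, and \eqref{twist.morph.ofstacks} is a morphism between them. Since being an equivalence of such sheaves is fpqc-local on the base (by the descent property recalled in \S\ref{Sheavesofcategories}, following \cite[Proposition 8]{g2}), it suffices to verify that \eqref{twist.morph.ofstacks} becomes an equivalence after pulling back along some fpqc cover of $Z$. The natural candidate is the $\widehat H_X$-torsor itself, $\pi\colon \widehat\cM\to Z$.

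After this pullback, both $\widehat\cM$ and its quotient $\cM=\widehat\cM/\underline{A}^*$ trivialize canonically: the diagonal $\widehat\cM\hookrightarrow \widehat\cM\times_Z\widehat\cM$ provides a section of the first torsor, which in turn yields a section of $\cM\times_Z\widehat\cM$ via the projection $\widehat\cM\to\cM$. Consequently, the composite $\widehat\cM\xra{\pi} Z\xra f X\times BH$ factors canonically through the natural map $X\hookrightarrow X\times BH$. Since $A\lmod_{X/H}$ is by construction the descent of $A\lmod$ along this latter map, one obtains a canonical identification $\pi^*(f^*(A\lmod_{X/H})) \iso A_{\widehat\cM}\lmod$. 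On the other side, the trivialization of $\widehat\cM$ yields a canonical isomorphism of sheaves of algebras $\pi^*O\iso A_{\widehat\cM}$, since $\widehat\cM\times^{\widehat H_X}A_Z$ pulled back to $\widehat\cM$ becomes the untwisted $A_{\widehat\cM}$. Unwinding the definition of \eqref{twist.morph.ofstacks} (pullback along $\widehat\cM\to\cM$ of an $A_\cM$-module), these identifications make the pulled-back functor tautologically the identity endofunctor of $A_{\widehat\cM}\lmod$, which is an equivalence.

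The one genuine point to check is that the two descent data along $\pi$ match under the identification above, so that the equivalence at the level of pullbacks lifts to an equivalence of sheaves on $Z$. Both descent data are actions on $A_{\widehat\cM}\lmod$ of the groupoid $\widehat\cM\times_Z\widehat\cM\cong \widehat H_X\times_X \widehat\cM \rightrightarrows \widehat\cM$, induced by action \eqref{twistext2.eq} of $\widehat H_X$ on $A$ together with the $\widehat H_X$-action on $\widehat\cM$. The subtle part concerns the $\underline{A}^*\subset \widehat H_X$-component: on the $O\lmod$ side this component is, by the definition $O=\widehat\cM\times^{\widehat H_X}A_Z$, absorbed into the algebra structure on $O$ via the identification of $\underline{A}^*$ with units in $A$; on the $A\lmod_{X/H}$ side, this is precisely what compatibility axiom (T) of Construction \ref{constr:A version of the Grothendieck construction} enforces, requiring the equivariant $\underline{A}^*$-action on modules to coincide with $A$-module multiplication. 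This matching is the main (though straightforward once set up) obstacle; once verified, the two descent data coincide and \eqref{twist.morph.ofstacks} is an equivalence.
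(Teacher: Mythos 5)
Your proof is correct and follows essentially the same route as the paper: apply fpqc descent for sheaves of categories, pull back along the cover $\widehat\cM\to Z$, identify both sides with $A_{\widehat\cM}\lmod$, and observe the functor becomes the identity. The final paragraph about re-matching descent data is unnecessary, since \eqref{twist.morph.ofstacks} is already given as a morphism of sheaves of categories over $Z$, so being an equivalence may simply be checked after pullback to the cover.
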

 \begin{proof} Thanks to the faithfully flat descent for quasi-coherent sheaves of categories it suffices to exhibit  a {\it fpqc}  cover of $Z'\to Z$ such that functor (\ref{twist.morph.ofstacks}) induces  an equivalence of categories  $A\lmod_{X/H}(Z')   \iso O\lmod(Z')$. We take $u\colon Z'=  \widehat \cM  \to Z$.
Then both categories are identified with  the category of quasi-coherent  $A_{\widehat \cM}$-modules and   morphism (\ref{twist.morph.ofstacks}) evaluated at $\widehat \cM$ is isomorphic to the identity functor.
  \end{proof}
 For example, assume that the projection $\widehat H_X \to  H\times_S X $ has a section $H\times_S X \to \widehat H_X$. Then the action of $H$ on  $A\lmod$ lifts to an action of $H$ on $A$.  Moreover, the section defines a lift $\widehat \cM \to Z$ of  $\cM \to Z$ such that the corresponding sheaf of algebras is given by the formula 
 $O=  \cM  \times ^{H}  A$.

 \begin{rem}\label{rem:informalaction}
%Let $A\lmod$ be the category (over $\Spec k$) of $A$-modules.   
%Observe that the stack $A\lmod$  comes with a global section, denoted by $\underline{A}$, that sends a scheme 
% $X$ to $A\otimes _k \cO_{X}  \in A\lmod(X)$.
Let $\phi$ be an action of $H$ on the category  $A\lmod$. In particular, for every scheme $Z$ over $X$ and a point $h\in H(Z)$ one has an autoequivalence  $\phi_{h}$ of the category of $A_Z$-modules. Assume {\it fpqc}  locally on $Z$ this autoequivalence preserves the isomorphism class of the free $A_Z$-module $A_Z$.
Then $\phi$
 arises from data (\ref{twistext.eq}) and  (\ref{twistext2.eq}) for an appropriate $ \widehat H_X$.  
 Namely, one sets $ \widehat H_X(Z)$ to be the group of pairs $(h, \alpha_h)$, where $h\in H(Z)$ and $\alpha_h\colon  \phi_{h}(A_Z) \iso A_Z$ is an isomorphism of $A_Z$-modules.
  \end{rem}
  
  %The last bit of terminology. 
  \begin{convention}[Quasi-coherent  sheaves of  categories over formal stacks]\label{cat.over.f.s} Let $\hat T$ be an $h$-adic formal stack, that is a collection of algebraic stacks $T_m$ over   $S_m= \Spec \bZ[[h]]/h^{m+1}$, $m\geq 0$, together with  $T_{m+1}\times_{S_{m+1}} S_m \iso T_m$.  A quasi-coherent  sheaf of  categories over   $\hat T$ is a collection $\{\fS_m, i^*_m \fS_{m+1} \iso \fS_{m}, m\geq 0\}$,
   where  $\fS_m$  is a quasi-coherent  sheaf of  categories over 
  $T_m$ and $i^*_m \fS_{m+1} $
  is the pullback of $\fS_{m+1}$ to $T_m$. 
  %For an $h$-adic formal group $H$ over $\hat S$ giving an action of $ H$  on  a quasi-coherent  sheaf of  categories over   $\hat T $ amounts to specifying descent data along the morphism of formal stacks  $\hat T \to \hat T/H$.
  \end{convention}
   \section{Action of $G_0$  on $\cA_{\mbb S}\lmod$.}\label{canonicalquant.theaction}
  \subsection{Main Theorem.}\label{subs.m.th} The action of  $G_0$ on $A_0$ defines its action on  $A_0\lmod$. 
 The goal of this subsection is to extend  the latter  to an  action of $G_0$  on $\cA_{\mbb S}\lmod$. 

  \begin{Th}\label{appendix:maintheorem} Let $H$ be an affine group scheme over $\bF_p$ equipped with a homomorphism $\upsilon\colon H\to G_0$.
  There exists a unique (up to a unique isomorphism) action of the group scheme $H$  on $\cA_{\mbb S}\lmod$ equipped with a trivialization at $h= \infty$ such that 
   the induced action of $H $ on the center of $A_0\lmod $  (identified with $A_0$) equals the composition   $H\rar{\upsilon} G_0 \mono  \underline{\Aut} (A_0)$.
  \end{Th}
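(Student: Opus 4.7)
First, it is enough to treat the universal case $H = G_0$, $\upsilon = \mathrm{id}$: given such an action, a general $(H,\upsilon)$-action is obtained by pullback along $\upsilon \times \mathrm{id}_{\mbb S}$. So from now on I aim to construct a $G_0$-action on $\cA_{\mbb S}\lmod$, trivialized at $h = \infty$, inducing the tautological $G_0$-action on $A_0\lmod$ at $h=0$.

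\textbf{Translation to an extension problem.} By Remark \ref{rem:informalaction} (applied in the $h$-separable site, using that $\cA_{\mbb S}$ is locally free), an action of $G_0$ on $\cA_{\mbb S}\lmod$ of the required local form is equivalent to the datum of an extension of sheaves of groups on $\Sch^{h-\sep}_{/\mbb S}$
\begin{equation*}
1 \to \underline{\cA}_{\mbb S}^{\times} \to \widehat G_{0,\mbb S} \to G_0 \times \mbb S \to 1,
\end{equation*}
together with a lift of the $G_0 \times \mbb S$-action on $(\cA_{\mbb S})|_{h=0}$ to an action of $\widehat G_{0,\mbb S}$ on $\cA_{\mbb S}$ by algebra automorphisms whose restriction to $\underline{\cA}_{\mbb S}^{\times}$ is $\mathrm{Ad}$. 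The trivialization at $\infty$ corresponds to a splitting of the restriction of this extension to $G_0 \times \{\infty\}$, compatible with the trivial action of $G_0$ on $\cA_{\mbb S}|_\infty$; the condition at $0$ pins down the induced $G_0$-action on $\cA_{\mbb S}|_0 = A_0$.

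\textbf{Uniqueness.} Suppose we are given two such extensions. Their difference is classified by a map $G_0 \times \mbb S \to \underline{\mathrm{Aut}}(\cA_{\mbb S})/\bG_m$ that lies in $\ker(\mathsf{r})/\bG_m$ and is trivial over $\{0\}$ and over $\{\infty\}$. By Corollary \ref{app.lemma.inneraut}, the sheaf $\ker(\mathsf{r})/\bG_m$ vanishes on stalks of $T \times \mbb S$ at closed points, so, after pulling back along an arbitrary map $T \to G_0$, the corresponding section of $\ker(\mathsf{r})/\bG_m$ on $T\times\mbb S$ is locally trivial, and the two standard trivializations at $\{0\}$ and $\{\infty\}$ glue it into a globally trivial object (the only ambiguity being a multiplicative $\bG_m$-valued function on $\mbb S$, which is locally constant and pinned down at $\infty$). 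This proves uniqueness up to unique isomorphism.

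\textbf{Existence.} I carry this out by gluing over the open cover $\{\mbb S\setminus\{0\},\,\mathrm{Spf}\,k[[h]]\}$ of $\mbb S$, using the auxiliary algebra $\cA_{\mbb S}^\flat$ of Section \ref{ssec:A_Sflat}.

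Over $\mbb S\setminus\{0\}$ the sheaf $\cA_{\mbb S}$ is a split Azumaya algebra (Remark \ref{rem:A_S is matrix algebra outside of 0}), so the sheaf of categories $\cA_{\mbb S}\lmod|_{\mbb S\setminus\{0\}}$ is equivalent to $\QCoh(\mbb S\setminus\{0\})$ via the canonical splitting module; I equip it with the trivial $G_0$-action. This automatically gives the required trivialization at $\infty$.

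Near $h=0$, Construction \ref{constr:G_0action on the cental reduction} provides an honest $G_0$-action on $\cA_{\mbb S}^\flat$ by algebra automorphisms, hence a $G_0$-action on $\cA_{\mbb S}^\flat\lmod$. To transport this to $\cA_{\mbb S}\lmod$ I invoke (and interpret in our compactified setting) the construction of \cite{bv}: over the formal neighborhood of $0$ there is a $G_0$-equivariant invertible bimodule $\mathcal{K}$ between $\cA_h$ and $\cA_h^\flat$ (realizing a Morita-like equivalence after completion), which defines a $G_0$-equivariant equivalence $\cA_{\mbb S}\lmod \simeq \cA_{\mbb S}^\flat\lmod$ on $\mathrm{Spf}\,k[[h]]$. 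Transporting the $G_0$-action across this equivalence yields the desired $G_0$-action on $\cA_{\mbb S}\lmod$ near $0$, and a direct computation shows that its restriction to $\{0\}$ is the tautological action on $A_0\lmod$.

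\textbf{Gluing.} On the overlap $\mathrm{Spf}\,k[[h]]\setminus\{0\} = \mathrm{Spec}\,k((h))$, both constructions produce $G_0$-actions on $\cA_{\mbb S}\lmod|_{\mathrm{Spec}\,k((h))}$ that induce the tautological action on $A_0\lmod$ at $0$ (vacuously, since $0$ is not in the overlap, but each side is determined by a canonical choice). Applying the uniqueness argument of Step 3 to the overlap with the endpoints removed, but extended via the completion on the $0$ side and the $\infty$ side, produces a canonical identification of the two actions, and this identification glues them into a global $G_0$-action on $\cA_{\mbb S}\lmod$ with the required properties.

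\textbf{Main obstacle.} The genuinely delicate step is the construction near $h=0$: one must check that the $G_0$-action on $\cA_{\mbb S}^\flat$ combined with the \cite{bv} bimodule $\mathcal K$ indeed transports to a $G_0$-action on $\cA_{\mbb S}\lmod$ (as opposed to, say, a projective action) and that at $h=0$ this recovers precisely the tautological action on $A_0\lmod$. All of this relies on the tight local analysis of $\underline{\mathrm{Aut}}(\cA_{\mbb S})$ of Section \ref{ssec: automorphisms of A_S}, where the $h$-separable hypothesis is essential.
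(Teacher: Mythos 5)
Your overall architecture matches the paper's: reduce to $H=G_0$, recast the action as an extension of $G_0\times\mbb S$ by $\underline{\cA}_{\mbb S}^\times$, prove uniqueness by showing the discrepancy between two actions is tensoring with a line bundle, and prove existence by gluing a formal-disk construction (via $\cA_{\mbb S}^\flat$ and the \cite{bv} extension) to the split-Azumaya picture on $\mbb S\setminus\{0\}$ using Beauville--Laszlo. However, there are two genuine gaps.

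First, in the uniqueness step you conclude too quickly that the discrepancy line bundle is trivial. A line bundle on $H\times\mbb S=H\times\mbb P^1$ that is locally trivial and comes with a trivialization over $H\times\{\infty\}$ is \emph{not} thereby globally trivial: the remaining invariant is its degree, an element of $H^0(H,\bZ)$, not merely ``a locally constant $\bG_m$-valued function.'' (Note also that the condition at $h=0$ only constrains the action on the center; it does not supply a second trivialization of the functor there.) The paper kills this degree by observing that the cocycle isomorphisms force the class $[\phi_1\circ\phi_2^{-1}]\in H^0(H,\bZ)$ to be a group homomorphism $H\to\bZ$, which must vanish because $H$ is affine, hence quasi-compact. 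Without this step the uniqueness argument is incomplete.

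Second, in the existence step you posit a ``$G_0$-equivariant invertible bimodule $\mathcal K$'' giving a $G_0$-equivariant equivalence $\cA_{\mbb S}\lmod\simeq\cA_{\mbb S}^\flat\lmod$ over $\mathrm{Spf}\,k[[h]]$. No such Morita equivalence exists integrally: $A_h$ and $A_h^\flat$ are orders of different ranks ($p^{2n}$ versus $p^{4n}$) that become Morita equivalent only after inverting $h$. The paper's Lemma on gluing is explicit that the bimodule $\Lambda_{\mbb S}$ only furnishes an adjoint pair $(\Psi_*,\Psi^*)$ whose unit and counit are isogenies killed by $h^N$. Correspondingly, the paper does not transport the action across an equivalence; it builds the action on $\cA_{\hat{\mbb S}}\lmod$ directly from the extension $\widehat G=\alpha^{-1}(\mathrm{im}\,\Gamma_\psi)\subset\widehat G^\sharp$ acting on $B_h=A_h\otimes A_h^{\flat,\mathrm{op}}$, and uses the isogeny only to produce the gluing bimodule $M_\theta$ via Beauville--Laszlo. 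Relatedly, your final gluing step cannot invoke the uniqueness argument on the overlap $\Spec k((h))$: that argument depends on the properness of $\mbb S$ (through $\Pic(\mbb P^1)$) and on the endpoint conditions, neither of which is available on the punctured disk. The identification over the overlap has to be produced explicitly at the level of bimodules, as the paper does.
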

  \begin{rem}\label{rephrasing}
  Set $H_{\mbb S} =H \times \mbb S$.  We claim that giving an action of $H$ on  $\cA_{\mbb S}\lmod$ amounts specifying an extension   
  \begin{equation}\label{diag.f.extany}
%\[
	\begin{tikzcd}
	1 \arrow[r, ""] & \underline{\cA}_{\mbb S}^*  \arrow[r, ""]\arrow[dr, "Ad"]& \widehat H_{\mbb S}  \arrow[r, ""]\arrow[d, "\alpha"]& H_{\mbb S} \arrow[r, ""] &1  \\
	&& \underline{\Aut} (\cA_{\bS}), &  &
	\end{tikzcd}
	% \]
	\end{equation}
	satisfying property (T) in Construction \ref{constr:A version of the Grothendieck construction}. Indeed, using Remark \ref{rem:informalaction} it suffices to check that, for any affine scheme $Z$ over $\mbb S$, every autoequivalence  $\phi$ of the category  $\cA_{\mbb S}(Z)\lmod$ over $\cO(Z)$ {\it fpqc}  locally on $Z$ preserves the isomorphism class of the free $\cA_{\mbb S}(Z)$-module $\cA_{\mbb S}(Z)$. We shall see that this is true even Zariski locally on $Z$.
	We may assume that $\cO(Z)$ is a local ring. If the image of $Z$ in $\mbb S$ does not contain $0$ then $\cA_{\mbb S}(Z)$ is  an Azumaya algebra. Hence, $\phi$ is given by the tensor product with a line bundle over $Z$.  Since $Z$ is local  $\phi$ is isomorphic to the identity functor. If   $Z \to \mbb S$  maps the closed point to $0$ then   $\cA_{\mbb S}(Z)$ is a local ring. Hence, by Kaplansky's theorem, every projective $\cA_{\mbb S}(Z)$-module is free. In particular, this can be applied to a projective module  $\phi( \cA_{\mbb S}(Z))$.  Using that the ring of endomorphisms of  $\phi( \cA_{\mbb S}(Z))$ is a local ring (namely, $\cA_{\mbb S}(Z)^{op}$) we conclude that $\phi( \cA_{\mbb S}(Z))$ is indecomposable. This proves the claim. We remark that the argument above also shows that $\widehat H_{\mbb S}$ viewed as a
 $\underline{\cA}_{\mbb S}^*$-torsor over $H_{\mbb S}$ is locally trivial for the Zariski topology on $H_{\mbb S}$. 
	
	Under this equivalence a trivialization of the action at $h= \infty$  amounts to the choice of a section $H=H \times  \infty \to \widehat H_{\mbb S}   \times _{\mbb S} \infty $ whose image commutes with the subgroup $ \underline{\cA}_{\mbb S}^*  \times _{\mbb S} \infty \subset  \widehat H_{\mbb S}   \times _{\mbb S} \infty $.
	The action of $H= H \times  0$ on the center of $A_0\lmod $  is induced by $\alpha_0\colon \widehat H_{\mbb S}  \times _{\mbb S} 0 \to \underline{\Aut} (A_0)$. The uniqueness part of Theorem \ref{appendix:maintheorem}  asserts that, for any two extensions (\ref{diag.f.extany}), such that $\alpha_0$ equals the composition 
	$$\widehat H_{\mbb S}  \times _{\mbb S} 0 \to H_{\mbb S}  \times _{\mbb S} 0 \rar{\upsilon} G_0\subset \underline{\Aut} (A_0),$$
	together with chosen sections at $h= \infty$,
	 there exists a unique isomorphism connecting the two that carries one section to the other.   
	  \end{rem}
  
  \begin{proof} We start with the uniqueness assertion. Let $T$ be an affine scheme over $\bF_p$, $\pi\colon T\times {\mbb S} \to \mbb S$ the projection. The tensor product with a line bundle defines a functor
  \begin{equation}\label{twist.morph.ofgroupoids}
  \Pic(T\times \mbb S) \to \Aut(\pi^*\cA_{\mbb S}\lmod)
  \end{equation}
  from the Picard groupoid   of line bundles over $T\times \mbb S$ to the groupoid of autoequivalences\footnote{The composition of autoequevalences makes $\Aut(\pi^*\cA_{\mbb S}\lmod)$ into a group object in the category of groupoids. 
  Functor (\ref{twist.morph.ofgroupoids})  is a homomorphism of groups in groupoids.} of the quasi-coherent sheaf of categories $\pi^*\cA_{\mbb S}\lmod$.
   \begin{lm}\label{app.lemma.autoequivalences} 
  Functor  (\ref{twist.morph.ofgroupoids})  is fully faithful and its essential image  consists of those autoequivalences $\fH \subset \Aut(\pi^*\cA_{\mbb S}\lmod)$ that act identically on the center  of  $(\pi^*\cA_{\mbb S})_{|T\times 0} \lmod$ (identified with  $A_0\otimes \cO(T)$). 
   \end{lm}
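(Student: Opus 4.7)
The proof has two parts, both of which rely on a preliminary computation that I would carry out first: the centre of the sheaf of algebras $\cA_{\mbb S}$ on $\mbb S$ is simply $\cO_{\mbb S}$. Using the presentation in Remark \ref{rem:description of A_S}, for $z=\sum_{I,J} r_{I,J}\, x^I y^J \in A_{\mbb S\setminus\{\infty\}}$ the commutation relation $[y_j,x_i]=h\delta_{ij}$ gives $[z,x_i] = h\sum_{I,J} r_{I,J}\, j_i\, x^I y^{J-e_i}$; since the ordered monomials form an $\mbb F_p[h]$-basis and $0\le j_i<p$, vanishing of this expression forces $J=0$, and symmetrically $I=0$. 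Hence $Z(A_{\mbb S\setminus\{\infty\}}) = \mbb F_p[h]$ and, by gluing, $Z(\cA_{\mbb S})=\cO_{\mbb S}$; consequently $Z(\pi^*\cA_{\mbb S}) = \cO_{T\times \mbb S}$.

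\textbf{Fully faithfulness.} Any natural transformation $L_1\otimes -\Rightarrow L_2\otimes -$ is determined by its value on the generator $\pi^*\cA_{\mbb S}$, and naturality under right multiplication forces this value to be a bimodule homomorphism $L_1\otimes\pi^*\cA_{\mbb S}\to L_2\otimes\pi^*\cA_{\mbb S}$. Because the bimodule endomorphisms of $\pi^*\cA_{\mbb S}$ coincide with its centre $\cO_{T\times \mbb S}$, such maps are canonically identified with $\Gamma(T\times\mbb S,\, L_2\otimes L_1^{-1})$. This yields both injectivity on morphisms and the fact that isomorphisms of functors correspond to isomorphisms of the underlying line bundles.

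\textbf{Essential image.} Let $\phi\in\fH$. By Remark \ref{rephrasing} there is a Zariski cover $\{U_i\}$ of $T\times \mbb S$ on which $\phi|_{U_i}\cong \alpha_{i,*}$ for some algebra automorphism $\alpha_i$ of $(\pi^*\cA_{\mbb S})|_{U_i}$. Since $A_0$ is commutative, the centre of $(\pi^*\cA_{\mbb S})_{|T\times 0}\lmod$ equals $A_0\otimes \cO(T)$ itself, and the hypothesis that $\phi$ acts identically on this centre amounts to saying that each $\alpha_i$ reduces to the identity modulo $h$, i.e.\ lies in $\Ker(\mathsf r)(U_i)$. Corollary \ref{app.lemma.inneraut} applied to the stalks of $\cO_{T\times \mbb S}$ at points of $U_i$ then shows that, after a further Zariski refinement, we may assume $\alpha_i=\Ad(a_i)$ is inner with $a_i\in\Gamma(U_i,(\pi^*\cA_{\mbb S})^\times)$. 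The elements $a_i$ furnish canonical isomorphisms $\gamma_i\colon \phi|_{U_i}\iso \Id|_{U_i}$; on overlaps $U_{ij}$ the composition $\gamma_j\gamma_i^{-1}$ is an automorphism of the identity endofunctor and hence, by the centre computation, multiplication by a unit $u_{ij}\in\Gamma(U_{ij},\cO^\times)$. The resulting {\v C}ech cocycle $\{u_{ij}\}$ defines a line bundle $L$ on $T\times\mbb S$ together with an isomorphism $\phi\iso L\otimes -$, which lies in the essential image of (\ref{twist.morph.ofgroupoids}).

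\textbf{Main obstacle.} The crucial input is the reduction to inner automorphisms via Corollary \ref{app.lemma.inneraut}, whose proof is the most delicate part of the entire argument: it relies on the $h$-separability framework of Section \ref{ssec: automorphisms of A_S}, on the inductive construction of conjugating units modulo successive powers of $h$ in Proposition \ref{prop:automorphisms trivial modulo h are inner locally} (with the log-Hamiltonian criterion of Lemma \ref{lem:description of log and usual Hamiltonian vector fields} appearing at the first step), and on a Beauville--Laszlo gluing between the $h$-completed and $h$-inverted patches. Once that input is available, the remainder of the proof is a standard {\v C}ech cohomology computation.
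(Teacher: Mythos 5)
Your proof is correct and follows essentially the same route as the paper: reduce via Remark \ref{rephrasing} to a local algebra automorphism trivial mod $h$, invoke Corollary \ref{app.lemma.inneraut} to make it inner, and glue the resulting local trivializations of $\phi$ into a line bundle (the paper phrases this as a torsor under the sheaf of central units rather than a \v{C}ech cocycle, which is the same thing). The only additions are your explicit computation of $Z(\cA_{\mbb S})=\cO_{\mbb S}$ and the full-faithfulness argument, both of which the paper treats as immediate.
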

  \begin{proof} 
   The only part that requires a proof is the essential surjectivity of the functor  $ \Pic(T\times \mbb S)  \to \fH$. 
 Let $\phi$ be an autoequivalence in  $\fH$. We claim that  $\phi$,   locally for the Zariski topology on $T\times \mbb S$,  is isomorphic to $\Id$. Indeed, as explained in Remark \ref{rephrasing}, every autoequivalence of $\pi^*\cA_{\mbb S}\lmod $ is, locally on $T\times \mbb S$, induced by an automorphism of $\pi^*\cA_{\mbb S}$. Since $\phi \in \fH$
  the latter automorphism is equal to $\Id$ modulo $h$.  
 Now the claim follows from Lemma \ref{app.lemma.inneraut}. Assigning to every open subset $U\subset T\times \mbb S$ the set of isomorphisms $\phi_{|U} \simeq \Id$ we define a torsor (for the Zariski  topology) under the sheaf of central elements in $(\pi^*\cA_{\mbb S})^*$.
   The latter sheaf is equal to $\cO^*_{\mbb S\times T}$. This defines a line bundle $L_\phi$ over $T\times \mbb S$ such that $\phi$ is  the tensor product with $L_\phi$.  
   \end{proof}
  Next, consider the groupoid $\fH'$ formed by pairs $(\phi, \alpha)$, where $\phi\in \fH$ and $\alpha\colon  \phi_{|T\times \infty}\iso \Id$. 
  The equivalence  $\Pic(T\times \mbb S) \iso \fH$ induces an equivalence  between $\fH'$  and the groupoid of line bundles over $T\times \mbb S$  equipped with a trivialization at $T\times \infty$. Since the Picard stack of $\mbb S=\bP^1_{\bF_p}$ is isomorphic to $\bZ\times B\bG_m$ it follows that $\fH'$
  is discrete and its $\pi_0$ is the group $H^0(T, \bZ)$.

Returning to the proof of the uniqueness assertion recall that giving  an action $\psi$ of $H$  on $\cA_{\mbb S}\lmod$ is equivalent to giving  descent data along the morphism $\mbb S \to  \mbb S \times  BH $. In particular, any action gives rise to an autoequivalence 
  $$\phi\colon  \pi^*\cA_{\mbb S}\lmod \iso \pi^*\cA_{\mbb S}\lmod ,$$
  where $\pi\colon H\times \mbb S \to \mbb S$ is the projection. If $\phi _1$, $\phi _2$ are equipped with trivializations at $h= \infty$ and induce the same action on the center of $A_0\lmod$ then $\phi_1 \circ \phi_2^{-1}$ is an object of $\fH'$ corresponding to some element  $[\phi_1 \circ \phi_2^{-1}] \in H^0(H, \bZ)$.  Using the cocycle {\it isomorphisms} related to $\phi_i$ it follows 
  that $[\phi_1 \circ \phi_2^{-1}]$ is, in fact, a group homomorphism $H\to \bZ$. The latter must be trivial because $H$ is assumed to be affine and, in particular, quasi-compact.  We conclude that there is a unique isomorphism $\phi_1 \iso \phi_2$ that commutes with trivializations at $H\times \infty$. The discreteness of  $\fH'$ (applied to  $T=H\times H$) implies
  that $\phi_1 \iso \phi_2$ commutes with cocycle isomorphisms. This completes the proof of the uniqueness assertion of  the Theorem.
  \begin{rem}\label{app.uniquenessoverabase} 
  The argument above proves a more general assertion:  if $T$ is any scheme over $\bF_p$ and $\pro_{\mbb S}\colon \mbb S \times T \to \mbb S$ is the projection, then there exists at most one (up to a unique isomorphism) action 
  of $H$  on $\pro_{\mbb S}^* \cA_{\mbb S}\lmod$ equipped with a trivialization over $\infty \times T$ such that 
   the induced action of $H$ on the center of $(\pro^*_{\mbb S} A_{\mbb S})_{0\times T} \lmod $   is given by $\upsilon$.
 \end{rem}

   Let us prove the existence. Without loss of generality we may assume that   $H =G_0$, $\upsilon=\Id.$
  We shall use the action $\psi\colon G_0\times \mbb S\to \underline{\Aut} (\cA_{\mbb S}^\flat)$ from Construction \ref{constr:G_0action on the cental reduction}  and the following result borrowed from \cite{bv}. Let $A_h$  (resp.  $A_h^{\flat}$) be the $h$-adic completion of $\cA_{\mbb S\backslash\{\infty \} }$ (resp. $\cA_{\mbb S\backslash\{\infty \} }^{\flat}$).
  Set $\cB_{\mbb S}=  \cA_{\mbb S} \otimes_{\cO_{\mbb S}} \cA^{\flat, op}_{\mbb S}$, and let  $\underline{\Aut} (B_h)$ be  the group scheme over $\bF_p$ of $\bF_p[[h]]$-linear automorphisms of the  $h$-adic completion of $\cB_{\mbb S}$.  That is, for a $\bF_p$-algebra $R$, $\underline{\Aut} (B_h)(R)$   
 is the group of $R[[h]]$-algebra automorphisms of tensor product
$A_h\hat \otimes_{\bF_p[[h]]} R[[h]]$. Define  a homomorphism $\Gamma_{\psi_0}\colon G_0 \mono  \underline{\Aut} (B_0)= \underline{\Aut} (A_0 \otimes A_0^\flat)$  by the formula $\Gamma_{\psi_0}(g)= g\otimes \psi_0(g)$.
  \begin{lm}[{\cite[Corollary 3.5]{bv}}]
 There exists a unique (up to a unique isomorphism) triple $(\widehat G^\sharp, \alpha, i)$ displayed in the digram
 \begin{equation}\label{diag.f.extbdoublea} 
%\[
	\begin{tikzcd}
	&  & \underline{B_h(h^{-1})}^*  &  &  \\
	1 \arrow[r, ""] & \underline{B}_h^*  \arrow[r, ""]\arrow[dr, "Ad"] \arrow[ur, ""] & \widehat G^\sharp  \arrow[r, ""]\arrow[d, "\alpha"]\arrow[u, "i"]& G_0\arrow[r, ""] \arrow[d, "\Gamma_{\psi_0}"] &1  \\
	&& \underline{\Aut} (B_h)   \arrow[r, ""]  & \underline{\Aut} (B_0) &
	\end{tikzcd}
	% \]
	\end{equation} 
where the north east arrow  is the natural inclusion, $i$ is a monomorphism,  and $\alpha(g)= \Ad_{i(g)}$.  In addition, if $W$ is an irreducible representation of  $B_h(h^{-1})$,  $B_h(h^{-1}) \iso \End_{\bF_p((h))}(W)$, there exists  a 
   $\bF_p[[h]]$-lattice  $\Lambda \subset W$, invariant under the $B_h$-action on $W$ and under the action of $\widehat G^\sharp$:  
   $$i\colon  \widehat G^\sharp \mono L^+\GL(\Lambda) \subset  L\GL(W) =\underline{B_h(h^{-1})}^*.$$
\end{lm}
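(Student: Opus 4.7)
The plan is to invoke the construction from \cite{bv} adapted to our setting. The construction proceeds in three stages: (a) identify an appropriate irreducible module $W$ for $B_h(h^{-1})$ together with a $B_h$-stable $\bF_p[[h]]$-lattice $\Lambda$; (b) define $\widehat G^\sharp$ as a suitable fibre product group scheme; (c) verify the extension structure by analyzing the kernel and surjectivity. The main obstacle will be step (c), specifically the surjectivity of $\widehat G^\sharp \to G_0$.

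For (a), both $A_h(h^{-1})$ and $A_h^\flat(h^{-1})$ are split matrix algebras over $\bF_p((h))$, since $\cA_{\mbb S}$ and $\cA_{\mbb S}^\flat$ restricted to $\mbb S\backslash\{0,\infty\}$ are split Azumaya of ranks $p^n$ and $p^{2n}$ respectively (Remark \ref{rem:A_S is matrix algebra outside of 0} and its analogue for $\cA_{\mbb S}^\flat$). Hence $B_h(h^{-1})$ is also a split matrix algebra, with a unique irreducible module $W \simeq V \otimes V^\flat$ up to isomorphism, where $V$, $V^\flat$ are the natural irreducibles. A natural $B_h$-stable lattice $\Lambda \subset W$ is obtained from the module $\cV_{\mbb S}$ of Construction \ref{constr:A module over A} (completed at $h$) tensored with the analogous standard module for $A_h^\flat$; by construction the $B_h$-action preserves $\Lambda$.

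For (b), I define $\widehat G^\sharp$ to be the group scheme whose $R$-points are pairs $(g,T)$ with $g \in G_0(R)$ and $T \in L^+\GL(\Lambda)(R)$ such that the inner automorphism $\Ad_T$ of $B_h \otimes R$ reduces modulo $h$ to $\Gamma_{\psi_0}(g)$ acting on $B_0 \otimes R$. Let $i$ be projection onto $T$ and $\alpha = \Ad\circ i$. The subgroup $\underline{B}_h^* \mono \widehat G^\sharp$ sits inside as pairs $(1,b)$ via left multiplication on $\Lambda$.

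For (c), the kernel of the projection $\widehat G^\sharp \to G_0$ consists of those $T$ whose adjoint action on $B_h$ is trivial modulo $h$; by an argument parallel to Proposition \ref{prop:automorphisms trivial modulo h are inner locally} applied to $B_h$ (it is also a locally free sheaf of algebras in the same way $\cA_{\mbb S}$ is, and the same log-Hamiltonian analysis applies), all such automorphisms are inner, yielding a canonical identification of the kernel with $\underline{B}_h^*$. The hard part is the surjectivity onto $G_0$: given $g \in G_0(R)$, one must produce $T \in L^+\GL(\Lambda\otimes R)$ whose conjugation realizes $\Gamma_{\psi_0}(g)$ and lifts it. Here the diagonal form $\Gamma_{\psi_0}(g) = g \otimes \psi_0(g)$ is essential. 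Using the $G_0$-action $\psi\colon G_0 \times \mbb S \to \ul{\Aut}(\cA_{\mbb S}^\flat)$ of Construction \ref{constr:G_0action on the cental reduction}, the factor $\psi_0(g)$ on $A_h^\flat$ automatically lifts, so one only needs to find an element intertwining $g$ on $A_h$ by conjugation in $B_h(h^{-1})$. This amounts to trivializing a $\bG_m$-gerbe over $G_0$ whose class is controlled by the splitting data of the matrix algebras, and the canonical choice of lattice $\Lambda$ provides this trivialization. Uniqueness of the triple then follows because any two candidates differ by a character $G_0 \to \bG_m$ preserving $\Lambda$, which, together with the diagram commutativity, must be trivial; this yields the unique isomorphism between any two candidates.
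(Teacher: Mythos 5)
This lemma is not proved in the paper at all: it is quoted verbatim from \cite[Corollary 3.5]{bv}, so there is no internal argument to compare yours with, and what you have written is an attempted reconstruction of the proof in the reference. The architecture you set up is reasonable and matches the shape of that construction: $B_h(h^{-1})$ is a split matrix algebra, one wants a $B_h$-stable lattice $\Lambda$ in its irreducible module, one defines $\widehat G^\sharp$ as pairs $(g,T)$ with $\Ad_T$ lifting $\Gamma_{\psi_0}(g)$, and the kernel computation does go through, since $B_h=A_h\hat\otimes A_h^{\flat,\op}$ is again an $h$-completed reduced Weyl algebra (in $6n$ generators), so the analogue of Proposition \ref{prop:automorphisms trivial modulo h are inner locally} applies and identifies $\Ker(\widehat G^\sharp\to G_0)$ with $\underline{B}_h^*$.

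The two load-bearing steps, however, are not actually carried out. First, surjectivity onto $G_0$: your argument is circular, because "the canonical choice of lattice $\Lambda$ provides this trivialization" presupposes the $\widehat G^\sharp$-invariance of $\Lambda$, which is part of the conclusion. The real content is that the pullback along $\Gamma_{\psi_0}$ of the canonical central extension $1\to L\bG_m\to \underline{B_h(h^{-1})}^*\to \underline{\Aut}(B_h(h^{-1}))\to 1$ reduces from $L\bG_m$ to $L^+\bG_m$, i.e.\ that a class valued in $\Gra_{\bG_m}\cong \bZ\times \hat{\bW}$ over $G_0$ vanishes; this is where the diagonal form $g\otimes\psi_0(g)$ and the explicit formulas of Construction \ref{constr:G_0action on the cental reduction} are used (one exhibits a $\psi$-stable $\bF_p[h^{-1}]$-subalgebra of $A_h^{\flat}(h^{-1})$ with a stable left ideal, reduces the structure group accordingly, and finishes with a Picard-group computation over $G_{0,\mathrm{red}}\times\Spec\bF_p[h^{-1}]$). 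None of this appears in your sketch, and your heuristic that "the factor $\psi_0(g)$ automatically lifts, so one only needs to handle $g$ on $A_h$" cannot be right as stated: if the $A_h$-factor could be handled on its own one would obtain a section of $\underline{\Aut}(A_h)\epi G_0$, which the paper explicitly shows does not exist. Second, uniqueness: two admissible pairs $(i_1,i_2)$ inducing the same $\alpha$ differ, after correcting by elements of $\underline{B}_h^*$, by a homomorphism from $G_0$ to the central quotient $L\bG_m/L^+\bG_m\cong\Gra_{\bG_m}$, not to $\bG_m$. The $\bZ$-part dies by connectedness, but the $\hat{\bW}$-part cannot be dismissed by a "characters are trivial" argument, since $G_0$ is non-reduced and admits a split surjection onto $\bG_a$; a genuine argument specific to $\Gamma_{\psi_0}$ is needed here as well. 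As written, both existence and uniqueness remain unproved.
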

 The plan of our construction is the following. 
 Since  $\cA_{\mbb S} $ and $\cA_{\mbb S}^\flat $  restricted to ${\mbb S}\backslash \{0\}$ are split Azumaya algebras their categories of modules are equivalent. 
 $$\cA_{{\mbb S}\backslash \{0\}}\lmod \iso  \cO_{{\mbb S}\backslash \{0\}}\lmod  \iso \cA_{{\mbb S}\backslash \{0\}}^\flat \lmod .$$
 Thus $\psi_{|{\mbb S}\backslash \{0\}}$ defines an action of $G_0$ on $\cA_{{\mbb S}\backslash \{0\}}\lmod$.
 On the other hand, if $\hat {\mbb S}$  denotes the formal completion ${\mbb S}$ at $\{0\}$ the diagram (\ref{diag.f.extbdoublea}) yields an action of $G_0$ on $\cB_{\hat S}\lmod$ whose restriction to the punctured disk is trivialized. 
 Together with $\psi_{|\hat {\mbb S}}$ the latter defines an action $G_0$ on $\cA_{\hat {\mbb S}}\lmod$ equivalent to the above over the punctured disk.
 We use the Beauville-Laszlo theorem to glue the two pieces together. Let us provide the details.

 First, we define an action  of the group  $G_0$
 on $\cA_{\hat {\mbb S}}\lmod$. Recall from Convention \ref{cat.over.f.s} that  the latter amounts to specifying  an action  of  $G_0$ on   $\cA_{{\mbb S}_m}\lmod$, for every $m\geq 0$,  together with the restriction isomorphisms.  

 Let  $ \underline{\Aut} (A_h) \to  \underline{\Aut} (A_h^\flat)$ be the projection to $G_0$ followed by $\psi$ and let    $\Gamma_{\psi}\colon  \underline{\Aut} (A_h)  \mono  \underline{\Aut} (A_h) \times  \underline{\Aut} (A_h^\flat) \subset  \underline{\Aut} (B_h)$   be its graph. Set
  $ \widehat G = \alpha^{-1} (\im \Gamma_\psi )$. We derive from   (\ref{diag.f.extbdoublea}) the following commutative diagram. 
  \begin{equation}\label{diag.f.ext}
%\[
	\begin{tikzcd}
	1 \arrow[r, ""] & \underline{A}_h^*  \arrow[r, "a\mapsto a\otimes 1"]\arrow[dr, "Ad"]& \widehat G  \arrow[r, ""]\arrow[d, ""]& G_0\arrow[r, ""] &1  \\
	&& \underline{\Aut} (A_h)=G.  &  &
	\end{tikzcd}
	% \]
	\end{equation} 
For a non-negative integer $m$, denote by $(\underline{A}_h^*)^{\geq m}$ the subgroup of    $\underline{A}_h^*$ of elements equal to $1$ modulo $h^{m+1}$.  Then the above diagram yields a compatible family of extensions
\begin{equation}\label{fundextruncold}
  1 \to \underline{A}_h^*/(\underline{A}_h^*)^{\geq m}    \to  \widehat G / (\underline{A}_h^*)^{\geq m} \to  G_0 \to 1. 
\end{equation} 
Observe that the group scheme $\underline{A}_h^*/(\underline{A}_h^*)^{\geq m} $ over $\Spec \bF_p$  is obtained from the group scheme  $\underline{\cA}_{{\mbb S}_m}^*$ over ${\mbb S}_m$ by applying the Weil restriction of scalars functor, that is  $\underline{A}_h^*/(\underline{A}_h^*)^{\geq m} = \underline{Mor}_{{\mbb S}_m}({\mbb S}_m, \underline{\cA}_{{\mbb S}_m}^*)$.
 Therefore, by adjunction, sequence (\ref{fundextruncold}) gives rise to an extension of group schemes over ${\mbb S}_m$
 \begin{equation}\label{fundextrunc}
%\[
	\begin{tikzcd}
  1 \arrow[r, ""] &  \underline{\cA}_{{\mbb S}_m}^*    \arrow[r, ""]\arrow[dr, "Ad"] &  \widehat G_{{\mbb S}_m}  \arrow[r, ""]\arrow[d, ""]&  G_0 \times {\mbb S}_m \arrow[r, ""] &1  \\
&& \underline{\Aut} (\cA_{{\mbb S}_m}).  &  &
	\end{tikzcd}
	% \]
	\end{equation} 
Explicitly,  extension (\ref{fundextrunc}) is obtained by pulling back (\ref{fundextruncold}) to ${\mbb S}_m$ and then taking the pushout with respect to the homomorphism $\underline{Mor}({\mbb S}_m, \underline{\cA}_{{\mbb S}_m}^*)\times {\mbb S}_m \to \underline{\cA}_{{\mbb S}_m}^*$.  Using the construction from \S \ref{twisting.stacks}  diagram (\ref{fundextrunc})  yields
the promised action of $G_0 \times {\mbb S}_m$  on $\cA_{{\mbb S}_m}\lmod$. It is clear from the construction that the actions are compatible as $m$ varies. We denote by $(\cA_{ \hat {\mbb S}}\lmod)_{\hat {\mbb S}/G_0}$ the corresponding quasi-coherent sheaf of categories over $\hat {\mbb S}/G_0$.

Homomorphism $\psi$ defines an action of an action of $G_0 $ on    $\cA_{{\mbb S}}^{\flat}\lmod$. Denote by $(\cA_{{\mbb S}}^{\flat}\lmod)_{{\mbb S}/G_0}$ the corresponding quasi-coherent sheaf of categories over ${\mbb S}/G_0$.
  \begin{lm}\label{lm:glueing} There exist 
  \begin{itemize}
\item[(i)] A pair $\Psi_*\colon \cA_{{\mbb S}}\lmod  \leftrightarrow \cA_{{\mbb S}}^\flat\lmod \colon \Psi^*$  of adjoint functors of the form
  \begin{equation}\label{app.lemma.adjf}
\Psi_*(N) = \Hom _{\cA_{\mbb S}} (\Lambda_{\mbb S}, N), \quad \Psi^*(N') =  \Lambda_{\mbb S}  \otimes_{\cA_{\mbb S}^{\flat}}  N',
 \end{equation} 
 where $ \Lambda_{\mbb S}$ is a locally finitely generated  $\cA_{\mbb S}  \otimes_{\cO_{\mbb S}} \cA_{\mbb S}^{\flat, op}$-module,
\item[(ii)] A pair $(\Psi_*)_{\hat {\mbb S}/G_0}\colon (\cA_{ \hat {\mbb S}}\lmod)_{\hat {\mbb S}/G_0}  \leftrightarrow (\cA_{\hat {\mbb S}}^\flat\lmod)_{\hat {\mbb S}/G_0} \colon  (\Psi^*)_{\hat {\mbb S}/G_0} $  of adjoint functors 
together with an isomorphism between its pullback to  $\hat {\mbb S}$ and the pair  $((\Psi_*)_{| \hat {\mbb S}}, (\Psi^*)_{| \hat {\mbb S}})$ constructed from (i),
\item[(iii)] An integer $N>0$, such that  the kernel and  cokernel of the adjunction morphisms $\Id \to \Psi_* \circ \Psi^*$, $\Psi^* \circ \Psi_* \to \Id$ are killed by $h^N$.
\end{itemize}
 \end{lm}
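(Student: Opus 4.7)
The plan is to produce $\Lambda_{\mbb S}$ by Beauville--Laszlo gluing of two pieces. On the open $\mbb S\backslash\{0\}$ both $\cA_{\mbb S}$ and $\cA_{\mbb S}^\flat$ are split Azumaya (as noted in Remark \ref{rem:A_S is matrix algebra outside of 0}): fixing splitting bundles $\cV$ (of rank $p^n$, as in Construction \ref{constr:A module over A}) and $\cV^\flat$ (of rank $p^{2n}$, constructed analogously using the natural action of $\cA_{\mbb S}^\flat$ on $\cO_{\mbb S}\otimes_{\mbb F_p}A_0$), I would set
\[
\Lambda_{\mbb S\backslash \{0\}} \;\coloneqq\; \cV|_{\mbb S\backslash \{0\}}\otimes_{\cO_{\mbb S\backslash \{0\}}}(\cV^\flat|_{\mbb S\backslash \{0\}})^\vee,
\]
which realises the Morita equivalence of these two matrix algebras. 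On the formal disk $\hat{\mbb S}$ I would take the $\mbb F_p[[h]]$-lattice $\Lambda\subset W$ from the lemma of \cite{bv} quoted in the text; by construction $\Lambda$ is a $B_h$-module, i.e.\ an $A_h$--$A_h^\flat$-bimodule, and $W=\Lambda[h^{-1}]$ is the unique irreducible module for the simple $\mbb F_p((h))$-algebra $B_h[h^{-1}]$. Over the punctured formal disk both pieces become invertible Morita bimodules between the same two matrix algebras and are therefore isomorphic; absorbing the remaining ambiguity (a central unit in $Z(B_h)[h^{-1}]\iso \cO_{\hat{\mbb S}\backslash\{0\}}$) into the choice of $\cV$ gives the gluing datum. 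Beauville--Laszlo then yields a coherent $\cA_{\mbb S}\otimes_{\cO_{\mbb S}}\cA_{\mbb S}^{\flat,op}$-module $\Lambda_{\mbb S}$ which is locally finitely generated.

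Given $\Lambda_{\mbb S}$, assertion (i) is formal: for any bimodule, $\Hom$ and tensor with $\Lambda_{\mbb S}$ form an adjoint pair. For (iii), both adjunction morphisms $\Id\to\Psi_*\circ\Psi^*$ and $\Psi^*\circ\Psi_*\to\Id$ restrict to isomorphisms over $\mbb S\backslash\{0\}$ because $\Lambda_{\mbb S\backslash \{0\}}$ is an invertible Morita bimodule there. Hence, evaluated on the ``universal'' free modules $\cA_{\mbb S}^\flat$ and $\cA_{\mbb S}$, the kernels and cokernels are coherent sheaves on $\mbb S$ supported at $0$, so they are killed by some $h^N$. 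Because $\Psi_*$ and $\Psi^*$ commute with colimits (they are $\Hom$ and tensor with a locally finitely generated bimodule) and every module is a colimit of finitely generated free ones, the same $N$ bounds the adjunction maps on all objects.

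For (ii), the key input is that by the cited result from \cite{bv} the lattice $\Lambda$ carries an action of $\widehat G^\sharp$ compatible with $\alpha\colon \widehat G^\sharp\to\underline{\Aut}(B_h)$, whose restriction to $\underline{A}_h^*\subset\widehat G^\sharp$ coincides with $\mr{Ad}$, and whose composition with the projection $\underline{\Aut}(B_h)\to\underline{\Aut}(A_h^\flat)$ agrees with $\psi\circ(\widehat G^\sharp\to G_0)$. Pulling this datum back to the subgroup $\widehat G\subset\widehat G^\sharp$ of diagram (\ref{diag.f.ext}) and passing to the truncations $\widehat G/(\underline A_h^*)^{\geq m}$ and the Weil restrictions yielding the extension (\ref{fundextrunc}) on each $\mbb S_m$, the bimodule $\Lambda/h^{m+1}\Lambda$ inherits a $\widehat G_{\mbb S_m}$-equivariant structure that intertwines the $\cA_{\mbb S_m}$-action (through the extension defining the $G_0$-action on $\cA_{\hat{\mbb S}}\lmod$) on one side and the $\cA^\flat_{\mbb S_m}$-action (through $\psi$) on the other. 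By Construction \ref{constr:A version of the Grothendieck construction} this descends $\Psi_*$ and $\Psi^*$, restricted to $\hat{\mbb S}$, to an adjoint pair between $(\cA_{\hat{\mbb S}}\lmod)_{\hat{\mbb S}/G_0}$ and $(\cA^\flat_{\hat{\mbb S}}\lmod)_{\hat{\mbb S}/G_0}$, and the canonical isomorphism with the restriction of (i) to $\hat{\mbb S}$ is built into the construction.

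The main obstacle is the punctured-disk matching step: one has to identify the generic fiber of the BV lattice with the generic fiber of the split-Azumaya bimodule $\cV\otimes(\cV^\flat)^\vee$ canonically enough that the gluing is natural with respect to the $\widehat G^\sharp$-equivariance and the adjunction data. This is a question about isomorphism classes of simple modules over a matrix algebra and can be resolved by choosing the splitting bundles $\cV$, $\cV^\flat$ so that the tensor product matches the chosen irreducible $W$ used in the BV lemma; the remaining central scalar ambiguity then fits into the line bundle appearing in Lemma \ref{app.lemma.autoequivalences}, and is fixed once and for all on a Zariski neighbourhood of $0$.
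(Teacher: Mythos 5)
Your proposal is correct and follows essentially the same route as the paper: take the Bogdanova--Vologodsky lattice $\Lambda$ on $\hat{\mbb S}$ (whose $\widehat G^\sharp$-equivariance gives (ii)), use the splitness of $\cB_{{\mbb S}\backslash\{0\}}$ to produce the piece away from $0$, glue by Beauville--Laszlo, and deduce (iii) from the adjunction maps being coherent bimodule morphisms that are isomorphisms away from $0$. The only difference is presentational: the paper sidesteps your ``punctured-disk matching obstacle'' by not constructing $\cV\otimes(\cV^\flat)^\vee$ independently, but instead transporting $\Lambda_{\hat{\mbb S}}[h^{-1}]$ through the Morita equivalence $\cB_{{\mbb S}\backslash\{0\}}\lmod\iso\cO_{{\mbb S}\backslash\{0\}}\lmod$ to a one-dimensional $\bF_p((h))$-space and simply choosing a free rank-one $\bF_p[h^{-1}]$-submodule of it, which is the same choice you make after absorbing the central scalar.
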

 \begin{proof} 
 The group  $G_0 $ acts on   $\cA_{\hat {\mbb S}}\lmod$ and on   $\cA_{\hat {\mbb S}}^{\flat}\lmod$.  This yields an action  of $G_0 $ on  $\cB_{\hat {\mbb S}}\lmod$ such that every global section  of  $(\cB_{\hat {\mbb S}}\lmod)_{\hat {\mbb S}}$ gives rise to a functor $(\cA_{\hat {\mbb S}}^{\flat}\lmod)_{\hat {\mbb S}} \to (\cA_{\hat {\mbb S}}\lmod)_{\hat {\mbb S}}$. Explicitly, the action  of 
 $G_0$ on  $\cB_{\hat {\mbb S}}\lmod$ 
  is given by diagram (\ref{diag.f.extbdoublea}): the group $\widehat G^\sharp $ is the quotient of  $\widehat G \ltimes \underline{B}_h^*$ by $\underline{A}_h^*$ embedded
 antidiagonally.
 Hence, $\Lambda=: \Lambda_{\hat {\mbb S}} $ can be viewed as a global section of $(\cB_{\hat {\mbb S}}\lmod)_{{\mbb S}/G_0}$.
     Merely as a $\cB_{\hat {\mbb S}}$-module,  $\Lambda_{\hat {\mbb S}}$ can be extended 
  to a $\cB_{{\mbb S}}$-module $\Lambda_{\mbb S}$ locally free over $\cO_{\mbb S}$. Indeed, we have that $\cB_{{\mbb S}\backslash \{0\}}\lmod \iso  \cO_{{\mbb S}\backslash \{0\}}\lmod$. Using this equivalence, $\Lambda_{\hat {\mbb S}}$ determines a $1$-dimensional space over $\bF_p((h))$. To construct $\Lambda_{\mbb S}$ choose a free rank one  $\bF_p[h^{-1}]$-submodule of the latter and use the Beauville-Laszlo theorem.
  Finally, we define the desired pair of adjoint functors by formulae (\ref{app.lemma.adjf}).
   The adjunction morphisms  $\Id \to \Psi_* \circ \Psi^*$, $\Psi^* \circ \Psi_* \to \Id$ are induced by morphisms of bimodules that are finitely generated as   $\cO_{{\mbb S}}$-modules and isomorphisms away from $0\in {\mbb S}$. Thus the cones of these morphisms are killed by some power of $h$.
     \end{proof}
  We return to the construction of the action of $G_0$ on $\cA_{{\mbb S}}\lmod$.
  Let $\pi\colon G_0\times {\mbb S} \to {\mbb S}$ be the projection. Giving a descent datum on   $\cA_{{\mbb S}}\lmod$ along the morphism $ {\mbb S} \to {\mbb S}/G_0$ amounts to giving an autoequivalence
   $$\theta\colon  \pi^*\cA_{{\mbb S}}\lmod \iso \pi^*\cA_{{\mbb S}}\lmod $$
together with the cocycle isomorphisms. 
In turn, $\theta$ is determined by a bimodule $M_\theta \in (\cA_{{\mbb S}} \otimes \cA_{{\mbb S}}^{op})\lmod (G_0 \times {\mbb S})$.  We construct $M_\theta$ using the  Beauville-Laszlo theorem.
The action $\psi$ of $G_0$ on  $\cA_{\mbb S}^\flat $ determines an autoequivalence
$$\theta^\flat\colon  \pi^*\cA_{{\mbb S}}^\flat \lmod \iso \pi^*\cA_{{\mbb S}}^\flat \lmod .$$ 
Consider the composition
\begin{equation}\label{app.blg}
  \pi^*\cA_{{\mbb S}}\lmod \rar{\pi^*(\Psi_*) } \pi^*\cA_{{\mbb S}}^\flat \lmod \rar {\theta^\flat} \pi^*\cA_{{\mbb S}}^\flat\lmod \rar{\pi^*(\Psi^*)}  \pi^*\cA_{{\mbb S}}\lmod,
  \end{equation} 
where $\Psi_*$, $\Psi^*$ are the functors constructed in Lemma \ref{lm:glueing}. Let 
 $$\hat \theta\colon  \pi^*\cA_{\hat {\mbb S}}\lmod \iso \pi^*\cA_{\hat {\mbb S}}\lmod $$
 be the functor determined by  the action $G_0$ on $\cA_{\hat {\mbb S}}\lmod$. Since $\Psi^*$ restricted to $\hat S$ is $G_0$-equivariant (by Lemma \ref{lm:glueing} (ii)), we have that
$$ \pi^*(\hat \Psi^*)  \circ \hat \theta^\flat  \circ \pi^*(\hat \Psi_*) \iso  \pi^*(\hat \Psi^*)  \circ \pi^*(\hat \Psi_*) \circ \hat \theta \rar{ } \hat \theta,$$
 where the right arrow comes by adjunction. By Lemma \ref{lm:glueing} the cone of the right morphism is killed by a power $h$. Reformulating in terms of bimodules we have a  $\cO_{G_0 \times {\mbb S}}$-coherent  bimodule $M'\in \pi^*(\cA_{{\mbb S}} \otimes \cA_{{\mbb S}}^{op})\lmod (G_0 \times {\mbb S})$  determined by (\ref{app.blg}), a  bimodule 
 $M_{\hat \theta} \in \pi^*(\cA_{\hat {\mbb S}} \otimes \cA_{\hat {\mbb S}}^{op})\lmod (G_0 \hat \times \hat {\mbb S})$ coherent as a $\cO_{G_0 \hat \times \hat {\mbb S}}$-module over the formal scheme  and an isogeny between $M_{\hat \theta}$ and  $M'$ restricted to $G_0 \hat \times \hat {\mbb S}$. 
 This determines the glueing datum over $\cO(G_0)((h))$: an isomorphism between $M_{\hat \theta} \otimes_{ \cO(G_0)[[h]] }  \cO(G_0)((h))$ and the restriction of $M'$. 
 By the  Beauville-Laszlo theorem the latter defines $M_\theta$. We leave it to the reader to glue the cocycle isomorphisms. This completes the proof of Theorem \ref{appendix:maintheorem}. 
 \end{proof}
 We shall denote by $(\cA_ {\mbb S}\lmod)_{ {\mbb S}/G_0}$ the quasi-coherent sheaves of categories over  ${\mbb S}/G_0$ (where $G_0$ acts trivially on ${\mbb S}$) obtained by  applying Theorem \ref{appendix:maintheorem} to $H=G_0$ and $\upsilon=\Id$. 
 \subsection{The restriction of $(\cA_ {\mbb S}\lmod)_{ {\mbb S}/G_0}$  to $\{0\}/G_0$.}\label{subsectionrestrictionto0}
In this subsection we shall identify the action of $G_0$ on $(\cA_ {\mbb S}\lmod)_{|\{0\}} \iso cA_ {0}\lmod$ from Theorem \ref{appendix:maintheorem} with the one induced by the natural action of $G_0$ on $A_0$.  
 Let $\iota\colon {\mbb S} \to {\mbb S}$ be the involution sending $h$ to $-h$. There exists a unique isomorphism of algebras over ${\mbb S}$
   \begin{equation}\label{app.alpha}
 \alpha\colon  \iota^* \cA_{\mbb S}^{op}\iso \cA_{\mbb S}
 \end{equation}
 such that, for every $i$,  $\alpha(x_i) = x_i$, $\alpha(y_i)=y_i$. Observe that, for any algebra $\cC_{\mbb S}$, the group stack of autoequivalences of  $\iota^* \cC_{\mbb S}^{op}\lmod$ is that of  $\cC_{\mbb S}\lmod$ pulled back by $\iota$.
  It follows that there is an equivalence of categories between the groupoid of actions of $H$ on  $\cC_{\mbb S}\lmod$  and on $\iota^* \cC_{\mbb S}^{op}\lmod$.
 Consequently, $\alpha$ defines an autoequivalence of the groupoid of actions of $H$ on  $\cA_{\mbb S}\lmod$. We shall refer to the latter as twisting by $\alpha$. Applying this autoequivalence to an action from  Theorem \ref{appendix:maintheorem} (equipped with a trivialization at $\infty$) we derive that its twist by $\alpha$ is uniquely isomorphic to the former one.  We rephrase this using 
 the language of group extensions from Remark \ref{rephrasing}: if the action is given by  diagram \ref{diag.f.extany} then there exists a unique isomorphism $\widehat \tau\colon \iota^* \widehat H_{\mbb S}  \to  \widehat H_{\mbb S} $ fitting in the diagram
  \begin{equation}\label{diag.f.z2}
%\[
	\begin{tikzcd}
	1 \arrow[r, ""] &\iota^* \underline{\cA}_{\mbb S}^*  \arrow[r, ""]\arrow[d, "\tau"]& \iota^* \widehat H_{\mbb S}  \arrow[r, ""]\arrow[d, "\widehat \tau "]& H_{\mbb S}\arrow[r, ""]\arrow[d, "Id"] &1  \\
	1 \arrow[r, ""] &\underline{\cA}_{\mbb S}^*  \arrow[r, ""]&  \widehat H_{\mbb S}  \arrow[r, ""] & H_{\mbb S}\arrow[r, ""] &1 
	\end{tikzcd}
	% \]
	\end{equation}
 in which the first row is the pullback of the second one via $\iota$,  $\tau(a) =\alpha(a)^{-1}$,  that commutes with the  homomorphisms to $\underline{\Aut} (\cA_{\mbb S}) = \underline{\Aut} (\cA_{\mbb S}^{op}) \stackrel{\alpha}{\iso} \iota^*\underline{\Aut} (\cA_{\mbb S}) $ and respects the sections at $\infty$.
 In particular, restricting $\widehat \tau$ to $h=0$, we find an involution $\widehat \tau_0$ of  $\widehat H_{\mbb S} \times_{\mbb S} \{0\}$ that carries $a\in A_0^*$ to $a^{-1}$ and equals the identity on the quotient $H$. Applying this to the universal case $H=G_0$, $\upsilon= \Id$, and using that $G_0$ is connected and does not admit nontrivial extensions by $\mu_2$ we 
 conclude that there exists a unique $\widehat \tau_0$-invariant section 
 \begin{equation}\label{app.section}
 G_0 \to  \widehat G _{{\mbb S}} \times_{\mbb S} \{0\}.
 \end{equation}
 ({\it A posteriori} the same conclusion holds for any connected $H$.) Section (\ref{app.section}) identifies the $G_0$-action on $A_0\lmod$ with one induced by the tautological action of $G_0$ on $A_0$. 
 \subsection{A $\bG_m \ltimes G_0$-equivariant structure on $\cA_{\mbb S}\lmod$.}\label{subs.addingmultiplicativegroup} 
 The next proposition shows that the action of $G_0$ on the quasi-coherent sheaf of categories  $\cA_{\mbb S}\lmod$ constructed in Theorem \ref{appendix:maintheorem} extends to  an equivariant structure with respect to a larger group acting on $S$.  
  Define an action $\lambda\colon A_0 \to A_0 \otimes \cO(\bG_m)$ of $\bG_m$ on $A_0$ by the  formulae 
  \begin{equation}\label{app.g_mactionona_s}
 \lambda(x_i)=x_i, \lambda(y_i) = z y_i, 1\leq i\leq n.
  \end{equation}
  Here $z$ denotes the coordinate on $\bG_m$.
 We have that $\lambda ([\eta])= z  [\eta]$. Hence, the subgroup $\bG_m \subset  \Aut (A_0)$ normalizes $G_0$.  Denote by 
  \begin{equation}\label{app.semidirect}
 \bG_m \ltimes G_0:=\bG_m \ltimes_{\Ad_\lambda} G_0    \subset \Aut (A_0)
  \end{equation}
  the subgroup generated by $G_0$ and $\bG_m$. 
 Endow $\mbb S$ with an action $\chi$ of $\bG_m$ by homotheties:  $\chi(h)= z h$.  The projection  to the first factor defines an action of $\bG_m \ltimes G_0$ on ${\mbb S}$. 
 Formulae (\ref{app.g_mactionona_s}) yield a  $\bG_m$-equivariant structure on sheaf of algebras $\cA_{\mbb S}$. We denote by $(\cA_{\mbb S}\lmod)_{{\mbb S}/\bG_m}$ the corresponding quasi-coherent sheaf of categories on the stack ${\mbb S}/\bG_m$. Since the $\bG_m$-action on the fiber of $\cA_{\mbb S}$ over the fixed point $\{\infty\}\in {\mbb S}$ is trivial, we have that 
 the restriction of $(\cA_{\mbb S}\lmod)_{{\mbb S}/\bG_m}$ to $ \{\infty\}/\bG_m$ is equivalent to $ \cO_{\{\infty\}/ \bG_m}\lmod$.
 Finally, let $q\colon {\mbb S}/\bG_m \to {\mbb S}/(\bG_m \ltimes G_0 )$ by the morphism of stacks induced by the inclusion $\bG_m \mono \bG_m \ltimes G_0 $.  
  \begin{pr}\label{app.G_mequivarinceprop}
   There exists a unique (up to a unique equivalence)  quasi-coherent sheaf of categories   $(\cA_{\mbb S}\lmod)_{{\mbb S}/(\bG_m \ltimes G_0) }$ on the stack ${\mbb S}/(\bG_m \ltimes G_0)$ equipped with equivalences
   $$\Theta\colon q^*((\cA_{\mbb S}\lmod)_{{\mbb S}/(\bG_m \ltimes G_0) }) \iso (\cA_{\mbb S}\lmod)_{{\mbb S}/\bG_m},$$
   $$\Xi_\infty\colon ((\cA_{\mbb S}\lmod)_{{\mbb S}/(\bG_m \ltimes G_0) })_{|\infty/(\bG_m \ltimes G_0 )}\iso \cO_{\{\infty\}/(\bG_m \ltimes G_0)}\lmod,$$
   and an isomorphism  $\varpi $ between the pullbacks  of $\Xi_\infty$ and $\Theta$ to $ \infty/\bG_m$, such that the action of   $\bG_m \ltimes G_0$ on the center of $A_0\lmod$ induced by $\Theta$ comes from the embedding  (\ref{app.semidirect}).
   Moreover, the $\bG_m \ltimes G_0$-action on $A_0\lmod$  is isomorphic to the one given by (\ref{app.semidirect}).
   \end{pr}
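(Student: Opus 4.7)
The strategy is to parallel the proof of Theorem \ref{appendix:maintheorem}, tracking $\bG_m$-equivariance throughout both the uniqueness and the existence arguments. The key observation is that every structure used in that proof — the action $\psi$, the extension $\widehat G^\sharp$ of \cite{bv}, and the bimodule $\Lambda_{\mbb S}$ — is naturally compatible with the $\bG_m$-action on $\mbb S$ by homotheties and with $\lambda$ on $A_0$, because $\bG_m$ normalizes $G_0$ inside $\underline{\Aut}(A_0)$ and $\lambda^*\eta_{\mathsf{can}} = z\eta_{\mathsf{can}}$.

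For uniqueness I would mimic the argument from Remark \ref{app.uniquenessoverabase}. Two candidates $\fS_1, \fS_2$ equipped with the specified data give, upon pullback along $\mbb S \to \mbb S/(\bG_m\ltimes G_0)$ and comparison via $\Theta_i$, two descent data on $\cA_{\mbb S}\lmod$. Their quotient is an autoequivalence $\phi$ of $\pi^*\cA_{\mbb S}\lmod$, where $\pi\colon (\bG_m\ltimes G_0)\times \mbb S \to \mbb S$ is the action morphism, equipped with a trivialization at $h=\infty$ (via $\Xi_{\infty,i}$ and $\varpi_i$) and inducing the identity on the center of $A_0\lmod$ (since both structures recover the tautological $(\bG_m\ltimes G_0)$-action on $A_0$). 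Applying Lemma \ref{app.lemma.autoequivalences} with $T = \bG_m\ltimes G_0$, the isomorphism class $[\phi]$ lies in the discrete group $H^0(\bG_m\ltimes G_0, \bZ)$, and the cocycle condition forces it to be a group homomorphism. Since $\bG_m\ltimes G_0$ is affine of finite type, any such homomorphism factors through its (finite) $\pi_0$ and a map $\bG_m \to \bZ$, hence must vanish; this yields the required unique isomorphism $\fS_1 \iso \fS_2$ compatible with all auxiliary data.

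For existence I would run the existence argument of Theorem \ref{appendix:maintheorem} $\bG_m$-equivariantly. First, the remark following Construction \ref{constr:central reductions over S} upgrades $\psi\colon G_0\times \mbb S \to \underline{\Aut}(\cA_{\mbb S}^\flat)$ to a $(\bG_m\ltimes G_0)$-equivariant morphism, where $\bG_m$ acts on $\mbb S$ by homotheties and on $\cA_{\mbb S}^\flat$ via the grading induced by $\lambda$ and the rescaling of $h$; similarly $\cA_{\mbb S}$ itself acquires a natural $\bG_m$-equivariant structure from its grading. Next, the construction of diagram (\ref{diag.f.extbdoublea}) in \cite{bv} is formulated via conditions on central characters and on the action on a chosen lattice, both of which are visibly $\bG_m$-invariant; consequently, $\widehat G^\sharp$ and the derived extension $\widehat G$ from (\ref{diag.f.ext}), together with the truncations (\ref{fundextrunc}), acquire $\bG_m$-equivariant structures. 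Third, the bimodule $\Lambda_{\mbb S}$ of Lemma \ref{lm:glueing} can be chosen $\bG_m$-equivariantly: the splitting lattice $\Lambda$ inside the irreducible module $W$ of $\cB_{\mbb S \setminus \{0\}}$ admits a canonical $\bG_m$-equivariant structure, and its Beauville--Laszlo extension to $\cB_{\mbb S}$ can be taken equivariantly using a $\bG_m$-stable $\bF_p[h^{-1}]$-sublattice. Running the final Beauville--Laszlo gluing of the proof of Theorem \ref{appendix:maintheorem} with these equivariant inputs produces the desired sheaf of categories $(\cA_{\mbb S}\lmod)_{\mbb S/(\bG_m\ltimes G_0)}$. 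The equivalence $\Xi_\infty$ is then read off from the fact that the $\bG_m$-action on the fiber $\cA_{\mbb S}|_{\infty}$ is trivial up to Morita equivalence (this fiber being split Azumaya of rank $p^{2n}$), the datum $\varpi$ is built into the construction, and the identification of the $(\bG_m\ltimes G_0)$-action on $A_0\lmod$ with the tautological one from (\ref{app.semidirect}) follows from the argument of \S\ref{subsectionrestrictionto0} together with the trivial observation that $\bG_m$ acts on $A_0$ via $\lambda$.

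The principal technical obstacle I anticipate is the bookkeeping required to propagate $\bG_m$-equivariance through the Beauville--Laszlo gluing at the end of the existence proof. While each individual input is naturally equivariant, verifying that the isogenies between the formal-disc and the open-complement pieces and the glueing datum along the formal punctured disc are themselves $\bG_m$-equivariant requires several compatibilities to be checked; choosing $\Lambda_{\mbb S}$ equivariantly is the most delicate point in this verification.
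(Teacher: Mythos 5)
Your proposal is correct in outline, but for the existence part it takes a genuinely different and substantially heavier route than the paper. The paper does \emph{not} re-run the construction of Theorem \ref{appendix:maintheorem} ($\psi$, the extension $\widehat G^\sharp$ from \cite{bv}, the bimodule $\Lambda_{\mbb S}$, the Beauville--Laszlo gluing) in a $\bG_m$-equivariant fashion. Instead it exploits rigidity: to descend $(\cA_{\mbb S}\lmod)_{{\mbb S}/G_0}$ along ${\mbb S}/G_0 \to {\mbb S}/(\bG_m\ltimes G_0)$ one needs an equivalence between $\pro_{{\mbb S}/\bG_m}^*((\cA_{\mbb S}\lmod)_{{\mbb S}/G_0})$ and $\chi^*((\cA_{\mbb S}\lmod)_{{\mbb S}/G_0})$ over $({\mbb S}/G_0)\times\bG_m$; after using the $\bG_m$-equivariant structure on the algebra $\cA_{\mbb S}$ to identify both sides with $G_0$-actions on $\pro_{\mbb S}^*\cA_{\mbb S}\lmod$ trivialized over $\infty\times\bG_m$ and with the prescribed action on the center, the uniqueness statement over a base $T=\bG_m$ (Remark \ref{app.uniquenessoverabase}) produces the required equivalence, and the cocycle datum, \emph{for free}. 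This makes all the compatibilities you flag as "the principal technical obstacle" --- equivariance of the \cite{bv} extension, of the lattice $\Lambda$, of the isogenies in the Beauville--Laszlo gluing --- simply unnecessary; indeed whether they can all be arranged equivariantly is exactly the kind of verification your plan leaves open, and it is the main reason your route, while plausible, is riskier and longer. Your uniqueness argument (running the $\Pic$/discreteness argument of Lemma \ref{app.lemma.autoequivalences} directly for $T=\bG_m\ltimes G_0$) is fine and close to the paper's, which instead first invokes Theorem \ref{appendix:maintheorem} to see that any candidate arises as descent data and then appeals to Remark \ref{app.uniquenessoverabase}. For the final assertion about the action on $A_0\lmod$, the paper's point is slightly sharper than yours: one must check that the distinguished section (\ref{app.section}) of $\widehat G_{\mbb S}\times_{\mbb S}\{0\}\to G_0$ is $\bG_m$-invariant, which follows from the uniqueness of $\widehat\tau$ in (\ref{diag.f.z2}); "$\bG_m$ acts on $A_0$ via $\lambda$" alone does not give this.
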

   \begin{rem}\label{rephrasingg_m}
  Consider the diagram corresponding to the $G_0$-action on $\cA_{\mbb S}\lmod$ (see Remark \ref{rephrasing}).
  \begin{equation}\label{diag.f.extanyg_m}
%\[
	\begin{tikzcd}
	1 \arrow[r, ""] & \underline{\cA}_{\mbb S}^*  \arrow[r, ""]\arrow[dr, "Ad"]& \widehat G_{\mbb S}  \arrow[r, ""]\arrow[d, "\alpha"]& G_0 \times {\mbb S}\arrow[r, ""] &1  \\
	&& \underline{\Aut} (\cA_{\mbb S}). &  &
	\end{tikzcd}
	% \]
	\end{equation}
Morphisms $\lambda$, $\chi$ and $\Ad_\lambda$ determine a $\bG_m$-equivariant structure on all the group schemes displayed above except for $\widehat G_{\mbb S}$. The proposition asserts  that there exists a unique $\bG_m$-equivariant structure on $\widehat G_{\mbb S}$ that makes the whole diagram $\bG_m$-equivariant and such that 
the section $G_0 \times \infty \to \widehat G_{\mbb S}   \times _S \infty $ commutes with the $\bG_m$-action.
\end{rem}
\begin{proof} We start with the existence assertion. Let $(\cA_{\mbb S}\lmod)_{{\mbb S}/G_0}$ be  the quasi-coherent sheaf of categories  on ${\mbb S}/G_0$ constructed in Theorem \ref{appendix:maintheorem}. We construct the 
   quadruple  $((\cA_{\mbb S}\lmod)_{{\mbb S}/(\bG_m \ltimes G_0) }, \Theta, \Xi_\infty, \varpi)$ by specifying the descent data  for $(\cA_{\mbb S}\lmod)_{{\mbb S}/G_0}$ along the morphism $$\pro\colon {\mbb S}/G_0 \to {\mbb S}/(\bG_m \ltimes G_0).$$
    Let $\pro_{{\mbb S}/\bG_m},  \chi\colon ({\mbb S}/G_0) \times \bG_m \to   {\mbb S}/G_0$ be the projection and the action morphisms respectively. We have to construct an equivalence
    \begin{equation}\label{app.descentagrumentg_m}
    \pro_{{\mbb S}/\bG_m}^*((\cA_{\mbb S}\lmod)_{{\mbb S}/G_0})\iso   \chi^*((\cA_{\mbb S}\lmod)_{{\mbb S}/G_0})
    \end{equation}
    together with the cocycle datum. Note the $\bG_m$-equivariant structure on $\cA_{\mbb S}$ identifies  the pullback of the quasi-coherent sheaves in question  to ${\mbb S}\times \bG_m$  with $\cA_{\mbb S}\lmod$ lifted along  $\pro_{\mbb S}\colon {\mbb S} \times \bG_m \to {\mbb S}$.
    Thus, each quasi-coherent sheaf of categories determines an action of $G_0$ on $\pro_{\mbb S}^*\cA_{\mbb S}\lmod$ each of which comes with a trivialization over $\infty \times \bG_m$. Using Remark \ref{app.uniquenessoverabase} there exists a unique isomorphism between the two actions compatible with trivializations. This gives equivalence (\ref{app.descentagrumentg_m}). Construction of the cocycle datum is left to the reader.
    
    For uniqueness assertion, observe that
    if $((\cA_{\mbb S}\lmod)_{{\mbb S}/(\bG_m \ltimes G_0) }, \Theta, \Xi_\infty, \varpi)$ is a quadruple from the Proposition,  then, by Theorem \ref{appendix:maintheorem}, $\pro^*((\cA_{\mbb S}\lmod)_{{\mbb S}/(\bG_m \ltimes G_0 )})$  is equivalent to $(\cA_{\mbb S}\lmod)_{{\mbb S}/G_0}$. 
   Hence $(\cA_{\mbb S}\lmod)_{{\mbb S}/(\bG_m \ltimes G_0 )}$ arises from a descent data along  $\pro_{\mbb S} $ as above and  our assertion also follows from ``up to a unique isomorphism'' part  of Remark \ref{app.uniquenessoverabase}.  
   
   As in \S \ref{subsectionrestrictionto0} to prove the last assertion of the Proposition it suffices to construct a $\bG_m$-invariant section  $G_0 \times 0 \to \widehat G_{\mbb S}   \times _{\mbb S} 0 $. In fact, the uniqueness of $\widehat \tau$ in (\ref{diag.f.z2})  implies that the latter is $\bG_m$-equivariant. It follows that the same is true for section (\ref{app.section}).
   \end{proof}
 \subsection{Further remarks.}\label{}
 The following corollary of Theorem \ref{appendix:maintheorem} will be used  in \S \ref{s.lag}.
 \begin{pr}\label{app.autAlifting}
  Let $f\colon T \to G_0 \times {\mbb S} $ be a morphism of schemes, $T_0\mono T$ the scheme theoretic fiber of $f$ over $G_0 \times 0$. Then Zariski locally on $T$ there exists a morphism $g\colon  T \to \underline \Aut (\cA_{\mbb S})$ such that $f_{|T_0} = g_{|T_0}$.
 \end{pr}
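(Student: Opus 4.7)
The strategy is to apply Theorem \ref{appendix:maintheorem} with $H=G_0$ and $\upsilon= \Id$, and then use the reformulation of its output described in Remark \ref{rephrasing}. This produces an extension of group schemes over $\mbb S$
$$1 \to \underline{\cA}_{\mbb S}^* \to \widehat{G}_{\mbb S} \xrar{\pi} G_0 \times \mbb S \to 1$$
equipped with a homomorphism $\alpha\colon \widehat{G}_{\mbb S} \to \underline{\Aut}(\cA_{\mbb S})$ whose restriction to $\underline{\cA}_{\mbb S}^*$ is $\Ad$. As noted in the paragraph following Remark \ref{rephrasing}, the morphism $\pi$ is Zariski locally trivial on $G_0 \times \mbb S$. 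Moreover, the discussion in \S\ref{subsectionrestrictionto0} produces a canonical section $s_0\colon G_0 \to \widehat{G}_{\mbb S} \times_{\mbb S} \{0\}$ of $\pi|_{\{0\}}$ with the property that $\alpha \circ s_0$ equals the tautological embedding $G_0 \hookrightarrow \underline{\Aut}(A_0)$.

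Pulling back $\pi$ along $f\colon T \to G_0 \times \mbb S$ yields a Zariski locally trivial $f^*\underline{\cA}_{\mbb S}^*$-torsor $f^*\widehat{G}_{\mbb S} \to T$. After shrinking $T$ to a Zariski open, we may choose a trivialization, i.e.\ a lift $\hat f\colon T \to \widehat{G}_{\mbb S}$ of $f$, and set $g_0 \coloneqq \alpha \circ \hat f\colon T \to \underline{\Aut}(\cA_{\mbb S})$. On the closed subscheme $T_0 \subset T$, both $\hat f|_{T_0}$ and $s_0 \circ f|_{T_0}$ are sections of the restricted torsor $f^*\widehat{G}_{\mbb S}|_{T_0}$, hence they differ by a unique morphism $u\colon T_0 \to \underline{\cA}_{\mbb S}^*|_{\{0\}} = A_0^*$, so that $\hat f|_{T_0} = u \cdot (s_0 \circ f|_{T_0})$ in $\widehat{G}_{\mbb S}$. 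Applying $\alpha$ and using $\alpha \circ s_0 = (G_0 \hookrightarrow \underline{\Aut}(A_0))$, we obtain
$$g_0|_{T_0} = \Ad_u \circ f|_{T_0}$$
as morphisms $T_0 \to \underline{\Aut}(A_0)$, where $f|_{T_0}$ is composed with the tautological embedding on the right.

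It remains to absorb the discrepancy $\Ad_u$ by modifying $\hat f$ by a section of $\underline{\cA}_{\mbb S}^*$ whose restriction to $T_0$ is $u$. By Remark \ref{rem:description of A_S}, $\cA_{\mbb S}$ is locally free of finite rank as an $\cO_{\mbb S}$-module, so $\underline{\cA}_{\mbb S}$ is the total space of a vector bundle on $\mbb S$ and $\underline{\cA}_{\mbb S}^*$ is an open subscheme of it. Shrinking $T$ to an affine Zariski open, $f^*\underline{\cA}_{\mbb S}$ becomes (locally) an affine space over $T$, so the section $u$ of $f^*\underline{\cA}_{\mbb S}|_{T_0}$ extends to a section $\tilde u$ of $f^*\underline{\cA}_{\mbb S}$ over $T$. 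Invertibility is an open condition, and $\tilde u|_{T_0} = u$ is a unit; shrinking $T$ further to the open locus where $\tilde u$ is invertible (which contains $T_0$), we obtain $\tilde u \in \Gamma(T, f^*\underline{\cA}_{\mbb S}^*)$ extending $u$. Defining $\hat f' \coloneqq \tilde u^{-1} \cdot \hat f$ and $g \coloneqq \alpha \circ \hat f'$, we compute
$$g|_{T_0} = \Ad_{u^{-1}} \circ g_0|_{T_0} = \Ad_{u^{-1}} \circ \Ad_u \circ f|_{T_0} = f|_{T_0},$$
as required. The main technical obstacle is the extension of the $A_0^*$-valued section $u$ from $T_0$ to a Zariski neighborhood, which hinges precisely on $\cA_{\mbb S}$ being a locally free coherent $\cO_{\mbb S}$-module so that $\underline{\cA}_{\mbb S}^*$ sits as an open subscheme of a vector bundle; everything else is a formal consequence of Theorem \ref{appendix:maintheorem} and the structure of the canonical section $s_0$ produced in \S\ref{subsectionrestrictionto0}.
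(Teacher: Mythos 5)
Your argument is correct and follows the paper's own (much shorter) proof in its essentials: invoke Theorem \ref{appendix:maintheorem} with $H=G_0$, $\upsilon=\Id$, use the Zariski-local triviality of the $\underline{\cA}_{\mbb S}^*$-torsor $\widehat G_{\mbb S}\to G_0\times\mbb S$ noted in Remark \ref{rephrasing} to lift $f$ to $\hat f\colon T\to\widehat G_{\mbb S}$, and set $g=\alpha\circ\hat f$. The entire second half of your proof (extending $u$ to $\tilde u$ and replacing $\hat f$ by $\tilde u^{-1}\cdot\hat f$) is superfluous: since $A_0$ is commutative, $\Ad_u$ is the identity automorphism of $A_0$ for any $u\colon T_0\to A_0^\times$, so your own identity $g_0|_{T_0}=\Ad_u\circ f|_{T_0}$ already gives $g_0|_{T_0}=f|_{T_0}$ — equivalently, $\alpha_0$ automatically factors through the projection to $G_0$, which is all the paper uses.
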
	
 \begin{proof}
 Consider the action of $G_0 $ on $\cA_S\lmod$ from Theorem \ref{appendix:maintheorem}  and let 
$$ 1 \to \underline{\cA}_{\mbb S}^*  \to \widehat G_{\mbb S}  \to  G_0 \times {\mbb S}\to 1$$
be the extension corresponding to this action by Remark \ref{rephrasing}. As explained in {\it loc. cit.} Zariski locally  on $T$  the map $f$ admits a lifting $\tilde f\colon T \to  \widehat G_{\mbb S}$. The composition of $\tilde f$ with $\alpha\colon \widehat G_{\mbb S} \to \underline \Aut (\cA_{\mbb S})$ does the job.
  \end{proof}
\begin{rem} We claim that the action of $G_0$ on $\cA_{\mbb S}\lmod$ constructed above does not lift to an action on the algebra $\cA_{\mbb S}$ even if ${\mbb S}$ is replaced by the formal completion $\hat {\mbb S}$. 
To see  this we shall prove that the surjection $r\colon G= \underline{\Aut}(A_h) \epi G_0$ has no sections {\it i.e.}, group homomorphisms 
$s\colon G_0 \to G$
with $r\circ s= \Id$.  Assuming the contrary consider the induced morphisms of the restricted Lie algebras. Write $k=\bF_p$.
The restricted Lie algebra $\Lie G_0$ is the algebra of Hamiltonian vector fields on $\Spec A_0$  (\cite[Lemma 3.3.]{bk}). 
It follows that $\Lie G$,  the restricted Lie algebra  of $k[[h]]$-linear derivations of the associative algebra $A_h$,  can be identified with the Lie subalgebra  $\frac{1}{h}(A_h/k[[h]])\subset A_h(h^{-1})/k((h))$: 
$$ 0\to \tfrac{1}{h} k[[h]] \to \tfrac{1}{h} A_h \rar{\ad} \Der A_h\to 0.$$
In particular, $\Lie G$ has the structure of restricted Lie algebra over  $k[[h]]$ and the differential of $r$ is identified with projection  
$$dr\colon \Lie G \to   \Lie G /h \Lie G = \Lie G_0. $$
Thus, $ds$ defines an isomorphism of restricted Lie algebras over $k[[h]]$
$$\Lie G_0 \otimes _k k[[h]] \iso  \Lie G.$$ Consequently, we have $\Lie G_0 \otimes _k k((h)) \iso  \Lie G \otimes _{k[[h]]} k((h))$. The isomorphism $A_h(h^{-1}) =\Mat_{p^n}(k((h)))$ identifies $\Lie G \otimes _{k[[h]]} k((h))$
with  $\mathfrak{pgl}_{p^n}(\,k((h))\,)$, the quotient of $\mathfrak{gl}_{p^n}(\,k((h))\,)$ by its center. 
It is shown in \cite[Lemma 4.4]{bv}  that there exists  a split surjection $G_0 \to \bG_a$. This yields a nonzero homomorphism  of restricted Lie algebras $  \Lie G_0 \otimes _k k((h)) \to k((h))$,
where $k((h))$ is equipped with the zero  $p$-th power operation. We claim that   $\mathfrak{pgl}_{p^n}(k((h)))$ does not admit such homomorphisms. Indeed, the quotient $\mathfrak{gl}/[\mathfrak{gl},\mathfrak{gl}]$ 
is the restricted Lie algebra of the multiplicative group. In particular, $p$-th power operation on this quotient is non-trivial.  This contradiction completes the proof.
\end{rem}

  \section{Canonical quantization of  $\QCoh(X)$.}\label{canonicalquant.ofcat}
\subsection{Construction of the canonical quantization and its uniqueness.}\label{ssec:construction of the canonical quantization}
For the duration of this subsection let   $S$ be a scheme over $\bF_p$ with $p>2$, and let $(X, [\eta])$  be a quasi-smooth scheme over $S$  equipped a restricted symplectic structure 
$$ [\eta]\in H^0(X, \coker(\cO_X \rar{d} \Omega^1_{X/S})).$$
We shall also assume that the rank of the vector bundle $\Omega^1_{X/S}$ is constant, say, $2n$, for some $n\in \bZ$. In particular, $d[\eta]=:\omega$ is a symplectic form on $X/S$ meaning that  $\wedge^n \omega$ does not vanish.  
Let $\pi\colon \cM_{X, [\eta]}\to X^\lp$ be the $G_0$-torsor of Darboux frames (see Construction \ref{constr:torsor of Darboux frames}).  Denote by $\xi\colon  X^\lp \to BG_0$ the morphism corresponding to the latter. In Theorem \ref{appendix:maintheorem} we constructed an action of $G_0$ on 
$ \cA_{\mbb S}\lmod$ and the corresponding quasi-coherent sheaf of categories $(\cA_{\mbb S}\lmod)_{{\mbb S}/G_0}$ on ${\mbb S}/G_0$. Let  $\QCoh_h$ be its pullback via the morphism $\xi \times \Id\colon X^\lp \times {\mbb S} \to BG_0 \times {\mbb S}= {\mbb S}/G_0$. The sheaf of categories  $\QCoh_h$  is equipped with the following pieces of structure 
\begin{itemize}
\item[(i)] An equivalences  $\Theta\colon (\pi  \times \Id)^* \QCoh_h \iso \pro_{\mbb S}^* \cA_{\mbb S}\lmod $ of quasi-coherent sheaves of categories over $\cM_{X, [\eta]}\times {\mbb S}$. Here $\pro_{\mbb S}\colon \cM_{X, [\eta]}\times {\mbb S} \to {\mbb S} $ is the projection.
\item[(ii)] An equivalences  $\Xi_\infty\colon (\QCoh_h)_{| X^\lp\times \{\infty\}} \iso \cO_{X^\lp}\lmod$ of quasi-coherent sheaves of categories over $X^\lp$ together with an isomorphism $\varpi $ between the pullback of $\Xi_\infty$ to  $\cM_{X, [\eta]}$ and the restriction of  $\Theta$ to $\cM_{X, [\eta]}\times \{\infty\}$\footnote{The restriction of $\Theta$ yields 
$\pi^* (\QCoh_h)_{| X^\lp \times \{\infty\}} \iso  (\pro_{\mbb S}^* \cA_{\mbb S}\lmod)_{|\cM_{X, [\eta]}\times \{\infty\}} \iso \cO_{\cM_{X, [\eta]}}\lmod$,  where the second equivalence comes from  
(\ref{rem:A_S is matrix algebra outside of 0}).}.
\end{itemize}
and enjoys  the following property 
\begin{itemize}
\item[(P)] The isomorphism $\Center (\pi^* ((\QCoh_h)_{| X^\lp\times \{0\}}))\simeq \pro^*A_0$ descends to 
$\Center((\QCoh_h)_{| X^\lp \times \{0\}}) \simeq F_* \cO_X$, where $F\colon X \to X^\lp$ is the relative Frobenius morphism. 
\end{itemize}
Part (i) comes from the construction of $\QCoh_h$ as a decent of $\pro_{\mbb S}^* \cA_{\mbb S}\lmod $ along  $\cM_{X, [\eta]}\times {\mbb S} \to X^\lp \times {\mbb S}$. Part (ii) is induced by the trivialization of the $G_0$-action on the fiber of $\cA_{\mbb S}\lmod $ at $h=\infty$. Lastly, property (P) is derived from the isomorphism $F_* \cO_X \simeq \cM_{X, [\eta]}\times ^{G_0} A_0$, 
where $A_0$ is equipped with the tautological action of $G_0$.  We shall refer to the quadruple $(\QCoh_h,  \Theta, \Xi_\infty, \varpi)$ as {\it the canonical quantization of $\QCoh(X)$}.

\begin{pr}\label{mainth:quantofthecategory}
The quadruple $(\QCoh_h,  \Theta, \Xi_\infty, \varpi)$ is uniquely characterized by property (P). 
\end{pr}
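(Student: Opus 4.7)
The plan is to derive uniqueness from the relative uniqueness statement of Theorem~\ref{appendix:maintheorem}, specifically Remark~\ref{app.uniquenessoverabase}, applied with $T=\cM_{X,[\eta]}$, $H=G_0$, and $\upsilon=\Id$. The underlying idea is that the data of $\QCoh_h$ equipped with $(\Theta,\Xi_\infty,\varpi)$ is the same as the data of a $G_0$-action on $\pro_{\mbb S}^*\cA_{\mbb S}\lmod$ over $\cM_{X,[\eta]}\times\mbb S$ with a trivialization at $\infty$, inducing a specified action on the center at $h=0$; and (P) pins down the latter to be the tautological one.

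Given another quadruple $(\QCoh_h',\Theta',\Xi_\infty',\varpi')$ satisfying property~(P), I would first pull $\QCoh_h'$ back along the $G_0$-torsor $\pi\times\Id\colon \cM_{X,[\eta]}\times\mbb S\to X^\lp\times\mbb S$. The canonical descent datum of this pullback, transported through the equivalence $\Theta'\colon (\pi\times\Id)^*\QCoh_h'\iso \pro_{\mbb S}^*\cA_{\mbb S}\lmod$, is precisely an action of $G_0$ on $\pro_{\mbb S}^*\cA_{\mbb S}\lmod$ (with $G_0$ acting trivially on $\mbb S$). The pair $(\Xi_\infty',\varpi')$ then furnishes this $G_0$-action with a trivialization over $\infty\times\cM_{X,[\eta]}$ compatible with the splitting $\cA_{\mbb S\backslash\{0\}}\iso\mr{Mat}_{p^n}(\cO_{\mbb S\backslash\{0\}})$ from Remark~\ref{rem:A_S is matrix algebra outside of 0}, in the sense required by Remark~\ref{app.uniquenessoverabase}.

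Next, I would reinterpret property~(P) as specifying the induced action of $G_0$ on the center of $\pro_{\mbb S}^*\cA_{\mbb S}\lmod$ at $\{0\}\times\cM_{X,[\eta]}$, namely on $\pro^*A_0$. Since $F_*\cO_X\simeq \cM_{X,[\eta]}\times^{G_0}A_0$ for the tautological action of $G_0$ on $A_0$, the assertion that the isomorphism $\Center(\pi^*(\QCoh_h')_{|X^\lp\times\{0\}})\simeq\pro^*A_0$ (coming from $\Theta'$) descends to $\Center((\QCoh_h')_{|X^\lp\times\{0\}})\simeq F_*\cO_X$ is equivalent to the $G_0$-action on $\pro^*A_0$ induced by $\Theta'$ being the tautological one $G_0\subset\ul{\Aut}(A_0)$. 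This is exactly the hypothesis on the center required to invoke the uniqueness in Remark~\ref{app.uniquenessoverabase}.

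Finally, the remark then produces a unique isomorphism between the $G_0$-action arising from $\QCoh_h'$ and the one defining the canonical $\QCoh_h$, which descends to a unique equivalence of quadruples $(\QCoh_h',\Theta',\Xi_\infty',\varpi')\simeq(\QCoh_h,\Theta,\Xi_\infty,\varpi)$. The only genuine bookkeeping is to check that the trivialization $\Xi_\infty'$ together with the comparison $\varpi'$ matches, under the torsor pullback, the notion of ``trivialization at $\infty$'' used in Theorem~\ref{appendix:maintheorem}; this is essentially a definitional unwinding and I do not expect it to require any substantial argument. The whole proof is therefore a soft reduction, with the real content having been placed into the uniqueness half of Theorem~\ref{appendix:maintheorem}.
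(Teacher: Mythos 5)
Your overall strategy coincides with the paper's: both proofs reduce the uniqueness of the quadruple to the rigidity statement established in the uniqueness half of Theorem \ref{appendix:maintheorem}, by passing to the pullback along the torsor $\pi\times\Id\colon \cM_{X,[\eta]}\times\mbb S\to X^\lp\times\mbb S$ and using property (P) to control the induced autoequivalences over $\{0\}$. Your reading of (P) — that the descent of the center isomorphism to $F_*\cO_X\simeq \cM_{X,[\eta]}\times^{G_0}A_0$ pins down the induced action on $\pro^*A_0$ over $\{0\}$ to be the tautological one — is also the correct one, and is exactly how the paper uses (P) (it only needs that the \emph{difference} of two such data acts as $\Id$ on the center over $\{0\}$).

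The one point where your write-up is imprecise is the claim that the descent datum for $\QCoh_h'$ along $\cM_{X,[\eta]}\times\mbb S\to X^\lp\times\mbb S$ \emph{is} an action of $G_0$ on $\pro_{\mbb S}^*\cA_{\mbb S}\lmod$ to which Remark \ref{app.uniquenessoverabase} applies verbatim. In the terminology of Section \ref{Groupsactingonacategory}, an ``action'' is an equivariant structure for the \emph{trivial} $G_0$-action on the base, and Remark \ref{app.uniquenessoverabase} is stated for $G_0$ acting trivially on $T\times\mbb S$. Here $\cM_{X,[\eta]}$ is a $G_0$-torsor, so the descent datum is an equivariant structure for a nontrivial action on the base: the underlying autoequivalence lives over $G_0\times\cM_{X,[\eta]}\times\mbb S$ but the cocycle constraints are formulated via the action map, not the projection. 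So the remark cannot simply be cited; its argument has to be rerun in the equivariant setting. This is what the paper does: it applies Lemma \ref{app.lemma.autoequivalences} with $T=G_0\times\cM_{X,[\eta]}$ to identify $\phi_1\otimes\phi_2^{-1}$ with tensoring by a line bundle $L$ on $G_0\times\cM_{X,[\eta]}\times\mbb S$, uses $\Xi_{\infty,i}$ and $\varpi_i$ to trivialize $L$ over $\{\infty\}$, invokes the discreteness of the groupoid of such trivialized line bundles with $\pi_0=H^0(G_0\times\cM_{X,[\eta]},\bZ)$, and finally kills the resulting integer class by restricting the cocycle constraint to $1_{G_0}$. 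None of these steps is sensitive to whether the $G_0$-action on the base is trivial, so your reduction is salvageable, but the last cocycle step (which in Theorem \ref{appendix:maintheorem} used that a homomorphism from an affine group scheme to $\bZ$ is trivial, and here instead uses evaluation at the identity) is genuinely part of the argument rather than ``definitional unwinding,'' and you should carry it out rather than delegate it to the remark.
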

\begin{proof} Let $(\fS_i,  \Theta_i, \Xi_{\infty, i}, \varpi _i)$, $i=1,2$, be  quadruples with property (P). We shall prove that there exists a unique equivalence (up to a unique isomorphism) connecting the two. The argument is similar to the proof of the uniqueness part of Theorem  \ref{appendix:maintheorem}.  Giving a pair  $(\fS_i,  \Theta_i)$ is equivalent equivalent to specifying the descent data for $\pro_{\mbb S}^* \cA_{\mbb S}\lmod $ along the morphism $\cM_{X, [\eta]}\times {\mbb S} \to \cM_{X, [\eta]}/G_0\times {\mbb S}=X^\lp \times {\mbb S}$. In particular, 
$(\fS_i,  \Theta_i)$ determines an autoequivalence $\phi_i$ of $ \cA_{\mbb S}\lmod $ lifted to $G_0 \times \cM_{X, [\eta]}\times {\mbb S}$.  Property (P) implies that the composition $\phi_1\otimes \phi_2^{-1}$ acts as $\Id$ on the center of $ \cA_{\mbb S}\lmod $  pulled back to $G_0 \times \cM_{X, [\eta]}\times \{0\}$.
 Applying  Lemma \ref{twist.morph.ofgroupoids} we conclude that $\phi_1\otimes \phi_2^{-1}$ is the tensor product with a line bundle $L$ over $G_0 \times \cM_{X, [\eta]}\times {\mbb S}$.  The equivalence $\Xi_{\infty, i}$ yields an isomorphism between $\phi_i$ restricted to $G_0 \times \cM_{X, [\eta]}\times \{\infty\}$ and the identity functor.  In turn, the latter determines a trivialization of $L$ restricted to $G_0 \times \cM_{X, [\eta]}\times \infty$. The groupoid of line bundles over  $G_0 \times \cM_{X, [\eta]}\times {\mbb S}$ equipped with a trivialization  over $G_0 \times \cM_{X, [\eta]}\times \{\infty\}$ 
 is discrete and its $\pi_0$ is the group $H^0(G_0 \times \cM_{X, [\eta]}, \bZ)=\bZ$. Finally, the cocycle  constraint implies  that $\phi_i$ and, hence, $\phi_1\otimes \phi_2^{-1}$ is isomorphic to $\Id$ when restricted to $1_{G_0} \times \cM_{X, [\eta]}\times \{\infty\}$. This defines a trivialization of $L$.  
\end{proof}
By property (P)  $(\QCoh_{h})_{|X^\lp \times \{0\}}$ has a   $F_* \cO_X$-linear structure.
Moreover, by (i), locally for the  {\it fpqc} topology,  $(\QCoh_{h})_{|X^\lp \times \{0\}}$,  as a quasi-coherent sheaf of categories with a $F_* \cO_X$-linear structure,  is equivalent to $F_*\cO_X\lmod$. We shall see that this also holds globally. 
%Thus, $(\QCoh_{h})_{|X' \times 0}$ 
To this end, 
%we introduce a bit of notation. 
%For a quasi-coherent sheaf of categories $\fS$  over a stack $T$ and  an affine scheme $Z$ over $T$ we denote by $\fS^\circ(Z)$ the category of  right exact  $\cO(Z)$-linear functors  from  $\fS(Z)$ to the category of $\cO(Z)$-modules  that commute with all direct sums. This assignment defines a 
%a quasi-coherent sheaf of categories  over $T$  denoted by  $\fS^\circ$.
observe that, for any ring $A$, the groupoid of autoequivalences of $A\lmod$ is identified with the groupoid of autoequivalences of $A^{op}\lmod$. In particular, the descent data for $\pro_{\mbb S}^* \cA_{\mbb S}\lmod $ along the morphism $\cM_{X, [\eta]}\times {\mbb S} \to X^\lp \times {\mbb S}$ specified by  
$(\QCoh_h,  \Theta)$ determines a descent data for $\pro_{\mbb S}^* \cA_{\mbb S}^{op}\lmod $. We denote $\QCoh_h^\circ$ the corresponding sheaf of categories over $X^\lp \times {\mbb S}$ and by $\Theta^\circ\colon (\pi  \times \Id)^* \QCoh_h^\circ \iso \pro_{\mbb S}^* \cA_{\mbb S}^{op}\lmod $ the equivalence it comes with. 
Explicitly,  $\QCoh_h^\circ$ assigns to each $Z\rar{u} X^\lp \times {\mbb S}$ 
the category of all  right exact $\cO$-linear functors  $ u^* \QCoh_h \to \cO_Z\lmod$ that commute with all direct sums\footnote{To see this observe that, for any ring $A$, the category of right $A$-modules is equivalent to the category of right exact functors  from  left $A$-modules  to  abelian groups that commute with all direct sums.}. Equivalence $\Xi_\infty$ and isomorphism
$\varpi$  determine
$\Xi^\circ_\infty\colon (\QCoh_h^\circ)_{| X^\lp \times \{\infty\}} \iso \cO_{X^\lp}\lmod$, $\varpi^\circ$. Next, recall from 
(\ref{app.alpha}) an isomorphism $ \alpha\colon  \iota^* \cA_{\mbb S}^{op}\iso \cA_{\mbb S}$. By Proposition \ref{mainth:quantofthecategory} there exists a unique equivalence of the triples 
$$\Sigma\colon (\QCoh_h,  \Theta, \Xi_\infty, \varpi)\iso (({\Id}\times \iota)^*\QCoh_h^\circ,  \alpha \circ \Theta^\circ, \Xi^\circ_\infty, \varpi^\circ).$$
Denote by 
$$\Sigma_0\colon (\QCoh_{h})_{|X^\lp \times \{0\}} \iso (\QCoh_{h}^\circ)_{|X^\lp \times \{0\}}$$
 the restriction of the latter to $X^\lp \times \{0\}$. Observe that the pullback $\Sigma_0$ to $\cM_{X, [\eta]}\times \{0\}$ is naturally isomorphic to the identity functor $\pro^*A_0\lmod \simeq \pro^*{A_0^{op}}\lmod$.
 Every $F_* \cO_X$-linear  equivalence $\Xi_0\colon  (\QCoh_{h})_{|X^\lp \times \{0\}}\iso  F_*\cO_X\lmod $ induces $(\QCoh_{h}^\circ)_{|X^\lp \times \{0\}} \iso F_*\cO_X\lmod$ denoted by $\Xi_0^\circ$.  
\begin{pr}\label{mainth:xi0}
There exists a unique triple $(\Xi_0, \kappa, \upsilon)$, where 
\begin{equation}\label{app.quantofcatmoduloh}
\Xi_0\colon (\QCoh_{h})_{|X^\lp \times \{0\}} \iso F_*\cO_X\lmod
\end{equation} 
 is a   $F_* \cO_X$-linear equivalence of quasi-coherent sheaves of categories over $X^\lp$, $\kappa$ is an isomorphism between $\pi^*(\Xi_0)$ and the composite 
 $$\pi^*((\QCoh_{h})_{|X^\lp \times \{0\}}) \stackrel{\Theta}{\iso} \pro^*A_0\lmod  \iso \pi^*(F_*\cO_X)\lmod \iso \pi^*(F_*\cO_X\lmod),$$ 
 and $\upsilon$ is an isomorphism $\Xi_0^\circ  \circ \Sigma_0 \iso \Xi_0$ compatible with $\kappa$. 
\end{pr}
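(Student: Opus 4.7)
The plan is to obtain $(\Xi_0,\kappa,\upsilon)$ by faithfully flat descent along the $G_0$-torsor $\pi\colon \cM_{X,[\eta]}\to X^\lp$, using the unique $\widehat\tau_0$-invariant section constructed in \S\ref{subsectionrestrictionto0} to rigidify the descent, and then to deduce uniqueness from a short Picard-theoretic computation on $\cM_{X,[\eta]}$.

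\textbf{Setup.} On the torsor, the equivalence $\Theta$ identifies $\pi^*((\QCoh_h)_{|X^\lp\times\{0\}})$ with $\pro^*A_0\lmod$, and by Lemma \ref{lem:fiber product of reduced Frobenius with itself} the Darboux frame identification $\cM_{X,[\eta]}\times_{X^\lp}X\simeq \cM_{X,[\eta]}\times \Spec A_0$ identifies $\pi^*F_*\cO_X$ with $\pro^*A_0$ in a $G_0$-equivariant fashion, where $G_0$ acts tautologically on $A_0$. An equivalence $\Xi_0$ as in the statement thus amounts to a descent datum on $\pro^*A_0\lmod$ that matches, on centers, the descent datum on $\pro^*A_0$ defining $F_*\cO_X$.

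\textbf{Existence.} By Theorem \ref{appendix:maintheorem}, the $G_0$-action on $A_0\lmod$ implicit in $\QCoh_h$ is encoded by the extension $\widehat G_{\mbb S}|_0$ from \S\ref{subsectionrestrictionto0}. The unique $\widehat\tau_0$-invariant section $G_0\to \widehat G_{\mbb S}\times_{\mbb S}\{0\}$ constructed there trivializes this extension and identifies the $G_0$-action on $A_0\lmod$ with the one induced by the tautological action on $A_0$. Applying the descent formalism of Construction \ref{constr:A version of the Grothendieck construction}, this promotes $(\QCoh_h)_{|X^\lp\times\{0\}}$ to modules over the descended algebra $\cM_{X,[\eta]}\times^{G_0}A_0\simeq F_*\cO_X$, producing both $\Xi_0$ and, by design, the isomorphism $\kappa$. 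The $\widehat\tau_0$-invariance of the section means that it is carried to itself under the involution $\alpha$ of (\ref{app.alpha}); this is precisely the compatibility needed to produce the isomorphism $\upsilon\colon \Xi_0^\circ\circ\Sigma_0\iso \Xi_0$.

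\textbf{Uniqueness.} Given two triples $(\Xi_0^i,\kappa_i,\upsilon_i)$, $i=1,2$, the composite $\Xi_0^2\circ(\Xi_0^1)^{-1}$ is an $F_*\cO_X$-linear autoequivalence of $F_*\cO_X\lmod$, hence given by tensor product with an invertible $F_*\cO_X$-module, i.e., a line bundle $L$ on $X$. Compatibility with $\kappa_1,\kappa_2$ canonically trivializes $\pi^*L$ on $\cM_{X,[\eta]}\times_{X^\lp}X$ as a $G_0$-equivariant line bundle, so $L$ is classified by a character $\chi\colon G_0\to \bG_m$. Compatibility with $\upsilon_1,\upsilon_2$ forces $\chi$ to be $\widehat\tau_0$-invariant, and since $\widehat\tau_0$ acts on characters by inversion, one gets $\chi^2=1$. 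As $p>2$, $\mu_2$ is \'etale, so any homomorphism from the connected group scheme $G_0$ to $\mu_2$ is trivial; consequently $L$ is canonically trivialized and the two triples are uniquely isomorphic. The main technical point to verify carefully is the last compatibility: that the condition forced by $\upsilon$ on $\chi$ is precisely invariance under the inversion involution, which in turn requires unwinding the interplay of $\Theta$, $\Sigma$, and the involution $\widehat\tau_0$ produced in (\ref{diag.f.z2}).
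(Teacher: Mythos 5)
Your proposal is correct in substance, but it is packaged differently from the paper's argument, so a comparison is worthwhile. The paper proves existence and uniqueness in a single stroke by a gerbe argument: the sheaf of categories $(\QCoh_h)_{|X^\lp\times\{0\}}$ with its $F_*\cO_X$-linear structure defines a $\bG_m$-gerbe $\cE$ on $X$, the datum $\Sigma_0$ reduces $\cE$ to a $\mu_2$-gerbe $\cE'$ trivialized over the cover $\cM_{X,[\eta]}\times_{X^\lp}X\to X$, and everything then follows from the assertion that the pullback $H^i_{et}(X,\mu_2)\to H^i_{et}(\cM_{X,[\eta]}\times_{X^\lp}X,\mu_2)$ is an isomorphism for $i=0,1$ and injective for $i=2$, which is deduced from $\mu_2\iso\tau_{<2}R(\pi\times\Id)_*\mu_2$ using the (reduced) vanishing of $H^i_{et}(G_0,\mu_2)$ for $i=0,1$. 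Your existence argument is instead the explicit construction via the unique $\widehat\tau_0$-invariant section (\ref{app.section}) and Proposition \ref{pr.twistedmodules}; this is exactly the alternative route the authors themselves record in Remark \ref{app.anotherconstructionofxi} (where $\kappa$ and $\upsilon$ are left to the reader, as you supply them), and it has the advantage of being constructive rather than relying on the $H^2$-injectivity. Your uniqueness argument is a concrete unwinding of the paper's $H^1$- and $H^0$-level statements, and the essential inputs coincide: the involution forces the discrepancy to be $2$-torsion, and connectedness of $G_0$ together with $p>2$ kills it. Two points deserve more care in your write-up. First, the step "so $L$ is classified by a character $\chi\colon G_0\to\bG_m$" is not automatic: the descent datum for the trivialized $\pi^*L$ along the $G_0$-torsor is a priori a cocycle $G_0\times(\cM_{X,[\eta]}\times_{X^\lp}X)\to\bG_m$ valued in invertible functions on the torsor, not a bare character of $G_0$; the paper's sheaf-theoretic computation of $\tau_{<2}R(\pi\times\Id)_*\mu_2$ is precisely what handles this after the reduction to $\mu_2$, so you should perform the squaring (via $\upsilon$) \emph{before} attempting to identify the class with a character. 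Second, the statement asserts uniqueness of the triple up to unique isomorphism, which also requires the $H^0$-level comparison (uniqueness of the isomorphism between the two triples); your argument establishes that the two triples are isomorphic but is silent on why the isomorphism is unique, which again comes from $H^0_{et}(G_0,\mu_2)=\mu_2$ and the rigidification by $\kappa$ and $\upsilon$.
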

\begin{proof}
Locally, for the  {\it fpqc} topology,  $(\QCoh_{h})_{|X^\lp \times \{0\}}$,  as a quasi-coherent sheaf of categories with a $F_* \cO_X$-linear structure,  is equivalent,  to $F_*\cO_X\lmod$. Assigning to a scheme over  $X^\lp$ the groupoid of such equivalences we define a gerbe on  $X^\lp $  banded by the group scheme of invertible elements in  $F_*\cO_X$ or, equivalently,
a gerbe $\cE$ on $X$  banded by $\bG_m$.    The equivalence $\Sigma_0$ amounts to specifying an isomorphism between $\cE$  and the opposite gerbe {\it i.e.}, the reduction of $\cE$ to a gerbe $\cE'$ on $X$  banded by $\mu_2 \subset \bG_m$.
 By construction the pullback of $\cE'$ to $\cM_{X, [\eta]}\times_{X^\lp} X$ is trivialized. To prove Proposition \ref{mainth:xi0} it suffices to show that every $\mu_2$-gerbe on $X$ equipped with a trivialization over $\cM_{X, [\eta]}\times_{X^\lp} X$ admits a unique trivialization (compatible with the given trivialization over $\cM_{X, [\eta]}\times_{X^\lp} X$). In the latter, amounts to showing that the pullback map 
 $$H^i_{et}(X, \mu_2) \to H^i_{et}(\cM_{X, [\eta]}\times_{X^\lp} X, \mu_2) $$
 is an isomorphism $i=0, 1$ and injective for $i=2$. The morphism $\pi \times \Id\colon \cM_{X, [\eta]}\times_{X^\lp} X \to X$ is a $G_0$-torsor locally trivial for the \'etale topology. Hence, using the vanishing of $H^i_{et}(G_0, \mu_2)$, for $i=0,1$, we conclude that $\mu_2 \iso \tau_{<2} R  (\pi \times \Id)_* \mu_2$. This implies the claim.
 \end{proof}
\begin{rem}\label{app.anotherconstructionofxi}
Here is another construction of $(\Xi_0, \kappa, \upsilon)$.
Section (\ref{app.section}) defines a lift of the $G_0$-torsor $\cM_{X, [\eta]} \to X^\lp$  to a $\hat G_{\mbb S} \times_{\mbb S} \{0\}$-torsor. Now (\ref{app.quantofcatmoduloh}) comes from the equivalence in Proposition \ref{pr.twistedmodules} and the isomorphism $F_* \cO_X \simeq \cM_{X, [\eta]}\times ^{G_0} A_0$. We leave it the reader to produce $\kappa$ and $\upsilon$.
\end{rem}
%We shall need an intrinsic characterization of equivalence $\Xi_0$. 
Observe that  (\ref{app.quantofcatmoduloh}) yields, in particular, an equivalence between the category  $\QCoh_{h}(X^\lp \times \{0\})$ and the category of quasi-coherent sheaves on $X$. 
\subsection{$\bG_m$-equivariant quantizations.}\label{ssec:G_m-equivariant quantizations}
Let   $S$ be a scheme over $\bF_p$ with $p>2$, and let $(X, \omega)$ be a  scheme quasi-smooth over  $S$   equipped with a symplectic $2$-form $\omega\in \Omega^2_{X/S}(X)$
and a $\bG_m$-action 
\begin{equation}\label{multgraction}
\gamma\colon \bG_m \times  X \to X.
\end{equation}
We assume that $\gamma$ is a morphism of schemes over $S$ and  the following identity holds 
\begin{equation}\label{multgractionomega}
\gamma^* \omega  =  z^m \proj_X^* \omega,
\end{equation}
for some integer $m$ invertible in $\bF_p$. 
Here $z$ denotes the coordinate   on $\bG_m$ and $\proj_X\colon \bG_m \times X \to X$ the projection.
The $\bG_m$-action  on $X$ defines a homomorphism from the Lie algebra of $\bG_m$ to the Lie algebra of vector fields on $X$. Denote by $\theta$ the image of the generator $z\frac{\partial}{\partial z}$ of $\Lie \bG_m$. 
Formula (\ref{multgractionomega}) together with the identity $d\omega=0$  imply
that 
$$d \iota_{\theta} \omega = m \omega. $$
Hence, $1$-form $\eta = \frac{1}{m} \iota_{\theta} \omega $, defines a restricted Poisson structure on $X$. 
Observe that the composition 
$$\bG_m \times {X\twt}  \rar{F_{\bG_m}\times \Id} \bG_m \times {X\twt}  \rar{\gamma'} {X\twt},$$
where $F_{\bG_m}$ is the Frobenius morphism on $\bG_m$,
carries the closed subscheme $\bG_m \times X^\lp\mono \bG_m \times {X\twt} $ to $X^\lp \mono {X\twt}$ yielding a morphism 
\begin{equation}\label{app.actionofG_monXprime}
\gamma_p\colon  \bG_m \times X^\lp  \to X^\lp. 
\end{equation}
Endow  $X^\lp$ with a $\bG_m$-action given by $\gamma_p$.
Note that the Frobenius morphism $F\colon X \to X^\lp$ is   $\bG_m$-equivariant. Also, consider the action $\chi_m\colon \bG_m \times {\mbb S}\to {\mbb S}$ given by the formula $\chi_m^*(h)=z^m h$. We shall see
that the canonical quantization $\QCoh_h$ comes equipped with a $\bG_m$-equivariant structure with respect to the diagonal action $(\gamma_p, \chi_m)\colon  \bG_m \times (X^\lp \times {\mbb S}) \to X^\lp \times {\mbb S}$. Let us explain a construction of the corresponding quasi-coherent sheaf of categories $(\QCoh_h)_{(X^\lp \times {\mbb S})/_{\!\gamma_p, \chi_m} \bG_m}$
 on the quotient stack  $(X^\lp \times {\mbb S})/_{\!\gamma_p, \chi_m} \bG_m $.

Recall from \S \ref{subs.addingmultiplicativegroup}  the homomorphism  $\lambda\colon \bG_m\to \Aut (A_0)$ and denote by $\lambda_m$ its pre-composition with the isogeny $t_m\colon \bG_m \to \bG_m$, $t^*_m(z)=z^m$.  The action of $\bG_m$ on $A_0$ normalizes $G_0\subset \Aut (A_0)$. Denote by $\bG_m \ltimes_{\Ad_{\lambda_m}} G_0$ the corresponding semidirect product. 
Let $\pi\colon \cM_{X, [\eta]}\to {X^\lp}$ be the $G_0$-torsor of  Darboux frames.  Then, using $\lambda_m$ and $\gamma$,  the action of $G_0$ on $\cM_{X, [\eta]}$ extends to an action of $\bG_m \ltimes_{\Ad_{\lambda_m}} G_0$ making $\pi$ equivariant with respect to this larger group.
\begin{comment}
\red{Define\footnote{This construction of the $\bG_m \ltimes_{\Ad_{\lambda_m}} G_0$-action probably can be omitted.} a $\bG_m$-action on  $\cM_{X, [\eta]}$ as follows. For a $k$-scheme $Z$,  the set $\cM_{X, [\eta]}(Z)$
consists of pairs $(f, u)$, where $f: Z\to X'$ is a morphism and 
$$ u: Z\times \Spec A_0 \iso Z \times _{X'} X$$
is an isomorphism of restricted Poisson schemes over $Z$. Let $\gamma(f)$ be the composition $\bG_m \times Z\rar{\Id \times f} \bG_m \times X' \to X'$, where the second map is given by (\ref{app.actionofG_monXprime}) and let 
$$\gamma(u):  \bG_m \times Z\times \Spec A_0\iso (\bG_m \times Z) \times _{X'} X,$$
be the morphism  identical on the first $2$ coordinates and whose post-compostion with the projection to the third factor is the composi and uption 
$$\bG_m \times Z\times \Spec A_0\rar{\pro_{\bG_m}, \pro_{Z},  \lambda_{m}(\pro_{\bG_m}, \pro_{\Spec A_0})} \bG_m \times Z\times \Spec A_0 $$
$$\rar{\pro_{\bG_m},  \pro_{X} u(\pro_{Z}, \pro_{\Spec A_0}  ) } \bG_m \times X \rar{\gamma}X.$$}
\end{comment}
The product   $\cM_{X, [\eta]} \times  {\mbb S}$  equipped with diagonal action of $\bG_m \ltimes_{\Ad_{\lambda_m}} G_0$ yields a morphism  of stacks
\begin{equation}\label{app.g_mequiveq}
(X^\lp \times  {\mbb S})/_{\!\gamma_p, \chi_m} \bG_m \iso  (\cM_{X, [\eta]} \times  {\mbb S})/(\bG_m \ltimes_{\Ad_{\lambda_m}} G_0) \to  {\mbb S}/_{\!\chi_m}(\bG_m \ltimes_{\Ad_{\lambda_m}} G_0)
\end{equation}
The homomorphism 
$$\bG_m \ltimes_{\Ad_{\lambda_m}} G_0 \rar{t_m\times \Id} \bG_m \ltimes_{\Ad_{\lambda}} G_0$$
and the identity  map on $ {\mbb S}$ induce a morphism
$$S/_{\!\chi_m}(\bG_m \ltimes_{\Ad_{\lambda_m}} G_0) \to  {\mbb S}/_{\!\chi}(\bG_m \ltimes_{\Ad_{\lambda}} G_0),$$
where $\chi= \chi_0$. 
Let $$\overline{\xi}_S\colon (X^\lp \times  {\mbb S})/_{\!\gamma_p, \chi_m} \bG_m \to   {\mbb S}/(\bG_m \ltimes_{\Ad_{\lambda}} G_0)$$
be the pre-composition of the latter with   (\ref{app.g_mequiveq}).
Set $(\QCoh_h)_{(X^\lp \times {\mbb S})/_{\!\gamma_p, \chi_m} \bG_m}= \overline{\xi}_ {\mbb S}^*((\cA_ {\mbb S}\lmod)_{{\mbb S}/(\bG_m \ltimes G_0 )})$, where  $(\cA_ {\mbb S}\lmod)_{{\mbb S}/(\bG_m \ltimes G_0 )}$ is constructed in Proposition \ref{app.G_mequivarinceprop}. By construction, the pullback of $(\QCoh_h)_{(X^\lp \times {\mbb S})/_{\!\gamma_p, \chi_m} \bG_m}$ to $X^\lp \times {\mbb S}$ is  $\QCoh_h$. Also, the category of global sections of  $(\QCoh_h)_{(X^\lp \times {\mbb S})/_{\!\gamma_p, \chi_m} \bG_m}$  on $(X^\lp \times \{0\})/_{\!\gamma_p} \bG_m$ 
 is the category 
of  quasi-coherent sheaves on $X/_{\!\gamma}\bG_m$.

\subsection{Quantizations of $\QCoh(X)$ vs quantizations of  $\cO_X$}\label{quantizationofalgebrasandcategories}

In this subsection we shall explain  how, under a certain assumption, (\ref{app.vanishingofsecondcoh}) below,
$(\QCoh_{h})_{|X^\lp \times \hat {\mbb S}}$ can be described as modules over a Frobenius-constant  quantization of the algebra $\cO_X$. For the sake of simplicity we shall assume that $S$ is the spectrum of a field $k$ of characteristic $p>2$ and $X$ is a smooth $S$-scheme.

For an integer $l\geq 0$, set 
$G_l= \coker( (\underline{A}_h^*)^{\geq l-1} \rar{\Ad} G)$, where  
$(\underline{A}_h^*)^{\geq l-1} \subset \underline{A}_h^* $ is the subgroup of elements equal to $1$ modulo $h^{l}$.  Recall from \cite{bk} that 
a  Frobenius-constant  quantization of $\cO_X$ of level $l$ is a $G_l$-torsor $\cM_{X, [\eta], l}$ over ${X^\lp}$ together with a $G_{l}$-equivariant morphism  $\cM_{X, [\eta], l} \to \cM_{X, [\eta]}$ of schemes over ${X^\lp}$.  The latter determines 
a coherent sheaf of algebras  $O_l = \cM_{X, [\eta], l} \times ^{ G_l} A_h/ (h^{l+1})  $ flat over $\cO_{{X^\lp}}[h]/(h^{l+1})$. (Note that the canonical map $G_l \to  \underline{\Aut} (A_h/ (h^{l+1}) )$ is not surjective. Consequently,  the torsor $\cM_{X, [\eta], l} $ carries more information than sheaf of algebras $O_l$.)  A section  
\begin{equation}\label{app.factorizationkb}
G_0 \to  G_1 
\end{equation} 
of the projection $G_1\epi G_0$ constructed in \cite{bk}  
identifies the groupoid of level $1$ Frobenius-constant  quantization with 
the groupoid of $\cO_{X}^*/\cO_{X}^{* p} $-torsors on the \'etale site of $X$.  Thus, every  Frobenius-constant  quantization $O_l$, ($l>0$), determines a class
$\rho ([O_l]) \in  H^1_{et}({X^\lp},  \cO_{X}^*/\cO_{X}^{* p} )$.
  \begin{pr}\label{app.quantofcatandalg} 
  For every integer $l\geq 0$, the following data are equivalent.
\begin{itemize}
\item[(i)] A  $\widehat G_{ {\mbb S}_l}$-torsor   $\widehat{\cM}_{X, [\eta], l} \to {X^\lp} \times  {\mbb S}_l$ together with a $\widehat G_{ {\mbb S}_l}$-equivariant morphism   $\widehat{\cM}_{X, [\eta], l} \to  \cM_{X, [\eta]}\times  {\mbb S}_l$ of schemes  over ${X^\lp} \times  {\mbb S}_l$. 
\item[(ii)] A Frobenius-constant  quantization  $\cM_{X, [\eta], l}$ of $\cO_X$ of level $l$ together with an equivalence 
\begin{equation}\label{app.eqquantalcat}
{O}_l\lmod \iso (\QCoh_{h})_{|{X^\lp} \times  {\mbb S}_l}.
\end{equation} 
\item[(iii)]
An object $\underline{O}_l  \in \QCoh_{h}({X^\lp} \times  {\mbb S}_l)$ such that    $\Theta (\pi  \times \Id)^*(\underline{O}_l) \in \cA_{ {\mbb S}}\lmod( \cM_{X, [\eta]}\times  {\mbb S}_l) $ is locally isomorphic to the free $\cA_ {\mbb S}$-module 
$\underline{\cA}_{\mbb S}$ pulled back to 
$\cM_{X, [\eta]}\times S_l$.
\end{itemize}
Moreover, in the setting of (ii) and (iii),  $\rho ([O_l])$ is equal to the class    of the line bundle $[\Xi_0((\underline{O}_l)_{|{X^\lp} \times \{0\}})]\in \Pic(X)$ mapped to $H^1_{et}({X^\lp},  \cO_{X}^*/\cO_{X}^{* p} )$.
\end{pr}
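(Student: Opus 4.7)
My plan is to unwind the descent construction of $\QCoh_h$ at the $l$-th infinitesimal neighborhood $\mbb S_l \subset \mbb S$ of $\{0\}$ and apply the twisted-module dictionary of Proposition \ref{pr.twistedmodules}. By definition $\QCoh_h|_{X^\lp \times \mbb S_l}$ is the pullback along $\xi\times\Id\colon X^\lp \times \mbb S_l \to \mbb S_l/G_0$ of the sheaf of categories $(\cA_{\mbb S_l}\lmod)_{\mbb S_l/G_0}$, which arises from the truncated extension (\ref{fundextrunc}) via Construction \ref{constr:A version of the Grothendieck construction}. The equivalence (i)$\Leftrightarrow$(ii) is thus a direct specialization of Proposition \ref{pr.twistedmodules}: given $\widehat{\cM}_{X,[\eta],l}$ as in (i), set $O_l = \widehat{\cM}_{X,[\eta],l}\times^{\widehat G_{\mbb S_l}} \cA_{\mbb S_l}$, with underlying Frobenius-constant quantization $\cM_{X,[\eta],l}$ obtained by pushing out along the natural homomorphism $\widehat G_{\mbb S_l} \to G_l$ induced by $\widehat G \to G \to G_l$ in (\ref{diag.f.ext}); conversely, an algebra $O_l$ together with a chosen equivalence as in (ii) recovers $\widehat{\cM}_{X,[\eta],l}$ as the sheaf of local identifications of $O_l$ with $\cA_{\mbb S_l}$ compatible with the chosen Darboux frame in $\cM_{X,[\eta]}$.

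For the equivalence (ii)$\Leftrightarrow$(iii), the direction (ii)$\Rightarrow$(iii) is tautological: take $\underline O_l \coloneqq O_l$ viewed as a left module over itself; by the very formula $O_l = \widehat{\cM}_{X,[\eta],l}\times^{\widehat G_{\mbb S_l}} \cA_{\mbb S_l}$, the pullback $\Theta(\pi\times\Id)^*(\underline O_l)$ trivializes on the further $\underline\cA^*_{\mbb S_l}$-torsor $\widehat{\cM}_{X,[\eta],l} \to \cM_{X,[\eta]}\times \mbb S_l$ and becomes the free $\cA_{\mbb S_l}$-module there. Conversely, given $\underline O_l$ as in (iii), define $\widehat{\cM}_{X,[\eta],l}$ as the sheaf on $\cM_{X,[\eta]}\times \mbb S_l$ whose sections over any $Z$ are $\cA_{\mbb S_l}$-module isomorphisms $\Theta(\pi\times\Id)^*(\underline O_l)|_Z \iso \cA_{\mbb S_l}|_Z$. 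The local-triviality hypothesis makes this an $\underline\cA^*_{\mbb S_l}$-torsor above $\cM_{X,[\eta]}\times \mbb S_l$, and the descent data encoding $\underline O_l$ as a global object of $\QCoh_h(X^\lp \times \mbb S_l)$ upgrade it to a $\widehat G_{\mbb S_l}$-torsor in the sense of Remark \ref{rephrasing}; the sheaf of algebras $O_l$ is then recovered as $\End_{\cA_{\mbb S_l}}(\underline O_l)^{op}$.

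The identification $\rho([O_l]) = [\Xi_0((\underline O_l)_{|X^\lp\times\{0\}})]$ reduces to the level-$1$ case, since both invariants depend only on the restriction of $\widehat{\cM}_{X,[\eta],l}$ to $X^\lp \times \mbb S_1$. The class $\rho$ is extracted from the $G_1$-torsor $\cM_{X,[\eta],1}$ using the Bezrukavnikov--Kaledin section (\ref{app.factorizationkb}) $G_0 \to G_1$, whereas by Remark \ref{app.anotherconstructionofxi} the equivalence $\Xi_0$ is built from the $\widehat\tau_0$-invariant section (\ref{app.section}) $G_0 \to \widehat G_{\mbb S}\times_{\mbb S}\{0\}$. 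Both sections split surjections to $G_0$ with kernel a quotient of $\underline A_0^*/\bG_m$, and the difference between them, measured against the torsor $\widehat{\cM}_{X,[\eta],1}$, produces exactly the $\cO_X^*/\cO_X^{*p}$-torsor on $X$ that represents the image of $\Xi_0((\underline O_1)_{|X^\lp\times\{0\}})\in \Pic(X)$ under the natural map $\Pic(X) \to H^1_{et}(X^\lp, \cO_X^*/\cO_X^{*p})$.

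The main obstacle is the bookkeeping needed to align the group $\widehat G_{\mbb S_l}$ produced by Theorem \ref{appendix:maintheorem} (with its central $\bG_m$ and the convention that $(\underline A_h^*)^{\ge m}$ consists of elements $\equiv 1$ modulo $h^{m+1}$) with the Bezrukavnikov--Kaledin group $G_l = \coker((\underline A_h^*)^{\ge l-1}\xra{\Ad} G)$, and to match the two section comparisons used to extract $\rho$ and $\Xi_0$ consistently. Once this alignment is fixed, the rest is a tautological unwinding of Proposition \ref{pr.twistedmodules} together with the $\mu_2$-gerbe / pullback argument already used in the proof of Proposition \ref{mainth:xi0}.
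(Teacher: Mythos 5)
Your proposal matches the paper's proof in all essentials: (i)$\Rightarrow$(ii) via Proposition \ref{pr.twistedmodules} applied to the extension (\ref{fundextrunc}), (ii)$\Rightarrow$(iii) by taking the free module, (iii)$\Rightarrow$(i) by realizing $\widehat{\cM}_{X, [\eta], l}$ as the sheaf of trivializations of $\Theta (\pi \times \Id)^*(\underline{O}_l)$ over liftings to $\cM_{X, [\eta]}\times {\mbb S}_l$, and the final claim by checking that the section (\ref{app.section}) underlying $\Xi_0$ is compatible with the Bezrukavnikov--Kaledin section (\ref{app.factorizationkb}). The only step you compress is the passage from the $\widehat G_{{\mbb S}_l}$-torsor over $X^\lp \times {\mbb S}_l$ to the $G_l$-torsor over $X^\lp$: since $\widehat G_{{\mbb S}_l}$ and $G_l$ live over different bases there is no direct homomorphism to push out along, and the paper first identifies such torsors with torsors over $X^\lp$ under the constant-along-${\mbb S}_l$ subgroup $\widehat G/(\underline{A}_h^*)^{\geq l}$ (using smoothness of $\underline{\cA}_{{\mbb S}_l}^*$ to trivialize after a faithfully flat base change pulled back from $X^\lp$), and only then applies $\widehat G/(\underline{A}_h^*)^{\geq l} \to G_{l+1} \to G_l$.
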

\begin{proof} $(i)\Rightarrow (ii)$. The morphism $\widehat G_{ {\mbb S}_l} \to \underline{\Aut} (\cA_{ {\mbb S}_l})$ displayed in diagram (\ref{fundextrunc}) yields a coherent sheaf of algebras 
$O_l= \widehat{\cM}_{X, [\eta], l}\times ^{ \widehat G_{ {\mbb S}_l} } \cA_{ {\mbb S}_l}$ and Proposition \ref{pr.twistedmodules} yields equivalence (\ref{app.eqquantalcat}). It remains to construct 
$\cM_{X, [\eta], l}$ the algebra $O_l$ is associated with. Let $\widehat G$ be a group scheme over $k$ such that, for any scheme $T$ over $k$,   $\widehat G(T)$ is the subgroup  of 
$\Mor_{ {\mbb S}_l}(T\times  {\mbb S}_l, \widehat G_{ {\mbb S}_l} )$
 of morphisms whose composition with $ \widehat G_{ {\mbb S}_l} \to G_0 \times  {\mbb S}_l$ is constant along $ {\mbb S}_l$.
Smoothness of $\underline{\cA}_{ {\mbb  {\mbb S}}_l}^*$ implies  that,  for any $\widehat G_{ {\mbb S}_l}$-torsor  $\widehat{\cM}_{X, [\eta], l} \to {X^\lp} \times  {\mbb S}_l$ that lifts $\cM_{X, [\eta]}\times  {\mbb S}_l$,  there exists  a faithfully flat morphism $T\to {X^\lp}$ such that the pullback of $\widehat{\cM}_{X, [\eta], l}$  to $T\times  {\mbb S}_l$ is trivial.
It follows that the groupoid of such torsors is equivalent to the groupoid of torsors under $\widehat G/ (\underline{A}_h^*)^{\geq l}$ that lift $\cM_{X, [\eta]}$. The torsor $\cM_{X, [\eta], l}$ is constructed from $\widehat G/ (\underline{A}_h^*)^{\geq l}$-torsor corresponding to $\widehat{\cM}_{X, [\eta], l}$ using the homomorphism 
$\widehat G/ (\underline{A}_h^*)^{\geq l} \to G_{l+1}\to G_l$.

$(ii)\Rightarrow (iii)$.  The image of the free module $\underline{O}_m  \in {O}_m\lmod ({X^\lp} \times  {\mbb S}_m)$ under equivalence (\ref{app.eqquantalcat}) does the job.

 $(iii)\Rightarrow (i)$. For a scheme over ${X^\lp} \times \mbb S_m$,   $f\colon T \to  {X^\lp} \times  {\mbb S}_m$,  let $\widehat G_{ {\mbb S}_m}(T)$ be the $\widehat G_{ {\mbb S}_m}(T)$-set of pairs $(\tilde f, \alpha)$, where $\tilde f\colon T \to   \cM_{X, [\eta]} \times  {\mbb S}_m$ is a morphism lifting $f$,
 and $\alpha$ is an isomorphism between $\tilde f^* \Theta (\pi  \times \Id)^*(\underline{O}_m) \in \cA_{S}\lmod(T)$ and the free module $\cA_S$ pulled back to $T$.  This is enough.

For the last statement of the Proposition, consider the extension
$$1 \to \bG_m \to \widehat G_{ {\mbb S}_0} \to G_1 \to 1.$$
The equivalence $\Xi_0$ is constructed using the section  
$G_0 \to \widehat G_{ {\mbb S}_0}$ of the projection  $\widehat G_{ {\mbb S}_0}\epi G_0$ (see Remark \ref{app.anotherconstructionofxi}). The assertion follows from the fact that the composition of   the section $G_0 \to \widehat G_{ {\mbb S}_0}$ with the map $G_{ {\mbb S}_0} \to G_1$ is equal to  (\ref{app.factorizationkb}). We leave the verification  of this fact to the reader.
\end{proof}
\begin{rem} Assume that 
\begin{equation}\label{app.vanishingofsecondcoh}
H^2(X, \cO_X)=0.
\end{equation} 
 Then every  $\widehat G_{ {\mbb S}_{l+1}}$-torsor. Indeed, as explained along the proof  of Proposition \ref{app.quantofcatandalg} it suffices to check that every $\widehat G/ (\underline{A}_h^*)^{\geq l}$-torsor lifts to
a $\widehat G/ (\underline{A}_h^*)^{\geq l+1}$-torsor. The obstruction to the lifting lives in  $H^2({X^\lp}, \cM_{X, [\eta]}\times ^{G_0} A_0)= H^2(X, \cO_X)=0$. 
Also under the same assumption on $X$ and a  Frobenius-constant  quantization $O_l$ with $\rho(\cO_l)=0$, ($l\geq 1$), one has an equivalence 
${O}_{l-1}\lmod \iso (\QCoh_{h})_{|X^\lp \times  {\mbb S}_{l-1}}$. To prove this consider the surjective  homomorphism
$$\widehat G/ (\underline{A}_h^*)^{\geq l-1} \to G_l \times _{G_1}  \widehat G_{ {\mbb S}_0}\to 1$$ 
A  Frobenius-constant  quantization  with $\rho(\cO_l)=0$ defines a torsor under the fiber product displayed above. The kernel of the above homomorphism is an abelian unipotent group. Thus, using the vanishing
of $H^2(X, \cO_X)$ our torsor lifts to a torsor under $\widehat G/ (\underline{A}_h^*)^{\geq l-1}$ and the claim follows.   
\end{rem}

 \subsection{Canonical quantization of Lagrangian subschemes.}\label{s.lag}
For the duration of this subsection let $ (X, [\eta])$ be a smooth symplectic variety of dimension $2n$  over a field $k$ of characteristic $p>2$ endowed with a restricted structure.
Recall from \cite{Mu} that a smooth Lagrangian subvariety $Y$ of  $X$  is called restricted if $[\eta]_{|Y} = 0$ in 
$H_{Zar}^{0}\left(Y, \operatorname{coker}\left(\mathcal{O}_{Y} \stackrel{d}{\longrightarrow} \Omega_{Y}^{1}\right)\right)$. 
 Let $J\subset A_0$ be the ideal generated by $y_i$, $1\leq i \leq n$, and let $G_{0, J} \subset G_0$ be the subgroup of automorphisms preserving $J$. In  (\cite{Mu}[Theorem 2.10]),  Mundinger proves that there exists a  {\it fpqc} cover $Z\to {Y^\lp}$ and an isomorphism 
$$Z \times _{X^\lp} X \iso Z \times \Spec A_0$$
of restricted Poisson schemes over $Z$ that induces an isomorphism between the closed subscheme $Z\times_{{Y^\lp}} Y \mono Z \times _{X^\lp} X$ and $  Z \times \Spec A_0/J \mono  Z \times \Spec A_0$. 
This defines a $G_{0, J}$-torsor $\pi_Y\colon  \cM_{X, Y, [\eta]}\to {Y^\lp}$ fitting into the commutative diagram
\begin{equation}\label{diag.Mundinger}
\def\normalbaselines{\baselineskip20pt
	\lineskip3pt  \lineskiplimit3pt}
\def\mapright#1{\smash{
		\mathop{\to}\limits^{#1}}}
\def\mapdown#1{\Big\downarrow\rlap
	{$\vcenter{\hbox{$\scriptstyle#1$}}$}}
\begin{matrix}
  \cM_{X, Y, [\eta]} & \mono  &   \cM_{X, [\eta]}  \cr
 \mapdown{\pi_Y}  &  &\mapdown{\pi} \cr
  {Y^\lp} & \mono  & X^\lp,
\end{matrix}
\end{equation} 
where $\pi\colon \cM_{X, [\eta]}\to X^\lp$ be the $G_0$-torsor of Darboux frames.
Let $(\QCoh_h,  \Theta, \Xi_\infty, \varpi)$ be the canonical quantization of $\QCoh(X)$. The equivalence $\Theta$ induces $\QCoh_h(\cM_{X, [\eta]}\times  {\mbb S}) \iso \cA_S\lmod (\cM_{X, [\eta]}\times  {\mbb S})$ also denoted by $\Theta$.
Let $\pro_{ {\mbb S},Y}\colon \cM_{X, Y, [\eta]}  \times  {\mbb S} \to  {\mbb S}$  be the projection.  Recall from Construction  \ref{constr:A module over A} a $\cA_{\mbb S}$-module $\cV_{\mbb S}$.

\begin{Th}\label{app.thquantoflarg} There exists a unique (up to a unique isomorphism) pair $(\cV_{{Y^\lp}\times  {\mbb S}}, \vartheta )$, where
$\cV_{{Y^\lp}\times  {\mbb S}} \in \QCoh_h({Y^\lp}\times  {\mbb S})\subset \QCoh_h(X^\lp\times  {\mbb S})$  and 
$$\vartheta\colon \Xi_\infty (\cV_{{Y^\lp}\times  {\mbb S}})_{|{Y^\lp} \times \{\infty\}} \iso \cO_{{Y^\lp}}$$
such that 
 \begin{equation}\label{app.thquantoflarg.eq}
 \varsigma\colon \Theta( (\pi_Y \times \Id)^* \cV_{{Y^\lp}\times  {\mbb S}} \iso \pro_{ {\mbb S},Y}^* \cV_ {\mbb S}.
 \end{equation}
 %$$\vartheta: \Xi_\infty (\cV_{Y'\times S})_{|Y' \times \infty} \iso \cO_{Y'}$$
%equal\footnote{\red{Explane what it means!}} over $\cM_{X, Y, [\eta]} \times \infty$.
Moreover, one has an isomorphism 
\begin{equation}\label{app.squareroot}
\Xi_0 (\cV_{{Y^\lp}\times  {\mbb S}})_{|{Y^\lp} \times \{0\}} \iso F_* K_{Y}^{\frac{1-p}{2}},
\end{equation}
where $K_Y= \Omega_Y^n$ is the canonical sheaf. 
\end{Th}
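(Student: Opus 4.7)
The plan is to construct $\cV_{Y^\lp \times \mbb S}$ by descending $\pro_{\mbb S,Y}^*\cV_{\mbb S}$ along the Mundinger torsor $\pi_Y \times \Id$ of diagram (\ref{diag.Mundinger}), using the rigidification $\vartheta$ at $\{\infty\}$ to select the canonical descent datum, and then to identify the fiber over $\{0\}\in\mbb S$. For uniqueness I would argue as in Proposition \ref{mainth:quantofthecategory}. A direct check on the two charts $\mbb S \setminus \{\infty\}$ and $\mbb S \setminus \{0\}$, using Remark \ref{rem:description of A_S}, shows that $\End_{\cA_{\mbb S}}(\cV_{\mbb S}) = \cO_{\mbb S}$. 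Consequently, for two candidate triples $(\cV_i, \vartheta_i, \varsigma_i)$ the sheaf of isomorphisms between $\cV_1$ and $\cV_2$ in $\QCoh_h$ is locally classified by $\bG_m$, i.e.\ corresponds to a line bundle $L$ on $Y^\lp \times \mbb S$. The trivialization $\vartheta_2 \circ \vartheta_1^{-1}$ rigidifies $L$ over $Y^\lp \times \{\infty\}$, and $\varsigma_2^{-1}\circ \varsigma_1$ rigidifies the pullback of $L$ to $\cM_{X,Y,[\eta]}\times\mbb S$ compatibly with the rigidification at $\{\infty\}$. The groupoid of such rigidified line bundles is discrete with $\pi_0 = H^0(Y^\lp, \bZ)$, and the cocycle constraint forces the class to be $0$, giving a unique isomorphism $\cV_1 \iso \cV_2$.

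For existence, I would convert the problem into a lifting problem for group-scheme torsors. By Remark \ref{rephrasing}, the $G_0$-action on $\cA_{\mbb S}\lmod$ is encoded by an extension $1 \to \underline{\cA}_{\mbb S}^* \to \widehat G_{\mbb S} \to G_0 \times \mbb S \to 1$. Define $\widehat G_{J,\mbb S} \subset \widehat G_{\mbb S} \times_{G_0\times \mbb S}(G_{0,J}\times\mbb S)$ to be the subgroup scheme of pairs $(g,\phi)$ with $g\in G_{0,J}$ such that the autoequivalence of $\cA_{\mbb S}\lmod$ induced by $\phi$ preserves the isomorphism class of $\cV_{\mbb S}$. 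Because $\End_{\cA_{\mbb S}}(\cV_{\mbb S}) = \cO_{\mbb S}$, this gives a Zariski-locally split extension
\begin{equation*}
1 \to \bG_m \to \widehat G_{J,\mbb S} \to G_{0,J} \times \mbb S \to 1.
\end{equation*}
A $\widehat G_{J,\mbb S}$-torsor on $Y^\lp\times\mbb S$ lifting $\pi_Y\times\Id$ and rigidified over $Y^\lp\times\{\infty\}$ then produces the desired $\cV_{Y^\lp\times\mbb S}$ by the associated-bundle construction, in the spirit of Proposition \ref{pr.twistedmodules}.

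I would construct this torsor by Beauville--Laszlo gluing, paralleling the proof of Theorem \ref{appendix:maintheorem}. Over $\mbb S \setminus \{0\}$ the algebra $\cA_{\mbb S}$ is a split Azumaya algebra (Remark \ref{rem:A_S is matrix algebra outside of 0}) and $\cV_{\mbb S}$ is locally free of rank $1$ there; the equivalence $\Xi_\infty$ at $\{\infty\}$ together with the rigidity of Azumaya splittings pins down a canonical lift over $Y^\lp \times (\mbb S \setminus \{0\})$. Over the formal neighborhood $\hat{\mbb S}$ of $\{0\}$, I would use the auxiliary algebra $\cA_{\mbb S}^\flat$ from \S\ref{ssec:A_Sflat}: the Lagrangian condition $[\eta]|_Y = 0$ is exactly what allows one to lift, via the adjunction $(\Psi_*, \Psi^*)$ of Lemma \ref{lm:glueing}, Mundinger's canonical $G_{0,J}$-equivariant ``vacuum'' $\cA_{\mbb S}^\flat$-module associated to the ideal $J$ (cf.\ \cite{Mu}) to a $G_{0,J}$-equivariant $\cA_{\mbb S}$-module agreeing with $\cV_{\mbb S}$. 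The two pieces then glue over $\hat{\mbb S}\setminus\{0\}$ by Beauville--Laszlo.

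Finally, the identification $\Xi_0((\cV_{Y^\lp\times\mbb S})_{|Y^\lp\times\{0\}}) \iso F_* K_Y^{(1-p)/2}$ is the remaining step. By Proposition \ref{mainth:xi0}, the fiber at $\{0\}$ is, as an $F_*\cO_X$-module, the descent of the $A_0$-module $A_0/J$ under the $G_{0,J}$-equivariant structure just constructed; since its pullback to $\cM_{X,Y,[\eta]}$ is trivial, it must be of the form $F_*\cO_Y \otimes_{\cO_Y} L$ for some line bundle $L$ on $Y$. To pin $L$ down I would invoke the opposite-gerbe twist $\Sigma_0$ of \S\ref{subsectionrestrictionto0}: the isomorphism $\alpha\colon \iota^*\cA_{\mbb S}^{op} \iso \cA_{\mbb S}$ induces a canonical pairing between $\cV_{Y^\lp\times\mbb S}$ and its image under $\Sigma_0$ which, at $\{0\}$ and after descent, yields $L^{\otimes 2} \iso K_Y^{1-p}$. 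The $\widehat\tau$-invariant section (\ref{app.section}) then selects the canonical square root $L \iso K_Y^{(1-p)/2}$, where the fractional exponent is an honest integer since $p$ is odd. The main obstacle will be the gluing step over $\hat{\mbb S}$: verifying that Mundinger's lift of the vacuum $\cA_{\mbb S}^\flat$-module is compatible with the specific extension $\widehat G_{\mbb S}$ from Theorem \ref{appendix:maintheorem}, and that the half-canonical twist singled out by $\Sigma_0$ matches $K_Y^{(1-p)/2}$ rather than some other square root---this is the characteristic-$p$ analog of the metaplectic correction of geometric quantization, and its canonicity here is ultimately a consequence of the uniqueness of $\widehat\tau$.
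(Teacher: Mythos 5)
Your uniqueness argument matches the paper's, and your overall strategy for the square root (a canonical pairing forcing $L^{\otimes 2}\iso K_Y^{1-p}$, then a rigidity argument selecting the square root) is in the right spirit. But your existence argument has a genuine gap, and you have correctly located it yourself: the gluing step over $\hat{\mbb S}$. You propose to lift a ``Mundinger vacuum $\cA_{\mbb S}^\flat$-module'' through the adjunction $(\Psi_*,\Psi^*)$ and glue by Beauville--Laszlo, but $(\Psi_*,\Psi^*)$ is only an isogeny (its adjunction maps have cones killed by $h^N$), so it does not transport a specific module such as $\cV_{\mbb S}$ together with its equivariant structure; and verifying compatibility with the particular extension $\widehat G_{\mbb S}$ of Theorem \ref{appendix:maintheorem} is exactly the hard point, not a routine check. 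The paper avoids all of this: it never uses $\cA_{\mbb S}^\flat$ or Beauville--Laszlo here. Instead it directly defines a sheaf of groups $\widehat G_{{\mbb S},J}$ on the site of locally constant $\mbb S$-schemes, whose $T$-points are triples $(\phi_{\cA_{\mbb S}},\phi_{\cV_{\mbb S}},\psi)$ with $\psi\in G_{0,J}(T)$ and $(\phi_{\cA_{\mbb S}})_{|T_0}=\psi_{|T_0}$, and proves that $1\to \underline{\cA}_{\mbb S}^*\to \widehat G_{{\mbb S},J}\to G_{0,J}\times\mbb S\to 1$ is exact Zariski-locally. The surjectivity --- which is what your gluing was meant to supply --- comes from an elementary but decisive observation: $\End_{\cO_{\mbb S}}(\cV_{\mbb S})$ is generated by $x_i$ and $\frac{y_i}{h}$, which are rational sections of $\cA_{\mbb S}$, so any automorphism of $\cA_{\mbb S}$ whose reduction at $h=0$ preserves $J$ extends to an automorphism of $\End_{\cO_{\mbb S}}(\cV_{\mbb S})$, and Zariski-local innerness of automorphisms of Azumaya algebras then produces $\phi_{\cV_{\mbb S}}$. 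The resulting extension is identified with the one from Theorem \ref{appendix:maintheorem} by the uniqueness statement of that theorem, which is the step your proposal replaces with the unresolved ``compatibility'' check.

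A second, smaller gap: your derivation of $\Xi_0(\cV_{Y^\lp\times\mbb S})_{|Y^\lp\times\{0\}}\iso F_*K_Y^{(1-p)/2}$ silently requires that $G_{0,J}$ be connected. This is needed twice: to obtain a unique $\widehat\tau$-invariant section $G_{0,J}\to\widehat G_{{\mbb S},J}\times_{\mbb S}\{0\}$, and to show that two $G_{0,J}$-actions on $A_0/J$ compatible with the module structure and preserving the trace pairing differ by a homomorphism $G_{0,J}\to\mu_2$ which must be trivial --- this is what rules out the ``other square root'' you worry about. Connectedness of $G_{0,J}$ is not obvious (neither $G_0$ nor $G_{0,J}$ is reduced); the paper proves it by contracting $G_{0,J}^0$ onto the parabolic stabilizer of $J$ in $\Sp(2n)$ via the $\bG_m$-action by homotheties. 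Without this lemma your metaplectic-correction step does not close.
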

\begin{rem}
Note that $K_{Y}^{1-p}$ is the relative dualizing sheaf $F^! \cO_{{Y^\lp}}$ of the Frobenius morphism $F\colon Y \to {Y^\lp}$. Indeed,  we have that
$$F^! \cO_{{Y^\lp}} \iso \Hom_{\cO_Y}( F^* K_{{Y^\lp}}, K_{Y})\iso \Hom_{\cO_Y}( K_{Y}^{p}, K_{Y})\iso  K_{Y}^{1-p}.$$
In particular, there is a non-degenerate pairing 
$$ F_* K_{Y}^{\frac{1-p}{2}} \otimes_{F_*\cO_Y}  F_* K_{Y}^{\frac{1-p}{2}} \to F_* K_{Y}^{1-p} \rar{\tr} \cO_{{Y^\lp}}.$$
\end{rem}
\begin{proof} We start with the existence part. 
In \S \ref{canonicalquant.theaction} we defined a quasi-coherent sheaf of categories $(\cA_ {\mbb S}\lmod)_{ {\mbb S}/G_0}$ on $BG_0\times  {\mbb S}$ whose pullback to $ {\mbb S}$ is $\cA_ {\mbb S}\lmod$. We shall construct an object $\cV_{ {\mbb S}/G_{0,J}} \in (\cA_ {\mbb S}\lmod)_{ {\mbb S}/G_0}(BG_{0, J}\times \mbb S )$ 
whose pullback to ${\mbb S}$ is $\cV_{ {\mbb S}}$ and then define $\cV_{{Y^\lp}\times  {\mbb S} }$ to be the pullback of $\cV_{ {\mbb S}/G_{0,J}}$
along the map ${Y^\lp} \times  {\mbb S} \to BG_{0, J}\times  {\mbb S}$ determined by $\cM_{X, Y, [\eta]}$. Explicitly, using Remark \ref{rephrasing}, let 
\begin{equation}\label{diag.f.extany.lag}
%\[
	\begin{tikzcd}
	1 \arrow[r, ""] & \underline{\cA}_{\mbb S}^*  \arrow[r, ""]\arrow[dr, "Ad"]& \widehat {G}_{{\mbb S}, J}  \arrow[r, ""]\arrow[d, "\alpha"]&  G_{0, J}  \times {\mbb S}\arrow[r, ""] &1  \\
	&& \underline{\Aut} (\cA_{\mbb S}). &  &
	\end{tikzcd}
	% \]
	\end{equation}
be the diagram corresponding to the $G_{0, J}\times {\mbb S}$-action on $\cA_{\mbb S}\lmod$. Also, let $\underline{\Aut}(\cA_{\mbb S}, \cV_{\mbb S})$ be the  group scheme parametrizing  pairs $(\phi_{\cA_{\mbb S}}, \phi_{\cV_{\mbb S}})$, where $\phi_{\cA_{\mbb S}}\in \underline{\Aut}(\cA_{\mbb S})$, $\phi_{\cV_{\mbb S}}\in \underline{\Aut}_{\cO_{\mbb S}}(\cV_{\mbb S})$ with 
$\phi_{\cV_{\mbb S}}(a v)= \phi_{\cA_{\mbb S}}(a) \phi_{\cV_{\mbb S}}(v)$, for all $a\in \cA_{\mbb S}$, $v\in \cV_{\mbb S}$.
Giving an object $\cV_{ {\mbb S}/G_{0,J}} $ as above is equivalent to giving a homomorphism
\begin{equation}\label{app.lagreqstr}
 \widehat {G}_{{\mbb S}, J}  \to \underline{\Aut}(\cA_{\mbb S}, \cV_{\mbb S}),
 \end{equation}
 whose composition with the projection $\underline{\Aut}(\cA_{\mbb S}, \cV_{\mbb S})\to \underline{\Aut}(\cA_{\mbb S})$ is $\alpha$ and whose restriction to $\underline{\cA}_{\mbb S}^*$ carries $a \in  \underline{\cA}_{\mbb S}^*$ to $(\Ad_a, a)$. The uniqueness part of Theorem \ref{appendix:maintheorem} asserts that (\ref{diag.f.extany.lag}) together with the splitting at $h=\infty$ is uniquely characterized by
 the requirement that  the restriction of $\alpha$ to $\widehat{G}_{{\mbb S}, J}  \times_{\mbb S} 0  $ factors through the identity morphism  $G_{0, J} \to \underline{\Aut}(A_0)$. To produce (\ref{app.lagreqstr}) we shall explain  a convenient construction of (\ref{diag.f.extany.lag}).

Call a scheme $T$ over ${\mbb S}$ {\it locally constant} if, for every point $x \in T$, there exists an open neighborhood $x\in U\subset T$ and a $k$-scheme $P$ such that the map $U \rightarrow {\mbb S}$ factors as an \'etale map $U \rightarrow {\mbb S} \times P$ followed by the projection  ${\mbb S} \times P \rightarrow {\mbb S}$. 
 Observe that a scheme smooth over a locally constant scheme is also 
locally constant and that the fiber product $T_1 \times_{\mbb S}  T_2$  of locally constant  schemes is again locally constant. 
Denote by $\cC$ the site whose underlying category is formed  by locally constant schemes over ${\mbb S}$  equipped with the Zariski topology.  Define a sheaf $\widehat{G}_{{\mbb S}, J}$ of groups on $\cC$ sending $T \in \cC$ to 
$$\widehat{G}_{{\mbb S}, J}(T) = \{ (\phi_{\cA_{\mbb S}}, \phi_{\cV_{\mbb S}}) \in \underline{\Aut}(\cA_{\mbb S}, \cV_{\mbb S})(T), \psi \in G_{0, J}(T)|  (\phi_{\cA_{\mbb S}})_{| T_0} = \psi_{|T_0}\},$$
where $T_0$ is the scheme theoretic fiber of the structure morphism $T\to {\mbb S}$ over $0\in {\mbb S}$. 
Consider the sequence of sheaves on $\cC$.
\begin{equation}\label{app.seqquantlagr}
1 \rightarrow \underline{\cA}_{\mbb S}^*\rightarrow \widehat{G}_{{\mbb S}, J} \rightarrow G_{0, J} \times {\mbb S} \rightarrow 1,
\end{equation}
where the second map sends $a \in  \underline{\cA}_{\mbb S}^*$ to $\{(\Ad_a, a), \Id \}$ and the third one carries   $(\phi_{\cA_{\mbb S}}, \phi_{\cV_{\mbb S}}, \psi) \in  H_{\mbb S}$ to $\psi$.
\begin{lm}\label{M_Sequivariantstructure}
	The sequence  (\ref{app.seqquantlagr}) is exact for the Zariski topology on $\cC$. 
	\end{lm}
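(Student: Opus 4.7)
The plan is to verify exactness term-by-term, making essential use of results from earlier sections and of the locally-constant hypothesis (which guarantees $h$-separability and $h$-torsion freedom). First, I would note that for any $T \in \cC$ the ring $\cO(T)$ is $h$-torsion free, hence so are $\cA_{\mbb S}(T)$ and $\cV_{\mbb S}(T)$; in particular, $\cV_{\mbb S}(T)$ is faithful as an $\cA_{\mbb S}(T)$-module, since $\cA_{\mbb S}(T) \hookrightarrow \cA_{\mbb S}(T)[h^{-1}]$ is injective and by Remark \ref{rem:A_S is matrix algebra outside of 0} the target is a matrix algebra on which $\cV_{\mbb S}[h^{-1}]$ is simple. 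Faithfulness immediately yields injectivity of $\underline{\cA}_{\mbb S}^* \to \widehat G_{\mbb S, J}$. I would also establish the auxiliary Hom computation $\End_{\cA_{\mbb S}}(\cV_{\mbb S})(T) = \cO(T)$ on $\cC$: any $\cA_{\mbb S}$-linear endomorphism is determined by $u := \phi(\bar 1) \in \cV_{\mbb S}(T)$, and the relation $y_i \cdot \bar 1 = 0$ forces $h\partial_{x_i} u = 0$; by $h$-torsion freedom this gives $\partial_{x_i} u = 0$, so $u \in \cO(T)$, whence $\Aut_{\cA_{\mbb S}}(\cV_{\mbb S})(T) = \cO(T)^*$.

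For middle exactness, let $(\phi_{\cA}, \phi_{\cV}, \Id) \in \widehat G_{\mbb S, J}(T)$ lie in the kernel of the right arrow, so $(\phi_{\cA})_{|T_0} = \Id$. Every object of $\cC$ is $h$-separable in the sense of Definition \ref{def:h-separable algebras}, so Proposition \ref{prop:automorphisms trivial modulo h are inner locally} provides, after Zariski localization on $T$, an element $a \in \cA_{\mbb S}(T)^*$ with $\phi_{\cA} = \Ad_a$. Then $a^{-1}\phi_{\cV}$ is $\cA_{\mbb S}$-linear and therefore equals multiplication by some $\lambda \in \cO(T)^*$ by the preceding Hom computation, so $(\phi_{\cA}, \phi_{\cV}, \Id) = (\Ad_{\lambda a}, (\lambda a)\cdot, \Id)$ is the image of $\lambda a \in \cA_{\mbb S}(T)^*$.

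The main obstacle is Zariski surjectivity onto $G_{0, J} \times \mbb S$. Given $\psi \in G_{0, J}(T)$, Proposition \ref{app.autAlifting} produces, Zariski-locally on $T$, an $\phi_{\cA} \in \underline{\Aut}(\cA_{\mbb S})(T)$ with $(\phi_{\cA})_{|T_0} = \psi_{|T_0}$; the issue is then to construct a $\phi_{\cA}$-semilinear automorphism $\phi_{\cV}$ of $\cV_{\mbb S}$, or equivalently, an isomorphism $\phi_{\cA}^* \cV_{\mbb S} \iso \cV_{\mbb S}$ of $\cA_{\mbb S}$-modules. I would produce this isomorphism by Beauville--Laszlo gluing along the covering $\mbb S = (\mbb S \setminus \{0\}) \cup \widehat{\mbb S}_{0}$, where $\widehat{\mbb S}_0$ is the formal neighborhood of $0$. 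Over $\mbb S \setminus \{0\}$ the algebra $\cA_{\mbb S}$ is an Azumaya algebra of which both $\cV_{\mbb S}$ and $\phi_{\cA}^* \cV_{\mbb S}$ are rank-$p^n$ splittings; they therefore differ by a line bundle on $T \times (\mbb S \setminus \{0\})$, which is pulled back from $T$ (since $\Pic(\mbb S \setminus \{0\}) = 0$) and becomes trivial after Zariski shrinking of $T$. Over $\widehat{\mbb S}_0$, the $J$-invariance of $\psi$ yields a canonical isomorphism at $T_0$: writing $\cV_{\mbb S} = \cA_{\mbb S}/\cA_{\mbb S}\cdot(y_1,\ldots,y_n)$, the reduction of $\phi_{\cA}^*\cV_{\mbb S}$ at $h=0$ becomes $A_0/\psi^{-1}(J) = A_0/J = \cV_0$. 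I would lift this iso order-by-order in $h$, with obstructions controlled by the $\End_{\cA_{\mbb S}}(\cV_{\mbb S})$ computation above and vanishing after further Zariski shrinking of $T$. The two local isomorphisms are then glued along the punctured formal neighborhood; the gluing datum is a unit in $\cO(T)((h))^*$ whose class in $\Pic(T \times \mbb S) = \Pic(T)$ is trivializable Zariski-locally. The key structural input throughout is that $\psi \in G_{0, J}$ preserves $J$, without which the $h = 0$ reduction of the twist would not match $\cV_0$ and no such global isomorphism could exist.
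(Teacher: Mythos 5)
Your arguments for injectivity and for exactness at the middle term are correct and essentially the paper's: for injectivity the paper restricts to $T\backslash T_0$ (using flatness of $T$ over $\mbb S$), where $\underline{\cA}_{\mbb S}^*\to\underline{\Aut}_{\cO_{\mbb S}}(\cV_{\mbb S})$ is an isomorphism, while you argue via faithfulness of $\cV_{\mbb S}(T)$ --- both work; and your middle-exactness argument (local innerness of automorphisms trivial at $h=0$, plus $\Aut_{\cA_{\mbb S}}(\cV_{\mbb S})=\cO^*$) is exactly the paper's, with Proposition \ref{prop:automorphisms trivial modulo h are inner locally} playing the role of Corollary \ref{app.lemma.inneraut}.

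The surjectivity onto $G_{0,J}\times\mbb S$ is where you diverge, and your argument has a genuine gap at the formal disk. You propose to produce the isomorphism $\phi_{\cA}^*\cV_{\mbb S}\iso\cV_{\mbb S}$ over $\widehat{\mbb S}_0$ by lifting the identification at $h=0$ order by order in $h$, with ``obstructions controlled by the $\End_{\cA_{\mbb S}}(\cV_{\mbb S})$ computation.'' That computation controls the \emph{torsor of lifts once one exists} (the ambiguity is a unit of $\cO(T)$), not the \emph{obstruction to existence}: the obstruction to extending an isomorphism modulo $h^m$ to one modulo $h^{m+1}$ lives in $\Ext^1_{A_0\otimes\cO(T_0)}\bigl(A_0/J\otimes\cO(T_0),\,A_0/J\otimes\cO(T_0)\bigr)$, and this group is nonzero because $A_0/J$ is far from projective over $A_0$. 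Moreover the hypothesis $\psi\in G_{0,J}$ enters your argument only at order zero, so you have no mechanism forcing the higher obstruction classes to vanish. The paper avoids the issue entirely: $\End_{\cO_{\mbb S}}(\cV_{\mbb S})$ is generated over $\cO_{\mbb S}$ by the global sections $x_i$ and $y_i/h$, and one checks that $\cA_{\mbb S}\cap h\cdot\End_{\cO_{\mbb S}}(\cV_{\mbb S})$ consists precisely of the sections of $\cA_{\mbb S}$ whose reduction modulo $h$ lies in $J$; hence the condition that $\psi$ preserve $J$ is exactly what is needed for $\phi_{\cA}$ to extend to an automorphism of the Azumaya algebra $\End_{\cO_{\mbb S}}(\cV_{\mbb S})$, after which Zariski-local innerness of automorphisms of Azumaya algebras produces $\phi_{\cV}$ in one stroke, globally over $\mbb S$ and with no Beauville--Laszlo gluing. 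To complete your route you would in effect need this same observation (or Mundinger's uniqueness of Lagrangian quantizations) to handle the formal disk, at which point the gluing becomes superfluous.
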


\begin{proof}
	Every scheme $T\in \cC$ is flat over ${\mbb S}$. In particular, the restriction map   $ \underline{\cA}_{\mbb S}^*(T)  \to   \underline{\cA}_{\mbb S}^*(T\backslash T_0) $ is injective. Hence,
	the injectivity of  the second morphism  in (\ref{app.seqquantlagr}) follows from the diagram 
	\[
	\begin{tikzcd}
	  \underline{\cA}_{\mbb S}^*(T) \arrow[r, ""] \arrow[hookrightarrow]{d}{} & \underline{\Aut}_{\cO_{\mbb S}}(\cV_{\mbb S}) (T) \arrow[hookrightarrow]{d}{} \\
	  \underline{\cA}_{\mbb S}^*(T\backslash T_0) \arrow[r, "\cong"] & \underline{\Aut}_{\cO_{\mbb S}}(\cV_{\mbb S}) (T\backslash T_0).
	\end{tikzcd}
	\]
Let us show exactness at the middle term. We have to check that Zariski locally on $T$ every $ (\phi_{\cA_{\mbb S}}, \phi_{\cV_{\mbb S}}) \in \underline{\Aut}(\cA_{\mbb S}, \cV_{\mbb S})(T)$ with   $ (\phi_{\cA_{\mbb S}})_{|T_0} = \Id$ comes from a section of $ \underline{\cA}_{\mbb S}^*$. Using Lemma \ref{app.lemma.inneraut} we may assume that $\phi_{\cA_{\mbb S}} =  \Ad_{a'}$, for some $a' \in \cA_{\mbb S}^*(T)$. 
Since $\underline {\Aut}_{\cA_{\mbb S}}(\cV_{\mbb S})(T) \cong \cO^*(T)$ we conclude that $\phi_{\cV_{\mbb S}}= f a'$,  for some $a' \in  \cO^*(T)$. Hence, the pair $(\phi_{\cA_{\mbb S}}, \phi_{\cV_{\mbb S}})$ is the image of $ f a'\in \cA_{\mbb S}^*(T)$.

Lastly, let us check the surjectivity of  the map to $G_{0, J} \times {\mbb S}$. Let $\psi \in G_{0, J}(T)$, for some $T\in \cC$. 
Using Proposition \ref{app.autAlifting}, Zariski locally on $T$, there exists  $\phi_{\cA_{\mbb S}} \in \underline{\Aut}(\cA_{\mbb S})(T)$ with $(\phi_{\cA_{\mbb S}})_{| T_0} = \psi_{|T_0}$. 
  To construct $\phi_{\cV_{\mbb S}}$  consider the embedding  $\cA_{\mbb S}   \subset \End_{\cO_{\mbb S}}(\cV_{\mbb S})$ given by the action of $\cA_{\mbb S}$ on $\cV_{\mbb S}$. 
  Note that as a $\cO_{\mbb S}$-algebra $\End_{\cO_{\mbb S}}(\cV_{\mbb S})$ is generated by global sections $x_i$, $\frac{y_i}{h}$, $1\leq i \leq n$ that are rational sections of $\cA_{\mbb S}$.
  It follows that every automorphism  $\phi_{\cA_{\mbb S}} \in \underline{\Aut}(\cA_{\mbb S})(T)$ whose restriction to $T_0$  preserves the ideal  generated by $y_i$'s   extends to an automorphism of  the algebra $\End_{\cO_{\mbb S}}(\cV_{\mbb S})$ pulled back  to $T$. Using that,
  Zariski locally, every automorphism of an Azumaya algebra is inner, we infer, locally on $T$, the existence of $\phi_{\cV_{\mbb S}}\in \underline{\Aut}_{\cO_{\mbb S}}(\cV_{\mbb S})(T)$ with $\Ad_{\phi_{\cV_{\mbb S}}} = \phi_{\cA_{\mbb S}}$ in $\underline{\Aut}(\End_{\cO_{\mbb S}}(\cV_{\mbb S}))(T)$ as desired. 
  \end{proof}
Next, we claim that the sheaf  $\widehat{G}_{{\mbb S}, J} $ is representable by a locally constant scheme over $G_{0, J} \times {\mbb S}$. Indeed, using Lemma \ref{M_Sequivariantstructure} we may view $\widehat{G}_{{\mbb S}, J} $ as a   $\underline{\cA}_{\mbb S}^*$-torsor over  $G_{0, J} \times {\mbb S}$ locally trivial for the Zariski topology. The total space of this torsor, denoted, abusing notation, also by
 $\widehat{G}_{{\mbb S}, J} $ is locally constant since $\cA_{\mbb S}$ is smooth over ${\mbb S}$. By the Yoneda Lemma  $\widehat{G}_{{\mbb S}, J} $ acquires a group scheme structure over ${\mbb S}$ making (\ref{app.seqquantlagr}) a group scheme extension. 
 Together with the natural homomorphism $\widehat{G}_{{\mbb S}, J} \to \underline{\Aut}(\cA_{\mbb S}, \cV_{\mbb S}) \to \underline{\Aut}(\cA_{\mbb S})$ it defines an action of $G_{0, J}\times {\mbb S}$ on $\cA_{\mbb S}\lmod$ and makes  $\cV_{\mbb S}$ a $G_{0, J}\times {\mbb S}$-equivariant $\cA_{\mbb S}$-module with respect to this action.
 To complete the proof, it remains to observe that (\ref{app.seqquantlagr}) splits over $S\backslash \{0\}$ and, in particular, over $h=\infty$.
 
 To construct isomorphism (\ref{app.squareroot}) we shall use the description of $\Xi_0$  from  Section \ref{subsectionrestrictionto0} and Remark \ref{app.anotherconstructionofxi}. We need the following.
 \begin{lm}\label{app.connectedness}
 The group $G_{0, J}$  is connected.
 \end{lm}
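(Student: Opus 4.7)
The plan is to present $G_{0,J}$ as an iterated extension of connected affine group schemes, so that connectedness follows from the standard fact that extensions of connected group schemes are connected.

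First, I would separate off the translations along the Lagrangian. Let $G_{0,J,0}\subset G_{0,J}$ be the closed subgroup scheme of automorphisms that additionally fix the maximal ideal $\mathfrak m=(x_1,\ldots,x_n,y_1,\ldots,y_n)\subset A_0$. Every section of $G_{0,J}$ factors, after a translation $x_i\mapsto x_i+\varepsilon_i$ with $\varepsilon_i\in\alpha_p$ (these translations preserve both $[\eta_{\mathsf{can}}]$ and $J$), as an element of $G_{0,J,0}$ composed with such a translation. This yields a short exact sequence
\[
1 \to G_{0,J,0} \to G_{0,J} \to \alpha_p^n \to 1,
\]
and since $\alpha_p^n$ is connected it remains to prove that $G_{0,J,0}$ is connected.

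Next, I would filter $G_{0,J,0}$ by order of the induced action on $\mathfrak m$. For $k\ge 1$ let $G^{(k)}\subset G_{0,J,0}$ be the subgroup scheme of automorphisms acting trivially on $\mathfrak m/\mathfrak m^{k+1}$. Since $A_0$ is Artinian ($\mathfrak m^{2n(p-1)+1}=0$), the filtration $G_{0,J,0}\supset G^{(1)}\supset G^{(2)}\supset\cdots$ terminates. For $k\ge 1$ the successive quotient $G^{(k)}/G^{(k+1)}$ embeds naturally in $\mathop{\mathrm{Hom}}(\mathfrak m/\mathfrak m^2,\mathfrak m^{k+1}/\mathfrak m^{k+2})$, a finite-dimensional vector group; any closed subgroup scheme of a vector group is an iterated extension of copies of $\mathbb G_a$ and $\alpha_{p^r}$, hence connected. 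Finally, the quotient $G_{0,J,0}/G^{(1)}$ is a closed subgroup scheme of the linear automorphisms of $\mathfrak m/\mathfrak m^2$ preserving the induced symplectic form (the linearization of $\omega$) together with the Lagrangian subspace $J/(J\cap\mathfrak m^2)$: that is, of the Siegel parabolic $P\subset \mathrm{Sp}_{2n}$ whose underlying linear group is $\mathrm{GL}_n\ltimes \mathrm{Sym}^2$, which is connected.

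The main technical point left is to check that the image of $G_{0,J,0}$ in $P$ is itself a connected subgroup scheme — a closed subgroup of a connected smooth group is not connected in general (e.g.\ finite constant subgroups). I expect this to be the real obstacle, and would handle it by writing down an explicit section $P\to G_{0,J,0}$: given a block matrix $\left(\begin{smallmatrix}A&B\\0&(A^{-1})^T\end{smallmatrix}\right)\in P$, send it to the linear automorphism $x_i\mapsto \sum_j A_{ij}x_j$, $y_i\mapsto \sum_j (A^{-1})^T_{ij}y_j+\sum_j B_{ij}x_j$ of $A_0$; a direct computation shows this is $\mathbb F_p$-linear on generators and respects $[\eta_{\mathsf{can}}]$ up to an exact form, and one verifies (using Remark \ref{rem:easier check for restricted Poisson}, i.e.\ the relation $(f^2)^{[p]}=2f^{[p]}f^p$) that it extends to a restricted Poisson automorphism of $(\Spec A_0,[\eta_{\mathsf{can}}])$. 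This exhibits $G_{0,J,0}/G^{(1)}$ as the full Siegel parabolic $P$, which is connected, and together with the earlier steps shows that $G_{0,J}$ is an iterated extension of connected group schemes, hence connected.
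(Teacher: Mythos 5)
Your overall strategy (dévissage of $G_{0,J}$ through the stabilizer of the origin and the congruence filtration by order of contact with the identity) is genuinely different from the paper's, which instead uses a contraction: conjugation by the homotheties $\bG_m\subset\ul{\mr{Aut}}(A_0)$ extends to a morphism $\bA^1\times G_{0,J}^0\to G_{0,J}^0$ retracting $G_{0,J}^0$ onto its linear part $P\subset\Sp(2n)$, and connectedness of $P$ plus connectedness of $\bA^1$ finishes the argument in one stroke. Your first reduction is essentially the paper's (the quotient by the stabilizer of the origin is the infinitesimal scheme $\alpha_p^n$, so $(G_{0,J})_{red}=(G_{0,J,0})_{red}$ — though note your map $G_{0,J}\to\alpha_p^n$, $\phi\mapsto(\ev_0\phi(x_i))_i$, is not a group homomorphism unless the linear part of $\phi$ is trivial; this is harmless since you only need the quotient to be connected), and your identification of the linear quotient with the Siegel parabolic via an explicit splitting is fine in spirit (although as written your block matrix sends $y_i$ to something with a nonzero $x$-component, which does \emph{not} preserve $J$; the off-diagonal block must sit in the other corner).

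The genuine gap is the assertion that ``any closed subgroup scheme of a vector group is an iterated extension of copies of $\bG_a$ and $\alpha_{p^r}$, hence connected.'' This is false in characteristic $p$: the constant group scheme $\bZ/p=\ker(\bG_a\xrightarrow{x\mapsto x^p-x}\bG_a)$ is a closed, étale, disconnected subgroup of $\bG_a$. And this is not a removable technicality here: the graded pieces $G^{(k)}/G^{(k+1)}$ are cut out inside $\Hom(\mathfrak m/\mathfrak m^2,\mathfrak m^{k+1}/\mathfrak m^{k+2})$ by the conditions $\mathsf C(\phi^*\eta_{\mathsf{can}}-\eta_{\mathsf{can}})=0$ and compatibility with $-^{[p]}$, which are \emph{polynomial}, Frobenius-semilinear conditions in the coefficients of $\phi-\id$ (the Cartier operator extracts coefficients of $x_i^{p-1}dx_i$ from a form that depends polynomially on those coefficients), i.e.\ exactly the kind of additive-polynomial equations whose zero loci can be étale. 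So the connectedness of the graded pieces is precisely where the difficulty of the lemma is concentrated, and your proposal does not address it. The cleanest repair is to observe that each $G^{(k)}/G^{(k+1)}$ is stable under the $\bG_m$-action by homotheties, which acts on the ambient vector group with strictly positive weights and hence contracts the subgroup to the origin along $\bA^1$ — but that is exactly the paper's contraction argument, applied piecewise, so at that point the filtration buys you nothing over applying the contraction to $G_{0,J}^0$ directly.
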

 \begin{proof}
 Let $G_{0, J}^0\subset G_{0, J}$ be the stabilizer of $0\in \Spec A_0$.  Since   $(G_{0, J}^0)_{red} = (G_{0, J})_{red}$ is suffices to prove that  $G_{0, J}^0$ is connected.  The subgroup $\bG_m \subset \underline{Aut}( A_0)$ of homotheties normalizes $G_{0, J}^0$.
 Moreover, the map $\bG_m\times G_{0, J}^0 \to G_{0, J}^0$, $(\lambda, g)\mapsto \lambda^{-1} \circ g \circ \lambda$,
 extends to a morphism $\Phi\colon \bA^1 \times G_{0, J}^0 \to G_{0, J}^0$. The restriction $\Phi_{|0\times G_{0, J}^0}$ carries  $G_{0, J}^0$ to the subgroup $P\subset \Sp(2n)\subset  G_{0, J}^0$ of linear symplectic transformations preserving $J$ which is connected. The lemma follows. 
 \end{proof}
 The isomorphism $\alpha\colon  \iota^* \cA_{\mbb S}^{op}\iso \cA_{\mbb S}$  makes  $ \iota^* \cV_{\mbb S}$ into a right  $\cA_{\mbb S}$-module. 
 % together with the isomorphism of $\cO_S$-modules $ \iota^* \cV_S \iso \cV_S$ sending $x_i$'s to itself gives $\cV_S$ a structure of   $\cA_S^{op}$-module. 
 Explicitly, the right action of   $ \cA_{\mbb S}$ on $ \iota^* \cV_{\mbb S}$ is given by the formula
 $f x_i= x_i f$, $v(h y_i)= - \frac{\partial f}{\partial x_i}$, for all
 $ f\in A_0/J = \Gamma({\mbb S}, \iota^* \cV_{\mbb S})$ and $1\leq i\leq n$. Define a perfect pairing 
  \begin{equation}\label{app.duality1bis}
B\colon   \iota^* \cV_S\otimes_{\cO_{\mbb S}} \cV_{\mbb S} \to   \iota^* \cV_{\mbb S}\otimes_{\cA_{\mbb S}} \cV_{\mbb S} \to \cO_{\mbb S}
 \end{equation}
 % of $\cO_S$ modules that induces an isomorphism of $\cA_S$-modules 
 %\begin{equation}\label{app.duality1}
  %\cV_S \iso \Hom _{\cO_S}(\cV_S, \cO_S).
  %\end{equation}
  as follows. The dualizing sheaf on $\Spec A_0/J$ is  $(\Omega^n_{\Spec A_0/J})^{1-p}$. 
  Hence, the trace map defines a non-degenerate bilinear form:
 \begin{equation}\label{app.duality1}
 (\Omega^n_{\Spec A_0/J})^{\frac{1-p}{2}} \otimes _k (\Omega^n_{\Spec A_0/J})^{\frac{1-p}{2}} \to (\Omega^n_{\Spec A_0/J})^{1-p}  \rar{\tr} k.
 \end{equation}
Using the identification   
  \begin{equation}\label{app.iden}
  A_0/J  \iso (\Omega^n_{\Spec A_0/J})^{\frac{1-p}{2}}, \quad f\mapsto f (dx_1\cdots dx_n)^{\frac{1-p}{2}}
  \end{equation}
  the latter yields $A_0/J  \otimes _k A_0/J\to k$ which by the $\cO_{\mbb S}$-linearity extends to a bilinear form $B\colon   \iota^* \cV_{\mbb S}\otimes_{\cO_{\mbb S}} \cV_{\mbb S} \to \cO_{\mbb S}$. We need to check that $B$ factors through  $  \iota^* \cV_{\mbb S}\otimes_{\cA_{\mbb S}} \cV_{\mbb S}$. This amounts to the following identity:
  $$B(\frac{\partial f_1}{\partial x_i}, f_2) + B(f_1, \frac{\partial f_2}{\partial x_i})=0,$$ 
  for $f_j\in A_0/J$, $1\leq i\leq n$, which in turn follows from the invariance property of the trace map 
  $$ {\tr}  \circ L_{ \frac{\partial }{\partial x_i}}=0. $$
  
  For $\phi_{\cV_{\mbb S}}\in  \Aut_{\cO_{\mbb S}}(\cV_{\mbb S})$, let $\phi_{\cV_{\mbb S}}^t\in \Aut_{\cO_{\mbb S}}( \iota^* \cV_{\mbb S})$ be the adjoint automorphism with respect to bilinear form  (\ref{app.duality1bis}) and $\nu(\phi_{\cV_{\mbb S}})= (\phi_{\cV_{\mbb S}}^t)^{-1}$.
 Define an isomorphism  
 $\widehat \tau $
   in the diagram below
  \begin{equation}\label{diag.f.z3}
%\[
	\begin{tikzcd}
	1 \arrow[r, ""] &\iota^* \underline{\cA}_{\mbb S}^*  \arrow[r, ""]\arrow[d, "\tau"]& \iota^* \widehat G_{{\mbb S}, J}  \arrow[r, ""]\arrow[d, "\widehat \tau "]& G_{0, J}\times {\mbb S}\arrow[r, ""]\arrow[d, "Id"] &1  \\
	1 \arrow[r, ""] &\underline{\cA}_{\mbb S}^*  \arrow[r, ""]&  \widehat G_{{\mbb S}, J}  \arrow[r, ""] & G_{0, J}\times {\mbb S}\arrow[r, ""] &1 
	\end{tikzcd}
	% \]
	\end{equation}
	sending  $(\phi_{\cA_{\mbb S}}, \phi_{\cV_{\mbb S}}, \psi ) \in  \iota^* \widehat{G}_{{\mbb S}, J}(T)$, for $T\in \cC$, to
$ (\alpha \phi_{\cA_{\mbb S}} \alpha^{-1}, \nu(\phi_{\cV_{\mbb S}}), \psi)$. Also, recall that $\tau(a)= \alpha(a)^{-1}$.  
We need to check $ (\alpha \phi_{\cA_{\mbb S}} \alpha^{-1},  \nu(\phi_{\cV_{\mbb S}}))\in  \underline{\Aut}(\cA_{\mbb S}, \cV_{\mbb S})(T)$. Since $T$ is flat over ${\mbb S}$ it is enough to check this after replacing $T$ by $T\backslash T_0$. Over ${\mbb S}\backslash \{0\}$ the morphism 
$\underline{\cA}_{\mbb S}^*  \to \underline{\Aut}(\cA_{\mbb S}, \cV_{\mbb S})$ is an isomorphism. Thus, we may assume that $(\phi_{\cA_{\mbb S}}, \phi_{\cV_{\mbb S}})= (\Ad_a, a)$, for some $a\in  \iota^* \underline{\cA}_{\mbb S}^*(T)$. Thus, using that $B(a v, v')= B(v, \alpha(a) v') $,  we have that $ (\alpha \Ad_a \alpha^{-1},  \nu(a))= (\Ad_{\tau (a)}, \tau(a))$, as desired. 

By construction, the following diagram is commutative.  
 \begin{equation}\label{diag.f.z3}
%\[
	\begin{tikzcd}
	 \iota^* \widehat G_{{\mbb S}, J}  \arrow[r, ""]\arrow[d, "\widehat \tau "]& \Aut_{\cO_{\mbb S}}( \iota^* \cV_{\mbb S})\arrow[d, "\nu"]   \\
	  \widehat G_{{\mbb S}, J}  \arrow[r, ""] &\Aut_{\cO_{\mbb S}}(\cV_{\mbb S}). 
	\end{tikzcd}
	% \]
	\end{equation}
Using Lemma \ref{app.connectedness} and discussion in \ref{subsectionrestrictionto0} there exists a unique section  $G_{0, J} \to  \widehat G_{{\mbb S}, J}  \times_{\mbb S}  \{0\}$ invariant under $\widehat \tau $. 
This section defines an action $\rho$  of  $G_{0, J}$   on the fiber $V_0=A_0/J$ of $\cV_{\mbb S}$ over $0$. 
 satisfying the following properties:
\begin{itemize}
\item[(i)] $\rho_g (a v)= g(a) \rho_g(v)$,
\item[(ii)] $B(\rho_g(v)\otimes \rho_g(v'))= B( v \otimes v').$
\end{itemize}
Using Remark \ref{app.anotherconstructionofxi}, we have that $\Xi_0 (\cV_{{Y^\lp}\times {\mbb S}})_{|{Y^\lp} \times \{0\}} \iso  \cM_{X, Y, [\eta]}^\flat \times ^{G_{0,J}} V_0$.
Now  isomorphism  (\ref{app.squareroot}) is a consequence of the following.
\begin{lm}
Isomorphism (\ref{app.iden}) carries   $G_{0, J}$-action $\rho$ on $V_0$ to the  natural action of $\underline{Aut}( A_0/J)$ on  $ (\Omega^n_{\Spec A_0/J})^{\frac{1-p}{2}}$ restricted to the subgroup $G_{0, J}\subset \underline{Aut}( A_0/J)$
\end{lm}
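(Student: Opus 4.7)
The plan is to show that the natural action $\rho'$ of $G_{0,J}$ on $V_0 = A_0/J$ coming from the identification (\ref{app.iden}) with $(\Omega^n_{\Spec A_0/J})^{\frac{1-p}{2}}$ also satisfies the two properties (i) and (ii) used to characterize $\rho$ in the preceding construction, and then conclude $\rho = \rho'$ by a uniqueness argument mirroring the connectedness argument from \S\ref{subsectionrestrictionto0}. Verifying (i) for $\rho'$ is immediate from the functoriality of pullback of differentials: under (\ref{app.iden}), multiplication by $a \in A_0/J$ corresponds to multiplication by $a$ on $(\Omega^n)^{\frac{1-p}{2}}$, while $g \in G_{0,J}$ acts via the corresponding automorphism of $\Spec A_0/J$. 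For (ii), I would specialize the pairing $B$ to $h=0$; unwinding the definitions, the resulting form $B_0 \colon V_0 \otimes_k V_0 \to k$ is $B_0(f_1, f_2) = \tr(f_1 f_2 (dx_1\cdots dx_n)^{1-p})$, where $\tr$ is the canonical trace on the dualizing sheaf $(\Omega^n_{\Spec A_0/J})^{1-p}$ of the $0$-dimensional $k$-scheme $\Spec A_0/J$. Invariance of $\tr$ under automorphisms of $\Spec A_0/J$ over $k$ then yields $B_0(\rho'_g f_1, \rho'_g f_2) = B_0(f_1, f_2)$.

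For uniqueness, I would take two $G_{0,J}$-actions $\rho,\rho'$ on $V_0$ satisfying (i) and (ii). Semi-linearity together with the fact that $V_0$ is free of rank $1$ over $A_0/J$ forces $\rho_g(v) = u_g \cdot \rho'_g(v)$ for a unique $u_g \in (A_0/J)^{\times}(T)$ depending functorially on $T$-points $g \in G_{0,J}(T)$, with the cocycle relation $u_{gh} = u_g \cdot g(u_h)$. Combining property (ii) with the $A_0/J$-balancedness of $B_0$ (which reads off from $\alpha(a) = a$ for $a \in A_0$ modulo $h$ and $J \cdot V_0 = 0$) and the non-degeneracy of $B_0$, I would deduce $u_g^2 = 1$. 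A direct computation in the local ring $A_0/J \otimes \mc O(T)$, using that $p > 2$ makes $2$ a unit, identifies solutions of $u^2 = 1$ with $\mu_2(T) \subset k^\times \subset (A_0/J)^{\times}(T)$: writing $u = c + v$ with $c \in \mc O(T)^\times$ satisfying $c^2 = 1$ and $v$ nilpotent, the equation $u^2 = 1$ becomes $v(2c+v) = 0$, and since $2c+v$ is a unit one gets $v = 0$. Since $G_{0,J}$ acts trivially on $k$, the cocycle becomes a group scheme homomorphism $u \colon G_{0,J} \to \mu_2$. By Lemma \ref{app.connectedness} $G_{0,J}$ is connected and $\mu_2$ is \'etale (as $p>2$), so $u$ is trivial and $\rho = \rho'$.

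The main obstacle I anticipate is the careful translation of property (ii) into the relation $u_g^2 = 1$: this requires unwinding the construction of $B$ through the involution $\iota$ and the isomorphism $\alpha$ of (\ref{app.alpha}), and verifying that the fiber at $h=0$ is indeed the symmetric $A_0/J$-balanced trace form $B_0$ described above. Once this identification is in place, the $\mu_2$-cocycle derivation and the connectedness conclusion are essentially formal.
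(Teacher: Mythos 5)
Your proof is correct and follows essentially the same route as the paper's: both verify that the trace-form action satisfies (i) and (ii) via invariance of the trace map, and both establish uniqueness by showing that two actions satisfying (i) and (ii) differ by a cocycle with $c_g^2=1$, hence a homomorphism $G_{0,J}\to\mu_2$, which is trivial by the connectedness of $G_{0,J}$. Your extra care in showing that solutions of $u^2=1$ in $(A_0/J)^\times(T)$ actually lie in $\mu_2(T)$ (using $p>2$ and nilpotence of the augmentation ideal) fills in a step the paper leaves implicit.
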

\begin{proof}
Since the trace map $\tr\colon (\Omega^n_{\Spec A_0/J})^{1-p}  \to k$ is $\underline{Aut}( A_0/J)$-invariant the second $G_{0, J}$-action on $V_0$ also satisfies properties (i) and (ii). We prove the Lemma by showing that there exists at most one $G_{0, J}$-action on $V_0$ satisfying (i) and (ii). Indeed, using (i), any two such actions $\rho$, $\rho'$ differ by $1$-cocycle 
$c_g \frac{\rho_g (1)}{\rho'_g (1)}$  of  $G_{0, J}$ with values in $(A_0/J)^*$. Using (ii) we see that $c_g^2\equiv 1$, that is $c_g$ is a homomorphism from $G_{0, J}$ to $\mu_2$. Since  $G_{0, J}$ is connected it follows that $c_g\equiv 1$.
\end{proof}
Let us prove the uniqueness assertion of Theorem (\ref{app.thquantoflarg}). Observe that, for any  flat morphism $f\colon W\to {\mbb S}$  the group of automorphisms of $f^*\cV_{\mbb S} \in \cA_{\mbb S}\lmod(W)$ is $\cO^*(W)$. 
Let  $(\cV_{{Y^\lp}\times {\mbb S}}, \vartheta)$, $(\cV_{{Y^\lp}\times {\mbb S}}^{ \prime}, \vartheta ')$ be two pairs as in Theorem (\ref{app.thquantoflarg}).  By  the observation above, assigning to a flat morphism
 $q\colon T \to {Y^\lp}\times {\mbb S}$ the set $L(T)$ of isomorphisms  $q^* \cV_{{Y^\lp}\times {\mbb S}} \iso q^* \cV_{{Y^\lp}\times {\mbb S}}^{\prime}$, we get a $\bG_m$-torsor $L$ over  ${Y^\lp}\times {\mbb S}$. Isomorphisms $\vartheta$, $\vartheta'$ define a trivialization $\gamma$ of $L$ over ${Y^\lp}\times \{\infty\} $. The groupoid of $\bG_m$-torsors over  ${Y^\lp}\times {\mbb S}$ with equipped with a trivialization over ${Y^\lp}\times \infty $
 is discrete and its $\pi_0$ is the group   $H^0({Y^\lp}\times {\mbb S}, \bZ)$ ({\it cf.} proof of the uniqueness part of Theorem  \ref{appendix:maintheorem}).  Using (\ref{app.thquantoflarg.eq}) and the inclusion $$(\pi_Y\times \Id)^*\colon H^0({Y^\lp}\times {\mbb S}, \bZ) \mono H^0(\cM_{X, Y, [\eta]} \times {\mbb S}, \bZ)$$ 
  we conclude that $(L, \gamma)$ is trivial. 

\end{proof}

\end{document}